\documentclass[twoside,11pt]{amsart}
\pdfoutput=1

\usepackage[utf8]{inputenc}
\usepackage[T1]{fontenc}
\usepackage[mathscr]{euscript}
\usepackage{amsfonts}
\usepackage{pbox}
\usepackage{amssymb}
\usepackage{amsmath}
\usepackage[nodayofweek]{datetime}
\usepackage{enumitem}
\usepackage{csquotes}
\usepackage{amsthm}
\usepackage{hyperref}
\usepackage{etoolbox}
\usepackage{stmaryrd}
\usepackage{xparse}
\usepackage{mathtools}
\usepackage{cleveref}

\theoremstyle{plain}
\newtheorem{thm}{Theorem}[section]
\newtheorem{conj}[thm]{Conjecture}
\newtheorem{prop}[thm]{Proposition}
\newtheorem{lem}[thm]{Lemma}
\newtheorem{cor}[thm]{Corollary}
\theoremstyle{definition}
\newtheorem{dfn}[thm]{Definition}

\newtheorem{situation}[thm]{Situation}
\newtheorem{cond}[thm]{Condition}
\theoremstyle{remark}
\newtheorem{rem}[thm]{Remark}

\usepackage{tikz}
\usetikzlibrary{matrix,arrows,calc}
\usepackage{tikzpfeile}

\DeclareMathAlphabet{\mathpzc}{OT1}{pzc}{m}{it}


\setenumerate[1]{leftmargin=*,labelindent=\parindent,label=(\alph*)}
\setenumerate[2]{leftmargin=*,labelindent=\parindent,label=(\roman*)}
\newlist{arabiclist}{enumerate}{2}
\setlist[arabiclist]{leftmargin=*,labelindent=\parindent,label=(\arabic*)}
\setlist[arabiclist,2]{label=(\roman*)}
\newlist{tfaelist}{enumerate}{1}
\setlist[tfaelist]{leftmargin=*,labelindent=\parindent,label=(\roman*)}

\usepackage[style=alphabetic]{biblatex}
\bibliography{hida-fam-palf}

\pagestyle{myheadings}

\makeatletter
\newcommand*{\@old@slash}{}\let\@old@slash\slash
\def\slash{\relax\ifmmode\delimiter"502F30E\mathopen{}\else\@old@slash\fi}
\def\backslash{\delimiter"526E30F\mathopen{}}
\makeatother 

\DeclareRobustCommand{\rquot}[2]{
  \mathchoice
  { 
    \left.
    \raisebox{0.5ex}{\ensuremath{#1}}
    \middle\ensuremath\slash
    \raisebox{-0.4ex}{\ensuremath{#2}}
    \right.  
  }
  { 
    #1 / #2 
  }
  { 
    #1 / #2
  }
  { 
    #1 / #2
  }
}
\DeclareRobustCommand{\lquot}[2]{
  \mathchoice
  { 
    \left.
    \raisebox{-0.4ex}{\ensuremath{#2}}
    \middle\ensuremath\backslash
    \raisebox{0.5ex}{\ensuremath{#1}}
    \right.  }
  { 
    #2\ensuremath\backslash #1
  }
  { 
    #2\ensuremath\backslash #1
  }
  { 
    #2\ensuremath\backslash #1
  }
}

\DeclareRobustCommand{\quotient}[2]{\rquot{#1}{#2}}

\DeclareRobustCommand{\N}{{\mathbb{N}}}
\DeclareRobustCommand{\Z}{{\mathbb{Z}}}
\DeclareRobustCommand{\Q}{{\mathbb{Q}}}
\DeclareRobustCommand{\R}{{\mathbb{R}}}
\makeatletter
\let\C\@undefined
\let\P\@undefined
\makeatother
\DeclareRobustCommand{\C}{{\mathbb{C}}}
\DeclareRobustCommand{\P}{{\mathbb{P}}}

\DeclareRobustCommand{\e}{\mathrm{e}}
\makeatletter
\let\i\@undefined
\let\Re\@undefined
\let\Im\@undefined
\makeatother
\DeclareRobustCommand{\i}{\mathrm{i}}
\DeclareRobustCommand{\Re}{\operatorname{Re}}
\DeclareRobustCommand{\Im}{\operatorname{Im}}

\DeclareRobustCommand{\Mat}{\operatorname{M}}
\DeclareRobustCommand{\GL}{\operatorname{GL}}
\DeclareRobustCommand{\SL}{\operatorname{SL}}

\DeclareRobustCommand{\da}{\mathrel{\vcenter{\baselineskip0.5ex \lineskiplimit0pt
                     \hbox{\scriptsize.}\hbox{\scriptsize.}}}%
                     =}
\DeclareRobustCommand{\ad}{=\mathrel{\vcenter{\baselineskip0.5ex \lineskiplimit0pt
                     \hbox{\scriptsize.}\hbox{\scriptsize.}}}%
                     }
\DeclareRobustCommand{\ep}{\varepsilon}
\let\temp\phi
\let\phi\varphi
\let\varphi\temp

\makeatletter
\let\d\@undefined
\makeatother
\DeclareRobustCommand{\d}{\mathrm{d}}

\DeclareRobustCommand{\image}{\operatorname{im}}

\makeatletter
\let\ker\@undefined
\makeatother
\DeclareRobustCommand{\ker}{\ensuremath{\operatorname{ker}}}

\DeclareRobustCommand{\Aut}{\ensuremath{\operatorname{Aut}}}

\DeclareRobustCommand{\Spec}{\operatorname{Spec}}

\DeclareRobustCommand{\Div}{\operatorname{Div}}

\DeclareRobustCommand{\Quot}{\operatorname{Quot}}

\DeclareRobustCommand{\restrict}[1]{|_{#1}}
\DeclareRobustCommand{\paarung}[2]{
  \mathchoice
  { 
    {\left<#1,#2\right>}
  }
  { 
    {\langle#1,#2\rangle}
  }
  { 
    {\langle#1,#2\rangle}
  }
  { 
    {\langle#1,#2\rangle}
  }
}
\DeclareRobustCommand{\zmod}[1]{\quotient{\Z}{#1}} 
\DeclareRobustCommand{\zmodmal}[1]{\left(\zmod{#1}\right)^\times}
\DeclareRobustCommand{\psring}[2]{#1\hspace*{0pt}[\hspace*{-2pt}[#2]\hspace*{-2pt}]}

\DeclareRobustCommand{\binmatrix}[4]{
  \mathchoice
  { 
    \begin{pmatrix}#1&#2\\#3&#4\end{pmatrix}
  }
  { 
    \bigl( \begin{smallmatrix}#1&#2\\#3&#4\end{smallmatrix} \bigr)
  }
  { 
    \bigl( \begin{smallmatrix}#1&#2\\#3&#4\end{smallmatrix} \bigr)
  }
  { 
    \bigl( \begin{smallmatrix}#1&#2\\#3&#4\end{smallmatrix} \bigr)
  }
}

\DeclareRobustCommand{\ke}[2]{#1/#2} 
\DeclareRobustCommand{\Gal}[2]{\operatorname{Gal}(\ke{#1}{#2})}
\DeclareRobustCommand{\inertia}{\mathrm{I}}
\makeatletter
\let\mod\@undefined
\makeatother
\DeclareRobustCommand{\mod}{\operatorname{mod}}

\DeclareRobustCommand{\category}[1]{\mathpzc{#1}}

\makeatletter
\let\Top\@undefined
\makeatother
\DeclareRobustCommand{\Top}{\category{Top}}

\DeclareRobustCommand{\Mod}{\category{Mod}}
\DeclareRobustCommand{\Hom}{\operatorname{Hom}}

\makeatletter

\newcommand{\colim@}[2]{%
  \vtop{\m@th\ialign{##\cr
    \hfil$#1\operator@font colim$\hfil\cr
    \noalign{\nointerlineskip\kern1.5\ex@}#2\cr
    \noalign{\nointerlineskip\kern-\ex@}\cr}}%
}
\DeclareRobustCommand{\colim}{%
  \mathop{\mathpalette\colim@{\rightarrowfill@\textstyle}}\nmlimits@
}
\newcommand{\lim@}[2]{%
  \vtop{\m@th\ialign{##\cr
    \hfil$#1\operator@font lim$\hfil\cr
    \noalign{\nointerlineskip\kern1.5\ex@}#2\cr
    \noalign{\nointerlineskip\kern-\ex@}\cr}}%
}
\DeclareRobustCommand{\lim}{%
  \mathop{\mathpalette\lim@{\leftarrowfill@\textstyle}}\nmlimits@
}
\makeatother

\DeclareRobustCommand{\Qquer}{\smash{\overline{\Q}}}
\DeclareRobustCommand{\Qbar}{\Qquer}

\DeclareRobustCommand{\Frob}{\operatorname{Frob}}
\DeclareRobustCommand{\frakp}{\mathfrak{p}}

\NewDocumentCommand{\tensor}{t_}
 {%
  \IfBooleanTF{#1}
   {\tensop}
   {\otimes}%
 }
\NewDocumentCommand{\tensop}{m}
 {%
  \mathbin{\mathop{\otimes}\displaylimits_{#1}}%
 }
\NewDocumentCommand{\fibertimes}{t_}
 {%
  \IfBooleanTF{#1}
   {\fibertimesop}
   {\times}%
 }
\NewDocumentCommand{\fibertimesop}{m}
 {%
  \mathbin{\mathop{\times}\displaylimits_{#1}}%
 }

\makeatletter
\let\O\@undefined
\makeatother
\DeclareRobustCommand{\O}{\mathcal{O}}
\DeclareRobustCommand{\wnklevel}[1]{\prescript{#1}{k}\!\mathcal{W}}
\DeclareRobustCommand{\wnk}{\wnklevel{N}}
\DeclareRobustCommand{\Sym}{\operatorname{Sym}}
\DeclareRobustCommand{\TSym}{\operatorname{TSym}}
\DeclareRobustCommand{\parab}{\mathrm{p}}
\DeclareRobustCommand{\cp}{\mathrm{c}}

\DeclareRobustCommand{\RD}{\mathrm{R}}
\DeclareRobustCommand{\RRD}{\mathbf{R}}
\DeclareRobustCommand{\an}{\mathrm{an}}
\DeclareRobustCommand{\HL}{\mathrm{H}}

\DeclareRobustCommand{\Hf}{\HL_{\mathrm f}}
\DeclareRobustCommand{\Hg}{\HL_{\mathrm g}}
\DeclareRobustCommand{\ord}{\text{\normalfont ord}}
\DeclareRobustCommand{\antiord}{{\iota\text{\normalfont-ord}}}
\DeclareRobustCommand{\HHL}{\mathbf{H}}
\DeclareRobustCommand{\Hp}{\HL_\parab}
\DeclareRobustCommand{\Hpet}{\HL_{\parab,\et}}
\DeclareRobustCommand{\Hc}{\HL_\cp}

\DeclareRobustCommand{\GR}{\Galgp\R}
\DeclareRobustCommand{\KS}{\mathrm{KS}}
\DeclareRobustCommand{\preKS}{\mathrm{preKS}}
\DeclareRobustCommand{\MS}{\mathrm{MS}}

\DeclareRobustCommand{\Kit}{\mathrm{Kit}}
\DeclareRobustCommand{\FK}{\mathrm{FK}}

\DeclareRobustCommand{\dR}{\mathrm{dR}}
\DeclareRobustCommand{\Motive}{\mathcal{M}}
\DeclareRobustCommand{\Mf}{\Motive(f)}

\DeclareRobustCommand{\new}{\mathrm{new}}

\DeclareRobustCommand{\ModForms}{\mathrm{M}}
\DeclareRobustCommand{\calS}{\mathcal{S}}
\DeclareRobustCommand{\Sbar}{\overline\calS}

\makeatletter
\let\S\@undefined
\let\SS\@undefined
\let\H\@undefined
\makeatother
\DeclareRobustCommand{\S}{\mathrm{S}}
\DeclareRobustCommand{\H}{\mathfrak{h}}
\DeclareRobustCommand{\SS}{\mathbb{S}}

\DeclareRobustCommand{\GQ}{\Galgp{\Q}}

\DeclareRobustCommand{\betti}{\mathrm{B}}
\DeclareRobustCommand{\hodge}{\mathrm{H}}
\DeclareRobustCommand{\degzero}[1]{\underset{\substack{\uparrow\\0}}{#1}}
\DeclareRobustCommand*{\shortra}[1][]{\raisebox{-1pt}{\tikz{%
      \draw[xscale=.5,thin,shorten >=3pt, ->,font=\scriptsize] (0,0)%
      node{\hspace*{-2pt}} -- (0.5,0) node[above] {#1}%
      -- node{} (1,0);}}\penalty1000\relax}
\makeatletter
\let\setminus\@undefined
\makeatother
\DeclareRobustCommand{\setminus}{\smallsetminus}
\DeclareRobustCommand{\ES}{\mathrm{ES}}

\DeclareRobustCommand{\smuline}[1]{\underline{\smash{#1}}}
\DeclareRobustCommand{\fil}{\operatorname{fil}}
\DeclareRobustCommand{\gr}{\operatorname{gr}}
\DeclareRobustCommand{\D}[1]{\operatorname{D}_{#1}}

\DeclareRobustCommand{\tangentspace}[1]{\mathrm{t}_{#1}}
\DeclareRobustCommand{\Symgrp}[1]{\mathfrak{S}_{#1}}
\DeclareRobustCommand{\logOmega}{\mho}

\DeclareRobustCommand{\Qp}{{\Q_p}}

\DeclareRobustCommand{\Qell}{{\Q_\ell}}
\DeclareRobustCommand{\Cp}{{\C_p}}
\DeclareRobustCommand{\Zp}{{\Z_p}}
\DeclareRobustCommand{\H}{\mathfrak{h}}

\DeclareRobustCommand{\calMS}{\mathcal{MS}}
\DeclareRobustCommand{\UM}{\mathcal{UM}}
\DeclareRobustCommand{\MMSS}{\mathbb{MS}}
\DeclareRobustCommand{\HT}{\mathrm{HT}}
\DeclareRobustCommand{\cris}{\mathrm{cris}}
\DeclareRobustCommand{\frobcris}{\phi_\cris}

\DeclareRobustCommand{\B}[1]{\operatorname{B}_{#1}}

\DeclareRobustCommand{\BdR}{\B{\dR}}
\DeclareRobustCommand{\tdR}{\mathrm{t}_\dR}
\DeclareRobustCommand{\BHT}{\B{\HT}}

\DeclareRobustCommand{\Dcris}{\D{\cris}}

\DeclareRobustCommand{\DdR}{\D{\dR}}

\DeclareRobustCommand{\pst}{\mathrm{pst}}
\DeclareRobustCommand{\Dpst}{\D{\pst}}

\DeclareRobustCommand{\et}{\text{\normalfont ét}}
\DeclareRobustCommand{\nr}{\mathrm{nr}}

\DeclareRobustCommand{\Galgp}[1]{\operatorname{G}_{#1}}
\DeclareRobustCommand{\GQp}{\Galgp{\Qp}}
\DeclareRobustCommand{\GQell}{\Galgp{\Qell}}

\DeclareRobustCommand{\nrhat}[1]{\hat#1^\nr}

\DeclareRobustCommand{\Qpbar}{\overline\Q_p}

\DeclareRobustCommand{\tH}{\operatorname{t}_{\mathrm{H}}}
\DeclareRobustCommand{\DP}{\mathrm{DP}}

\DeclareRobustCommand{\wt}{\mathrm{wt}}
\DeclareRobustCommand{\Lambdawt}{\Lambda\!^\wt}
\DeclareRobustCommand{\cyc}{\mathrm{cyc}}

\DeclareRobustCommand{\diamondop}[1]{\langle#1\rangle}
\DeclareRobustCommand{\calI}{\mathcal I}
\DeclareRobustCommand{\calX}{\mathcal X}
\DeclareRobustCommand{\calL}{\mathcal Q}
\DeclareRobustCommand{\calLbar}{\overline{\calL}}
\DeclareRobustCommand{\arith}{\mathrm{arith}}
\DeclareRobustCommand{\arithm}{\arith}
\DeclareRobustCommand{\naive}{\mathrm{naive}}

\DeclareRobustCommand{\hecket}{\mathbf t}
\DeclareRobustCommand{\heckeT}{\mathbf T}

\DeclareRobustCommand{\HeckeKonkretadjoint}[3]{\heckeT^\iota_{#1}(#2,#3)}

\DeclareRobustCommand{\tord}{\hecket^\ord}
\DeclareRobustCommand{\Tord}{\heckeT^\ord}


\DeclareRobustCommand{\EP}{\boldsymbol\ep}

\DeclareRobustCommand{\cpiso}{\operatorname{cp}}

\DeclareRobustCommand{\Pet}{\mathrm{Pet}}
\DeclareRobustCommand{\errorterm}{\mathcal E}

\DeclareRobustCommand{\MotiveCanBasis}[1]{b^{#1}}

\DeclareRobustCommand{\DirichletCanBasis}{\MotiveCanBasis{\chi}}

\DeclareRobustCommand{\cc}{\Frob_\infty}
\DeclareRobustCommand{\phigamma}{\ensuremath{(\phi,\Gamma)}}
\DeclareRobustCommand{\Gcyc}{\mathrm{G}_\cyc}

\DeclareRobustCommand{\bigLambdaNormal}{\raisebox{\depth}{\scalebox{-1}[-1]{$\mathbb V$}}}
\DeclareRobustCommand{\bigLambdaSmall}{\raisebox{\depth}{\scalebox{-1}[-1]{\footnotesize$\mathbb V$}}}
\DeclareRobustCommand{\bigLambda}{
  \mathchoice
  { 
    \bigLambdaNormal
  }
  { 
    \bigLambdaNormal    
  }
  { 
    \bigLambdaSmall
  }
  { 
    \bigLambdaSmall
  }
}

\DeclareRobustCommand{\calT}{\mspace{2mu}\mathcal T\mspace{-2mu}} 
\DeclareRobustCommand{\calTexp}[1]{\mathcal T^{\raisebox{-0.2pt}{\scriptsize$#1$}}}

\DeclareRobustCommand{\Gausssum}{\mathrm G}

\DeclareRobustCommand{\rig}{\mathrm{rig}}
\DeclareRobustCommand{\robba}{\B\rig^\dagger}

\DeclareRobustCommand{\phigammafamcat}[1]{\robba\text-\Mod^{\phigamma}_A}

\DeclareRobustCommand{\Ref}{\mathrm{Ref}}

\DeclareRobustCommand{\conductor}{\operatorname{cond}}
\DeclareRobustCommand{\EulerFactorP}{\operatorname{P}}
\DeclareRobustCommand{\ordnung}{\operatorname{ord}}
\newcommand{\longra}[2][]{%
  \raisebox{-1pt}{\tikz{%
      \draw[xscale=#2,thin,shorten >=3pt, ->,font=\scriptsize] (0,0)%
      node{\hspace*{-2pt}} -- (0.5,0) node[above] {#1}%
      -- node{} (1,0);}}\penalty1000\relax%
}

\DeclareRobustCommand{\MotiveCanBasis}[1]{b^{#1}}
\DeclareDocumentCommand{\TateCanBasis}{O{1} O{\Q}}{\MotiveCanBasis{#2(#1)}}

\DeclareRobustCommand{\DirichletCanBasis}{\MotiveCanBasis{\chi}}

\begin{document}

\title{A $p$-adic $L$-function with canonical motivic periods for Hida families}%
\author{Michael Fütterer}
\address{Univeristät Heidelberg\\ Mathematisches Institut\\
Im Neuenheimer Feld 205\\ 69120 Heidelberg, Germany.} \email{mfuetterer@mathi.uni-heidelberg.de}
\urladdr{http://www.mathi.uni-heidelberg.de/\textasciitilde mfuetterer/}
\thanks{The author acknowledges support by the DFG.}
\date{\today}%

\maketitle

\tableofcontents

\thispagestyle{empty}

\section*{Introduction}

In \cite{MR2276851} Fukaya and Kato formulated a conjecture on $p$-adic $L$-functions in a
general setting for a wide class of motives, in particular they wrote down a precise
conjectural interpolation formula, involving complex and $p$-adic periods defined in terms
of comparison isomorphisms. They showed that their conjecture is
implied by the Equivariant Tamgawa Number Conjecture and Kato's local $\ep$-isorphism
conjecture (we abbreviate these conjectures by ETNC in the following), which gives their
formula a conceptual explanation. This naturally raises the question whether this formula
specializes to the known interpolation formulas in cases where a $p$-adic $L$-function has
already been constructed. For families this question is particularly interesting because one
needs to study the variation of the periods in families.

This text answers this question in the case of Hida families of elliptic cusp forms and
compares the conjectural $p$-adic $L$-function with the one constructed by Kitagawa in
\cite{MR1279604}. We show that, under a technical hypothesis on the Hida family, we can
modify Kitagawa's construction in such a way that it really produces a function having the
interpolation property predicted by Fukaya and Kato.\footnote{Up to a sign which we cannot
  interpolate, see \cref{rem:problem}.}  Moreover this modification is essentially just
multiplication by a unit in the Iwasawa algebra, so the two $p$-adic $L$-functions generate
the same ideal. See \cref{thm:main-thm} for the precise result we obtain.

The most important part in this work is the calculation of the complex and $p$-adic periods
of the motive attached to a modular form, which allows us to express them in terms of
Kitagawa's error terms. In the complex case, we make use of the fact that the comparison
isomorphism is essentially the classical Eichler-Shimura isomorphism (due to lack of a
reference we include a proof of this well-known fact), which admits a rather explicit
description. The same holds in the $p$-adic case as well. Here enters as the most important
ingredient to our proof the fact that the $p$-adic Eichler-Shimura isomorphisms can be
interpolated in a Hida family, which was proved by Kings, Loeffler and Zerbes
\cite{MR3637653} building on work of Ohta \cite{MR1332907}. This result enables us to make
the choices in Kitagawa's construction canonical, which leads to the desired $p$-adic
$L$-function.

This work is based on the author's PhD thesis \cite{Diss}, where many additional details can
be found. The author wants to thank his advisor Otmar Venjakob and all the colleagues at the
Mathematical Institute in Heidelberg for their support.

\subsection*{Notations and conventions}

In the whole work $p$ denotes an odd prime. We fix algebraic closures $\Qquer$ of $\Q$,
$\Qpbar$ of $\Qp$ and $\C$ of $\R$ and write $\C_p$ for the $p$-adic completion of $\Qpbar$.

We fix throughout the work a square root $\i\in\C$ of $-1$. By a {pair of embeddings of
  $\Qbar$}, we mean a pair $(\iota_\infty, \iota_p)$ of embeddings
${\iota_\infty}\colon\Qquer\inj\C$ and ${\iota_p}\colon\Qquer\inj\Qpbar\subseteq\C_p$. We
provisionally fix such a pair of embeddings. This fixes a choice of a compatible system of
$p$-power roots of unity ${\xi=(\xi_n)_{n\ge0}}$ with $\xi_n\in\Qp(\mu_{p^\infty})$ by
saying that the pair of embeddings should identify $\xi$ with the system
$(\e^{2\pi\i p^{-n}})_{n\ge0}$ of $p$-power roots of unity in $\C$.\label{c-cp-orientation}
Our choice of $(\iota_\infty,\iota_p)$ is only provisional, we may change it at some point
in this work. When we do so we thus also have to change $\xi$.  Our fixed choice
of 
$\xi=(\xi_n)_{n\ge0}$ determines a uniformizer of $\BdR^+$ which we denote by ${\tdR}$.

We also fix a number field $K$ and an embedding $K\subseteq\Qbar$, which will play the role
of the coefficient field for modular forms. We write $\O_K$ for its ring of integers. The
induced inclusion $K\subseteq\Qpbar$ induces a place of $K$ above $p$ that we call
$\frakp$. We write $L$ for the completion of $K$ at $\frakp$ and $\O$ for its ring of
integers.


We normalize the reciprocity map from class field theory such that it maps prime elements to
\emph{arithmetic} Frobenii. This is particularly important when we view Dirichlet characters
as Galois characters.

\section{Modular forms and their motives}

In this section we review the most important facts about the motive attached to a modular
newform and provide proofs for some facts for which we do not have a reference.

\subsection{Modular curves and Hecke correspondences}

Fix an integer $N\ge4$.
If $E\ra S$ is a generalized elliptic curve in the sense of Deligne and Rapoport
\cite[Déf. II.1.12]{MR0337993}, we consider the following types of level $N$
structures on $E$ (see also \cite[Def.\ 2.4.1--2]{MR2311664}):
\begin{enumerate}
\item A naive $\Gamma(N)$-structure on $E$ is a homomorphism of group schemes over $S$ \[ \phi\colon{\left(\zmod N\right)}^2\ra E[N] \] such that there is an equality of effective Cartier divisors on $E$ \[ E[N] = \!\!\!\!\!\!\sum_{(a,b)\in{\zmodmal{N}}^2}\!\!\!\!\![\phi(a,b)] \]
  and the above Cartier divisor meets each irreducible component in each geometric fiber.
\item A naive $\Gamma_1(N)$-structure on $E$ is a homomorphism of group schemes over $S$ \[ \phi\colon\zmod N\ra E[N] \] such that the effective Cartier divisor \[ \sum_{a\in\zmodmal N}\!\! [\phi(a)] \] is a subgroup scheme of $E$ and meets each irreducible component in each geometric fiber.
\item An arithmetic $\Gamma_1(N)$-structure on $E$ is a closed immersion of group schemes over $S$
  \[ \varphi\colon\mu_N\inj E[N]. \]
\end{enumerate}
Because $N\ge4$, the functors that associate to a scheme $S$ the set of isomorphism classes
of pairs $(E,\phi)$, where $E$ is a generalized elliptic curve over $S$ and $\phi$ is one of
the aforementioned level structures, are representable by a scheme over $\Z$, the so-called
modular curves. We denote these by $X(N)$, $X_1(N)$ and $X_1(N)^\arithm$, respectively, and
denote by $Y(N)$, $Y_1(N)$ resp.\ $Y_1(N)^\arithm$ the open subschemes parametrizing actual
(as opposed to generalized) elliptic curves with level structures.  All of these are finite
flat regular curves over $\Z$ which are finite \'etale over $\Z[1/N]$.\footnote{There is
  also the notion of an arithmetic $\Gamma(N)$-structure, and the corresponding moduli
  functor is also representable, but it will play no role in this work. Moreover, for the
  $\Gamma_1(N)$-structures it would suffice to assume $N\ge3$.}  The generalized
elliptic curves over $X(N)$ and $X_1(N)$ will be denoted by $\overline E(N)$ and
$\overline E_1(N)$ and the universal elliptic curves over $Y(N)$ and $Y_1(N)$ by $E(N)$ and
$E_1(N)$, respectively. Each of the natural maps from any of the universal (generalized)
elliptic curve to the corresponding modular curve will be denoted by $f$.

We have natural a morphism $X(N)\ra X_1(N)$, forgetting the second coordinate. Over any
commutative ring containing an $N$-th root of unity, $X_1(N)$ and $X_1(N)^\arithm$ become
canonically isomorphic. Further we have the well-known diamond operators and Hecke
correspondences on these modular curves, which induce endomorphisms of cohomology groups
attached to those curves, called the diamond and Hecke operators. We omit a detailed
description of these and just refer to \cite[(3.13)–(3.18)]{MR3077124}.

We will denote their analytifications by $X(N)^\an$, $X_1(N)^\an$ etc.\ (and similarly for other schemes). Note that because $\C$ contains an $N$-th
root of unity there is no need to distinguish between the naive and arithmetic versions in
this case. It is well-known that $Y_1(N)^\an$ is isomorphic to the quotient of the upper
half plane $\H$ by the congruence subgroup $\Gamma_1(N)$ and $X_1(N)$ is isomorphic to the
quotient of the extended upper half plane $\H^*=\H\cup\P^1(\Q)$ by $\Gamma_1(N)$. The point
of exact order $N$ in the fiber $E_\tau=\C/(\Z\oplus\Z\tau)$ over some $\tau\in\H$ is
$1/N$.\label{point-of-exact-order-N} In contrast, $Y(N)^\an$ is \emph{not} isomorphic to
the quotient of $\H$ by $\Gamma(N)$, but rather to a disjoint union of $\phi(N)$ copies of
it (where $\phi$ is the Euler totient function). This is because the quotient of $\H$ by
$\Gamma(N)$ only parametrizes elliptic curves over $\C$ with $\Gamma(N)$-structure such that
the two points have a fixed Weil pairing, and there are $\phi(N)$ possible values for the
Weil pairing. See \cite[§1.5]{MR2112196}, \cite[Introduction, p. 15]{MR0337993},
\cite[§1.8]{MR2104361}.

We describe the analytification of $E_1(N)$.
Define an action of $\Sigma^+\da\Mat_2(\Z)\cap\GL_2^+(\Q)$ on $\C\times\H$ by 
\begin{equation}\label{eqn:action-slz-ctimesh}
  \gamma(z,\tau) = (\det(\gamma)(c\tau+d)^{-1}z,\gamma\tau) \qquad \text{for }\gamma=\binmatrix{a}{b}{c}{d}\in\Sigma^+,\ z\in\C,\ \tau\in\H
\end{equation}
and let $\EP\da\binmatrix100{-1}$ act on $\C\times\H$ as
$\EP(z,\tau)\da(\overline z,-\overline\tau)$. It is then easy to
check that we get a well-defined action of $\Sigma\da\Mat_2(\Z)\cap\GL_2(\Q)$ (which is the
monoid generated by $\Sigma^+$ and $\EP$) on $\C\times\H$. The projection $\C\times\H\ra\H$
is equivariant. We define further $E_\H$ as the quotient of $\C\times\H$ by the left action
of $\Z^2$ given by
\[ (m,n)(z,\tau) = (z+m\tau+n,\tau) \qquad (m,n\in\Z) \] and let $f\colon E_\H\ra\H$ be the
projection onto the second factor (alternatively, define $\Lambda$ as the image of 
\begin{equation}
  \label{eqn:def-lambda}
  \Z^2\times\H\inj\C\times\H,\quad(m,n,\tau)\mapsto(m\tau+n,\tau), 
\end{equation}
so we have
\begin{equation}
  \label{eqn:def-lambda-alt}
  \Lambda=\bigcup_{\tau\in\H}\,\Z\oplus\Z\tau\times\{\tau\},
\end{equation}
and put $E_\H\da\quotient{(\C\times\H)}{\Lambda}$). Then the previous action of
$\Mat_2(\Z)\cap\GL_2(\Q)$ on $\C\times\H$ descends to $E_\H$. The analytification
$E_1(N)^\an$ can be described as the quotient of $E_\H$ by $\Gamma_1(N)$.

\subsection{Modular forms}
\label{sec:modular-forms}

In this section we abbreviate $X=X_1(N)^\arithm$, $\overline E=\overline E_1(N)^\arithm$ and
denote by $C$ be the divisor of cusps on $X_1(N)^\arithm$.

If $f\colon E\ra S$ is a generalized elliptic curve with unit section $e\colon S\ra E$ we
put $\omega_{E/S}=e^*\Omega^1_{E/S}$. This is a line bundle on $S$ which is canonically
isomorphic to $f_*\Omega^1_{ E/S}$ and stable under base change, see \cite[Prop.\ I.1.6
(ii)]{MR0337993}.

For any commutative ring $R$ we let
\[ \ModForms_k(\Gamma_1(N),R) \da \HL^0(X_1(N)^\arithm_{/R}, (\omega^{k-2}_{\overline
    E/X}\tensor_{\O_X}\Omega^1_X(C))_{/R}) \]
be the $R$-module of modular forms of level $N$ and weight $k$ with coefficients in $R$ and
\[ \S_k(\Gamma_1(N),R) \da \HL^0(X_1(N)^\arithm_{/R}, (\omega^{k-2}_{\overline
    E/X}\tensor_{\O_X}\Omega^1_X)_{/R}) \] the submodule of cusp forms. Here the subscript
\enquote{$/R$} means base change to $R$ (over $\Z$). With this definition we have that if
$R$ is a subring of $\C$ then $\ModForms_k(\Gamma_1(N),R)$ and $\S_k(\Gamma_1(N),R)$ can be
canonically identified with classical modular resp.\ cusp forms for the congruence subgroup
$\Gamma_1(N)$ and weight $k$ all of whose Fourier coefficients lie in $R$,\footnote{At this
  point we need that we work with the \emph{arithmetic} modular curve, see \cite[Rem.\
  4.4.2]{MR2311664}. The problem is that the cusp at infinity which belongs to the Tate
  curve is not defined over $\Z$ on the naive modular curve, but only over $\Z[\mu_N]$,
  while for $X_1(N)^\arithm$ it is defined over $\Z$; see also \cite[Rem.\
  12.3.6]{MR1357209}.}  and in general we have for any commutative ring $R$ and any
commutative flat $R$-algebra $S$ canonical isomorphisms
\[ \ModForms_k(\Gamma_1(N),S) = \ModForms_k(\Gamma_1(N),R) \tensor_R S,\quad  \S_k(\Gamma_1(N),S) = \S_k(\Gamma_1(N),R) \tensor_R
  S. \]


The modules we just introduced carry actions of Hecke and diamond operators which we denote
by $T_\ell$ for all primes and $\diamondop d$ for $d\in\zmodmal N$. They are induced by the
correspondences mentioned before and coincide with the classical ones if $R$ is a subring of
$\C$. We denote the Hecke algebras for these modules by $\heckeT_k(N,R)$ and
$\hecket_k(N,R)$ (with $\heckeT$ for modular and $\hecket$ for cusp forms).  Using the
diamond operators we can then define the subspaces with fixed nebentype
$\psi\colon\zmodmal N\ra\R^\times$, which we denote by $\ModForms_k(\Gamma_1(N),\psi,R)$ and
$\S_k(\Gamma_1(N),\psi,R)$, respectively.  Their Hecke algebras will be denoted by
$\heckeT_k(N,\psi,R)$ and $\hecket_k(N,\psi,R)$, respectively.

For the reasons explained at the end of the previous section, if we consider the analogous
definitions with the curve $X_1(N)^\arith$ replaced by $X(N)$ we do \emph{not} get the
classical space of $\Gamma(N)$-modular forms over $\C$, but rather a direct sum of $\phi(N)$
copies of it. Nonetheless denote the resulting modules by
\[ \ModForms_k(\Gamma(N),R), \quad \S_k(\Gamma(N),R) \]
although this is \emph{not} the standard notation in the literature.

We an call $f\in\S_k(\Gamma_1(N),R)$ for some commutative ring $R$
a newform if it is an eigenform and moreover there
does not exist a proper divisor $M\mid N$, a finite ring extension $S$ of $R$ and
$g\in\S_k(\Gamma_1(M),S)$ which is an eigenvector of almost all Hecke operators with the same
eigenvalues as $f$. It is easy to see that this is equivalent to the classical definition whenever $R$ is a subring of $\C$ (see Miyake, Thm. 4.6.12, Thm. 4.6.14).



\subsection{Motives for modular forms}
\label{sec:wnk}

We briefly review Scholl's construction of motives for modular forms from \cite{MR1047142}.

Fix $N\ge4$, $k\ge2$. In this section we consider the modular curves as curves over $\Q$,
but often we omit writing down the base change from $\Z$ to $\Q$ explicitly. We define
$\preKS(N,k)$ to be the $(k-2)$-fold fiber product of the universal generalized elliptic
curve $\overline E(N)$ with itself over $X(N)$, which is a singular projective variety over
$\Q$.
By \cite[Thm.\ 3.1.0]{MR1047142} it has a canonical desingularisation 
$\KS(N,k)$ which is called the Kuga-Sato variety of level $N$ and weight $k$.



We have natural actions on $\preKS(N,k)$ of the groups $(\zmod N)^2$ (acting on the torsion
basis in each fiber), $\{\pm1\}$ (by inversion on $\overline E(N)$) and the symmetric group
$\Symgrp{k-2}$ (by permuting the factors). This gives rise to an action of
\[ G(N,k)\da {\Big(\left(\zmod N\right)^2\rtimes\{\pm1\}\Big)}^{k-2}\rtimes \Symgrp{k-2} \]
on $\preKS(N,k)$ which extends to $\KS(N,k)$.

\begin{dfn}
  Let $\ep\colon G(N,k)\ra\{\pm1\}$ be the character that is trivial on each factor
  $\zmod N$, is the product map on $\{\pm1\}^{k-2}$ and the sign on $\Symgrp{k-2}$. Then let
  $\pi_\ep\in\Q[\Gamma_k]$ be the projector onto the $\ep$-eigenspace and define a Chow
  motive $\wnk\da(\KS(N,k),\pi_\ep)$.
\end{dfn}

The Hecke correspondences extend to correspondences on $\KS(N,k)$ and thus induce
endomorphisms of the motive $\wnk$. This defines actions of Hecke operators on $\wnk$, and
thus also on its realizations. Further the action of $\GL_2(\zmod N)$ on $E(N)\ra Y(N)$ also
carries over to endomorphisms of $\wnk$. We now fix a newform $f\in\S_k(\Gamma_1(N),K)$ with
coefficients in the number field $K$. In \cite[§4.2.0]{MR1047142}, Scholl explains that if we
view $\wnk\tensor_\Q K$ as a Grothendieck motive, then one can decompose $\wnk$ into
Hecke eigenspaces, so that the following definition makes sense.

\begin{dfn}
  The motive $\Mf$ attached to $f$ is the (Grothendieck) submotive of
  $\wnk\tensor_\Q K$ on which the Hecke operator $T_\ell$ acts precisely by the Hecke
  eigenvalue $a_\ell$ for all primes $\ell\nmid N$ (where the $a_\ell$ are the Hecke eigenvalues
  of $f$) and where the subgroup \[ \binmatrix1*0*\subseteq\GL_2\left(\zmod{N}\right) \]
  acts trivially.
\end{dfn}

Regarding the Betti and $p$-adic realizations, we have the following.
\begin{thm}\label{thm:wnk-betti-p}
  The {Betti realization of $\wnk$} is the parabolic cohomology group
  \[ \wnk_\betti=\Hp^1(Y(N)^\an,\Sym^{k-2}\RD^1f_*\smuline\Q). \] The action of $\GR$ comes
  from its natural action on $Y(N)^\an$ and $E(N)^\an$ and the resulting $\GR$-sheaf
  structure on $\Sym^{k-2}\RD^1f_*\smuline\Q$.

  The $p$-adic realization of $\wnk$ is
  \[ \wnk_p=\Hpet^1(Y(N)\fibertimes_\Z\Qbar,\Sym^{k-2}\RD^1f_*\smuline\Q_p). \] It has
  Hodge-Tate weights $k-1$ and $0$.
\end{thm}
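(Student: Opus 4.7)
The plan is to compute the Betti realization $\wnk_\betti$, which by the definition of a Chow motive equals the image of $\pi_\ep$ acting on $\HL^{k-1}(\KS(N,k)^\an, \smuline\Q)$ (since $\KS(N,k)$ is smooth projective of dimension $k-1$), via a Leray spectral sequence analysis on the smooth locus, and then to invoke Scholl's main theorem from \cite{MR1047142} to pass to the desingularised compactification and thereby obtain parabolic cohomology. The $p$-adic realization will be treated in exactly parallel fashion using étale cohomology.

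First, write $f^{(k-2)}\colon E(N)^{k-2}_{/Y(N)} \to Y(N)$ for the $(k-2)$-fold fiber product of the universal elliptic curve $E(N)$ with itself over $Y(N)$; this is the open smooth locus of $\preKS(N,k)$. The analytic Leray spectral sequence together with the Künneth formula $\RD^q f^{(k-2)}_* \smuline\Q \simeq \bigoplus_{q_1+\cdots+q_{k-2}=q} \bigotimes_i \RD^{q_i} f_* \smuline\Q$ presents $\HL^{k-1}(E(N)^{k-2,\an}_{/Y(N)}, \smuline\Q)$ in terms of cohomology of $Y(N)^\an$ with coefficients in tensor products of the $\RD^{q_i} f_* \smuline\Q$. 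Now identify the $\ep$-isotypic piece: the subgroup $(\Z/N)^2$ of $G(N,k)$ acts by translations in the fibers and hence trivially on cohomology, while each factor of $\{\pm1\}^{k-2}$ acts on $\RD^{q_i} f_* \smuline\Q$ as $(-1)^{q_i}$ via fiberwise inversion. Since $\ep$ restricts to the product character on $\{\pm1\}^{k-2}$, the $\ep$-isotypic constraint forces every $q_i = 1$, and hence $q = k-2$, $p = 1$. The symmetric group $\Symgrp{k-2}$ permutes the factors of $(\RD^1 f_* \smuline\Q)^{\otimes(k-2)}$ with signs coming from graded commutativity of cup product, so the $\sign$-isotypic piece is $\Sym^{k-2} \RD^1 f_* \smuline\Q$. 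Altogether the $\ep$-isotypic component equals $\HL^1(Y(N)^\an, \Sym^{k-2} \RD^1 f_* \smuline\Q)$.

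The main obstacle and key input is the passage from this ordinary cohomology of the open locus to the parabolic cohomology of the compactification. This is precisely the content of Scholl's construction in \cite[\S\S1--3]{MR1047142}: by a careful analysis of the exceptional divisors of the desingularisation $\KS(N,k) \to \preKS(N,k)$ and of the fibers over the cusps, he shows that the $\ep$-isotypic piece of $\HL^{k-1}(\KS(N,k)^\an, \smuline\Q)$ coincides with the image of compactly supported cohomology in ordinary cohomology on $Y(N)^\an$, i.e.\ with $\Hp^1(Y(N)^\an, \Sym^{k-2} \RD^1 f_* \smuline\Q)$, and I would cite this rather than reprove it. The $\GR$-equivariance is immediate since all constructions are compatible with base change to $\R$. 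For the $p$-adic realization the same argument applies verbatim with étale cohomology, giving the asserted description. Finally, the Hodge–Tate weights arise from the Hodge decomposition on the Betti side: $\Sym^{k-2} \RD^1 f_* \smuline\Q$ underlies a polarisable variation of Hodge structure of weight $k-2$, so $\Hp^1$ is pure of weight $k-1$ with Hodge numbers supported only in bidegrees $(k-1,0)$ and $(0,k-1)$; the $p$-adic comparison isomorphism then forces the Hodge–Tate weights to be $k-1$ and $0$.
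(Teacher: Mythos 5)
Your reconstruction of the $\ep$-isotypic computation is correct and is indeed the content of Scholl's argument: the paper's own ``proof'' is a bare citation of \cite[Thm.\ 1.2.1 and §1.2.0--1]{MR1047142}, so your unpacking of the Leray/Künneth decomposition and the action of $G(N,k)$ is a useful elaboration rather than a different route. The translation subgroup $(\zmod N)^2$ acts trivially on $\RD^q f_*\smuline\Q$ since fiberwise translations of an elliptic curve are isotopic to the identity; each inversion factor acts on $\RD^{q_i}f_*\smuline\Q$ as $(-1)^{q_i}$, forcing all $q_i=1$; and the Koszul sign on $(\RD^1 f_*\smuline\Q)^{\tensor(k-2)}$ means the $\sign$-isotypic part is exactly $\Sym^{k-2}$. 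All of this is fine, and citing Scholl for the passage from the open locus to the parabolic cohomology of the compactification is the right move.

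There is, however, a genuine gap in your treatment of the Hodge--Tate weights. You assert that because $\Sym^{k-2}\RD^1f_*\smuline\Q$ is a polarisable VHS of weight $k-2$, the Hodge numbers of $\Hp^1$ are supported \emph{only} in bidegrees $(k-1,0)$ and $(0,k-1)$. That is false as a general statement: if $\mathcal V$ is any constant VHS of weight $k-2$ with intermediate Hodge numbers on a compact curve $X$, then $\HL^1(X,\mathcal V)\cong\HL^1(X,\Q)\tensor\mathcal V$ has Hodge types spread over the full range, not just the extremes. The vanishing of the intermediate graded pieces in the modular case is a special feature driven by the Kodaira--Spencer isomorphism $\omega_{E/Y}^{\tensor 2}\isom\Omega^1_Y$ (equivalently, the Griffiths transversality maps $\gr^j\mathcal E_k\ra\gr^{j-1}\mathcal E_k\tensor\Omega^1$ are isomorphisms for $1\le j\le k-2$), which kills all the interior cohomology of the filtered de Rham complex. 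So either invoke Scholl's explicit computation of the Hodge filtration (this is \cref{thm:wnk-hodge-filt} in the paper, coming from \cite[§1.2.0--1]{MR1047142}), or invoke the Eichler--Shimura isomorphism directly; from either, the de Rham realization has nonzero $\gr^i$ only at $i=0,k-1$, and the $p$-adic comparison theorem for the smooth projective variety $\KS(N,k)$ then identifies these with the Hodge--Tate weights.
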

\begin{proof}
  \cite[Thm. 1.2.1 and §1.2.0--1]{MR1047142}
\end{proof}

The isomorphism comes each time from the Leray spectral sequence for the morphism
$\KS(N,k)\ra X(N)$.
The corresponding realization
of $\Mf$ is by construction the subspace where the Hecke operator $T_\ell$ acts
precisely by the Hecke eigenvalue $a_\ell$ for all primes $\ell\nmid N$ (where the $a_\ell$ are
the Hecke eigenvalues of $f$) and where the subgroup
\[ \binmatrix1*0*\subseteq\GL_2\left(\zmod{N}\right) \] acts trivially.
In particular, the $\frakp$-adic realization of $\Mf$ is Deligne's Galois representation
attached to $f$.

We will describe the de Rham realization in \cref{sec:dr}.

Using monodromy and the fact that $X(N)^\an$ is isomorphic to a disjoint union of $\phi(N)$
copies of the quotient of the upper half plane by $\Gamma(N)$ we can describe $\wnk_\betti$
more explicitely as
\begin{equation}
  \label{eqn:wnk-betti-gpcohom}
  \wnk_\betti\cong\bigoplus_{\phi(N)}\Hp^1(\Gamma(N),\Sym^{k-2}\Q^2) 
\end{equation}
and similarly for $\wnk_p$. \label{choice-of-basis} Here we need to be a bit precise because
this isomorphism depends on two choices. First we need to choose a base point on the
universal cover of (each connected component of) $Y(N)^\an$, which is the upper half plane
$\H$, and we choose $\i\in\H$ for this. Second, we need to choose a basis of the fiber of
the representation $\Sym^{k-2}\RD^1f_*\smuline\Q$ at this base point. Such a choice is
induced by choosing a base of the homology $\HL^1(E_\i,\Q)$ (where
$E_\i=\quotient{\C}{(\Z\oplus\Z\i)}$) and we choose the ordered basis $(\i,1)$ for this
homology group.

The representation $\Sym^{k-2}\Q^2$ of $\GL_2(\Q)$ is canonically isomorphic to the space of
homogeneous polynomials of degree $k-2$ in two variables $X,Y$ over $\Q$, where a matrix
$\binmatrix a b c d\in\GL_2(\Q)$ acts by sending $X$ to $aX+cY$ and $Y$ to $bX+dY$ (and
similarly for other coefficient rings). Thus we will denote elements of $\Sym^{k-2}\Q^2$ as
homogeneous polynomials. In the context of group cohomology double coset operators are
described e.\,g.\ in \cite[§8.3]{MR1291394} or \cite[§4, p.\ 563]{MR848685}, in particular
this describes the action of Hecke and diamond operators. Moreover, the action of complex
conjugation corresponds to the action of the matrix \[ \EP=\binmatrix100{-1} \] (i.\,e.\ the
double coset operator $[\Gamma(N)\EP\Gamma(N)]$).\footnote{This is correct with the choice
  of basis described before; the other obvious choice $(1,\i)$ would lead to $\EP$ being
  replaced by $-\EP$.}

\subsection{The Eichler-Shimura isomorphism and the comparison isomorphism}

The purpose of this section is to prove the well-known fact that the two maps alluded to in
the title are the same.

\subsubsection{The Eichler-Shimura isomorphism}
\label{sec:es}

The Eichler-Shimura isomorphism relates the space of cusp forms to the parabolic cohomology
group of the local system $\Sym^{k-2}\RD^1f_*\smuline\Z$ on a modular curve. We recall the
construction of the Eichler-Shimura map from
\cite[§4.10]{MR2104361}.

In this whole section we write $Y$ for either $Y(N)^\an$ or $Y_1(N)^\an$, $f\colon E\ra Y$ for the universal elliptic curve over it and $X$ for the corresponding compactification.

We have a commutative diagram with exact rows of sheaves on $E$
\begin{equation*}
  \begin{tikzpicture}
    \matrix (m) [matrix of math nodes, row sep=2em, column sep=1.5em, text height=1.5ex, text depth=0.25ex]
    { 0 & \smuline\C & \O_E & \Omega^1_E & 0  \\
      0 & \smuline\C\tensor_{\smuline\C}f^{-1}\O_Y & \O_E & \Omega^1_{E/Y} & 0 \\};
    \path[->,font=\scriptsize]
    (m-1-1) edge (m-1-2)
    (m-1-2) edge (m-1-3)
    (m-1-3) edge (m-1-4)
    (m-1-4) edge (m-1-5)
    (m-2-1) edge (m-2-2)
    (m-2-2) edge (m-2-3)
    (m-1-4) edge (m-2-4)
    (m-2-3) edge (m-2-4)
    (m-2-4) edge (m-2-5)
    (m-1-2) edge (m-2-2);
    \draw [double equal sign distance] (m-1-3) -- (m-2-3);
  \end{tikzpicture}
\end{equation*}
which yields a commutative square of morphisms of complexes of sheaves on $E$
\begin{equation*}
  \begin{tikzpicture}
    \matrix (m) [matrix of math nodes, row sep=2em, column sep=2.7em, text height=1.5ex, text depth=0.25ex]
    { \Omega_E^\bullet & \Omega_{E/Y}^\bullet \\
      \smuline\C[0] & \smuline\C\tensor_{\smuline\C}f^{-1}\O_Y[0] \\};
    \path[->,font=\scriptsize]
    (m-1-1) edge (m-1-2)
    (m-1-1) edge node [left] {qis} (m-2-1)
    (m-1-2) edge node [left] {qis} (m-2-2)
    (m-2-1) edge (m-2-2);
  \end{tikzpicture}
\end{equation*}
in which the vertical maps are quasi-isomorphisms. By applying $\RRD^1f_*$ to this square
and using the projection formula, we obtain the diagram
\begin{equation}
  \label{eqn:commutative-square-diff-sheaves}  
  \begin{tikzpicture}
    \matrix (m) [matrix of math nodes, row sep=2em, column sep=2.7em, text height=1.5ex, text depth=0.25ex]
    { \RRD^1f_*\Omega_E^\bullet & \RRD^1f_*\Omega_{E/Y}^\bullet \\
      \RD^1f_*\smuline\C & \RD^1f_*\smuline\C\tensor_{\smuline\C}\O_Y &
      \RD^1f_*\smuline\R\tensor_{\smuline\R}\O_Y \\};
    \path[->,font=\scriptsize]
    (m-1-1) edge (m-1-2)
    (m-1-1) edge node [left] {$\sim$} (m-2-1)
    (m-1-2) edge node [left] {$\sim$} (m-2-2)
    (m-2-1) edge (m-2-2);
    \draw [double equal sign distance] (m-2-2) -- (m-2-3);
  \end{tikzpicture}
\end{equation}
in which the vertical maps are isomorphisms.
The Hodge filtration of $\RRD^1f_*\Omega^\bullet_{E/Y}$ is given by the injection
\begin{equation}
  \label{eqn:hodge-filt-e}
  \omega_{E/Y}\inj\RRD^1f_*\Omega^\bullet_{E/Y},
\end{equation}
which comes from applying $\RRD^1f_*$ to the morphism of complexes \[ \Omega^1_{E/Y}[-1] \ra \Omega^\bullet_{E/Y}. \]
We take $(k-2)$-th powers and then tensor with $\Omega^1_{Y}$ over $\O_{Y}$. Using the above isomorphism, this gives us a map
\begin{multline}\label{eqn:pre-eichler-sh-a}
  \omega_{E/Y}^{ k-2}\tensor_{\O_{Y}}\Omega_{Y}^1\ra\Sym^{k-2}_{\O_Y}(\RRD^1f_*\Omega^\bullet_{E/Y})\tensor_{\O_Y}\Omega^1_Y\\
  \isom\Sym^{k-2}_{\O_Y}(\RD^1f_*\smuline\R\tensor_{\smuline\R}\O_{Y})\tensor_{\O_Y}\Omega^1_Y
  =\Sym^{k-2}_{\smuline\R}\RD^1f_*\smuline\R\tensor_{\smuline\R}\Omega^1_{Y}.
\end{multline}
Let us write $\mathcal{D}^k_R$ for $\Sym^{k-2}_{\smuline R}\RD^1f_*\smuline R$ for $R=\R$ or $R=\C$ in the following.

On the other hand, we have an exact sequence \[ 0\ra\smuline\C\ra\O_{Y}\ra[$d$]\Omega^1_{Y}\ra0 \] which we tensor over $\smuline\R$ with the (locally free, hence flat) sheaf $\mathcal{D}^k_\R$ to obtain
\begin{equation}\label{eqn:pre-eichler-sh-b}
  0\ra \mathcal{D}^k_\C\ra \mathcal{D}^k_\R\tensor_{\smuline\R}\O_{Y}\ra[$d$]\mathcal{D}^k_\R\tensor_{\smuline\R}\Omega^1_{Y}\ra0.
\end{equation}

\begin{dfn}\label{dfn:es}
  The {Eichler-Shimura map} is defined to be the composition
  \begin{multline}\label{eqn:es-map}
    \S_k(\Gamma,\C)=\HL^0(X,\omega_{\overline E/X}^{ k-2}\tensor_{\O_X}\Omega^1_{X})\ra \HL^0(Y,\omega_{E/Y}^{ k-2}\tensor_{\O_Y}\Omega^1_{Y})\\\ra \HL^0(Y,\mathcal{D}^k_\R\tensor_{\smuline\R}\Omega^1_{Y})\ra \HL^1(Y,\mathcal{D}^k_\C)
  \end{multline}
  where the first map is the restriction map, the second map is induced by \eqref{eqn:pre-eichler-sh-a} and the third map is the boundary homomorphism in the long exact cohomology sequence attached to \eqref{eqn:pre-eichler-sh-b}.
  Then its image lies in the parabolic cohomology group $\Hp^1(Y,\mathcal{D}^k_\C)$, so the
  Eichler-Shimura map goes
  \[ {\ES}\colon\S_k(\Gamma,\C) \ra \Hp^1(Y,\Sym^{k-2}\RD^1f_*\smuline \C). \]
\end{dfn}

Denote by ${\overline{\S_k(\Gamma,\C)}}$ the {complex conjugate vector space} of
$\S_k(\Gamma,\C)$, which we identify with the space of antiholomorphic modular forms.

\begin{thm}\label{thm:eichler-shimura-conrad}
  The Eichler-Shimura map induces a Hecke equivariant isomorphism of complex vector spaces 
  \[ \ES\oplus\overline\ES\colon\S_k(\Gamma,\C)\oplus\overline{\S_k(\Gamma,\C)}\isom\Hp^1(Y,\Sym^{k-2}\RD^1f_*\smuline \C). \] 
  There is also the variant for modular forms
  \[ \ES\oplus\overline\ES\colon\ModForms_k(X,\C)\oplus\overline{\S_k(\Gamma,\C)}\isom\Hc^1(Y,\Sym^{k-2}\RD^1f_*\smuline \C). \]
\end{thm}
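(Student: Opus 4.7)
The plan is to identify the Eichler-Shimura map with the inclusion of the top piece of the Hodge filtration on $\Hp^1(Y, \mathcal{D}^k_\C)$ and then invoke the Hodge decomposition. The first task is to verify that the image of $\ES$ really lies in the parabolic subspace. Using the Deligne canonical extension of the local system $\mathcal{D}^k_\C$ to $X$, together with a local computation at each cusp via the Tate curve and the Kodaira-Spencer isomorphism $\omega_{E/Y}^{\otimes 2} \cong \Omega^1_Y(\log C)$, one checks that because a cusp form has $q$-expansion divisible by $q$, the class produced by the boundary map of \eqref{eqn:pre-eichler-sh-b} extends across $C$ and hence lies in the image of $\Hc^1(Y,\mathcal{D}^k_\C) \to \HL^1(Y,\mathcal{D}^k_\C)$, which by definition is the parabolic cohomology.

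Next, I would reinterpret the construction in terms of the Hodge-to-de Rham spectral sequence. Diagram \eqref{eqn:commutative-square-diff-sheaves} together with the Leray spectral sequence for $f$ shows that the symmetric power $\Sym^{k-2}(\RRD^1 f_* \Omega^\bullet_{E/Y})$ carries a natural Hodge filtration whose top step is $\omega_{E/Y}^{k-2}$. Tensoring with $\Omega^1_Y$ and taking hypercohomology identifies $F^{k-1} \Hp^1(Y, \mathcal{D}^k_\C)$ with $\S_k(\Gamma, \C)$ via precisely the map $\ES$. In other words, $\ES$ is the canonical Hodge-filtration inclusion realized concretely through holomorphic differentials, and $\overline\ES$ is its complex conjugate.

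The theorem then follows because $\Hp^1(Y, \mathcal{D}^k_\C)$ carries a pure Hodge structure of weight $k-1$ concentrated in Hodge types $(k-1,0)$ and $(0,k-1)$, as follows from Zucker's theory (or can be verified by a dimension count against the classical group-cohomological form of Eichler-Shimura, using \eqref{eqn:wnk-betti-gpcohom}). The Hodge decomposition yields $\Hp^1 = F^{k-1} \oplus \overline{F^{k-1}}$, and complex conjugation identifies the second summand with $\overline{\S_k(\Gamma,\C)}$ via $\overline{\ES}$. Hecke equivariance is automatic from the functoriality of the construction with respect to the defining correspondences. The $\ModForms_k$ version is proved analogously, replacing $\Omega^1_X$ by $\Omega^1_X(C)$ and $\Hp^1$ by $\Hc^1$ throughout and invoking the logarithmic Hodge filtration. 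The main obstacle will be the careful cusp analysis in the first step, which amounts to an explicit computation in local Tate-curve coordinates tracking the $q$-expansion of a cusp form through the boundary map and verifying that the resulting class does not acquire a contribution from $C$.
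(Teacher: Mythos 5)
The paper itself gives no proof of \cref{thm:eichler-shimura-conrad}; it is stated as a well-known background fact and the exposition passes immediately to the explicit group-cohomological description in \cref{prop:es-explicit}. So there is no internal argument to compare you against, and I assess the proposal on its own.

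Your Hodge-theoretic route is correct for the cuspidal statement and is a standard modern proof: identify $\ES$ with the inclusion of the top Hodge filtration step $F^{k-1}$, use purity of the Hodge structure on parabolic cohomology in weight $k-1$, and then pin the Hodge numbers to bidegrees $(k-1,0)$ and $(0,k-1)$ by Hodge symmetry and a dimension count against the group-cohomological form \eqref{eqn:wnk-betti-gpcohom}. Two cautions, the second the more serious. First, \enquote{as follows from Zucker's theory} reads too strongly: Zucker supplies the pure polarized Hodge structure, but the concentration in exactly two Hodge types is an additional computation, requiring either that you trace the Kodaira--Spencer isomorphism through the graded pieces of the Hodge-to-de Rham $E_1$-page, or the dimension count you record as a fallback --- keep the latter, since it is non-circular (the size of $\Hp^1$ comes from Euler characteristics of $\Gamma$, not from Eichler--Shimura). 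Second, the $\ModForms_k$ variant is \emph{not} \enquote{proved analogously} in the sense you claim: $\Hc^1(Y,\mathcal{D}^k_\C)$ carries only a \emph{mixed} Hodge structure, with weights $k-2$ and $k-1$, so there is no Hodge decomposition into bidegree summands to apply. One must instead use the weight filtration: identify $W_{k-2}\Hc^1$ with the boundary (Eisenstein) contribution, match it to $\ModForms_k/\S_k$ on the holomorphic side only --- which is precisely why the second factor in the statement is $\overline{\S_k(\Gamma,\C)}$ and not $\overline{\ModForms_k}$ --- and then splice this onto the parabolic result for the top weight quotient. That is a genuine extra argument, not a formal re-run, and as written your sketch would stall there.
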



There is a more classical description of the Eichler-Shimura isomorphism using group
cohomology which has the advantage of being rather explicit. This description is used
e.\,g.\ in the classical reference \cite[chap.\ 8]{MR1291394}, although in a slightly
different formulation. We briefly describe it here, for simplicity only for the
$\Gamma_1(N)$ situation. So for the moment we specialize to the case $X=X_1(N)^\an$,
$Y=Y_1(N)^\an$ and put $\Gamma=\Gamma_1(N)$. As explained at the end of \cref{sec:wnk}
\begin{equation}
  \label{eqn:identif-gp-cohom}
  \Hp^1(Y,\Sym^{k-2}\RD^1f_*\smuline\C) \cong   \Hp^1(\Gamma,\Sym^{k-2}\C^2).
\end{equation}
We view the element of the latter grounp as inhomogeneous cocycles on $\Gamma$ with values
in the space of homogeneous polynomials over $\C$ of degree $k-2$ in two variables $X,Y$.

\begin{prop}\label{prop:es-explicit}
  \begin{enumerate}
  \item The Eichler-Shimura isomorphism followed by \eqref{eqn:identif-gp-cohom}
    maps $f\in\S_k(\Gamma,\C)$ to the cocycle
    \[ \gamma\mapsto\int_{\tau_0}^{\gamma \tau_0} \omega_f ,\qquad\gamma\in\Gamma \] with
    ${\omega_f}=(2\pi\i)^{k-1}(zX+Y)^{k-2}f\d z$ (a $\Sym^{k-2}\C^2$-valued $1$-form) and
    $\tau_0\in\H$ being a lift of the base point in $Y$.
  \item The Eichler-Shimura isomorphism followed by \eqref{eqn:identif-gp-cohom}
    maps $\overline g\in\overline{\S_k(\Gamma,\C)}$ to the cocycle
    \[ \gamma\mapsto\int_{\tau_0}^{\gamma \tau_0} \omega_{\overline g} ,\qquad\gamma\in\Gamma \]
    with
    $\omega_{\overline g}=(-2\pi\i)^{k-1}(\overline zX+Y)^{k-2}\overline g\d \overline z$
    and $\tau_0$ as before.
    Here, the $\overline g$ in the definition of $\omega_{\overline g}$ literally means
    the complex conjugate function of the $\C$-valued function $g$.
  \end{enumerate}
\end{prop}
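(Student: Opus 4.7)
The plan is to unwind the Eichler-Shimura map step by step on the universal cover $\H$ of $Y=Y_1(N)^\an$, writing the $\H$-coordinate as $\tau$ (this plays the role of the variable $z$ in the statement of the proposition). I will treat part (a) in detail; part (b) follows the same lines using complex conjugation. The argument has three pieces corresponding to the three arrows in \eqref{eqn:es-map}.

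First, under the restriction from $X$ to $Y$ and the standard identification of classical modular forms with sections of $\omega_{E/Y}^{k-2}\tensor_{\O_Y}\Omega^1_Y$, the newform $f$ corresponds on $\H$ to the section $(2\pi\i)^{k-1}f(\tau)(\d z)^{k-2}\tensor \d\tau$, where $\d z$ is the invariant differential on the fiber $E_\tau=\C/(\Z\tau+\Z)$; the factor $(2\pi\i)^{k-1}$ is the standard normalization matching the classical $q$-expansion with the Tate curve (where $\d q/q=2\pi\i\,\d\tau$ and $\d T/T=2\pi\i\,\d z$). To compute the image of this section under \eqref{eqn:pre-eichler-sh-a}, I make the Hodge embedding $\omega_{E/Y}\hookrightarrow\mathcal{D}^2_\C$ of \eqref{eqn:hodge-filt-e} explicit in the trivialization of $\RD^1 f_*\smuline\C$ on $\H$. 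Because $\H$ is simply connected, this sheaf trivializes canonically by parallel transport from the dual of the basis $(\i,1)$ fixed at $\tau=\i$; since the varying family $(\tau,1)$ of bases of the lattice $\Z\tau+\Z$ is precisely this parallel transport, its dual furnishes the constant sections $X,Y$. The class $\d z\in \HL^1(E_\tau,\C)$ has periods $(\int_0^\tau \d z,\int_0^1\d z)=(\tau,1)$ on this basis, so it corresponds to $\tau X+Y$. Taking $(k-2)$-th symmetric powers and tensoring with $\d\tau$ then shows that \eqref{eqn:pre-eichler-sh-a} sends the section associated to $f$ to exactly $\omega_f$.

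The third arrow, the boundary map attached to \eqref{eqn:pre-eichler-sh-b}, becomes explicit through \eqref{eqn:identif-gp-cohom} as follows. Pull $\omega_f$ back to $\H$; since $\H$ is contractible, a primitive $F(\tau)=\int_{\tau_0}^\tau\omega_f$ exists and is uniquely normalized by $F(\tau_0)=0$. The boundary class is represented by the group $1$-cocycle $c(\gamma)=F-\gamma\cdot F$, which is locally constant on $\H$ and hence a constant element of $\Sym^{k-2}\C^2$, where $\Gamma$ acts on $F$ by $(\gamma F)(\tau)=\gamma\cdot F(\gamma^{-1}\tau)$. Evaluating at $\tau=\gamma\tau_0$ gives $c(\gamma)=F(\gamma\tau_0)-\gamma\cdot F(\tau_0)=\int_{\tau_0}^{\gamma\tau_0}\omega_f$, and an immediate manipulation confirms the inhomogeneous cocycle relation $c(\gamma_1\gamma_2)=c(\gamma_1)+\gamma_1\cdot c(\gamma_2)$. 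Part (b) proceeds identically with the conjugate Hodge filtration $\overline{\omega_{E/Y}}$: the class $\d\overline z$ at $\tau$ has periods $(\overline\tau,1)$ and hence maps to $\overline\tau X+Y$, while the antiholomorphic form $\overline g$ corresponds to the section $(-2\pi\i)^{k-1}\overline g(\tau)(\d\overline z)^{k-2}\tensor\d\overline\tau$ (the sign arising from $\overline{2\pi\i}=-2\pi\i$), yielding the claimed formula for $\omega_{\overline g}$. The main obstacle is tracking the $(2\pi\i)^{k-1}$ normalization factor carefully, since it depends on the precise identification between classical modular forms and sheaf-theoretic sections of $\omega^{k-2}\tensor\Omega^1_X$; everything else is a mechanical unwinding of the definitions from Section~\ref{sec:es}.
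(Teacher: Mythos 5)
Your proof is correct and considerably more detailed than the paper's, which disposes of part~(a) with the single phrase ``well-known'' and of part~(b) by remarking that it follows from (a) via elementary properties of the curve integral. You instead unwind the three arrows of \cref{dfn:es} explicitly, which is exactly what ``well-known'' is hiding.

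The three points you identify as carrying the technical weight are all handled correctly: the identification $\d z \leftrightarrow \tau X + Y$ is right once one takes $X,Y$ dual to the parallel-transported homology basis $(\tau,1)$ normalized at $\tau=\mathrm i$ to the basis $(\mathrm i,1)$ the paper fixes on \cpageref{choice-of-basis}; the $(2\pi\mathrm i)^{k-1}$ normalization is exactly the Tate-curve bookkeeping $\d T/T = 2\pi\mathrm i\,\d z$, $\d q/q = 2\pi\mathrm i\,\d\tau$ applied to $(k-2)$ copies of $\omega$ and one copy of $\Omega^1_X$; and the connecting-map computation $c(\gamma) = F - \gamma F$ evaluated at $\gamma\tau_0$ is the standard group-cohomology description of the boundary for \eqref{eqn:pre-eichler-sh-b}. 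The only step you leave implicit is that $c(\gamma)$ is genuinely a constant in $\Sym^{k-2}\C^2$, which requires the $\Gamma$-invariance of $\omega_f$ (equivalently, the weight-$k$ modularity of $f$) so that $\d(F-\gamma F)=\omega_f - \gamma\omega_f = 0$; this is worth a sentence but is not a gap.

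One small divergence of route from the paper: the paper derives (b) from (a) by applying complex conjugation to the cocycle formula (the map $\overline{\mathrm{ES}}$ is, implicitly, the conjugate of $\mathrm{ES}$ with respect to the real structure $\mathcal D^k_\R$ of \eqref{eqn:pre-eichler-sh-b} on the target and the tautological conjugation on the source), whereas you rework the computation from scratch with the antiholomorphic subbundle. Both give the same answer, but the conjugation argument is shorter --- you could simply apply $\overline{(\,\cdot\,)}$ to your formula from (a), use $\overline{(2\pi\mathrm i)^{k-1}(zX+Y)^{k-2}g\,\d z} = (-2\pi\mathrm i)^{k-1}(\overline zX+Y)^{k-2}\overline g\,\d\overline z$ (the real basis $X,Y$ is fixed), and be done. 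That is presumably what the paper means by ``basic properties of the complex curve integral.''
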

\begin{proof}
  The first statement is well-known (although note that the precise formula depends on the
  choice of a basis described on \cpageref{choice-of-basis}).
%
%
  The second statement follows easily from the first one applying the definitions and basic
  properties of the complex curve integral.
\end{proof}

Now we examine how the complex conjugation on $\Hp^1(Y,\Sym^{k-2}\RD^1f_*\smuline\Z)$ behaves under the Eichler-Shimura isomorphism. This works again in the general setting, so $X$ may now be $X(N)^\an$, $Y$ may be $Y(N)^\an$ and so on. The action of $\GR$ on $Y$ and the fact that $\Sym^{k-2}\RD^1f_*\smuline\Z$ is a $\GR$-sheaf gives an action of $\GR$ on $\Hp^1(Y,\Sym^{k-2}\RD^1f_*\smuline\Z)$ and hence also on $\Hp^1(Y,\Sym^{k-2}\RD^1f_*\smuline\C)=\Hp^1(Y,\Sym^{k-2}\RD^1f_*\smuline\Z)\tensor\C$, where we let $\GR$ act \emph{trivially} on the second tensor factor $\C$.

\begin{dfn}\label{dfn:gr-on-mod-forms}
  We define an action of $\GR$ on $\S_k(\Gamma,\C)\oplus\overline{\S_k(\Gamma,\C)}$ by letting the nontrivial element act as
  \[ f\oplus \overline g \mapsto -(g^*\oplus\overline{f^*}), \]
  where $f^*$ denotes the dual cusp form
  \[ f^*=\sum_{n=1}^\infty \overline a_n q^n \] of a cusp form\footnote{This is to be
    understood in each of the $\phi(N)$ components separately in the case
    $\Gamma=\Gamma(N)$.} \[ f=\sum_{n=1}^\infty a_n q^n\in\S_k(\Gamma,\C). \]
\end{dfn}
\begin{lem}\label{lem:es-gr-equiv}
  With the above definition, the Eichler-Shimura isomorphism is $\GR$-equivariant.
\end{lem}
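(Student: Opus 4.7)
The lemma reduces, via the explicit description of the Eichler-Shimura map in \cref{prop:es-explicit} and the identification \eqref{eqn:identif-gp-cohom}, to a cocycle-level computation in group cohomology. By the observation at the end of \cref{sec:wnk}, the $\GR$-action on $\Hp^1(Y,\Sym^{k-2}\RD^1 f_*\smuline\C)$ translates to the double-coset action of $\EP=\binmatrix{1}{0}{0}{-1}$ on $\Hp^1(\Gamma,\Sym^{k-2}\C^2)$. Since $\EP$ normalizes $\Gamma=\Gamma_1(N)$ (one checks directly that $\EP\binmatrix{a}{b}{c}{d}\EP^{-1}=\binmatrix{a}{-b}{-c}{d}$ still satisfies $a\equiv1,\ c\equiv 0\pmod N$), this double-coset action is simply conjugation: a cocycle $c\colon\Gamma\to\Sym^{k-2}\C^2$ is sent to $\gamma\mapsto\EP\cdot c(\EP\gamma\EP)$. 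The lemma is thus equivalent to the pair of cocycle identities
\[ \EP\cdot\ES(f)=-\overline{\ES(f^*)}, \qquad \EP\cdot\overline{\ES(g)}=-\ES(g^*),\]
and the second of these follows from the first by taking complex conjugates; so it suffices to prove the first.

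To prove the first identity, I would take $\tau_0=\i$ as base point (which is fixed by $\EP$ since $\EP\cdot\i=-\overline{\i}=\i$) and perform the change of variables $z=-\bar w$ in the integral $\int_{\i}^{\EP\gamma\EP\cdot\i}\omega_f$. This substitution mirrors the action of $\EP$ on $\H$, so it sends the integration path from $\i$ to $\EP\gamma\EP\cdot\i=-\overline{\gamma\i}$ (in $z$) to a path from $\i$ to $\gamma\i$ (in $w$), while $dz=-d\bar w$. The crucial analytic ingredient is the identity $f(-\bar w)=\overline{f^*(w)}$, which is immediate from the $q$-expansion: $f(-\bar w)=\sum_n a_n\,e^{-2\pi\i n\bar w}=\overline{\sum_n\bar a_n\,e^{2\pi\i n w}}=\overline{f^*(w)}$.

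After the substitution, $(zX+Y)^{k-2}$ becomes $(-\bar w X+Y)^{k-2}$, and applying $\EP$ to this polynomial (which in the paper's convention sends $X\mapsto X,\ Y\mapsto -Y$) produces $(-1)^{k-2}(\bar wX+Y)^{k-2}$. Combining this with the sign from $dz=-d\bar w$ and with the identity $(-2\pi\i)^{k-1}=(-1)^{k-1}(2\pi\i)^{k-1}$ (needed to pass from the normalization of $\omega_f$ to that of $\omega_{\overline{f^*}}$), a routine collection of signs identifies the transformed integral with $-\int_{\i}^{\gamma\i}\omega_{\overline{f^*}}=-\overline{\ES(f^*)}(\gamma)$.

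The main obstacle is the careful bookkeeping of the several competing sign contributions — from the change of variables, from $\EP$ acting on the polynomial-valued coefficient, and from the normalization constants $\pm(2\pi\i)^{k-1}$ — and their reconciliation with the overall sign $-1$ appearing in \cref{dfn:gr-on-mod-forms}. A secondary, easily dispatched, point is that any choice of integration path gives the same value, which follows from closedness of $\omega_f$ and $\omega_{\overline{f^*}}$ together with the independence (up to coboundary) of the cocycle representative on the choice of path.
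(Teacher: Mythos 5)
Your approach is exactly the one the paper takes: reduce to group cocycles on $\Gamma_1(N)$ via \cref{prop:es-explicit} and \eqref{eqn:identif-gp-cohom}, base at $\i$ (fixed by $\EP$), substitute $z=-\bar w$, use $f(-\bar w)=\overline{f^*(w)}$, and track signs. The paper compresses all of this into ``a straightforward calculation,'' and you are filling in the same calculation.

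However, the collection of signs you list does \emph{not} produce the claimed $-1$. You record three contributions: $(-1)$ from $\d z=-\d\bar w$, $(-1)^{k-2}$ from $\EP$ acting on $(-\bar wX+Y)^{k-2}$, and $(-1)^{k-1}$ from matching $(2\pi\i)^{k-1}$ in $\omega_f$ with $(-2\pi\i)^{k-1}$ in $\omega_{\overline{f^*}}$. These multiply to $(-1)\cdot(-1)^{k-2}\cdot(-1)^{-(k-1)}=+1$. Tracing the computation line by line confirms this: with the conventions $X\mapsto X$, $Y\mapsto -Y$ for $\EP$ on polynomials and $\gamma\mapsto\EP\, u(\EP\gamma\EP)$ on cocycles, one finds $\EP\cdot u_f = +\,u_{\overline{f^*}}$, with no overall sign change. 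So either there is an additional sign you have not accounted for — most plausibly hiding in the precise form of the $\EP$-double-coset action on $\Hp^1(\Gamma,\Sym^{k-2}\C^2)$ (whether the coefficient side really sees $\EP$ or rather $\EP^\iota=-\EP$, or whether there is a $\det(\EP)$-power implicit in the weight-$k$ normalization) — or the conclusion should be $+u_{\overline{f^*}}$ rather than $-u_{\overline{f^*}}$. You cannot simply assert that ``a routine collection of signs'' lands on $-1$ when your own enumerated signs cancel; that bookkeeping is precisely where the content of the lemma lives. Note also that the paper's own \cref{lem:xi-maps-to-f} concludes $\xi_f^\pm\mapsto\tfrac12(f\oplus(\pm\overline{f^*}))$, and that conclusion follows from $\EP\cdot u_f = +u_{\overline{f^*}}$, not from $\EP\cdot u_f=-u_{\overline{f^*}}$; so before committing to a sign you should reconcile your convention for the $\EP$-cocycle action with the way it is used downstream.
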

\begin{proof}
  To simplify notation, we give the argument only in the case $X=X_1(N)$, so $\Gamma=\Gamma_1(N)$; the proof for $X=X(N)$ works similar.
  For $g\in\S_k(\Gamma_1(N),\C)$ let 
  \begin{equation*}
    u_{\overline g}\colon\gamma\mapsto\int_{\tau_0}^{\gamma \tau_0} \omega_{\overline g} ,\qquad\gamma\in\Gamma,
  \end{equation*}
  with $\omega_{\overline g}=(-2\pi\i)^{k-1}(\overline zX+Y)^{k-2}\overline g\d\overline z$
  being the cocycle from \cref{prop:es-explicit}. We have to check that complex conjugation on
  $\Hp^1(\Gamma,\Sym^{k-2}\C^2)$ sends $u_f$ to $-u_{\overline{f^*}}$ for
  $f\in\S_k(\Gamma_1(N),\C)$.
  The action of complex conjugation is given by the matrix $\EP$, explicitely it
  maps a cocycle $u$ to the cocycle
  \begin{equation*}
    \gamma \mapsto \EP u(\EP\gamma\EP).
  \end{equation*}
  Applying this to $u_f$ the claim follows from a straightforward calculation.
  
\end{proof}

\subsubsection{The de Rham realization}
\label{sec:dr}

For describing the de Rham realization we need to use the language of log schemes
\cite{MR1463703}; we follow \cite[§1.2.4]{MR2103471}.

The cuspidal divisor $C=X(N)\setminus Y(N)$ defines a logarithmic structure on $X(N)$ and its preimage in $\overline E(N)$ defines a logarithmic structure on $\overline E(N)$.
We denote by $\logOmega$ the sheaf of logarithmic relative differentials, see
\cite[§1.7]{MR1463703} (it is denoted $\omega$ in the reference
\cite{MR1849260} and $\Omega$ in \cite{MR1463703}, but we
want to avoid any confusion with the usual differentials or the sheaf $\omega$ defined in
\cref{sec:modular-forms}).

We consider the logarithmic de Rham complex
\[ \logOmega^\bullet_{\overline E(N)/X(N)}=(\degzero{\O_{X(N)}}\ra[d]\logOmega^1_{\overline
    E(N)/X(N)}) \] (here and in the following, a \enquote{$0$} below a complex indicates
degree $0$) and put
\[ \mathcal{E}=\RRD^1f_*\logOmega^\bullet_{\overline E(N)/X(N)}, \qquad \mathcal{E}_k =
  \Sym_{\O_{X(N)}}^{k-2}\!\!\mathcal{E}, \qquad \mathcal E_{k,\cp}=\mathcal E_k(-C). \] The
{logarithmic Gauß-Manin connection} on $\mathcal E$
\[ \nabla\colon \mathcal{E} \ra\mathcal E \tensor_{\O_{X(N)}} \logOmega^1_{X(N)} \] induces
flat logarithmic connections on $\mathcal E_k$ and $\mathcal E_{k,\cp}$ which we denote by
$\nabla_k$ and $\nabla_{k,\cp}$, respectively.  We denote the complexes of sheaves on $X(N)$
defined by these connections by $\mathcal E^\bullet_k$ and $\mathcal E^\bullet_{k,\cp}$,
respectively.

\begin{thm}\label{thm:wnk-dr}
  The de Rham realization of $\wnk$ is \[ \wnk_\dR=\operatorname{image}(\HHL^1(X(N), \mathcal{E}^\bullet_{k,\cp})\ra\HHL^1(X(N), \mathcal{E}_k^\bullet))\tensor\Q. \]
\end{thm}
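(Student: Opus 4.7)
The plan is to mimic Scholl's approach for the Betti and $p$-adic realizations, adapting it to the logarithmic de Rham setting. By definition of the Chow motive $\wnk=(\KS(N,k),\pi_\ep)$, its de Rham realization is the image of $\pi_\ep$ acting on $\HL^{k-1}_\dR(\KS(N,k)/\Q)$, which by Deligne's theory in the log setting is computed by the logarithmic de Rham cohomology $\HHL^{k-1}(\KS(N,k),\logOmega^\bullet_{\KS(N,k)})$ with the log structure coming from the cuspidal divisor. So the task is to identify the $\ep$-eigenspace of this group with the asserted image.

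First I would introduce the structure morphism $\pi\colon \KS(N,k)\ra X(N)$ (the composition of the desingularisation with the projection from $\preKS(N,k)=\overline E(N)^{k-2}$) and consider the Leray spectral sequence
\[
E_2^{p,q} = \HL^p(X(N), \RD^q\pi_*\logOmega^\bullet_{\KS(N,k)/X(N)}) \Longrightarrow \HHL^{p+q}(\KS(N,k),\logOmega^\bullet_{\KS(N,k)}).
\]
The group $G(N,k)$ acts on this spectral sequence, and the projector $\pi_\ep$ commutes with all differentials, so the $\ep$-part is again a spectral sequence converging to $\pi_\ep\HHL^{p+q}$. The plan is then to show that $\pi_\ep \RD^q\pi_*\logOmega^\bullet_{\KS(N,k)/X(N)}$ vanishes for $q\neq k-2$ and equals (up to the image/parabolic issue) $\mathcal E_k$ for $q=k-2$. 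Since the desingularisation $\KS(N,k)\ra\preKS(N,k)$ is an isomorphism away from the cusps and its exceptional fibres are explicitly described in \cite[Thm.\ 3.1.0]{MR1047142}, one checks that the higher direct images on $\KS(N,k)$ agree with those on the smooth part, so one may work with $\preKS(N,k)$.

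On $\preKS(N,k)=\overline E(N)^{\times_{X(N)} (k-2)}$, Künneth for log de Rham gives
\[
\RD^q\pi_*\logOmega^\bullet_{\preKS(N,k)/X(N)} \;\cong\; \bigoplus_{q_1+\dots+q_{k-2}=q}\!\!\!\!\mathcal E^{(q_1)}\tensor\cdots\tensor\mathcal E^{(q_{k-2})},
\]
where $\mathcal E^{(0)}=\O_{X(N)}$ and $\mathcal E^{(1)}=\mathcal E$ (and the higher terms vanish since each fibre is a curve). The sign character on the $\{\pm1\}^{k-2}$ factor of $G(N,k)$ annihilates any summand containing an $\mathcal E^{(0)}$ factor (inversion acts as $+1$ on $\O$ but $-1$ on $\mathcal E$), forcing $q_i=1$ for all $i$, hence $q=k-2$; the sign on $\Symgrp{k-2}$ then cuts the tensor power down to the symmetric part, giving $\pi_\ep \RD^{k-2}\pi_*\logOmega^\bullet_{\preKS(N,k)/X(N)}=\mathcal E_k$. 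Substituting back, the spectral sequence degenerates and yields $\pi_\ep \HHL^{k-1}(\KS(N,k),\logOmega^\bullet) \cong \HHL^1(X(N),\mathcal E_k^\bullet)$, where the de Rham complex of $\mathcal E_k$ is taken with respect to the Gauß–Manin connection $\nabla_k$.

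The remaining point is to explain the \emph{image} appearing in the statement. This reflects the fact that the correct de Rham realization for a non-proper variety uses a mixture of log and compactly supported log cohomology: Scholl constructs the motive so that cuspidal contributions are killed, and the appropriate functorial de Rham cohomology is the image of $\HHL^1(X(N),\mathcal E_{k,\cp}^\bullet)\ra\HHL^1(X(N),\mathcal E_k^\bullet)$, analogously to the parabolic cohomology appearing in \cref{thm:wnk-betti-p}. This part is achieved by using the compactly supported log de Rham complex (which incorporates the twist by $-C$ on $\mathcal E_k$) and comparing via the map induced by $\mathcal E_{k,\cp}\inj\mathcal E_k$; the tensor with $\Q$ takes care of any residual torsion from the desingularisation. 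The main obstacle I expect is this last step: carefully justifying why the correct \enquote{parabolic} de Rham object is the image, which requires the comparison between log de Rham and de Rham with compact supports in the sense of \cite[§1.2.4]{MR2103471}, together with the fact that $\pi_\ep$ is exact and commutes with the natural map. Once this is in place, the identification with $\wnk_\dR\tensor\Q$ follows.
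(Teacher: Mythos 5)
The paper gives no proof of this statement at all; it simply cites Kato, \cite[§2.2, p.\ 15]{MR1849260}. So you are reconstructing an argument the paper does not spell out, and the general strategy you sketch — Leray for $\KS(N,k)\to X(N)$, Künneth over $X(N)$, the character $\ep$ killing everything off the middle row — is the right overall shape of such an argument. But as written it contains two substantive errors that would need correcting before it could work.

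First, the identification $\HL^{k-1}_\dR(\KS(N,k)/\Q)\cong\HHL^{k-1}(\KS(N,k),\logOmega^\bullet_{\KS(N,k)})$ in your opening paragraph is false. The Kuga--Sato variety $\KS(N,k)$ is smooth and \emph{proper} over $\Q$ (that is precisely what Scholl's desingularisation theorem \cite[Thm.\ 3.1.0]{MR1047142} gives), so the de Rham realization of the Chow motive $(\KS(N,k),\pi_\ep)$ is $\pi_\ep$ applied to the ordinary algebraic de Rham cohomology $\HHL^{k-1}(\KS(N,k),\Omega^\bullet_{\KS(N,k)/\Q})$. Putting the log structure along the cuspidal divisor $D$ and taking $\logOmega^\bullet$ instead computes the de Rham cohomology of the open complement $\KS(N,k)\setminus D$, which is a genuinely different group. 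The role of log structures in this argument is entirely on the \emph{base} $X(N)$ and on the relative semistable model $\preKS(N,k)$: the relative log de Rham complex makes the pushforward $\mathcal E = \RRD^1 f_*\logOmega^\bullet_{\overline E(N)/X(N)}$ locally free across the cusps and gives you the Gauß--Manin connection with log poles, but there is no absolute log de Rham cohomology of the total space appearing in the definition of $\wnk_\dR$.

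Second — and this is where the argument would genuinely break down — your explanation of the \emph{image} is wrong for the same reason: $\KS(N,k)$ is proper, so you cannot appeal to it being ``a non-proper variety''. The appearance of the image is the de Rham shadow of the parabolic (interior) cohomology phenomenon in \cref{thm:wnk-betti-p}: after applying $\pi_\ep$, the contribution of the fibers of $\KS(N,k)\to X(N)$ over the cusps does not vanish outright but contributes exactly the cokernel of $\HHL^1(X(N),\mathcal E_{k,\cp}^\bullet)\ra\HHL^1(X(N),\mathcal E_k^\bullet)$ and the kernel of the dual map, so that the cuspidal part sits as the image. Making this precise requires Scholl's analysis of the geometry of $\KS(N,k)$ over the boundary and of the effect of $\pi_\ep$ on the degenerate fibers, or alternatively a careful comparison argument across realizations; ``one checks that the higher direct images on $\KS(N,k)$ agree with those on the smooth part'' is exactly the hard step, not something one gets for free. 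Your proposal also quietly passes from $\KS(N,k)$ to $\preKS(N,k)$ for the Künneth step, which is legitimate only after $\pi_\ep$ has been shown to annul the extra terms coming from the desingularisation — again precisely the content you cannot wave away. So the strategy has the right ingredients, but the two identifications you present as routine are the crux of the proof and, in the form you state them, are not true.
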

\begin{proof}
  \cite[§2.2, p.\ 15]{MR1849260}
\end{proof}

We will also need the Hodge realization.

\begin{prop}\label{thm:wnk-hodge-filt}
  The {Hodge filtration} of the de Rham realization of $\wnk$ is given by
  \[ \fil^i\wnk_\dR =
  \begin{cases}
    \; \wnk_\dR & \text{for } i\le0,\\
    \; \S_k(\Gamma(N),\Q) & \text{for } 1\le i\le k-1,\\
    \; 0 & \text{for } i\ge k.
  \end{cases}
  \]
\end{prop}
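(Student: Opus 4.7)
The plan is to compute the Hodge filtration on $\wnk_\dR$ by analyzing the associated filtered log de Rham complex $\mathcal E_k^\bullet$ (and its cuspidal variant) and running the Hodge-to-de-Rham spectral sequence. The starting point is the Hodge filtration on $\mathcal E$ itself, which is the two-step filtration coming from the inclusion $\omega_{\overline E(N)/X(N)} \inj \mathcal E$ of \cref{eqn:hodge-filt-e}, with quotient isomorphic to $\omega^{-1}$ via Serre duality. Taking the natural filtration on $\Sym^{k-2}$ produces a filtration $\fil^\bullet \mathcal E_k$ with graded pieces $\gr^p \mathcal E_k \cong \omega^{2p-k+2}$ for $0\le p\le k-2$. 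By Griffiths transversality the logarithmic Gauß-Manin connection $\nabla_k$ shifts this filtration by one, so we obtain a filtered complex
\[ \fil^p \mathcal E_k^\bullet = \bigl[\fil^p \mathcal E_k \longra{1} \fil^{p-1} \mathcal E_k \tensor_{\O_{X(N)}} \logOmega^1_{X(N)}\bigr]. \]
The same construction applies to $\mathcal E_{k,\cp}^\bullet$ after twisting by $-C$ throughout.

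Next, I compute the graded pieces $\gr^p \mathcal E_k^\bullet$. By the Griffiths transversality argument, the induced differential on $\gr^p$ is $\O_{X(N)}$-linear; it is (up to a nonzero constant depending on $p$) the Kodaira-Spencer map, which for the universal generalized elliptic curve induces the classical isomorphism $\omega^{\otimes 2} \isom \logOmega^1_{X(N)}$. Combining this with the identification of the graded pieces, for $1 \le p \le k-2$ the complex $\gr^p \mathcal E_k^\bullet$ has the form $[\omega^{2p-k+2} \to \omega^{2p-k+2}]$ with an isomorphism as the differential (the scalar $p(k-2-p)$ or similar being nonzero in this range), hence is acyclic. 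The only surviving pieces are $\gr^0 \mathcal E_k^\bullet = [\omega^{-(k-2)} \to 0]$ and $\gr^{k-1} \mathcal E_k^\bullet = [0 \to \omega^{k-2} \tensor \logOmega^1_{X(N)}]$, and analogously for the cuspidal variant.

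The Hodge-to-de-Rham spectral sequence $E_1^{p,q} = \HHL^{p+q}(X(N), \gr^p \mathcal E_k^\bullet) \Rightarrow \HHL^{p+q}(X(N), \mathcal E_k^\bullet)$ (and its cuspidal version) therefore has nonzero columns only at $p=0$ and $p=k-1$. It degenerates at $E_1$: this follows because the Kuga-Sato variety is smooth projective and the filtered complex in question is a direct summand (via the projector $\pi_\ep$ and the Hecke/$\GL_2(\zmod N)$ projections defining $\wnk$) of the log de Rham complex of a compactification, for which degeneration is the classical theorem of Deligne. Consequently the Hodge filtration on $\HHL^1(X(N),\mathcal E_{k,\cp}^\bullet)$ and $\HHL^1(X(N),\mathcal E_k^\bullet)$ only jumps at $0$ and $k-1$, and
\[ \fil^{k-1} \HHL^1(X(N), \mathcal E_{k,\cp}^\bullet) \;=\; H^0\bigl(X(N),\omega^{k-2}(-C)\tensor\logOmega^1_{X(N)}\bigr) \;=\; H^0\bigl(X(N),\omega^{k-2}\tensor\Omega^1_{X(N)}\bigr), \]
using $\Omega^1_{X(N)} = \logOmega^1_{X(N)}(-C)$. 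By definition this last group is $\S_k(\Gamma(N),\Q)$.

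Finally, the Hodge filtration on $\wnk_\dR$ is by definition the image of the filtration on $\HHL^1(X(N),\mathcal E_{k,\cp}^\bullet)$ inside $\HHL^1(X(N),\mathcal E_k^\bullet)$ under \cref{thm:wnk-dr}. The map $H^0(X(N), \omega^{k-2}\tensor\Omega^1_{X(N)}) \to H^0(X(N), \omega^{k-2}\tensor\logOmega^1_{X(N)})$ at top filtration is injective (both sides are computed on the compact curve, and the natural map $\Omega^1 \inj \logOmega^1$ is injective), giving $\fil^{k-1}\wnk_\dR = \S_k(\Gamma(N),\Q)$, while $\fil^k = 0$ since $\fil^{k-1}\mathcal E_k = 0$ already forces $\fil^k \mathcal E_k^\bullet = 0$, and $\fil^0 = \wnk_\dR$ by construction. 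The equality $\fil^p = \fil^{k-1}$ for $1 \le p \le k-1$ is exactly the vanishing of the middle graded pieces from the spectral sequence analysis. The main obstacle in this argument is the correct handling of the cuspidal twist and the verification that passing to the image commutes with the filtration, together with the precise identification of the nonvanishing constant in the graded Kodaira-Spencer on $\Sym^{k-2}$; once the $E_1$-degeneration is invoked, these are essentially bookkeeping.
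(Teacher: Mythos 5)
Your argument is correct and essentially reproduces the standard computation that underlies the reference the paper cites (the paper's own ``proof'' is simply a pointer to \cite[(11.2.5)]{MR2104361}, so there is no in-text argument to compare against). The chain Griffiths transversality $\Rightarrow$ Kodaira--Spencer on graded pieces $\Rightarrow$ collapse of the Hodge spectral sequence to two columns $\Rightarrow$ identification of the top step via $\Omega^1_{X(N)} = \logOmega^1_{X(N)}(-C)$ is exactly what one finds in Kato, Scholl, or Faltings--Chai, and each step you flag as ``bookkeeping'' (cuspidal twist, strictness of the Hodge filtration under passage to the image, the nonvanishing of the combinatorial constant) is genuinely the right thing to check and genuinely routine. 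One small remark that would sharpen the proof: you do not actually need Deligne's full $E_1$-degeneration theorem for the bidegrees contributing to $\HHL^1$. The only term threatening $E_1^{k-1,2-k}$ is the differential out of $E_1^{0,0} = H^0(X(N),\omega^{-(k-2)})$, which already vanishes for $k\ge 3$ by ampleness of $\omega$ and is handled trivially for $k=2$; and all differentials out of $E_1^{0,1}$ land in zero columns for degree reasons. So the degeneration relevant to the statement is elementary, though invoking Deligne is of course also valid. Apart from this stylistic point, the proposal is a sound self-contained proof.
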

\begin{proof}
  \cite[(11.2.5)]{MR2104361}
\end{proof}

We will now describe explicitly where the embedding of the intermediate filtration step
$\S_k(\Gamma(N),\Q)$ comes from. Over $Y(N)$, $\omega_{E(N)/Y(N)}$ is isomorphic to
$f_*\Omega^1_{E(N)/Y(N)}$ and $\mathcal E$ is isomorphic to
$\RRD^1f_*\Omega^\bullet_{E(N)/Y(N)}$, and this gives a canonical map
\[ \omega_{E(N)/Y(N)}\ra\mathcal E\restrict{Y(N)} \]
just as in \eqref{eqn:hodge-filt-e}, which is just the Hodge filtration of $\mathcal E\restrict{Y(N)}$. It can be extended to the cusps to give a morphism
\begin{equation}
  \label{eqn:map-differentials-complex}
  \omega_{\overline E(N)/X(N)}^{ k-2}\tensor_{\O_{X(N)}}\Omega^1_{X(N)}\ra \mathcal E_{k,\cp} \tensor_{\O_{X(N)}} \logOmega^1_{X(N)}.
\end{equation}
The injection 
\[ \S_k(\Gamma(N),\Z) \inj \HHL^1(X(N), \mathcal{E}_{k,\cp}^\bullet) \]
is then obtained by considering $\omega_{\overline E(N)/X(N)}^{
  k-2}\tensor_{\O_{X(N)}}\Omega^1_{X(N)}$ as a complex concentrated in degree $1$, which via
\eqref{eqn:map-differentials-complex} gives us a morphism of complexes \[ (\omega_{\overline
    E(N)/X(N)}^{ k-2}\tensor_{\O_{X(N)}}\Omega^1_{X(N)})[-1] \ra \mathcal
  E^\bullet_{k,\cp} \] to which we can apply $\HHL^1$.

We also want to say how the cokernel of the above injection, i.\,e.\ $\gr^0\wnk_\dR$, looks like.
\begin{prop}\label{prop:gr-wnk-sk-dual}
  We have canonically  \[ \gr^0\wnk_\dR = \HL^1(X(N)_{/\Q},(\omega_{\overline E(N)/X(N)}^{2-k})_{/\Q}) \cong \S_k(\Gamma(N),\Q)^\vee. \]
  Here $(\cdot)^\vee$ denotes the dual $\Q$-vector space and the last isomorphism comes from Serre duality.
\end{prop}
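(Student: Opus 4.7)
The plan is to construct a natural surjective $\Q$-linear map $\wnk_\dR \twoheadrightarrow \HL^1(X(N)_{/\Q}, (\omega_{\overline E(N)/X(N)}^{2-k})_{/\Q})$ whose kernel is $\fil^1 \wnk_\dR = \S_k(\Gamma(N),\Q)$, and to combine this with Serre duality on the smooth projective curve $X(N)_{/\Q}$ applied to the line bundle $\omega^{k-2}$, which yields $\HL^1(X(N),\omega^{2-k}) \cong \HL^0(X(N),\omega^{k-2}\tensor\Omega^1_{X(N)})^\vee = \S_k(\Gamma(N),\Q)^\vee$.

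To construct the map, I would use the Hodge filtration short exact sequence $0 \to \omega_{\overline E/X} \to \mathcal E \to \omega_{\overline E/X}^{-1} \to 0$ (with $\RD^1 f_*\O_{\overline E} \cong \omega_{\overline E/X}^{-1}$ coming from Serre duality on the elliptic fibres, extended to the cusps). Taking $(k{-}2)$-fold symmetric powers induces a descending filtration on $\mathcal E_k$ whose bottom graded piece is $\gr^0\mathcal E_k = \omega_{\overline E/X}^{2-k}$, and analogously $\gr^0\mathcal E_{k,\cp} = \omega_{\overline E/X}^{2-k}(-C)$. Griffiths transversality for the logarithmic Gauß--Manin connection implies that the bottom graded pieces of the induced Hodge filtrations on the two-term complexes $\mathcal E^\bullet_k$ and $\mathcal E^\bullet_{k,\cp}$ are $\omega^{2-k}[0]$ and $\omega^{2-k}(-C)[0]$ respectively, concentrated in degree $0$. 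Applying $\HHL^1(X(N),-)$ to the corresponding quotient morphisms of complexes and to the inclusion $\mathcal E^\bullet_{k,\cp}\hookrightarrow\mathcal E^\bullet_k$ produces a commutative square with top row $\HHL^1(X(N),\mathcal E^\bullet_{k,\cp}) \to \HHL^1(X(N),\mathcal E^\bullet_k)$, bottom row $\HL^1(X(N),\omega^{2-k}(-C)) \to \HL^1(X(N),\omega^{2-k})$, and vertical maps given by projection to the bottom Hodge piece. Restricting the right vertical map to the image of the top horizontal one, which equals $\wnk_\dR$ by \cref{thm:wnk-dr}, yields our candidate map.

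By \cref{thm:wnk-hodge-filt}, the embedding $\S_k(\Gamma(N),\Q) = \fil^1\wnk_\dR \hookrightarrow \wnk_\dR$ factors through $\fil^{k-1}\mathcal E^\bullet_{k,\cp}$, whose image in the bottom graded piece is zero. Hence the candidate map descends to $\gr^0\wnk_\dR = \wnk_\dR/\S_k(\Gamma(N),\Q) \to \HL^1(X(N),\omega^{2-k})$. Using Hodge symmetry on the pure $\Q$-Hodge structure $\wnk_\dR$ of weight $k-1$, whose Hodge filtration (by \cref{thm:wnk-hodge-filt}) has only two jumps, at $0$ and $k-1$, one finds $\dim_\Q\gr^0\wnk_\dR = \dim_\Q\gr^{k-1}\wnk_\dR = \dim_\Q\S_k(\Gamma(N),\Q)$, which matches $\dim_\Q\HL^1(X(N),\omega^{2-k})$ by Serre duality. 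It therefore suffices to prove surjectivity.

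For surjectivity I would factor the composite $\HHL^1(X(N),\mathcal E^\bullet_{k,\cp}) \to \HL^1(X(N),\omega^{2-k})$ through $\HL^1(X(N),\omega^{2-k}(-C))$. The first arrow $\HHL^1(X(N),\mathcal E^\bullet_{k,\cp}) \twoheadrightarrow \HL^1(X(N),\omega^{2-k}(-C))$ is the projection onto the $E_\infty^{0,1} = \HL^1(X(N),\gr^0\mathcal E^\bullet_{k,\cp})$ term of the Hodge-to-de Rham spectral sequence for the logarithmic de Rham complex, which degenerates at $E_1$ by Deligne's mixed Hodge theory applied to the smooth projective curve $X(N)_{/\Q}$ with its log structure at the cusps; this is the main technical input. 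The second arrow $\HL^1(X(N),\omega^{2-k}(-C)) \twoheadrightarrow \HL^1(X(N),\omega^{2-k})$ is surjective because its cokernel embeds into $\HL^1(X(N),(\omega^{2-k})\restrict{C}) = 0$, the sheaf $(\omega^{2-k})\restrict{C}$ being supported on the zero-dimensional scheme $C$. Combined with the dimension count, surjectivity yields the desired isomorphism.
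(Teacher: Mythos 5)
The paper's own ``proof'' is just a citation (to \cite[p.~15/16]{MR1849260}), so there is no internal argument to compare against; your reconstruction fills the gap. Your approach is the expected one and is substantially correct: use the Hodge filtration on $\mathcal E_k$ (coming from $0\to\omega_{\overline E/X}\to\mathcal E\to\omega_{\overline E/X}^{-1}\to 0$ and $(k{-}2)$\nobreakdash-fold symmetric powers), Griffiths transversality for $\nabla_k$, identify $\gr^0\mathcal E_k^\bullet=\omega_{\overline E/X}^{2-k}[0]$ and $\gr^0\mathcal E_{k,\cp}^\bullet=\omega_{\overline E/X}^{2-k}(-C)[0]$, and project; then use $E_1$\nobreakdash-degeneration, $\HL^1(C,\cdot)=0$, and a dimension count against $\fil^1\wnk_\dR=\S_k(\Gamma(N),\Q)$ (\cref{thm:wnk-hodge-filt}) together with $\dim\HL^1(X(N),\omega^{2-k})=\dim\S_k(\Gamma(N),\Q)$ (Serre duality) to conclude. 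This is all consistent with \cref{eqn:hodge-filt-e,eqn:map-differentials-complex,thm:wnk-dr} and with \cref{cor:hodge-realization}.

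The one place to be more careful is the source of the degeneration of the Hodge-to-de Rham spectral sequence for $\mathcal E_{k,\cp}^\bullet$ and $\mathcal E_k^\bullet$. ``Deligne's mixed Hodge theory applied to $X(N)$ with its log structure'' covers the case of constant coefficients; here the coefficients are the non-trivial filtered flat bundle $\mathcal E_k$ with its logarithmic Gauß--Manin connection, i.e.\ a variation of Hodge structure with log poles along $C$. The degeneration in that generality is still true, but either one cites a VHS-level result (Zucker/Steenbrink/Deligne's argument for the Hodge--Deligne complex, or Saito's mixed Hodge modules), or one derives it by pushing forward the ordinary Hodge theory of the smooth projective Kuga--Sato desingularisation through the Leray spectral sequence for $\KS(N,k)\to X(N)$ --- which is exactly how the paper defines $\wnk_\dR$ and its filtration in the first place (\cref{thm:wnk-dr}, \cref{thm:wnk-hodge-filt}). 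The latter route is the natural one here and avoids appealing to Deligne's *Hodge II* out of its scope. With that adjustment your argument is complete and matches the intended content of the cited reference.
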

\begin{proof}
  \cite[p.\ 15/16]{MR1849260}
\end{proof}

\begin{cor}\label{cor:hodge-realization}
  The {Hodge realization of $\wnk$} is \[ \wnk_\hodge = \HL^1(X(N)_{/\Q},(\omega_{\overline E(N)/X(N)}^{2-k})_{/\Q}) \oplus \S_k(\Gamma(N),\Q) \] with the first summand sitting in degree $0$ and the second summand sitting in degree $k-1$.
\end{cor}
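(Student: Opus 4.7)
The statement is an immediate consequence of the two results that directly precede it, so my plan is to assemble them rather than to construct anything new. The Hodge realization of a motive whose de Rham realization carries a decreasing filtration is by definition the associated graded object $\bigoplus_i \gr^i \wnk_\dR$, graded by the filtration index; so I would simply read off the nonzero graded pieces from \cref{thm:wnk-hodge-filt}.

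From \cref{thm:wnk-hodge-filt} the filtration jumps only at $i=1$ and at $i=k$: one has $\fil^0 \wnk_\dR = \wnk_\dR$, $\fil^i \wnk_\dR = \S_k(\Gamma(N),\Q)$ for $1\le i\le k-1$, and $\fil^k \wnk_\dR = 0$. Therefore
\[
  \gr^i \wnk_\dR =
  \begin{cases}
    \wnk_\dR/\S_k(\Gamma(N),\Q) & i=0,\\
    0 & 1\le i\le k-2,\\
    \S_k(\Gamma(N),\Q) & i=k-1,\\
    0 & i\ge k.
  \end{cases}
\]
For the degree-$0$ piece I would then invoke \cref{prop:gr-wnk-sk-dual}, which identifies the quotient $\gr^0 \wnk_\dR$ canonically with $\HL^1(X(N)_{/\Q},(\omega^{2-k}_{\overline E(N)/X(N)})_{/\Q})$ via Serre duality. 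For the top piece there is nothing further to do: $\fil^{k-1} = \S_k(\Gamma(N),\Q)$ already and $\fil^k = 0$.

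Assembling the two nonzero graded pieces gives exactly the stated direct sum with the correct degrees. No real obstacle arises; the only substantive inputs are \cref{thm:wnk-hodge-filt} and \cref{prop:gr-wnk-sk-dual}, both quoted from \cite{MR1849260,MR2104361}, so the proof is essentially just a bookkeeping argument combining the Hodge filtration with the Serre duality identification of its $0$-th graded piece.
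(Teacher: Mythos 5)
Your proposal is correct and matches the paper's intent exactly: the corollary carries no separate proof in the paper because it is meant to be read off from \cref{thm:wnk-hodge-filt} and \cref{prop:gr-wnk-sk-dual} in precisely the way you describe, by taking associated gradeds of the Hodge filtration and substituting the Serre-duality identification of $\gr^0$.
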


\subsubsection{The complex comparison isomorphism}
\label{sec:comparison-iso}

A description of the {complex comparison isomorphism of $\wnk$} is given in
\cite[§2.2]{MR1849260}. We recall this here and refer to this text for more details.

Write $j\colon Y(N)\ra X(N)$ for the inclusion. Via the analytification map $X(N)^\an\ra X(N)\fibertimes_\Z\C$, we can pull back all the involved sheaves and connections to the analytic setting. By abuse of notation, 
we write $j$ also for the analytic inclusion.
In \cite[§2.2]{MR1849260} it is proved that this does not change the cohomology groups and the de Rham realization (over $\C$, of course). So we can work in the analytic category and write for the rest of the section $X=X(N)^\an$, $Y=Y(N)^\an$ and $E=E(N)^\an$.

In this section, let us write again $\mathcal{D}^k_\C=\Sym_{\smuline\C}^{k-2}\RD^1f_*\smuline\C$, as we did in \cref{sec:es}. 
We further use some of the notation from \cref{sec:dr}. Write $\mathcal G_k^\bullet$ for the restriction of $\mathcal E_k^\bullet$ or $\mathcal E_{k,\cp}^\bullet$ to $Y$ (both are the same), which is by definition just the complex \begin{equation*} (\degzero{\mathcal G_k}\shortra\mathcal G_k\tensor_{\O_{Y}}\Omega^1_{Y}),\quad \text{with } \mathcal G_k=\Sym_{\O_{Y}}^{k-2}\RRD^1f_*\Omega^\bullet_{E/Y}. \end{equation*}

Using \eqref{eqn:commutative-square-diff-sheaves} and taking $(k-2)$-th symmetric powers, one can construct an exact sequence
\begin{equation}\label{eqn:exact-seq-comp-iso-mn}
  0\ra \mathcal{D}^k_\C \ra \mathcal{G}_{k}\ra \mathcal{G}_{k}\tensor_{\O_{Y}}\Omega^1_{Y}\ra0,
\end{equation}
where the right map is just the restriction of the connection $\nabla_k$ to $Y$. This means that the two right entries in this sequence are precisely the complex $\mathcal G^\bullet_k$, so this gives rise to a quasi-isomorphism of complexes \[ \mathcal G^\bullet_{k}\ra \mathcal{D}^k_\C[0] \] which after applying $\HHL^1$ gives an isomorphism \[ \HHL^1(Y, \mathcal{G}^\bullet_{k})\isom\HL^1(Y, \mathcal{D}^k_\C). \]
The exact sequence above can be extended to the cusps to
\begin{equation}
  \label{eqn:comp-iso-complex}
  0\ra j_!\mathcal{D}^k_\C \ra \mathcal{E}_{k,\cp}\ra \mathcal{E}_{k,\cp} \tensor_{\O_{X}}\logOmega^1_{X}\ra0
\end{equation}
where the right map is now $\nabla_{k,\cp}$. This gives in the same way an isomorphism \[ \HHL^1(X, \mathcal{E}^\bullet_{k,\cp})\isom\Hc^1(Y,\mathcal{D}^k_\C). \]
Finally, the restriction morphism \[ \HHL^1(X, \mathcal{E}^\bullet_{k})\ra\HHL^1(Y, \mathcal{G}^\bullet_{k}) \] is an isomorphism by \cite[§2.4, p. 21]{MR1849260}.

We assemble the isomorphisms we have so far in a diagram
\begin{equation}\label{eqn:diagram-cp-iso}
\begin{tikzpicture}
  \matrix (m) [matrix of math nodes, row sep=2em, column sep=1.5em, text height=1.5ex, text depth=0.25ex]
  { \HHL^1(X, \mathcal E_{k,\cp}^\bullet) & \HHL^1(X, \mathcal{E}^\bullet_{k}) & \HHL^1(Y, \mathcal G_{k}^\bullet) \\
    \Hc^1(Y,\mathcal{D}^k_\C) & &  \HL^1(Y, \mathcal{D}^k_\C). \\};
  \path[->,font=\scriptsize]
  (m-1-1) edge node [above] {(1)} (m-1-2)
  (m-1-2) edge node [above] {$\sim$} (m-1-3)
  (m-1-1) edge node [left] {$\sim$} (m-2-1)
  (m-1-3) edge node [left] {$\sim$} (m-2-3)
  (m-2-1) edge node [above] {(2)} (m-2-3);
\end{tikzpicture}
\end{equation}
The image of the map (1) is $\wnk_\dR\tensor\C$ and the image of the map (2) is $\wnk_\betti\tensor\C$. Therefore this provides us with the desired comparison isomorphism \[ \wnk_\dR\tensor\C\isom\wnk_\betti\tensor\C. \]

\subsubsection{Relating Eichler-Shimura and the comparison isomorphism}
\label{sec:de-rham-eichler-shimura}

In this section we prove the compatibility of the Eichler-Shimura isomorphism with the
canonical map from cusp forms to the de Rham realization of $\wnk$ from the Hodge filtration
(see \cref{thm:wnk-hodge-filt}) and the comparison isomorphism from
\cref{sec:comparison-iso}. This is also stated without proof in
\cite[§11.3]{MR2104361}. 

\begin{thm}\label{thm:compatibility-es-comparison}
  The diagram
  \[\begin{tikzpicture}
    \matrix (m) [matrix of math nodes, row sep=2.5em, column sep=0pt, text height=1.5ex, text depth=0.25ex]
    { & \S_k(\Gamma(N),\C) \\ 
      \wnk_\dR\tensor\C && \wnk_\betti\tensor\C \\ };
    \path[->,font=\scriptsize]
    (m-1-2) edge node[left] {Hodge$\,$} (m-2-1)
    (m-1-2) edge node[right] {$\,\ES$} (m-2-3)
    (m-2-1) edge node[above] {$\sim$} (m-2-3);
  \end{tikzpicture}\]
  commutes. Here the left map comes from the Hodge filtration of $\wnk_\dR$ as in \cref{thm:wnk-hodge-filt}, the right map is the Eichler-Shimura map and the bottom map is the comparison isomorphism.
\end{thm}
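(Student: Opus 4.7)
The plan is to chase the diagram of isomorphisms \eqref{eqn:diagram-cp-iso} using explicit Čech representatives, exploiting the fact that the comparison isomorphism and the Eichler-Shimura map are connecting homomorphisms of two short exact sequences which are identified via \eqref{eqn:commutative-square-diff-sheaves}. First I fix $f \in \S_k(\Gamma(N), \C) = \HL^0(X, \omega^{k-2}_{\overline E/X} \tensor \Omega^1_X)$ and trace its image along the left and bottom edges of the diagram. Under the Hodge filtration embedding described after \cref{thm:wnk-hodge-filt}, $f$ maps to the hypercohomology class in $\HHL^1(X, \mathcal E^\bullet_{k,\cp})$ represented by the pair $(0, f)$, with $f$ sitting in degree $1$ via \eqref{eqn:map-differentials-complex}. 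The map (1) in \eqref{eqn:diagram-cp-iso} together with the restriction isomorphism $\HHL^1(X, \mathcal E^\bullet_k) \cong \HHL^1(Y, \mathcal G^\bullet_k)$ preserves this description, yielding the class of $(0, \tilde f)$ in $\HHL^1(Y, \mathcal G^\bullet_k)$, where $\tilde f \in \mathcal G_k \tensor_{\O_Y} \Omega^1_Y$ denotes the image of $f|_Y$ under the weight-$k$ Hodge filtration map. The right vertical isomorphism in \eqref{eqn:diagram-cp-iso} is the connecting homomorphism of \eqref{eqn:exact-seq-comp-iso-mn}: on a sufficiently fine open cover $\{U_i\}$ of $Y$, one chooses local sections $g_i \in \mathcal G_k(U_i)$ with $\nabla_k(g_i) = \tilde f|_{U_i}$, and $(g_i - g_j)$ is a Čech $1$-cocycle with values in $\mathcal D^k_\C$ representing the resulting class in $\HL^1(Y, \mathcal D^k_\C)$.

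Next I trace the Eichler-Shimura side. Restricting $f$ to $Y$ and applying \eqref{eqn:pre-eichler-sh-a} yields a section $\tilde f^{\ES} \in \HL^0(Y, \mathcal D^k_\R \tensor_\R \Omega^1_Y)$. By its very construction, \eqref{eqn:pre-eichler-sh-a} is the composition of the Hodge filtration map $\omega^{k-2}_{E/Y} \tensor \Omega^1_Y \to \mathcal G_k \tensor_{\O_Y} \Omega^1_Y$ with the isomorphism $\mathcal G_k \cong \mathcal D^k_\R \tensor_\R \O_Y$ from the right column of \eqref{eqn:commutative-square-diff-sheaves}, so $\tilde f^{\ES}$ is precisely the image of $\tilde f$ under this identification. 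The boundary of $\tilde f^{\ES}$ under \eqref{eqn:pre-eichler-sh-b} is then computed in the same Čech-theoretic manner: pick local primitives $h_i \in (\mathcal D^k_\R \tensor_\R \O_Y)(U_i)$ satisfying $d(h_i) = \tilde f^{\ES}|_{U_i}$; the cocycle $(h_i - h_j)$ takes values in $\mathcal D^k_\C$ and represents the Eichler-Shimura class.

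To conclude, I invoke the classical fact that the identification $\mathcal G_k \cong \mathcal D^k_\R \tensor_\R \O_Y$ is horizontal, i.e.\ intertwines the Gauß-Manin connection $\nabla_k$ with $\mathrm{id} \tensor d$; equivalently, the horizontal sections of the Gauß-Manin connection on $\RRD^1 f_* \Omega^\bullet_{E/Y}$ are exactly the locally constant sections of $\RD^1 f_* \smuline\C$. Granted this, the exact sequences \eqref{eqn:exact-seq-comp-iso-mn} and \eqref{eqn:pre-eichler-sh-b} are identified, so the local primitives $g_i$ from the first computation are admissible choices for the $h_i$ in the second, and taking $h_i = g_i$ shows that the two Čech cocycles coincide. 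I expect the main obstacle to be the careful bookkeeping of the identifications, most notably the horizontality just mentioned; once that is in place, commutativity reduces to the tautological observation that the Eichler-Shimura map is by construction the composition of the Hodge embedding with the connecting homomorphism attached to the de Rham resolution of $\mathcal D^k_\C$.
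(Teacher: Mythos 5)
Your proposal is correct and follows essentially the same strategy as the paper's proof: both hinge on identifying the short exact sequence \eqref{eqn:pre-eichler-sh-b} used in the Eichler-Shimura construction with the sequence \eqref{eqn:exact-seq-comp-iso-mn} used for the comparison isomorphism, via the horizontal isomorphism $\mathcal G_k\cong\mathcal D_\C^k\tensor_{\smuline\C}\O_Y$ coming from \eqref{eqn:commutative-square-diff-sheaves}. The only difference is presentational — you make the argument explicit with Čech representatives of the connecting/hypercohomology maps, where the paper decomposes the claim into five commuting subdiagrams and at the crucial step cites the standard homological-algebra compatibility between the boundary map and the quasi-isomorphism-induced map on hypercohomology.
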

\begin{proof}
  In this proof, we abbreviate $X=X(N)^\an$, $Y=Y(N)^\an$ and $E=E(N)^\an$.


  From \eqref{eqn:commutative-square-diff-sheaves} we have an isomorphism
  \begin{equation}
    \label{eqn:isom-gk-dk}
    \mathcal G_k 
    \cong\mathcal{D}_\C^k\tensor_{\smuline\C}\O_{Y}
    \end{equation}
and this allows us to identify the exact sequence \eqref{eqn:pre-eichler-sh-b} used in the definition of the Eichler-Shimura map with the exact sequence \eqref{eqn:exact-seq-comp-iso-mn} used to define the comparison isomorphism to obtain a commutative diagram with exact rows
\begin{equation}\label{eqn:identif-exact-sequences}
\begin{tikzpicture}
  \matrix (m) [matrix of math nodes, row sep=2em, column sep=1.5em, text height=1.5ex, text depth=0.25ex]
  { 0 & \mathcal D_\C^k & \mathcal D_\C^k\tensor_{\smuline\C}\O_Y & \mathcal D_\C^k\tensor_{\smuline\C}\Omega^1_Y & 0  \\
    0 & \mathcal D_\C^k & \mathcal G_k & \mathcal G_k\tensor_{\O_Y}\Omega^1_Y & 0. \\};
  \path[->,font=\scriptsize]
  (m-1-1) edge (m-1-2)
  (m-1-2) edge (m-1-3)
  (m-1-3) edge (m-1-4)
  (m-1-4) edge (m-1-5)
  (m-2-1) edge (m-2-2)
  (m-2-2) edge (m-2-3)
  (m-1-4) edge node [right] {$\sim$} (m-2-4)
  (m-2-3) edge (m-2-4)
  (m-2-4) edge (m-2-5)
  (m-1-3) edge node [right] {$\sim$} (m-2-3);
  \draw [double equal sign distance] (m-1-2) -- (m-2-2);
\end{tikzpicture}
\end{equation}

  Using this and the definition of the map from the Hodge filtration (labelled \enquote{Hodge} below), we extend the diagram \eqref{eqn:diagram-cp-iso}, which is part (1) below, to
  \begin{equation*}
    \begin{tikzpicture}
      \matrix (m) [matrix of math nodes, row sep=1em, column sep=.75em, text height=1.5ex, text depth=0.25ex]
      { \HL^0(X,\omega_{\overline E/X}^{k-2}\tensor_{\O_X}\Omega_X^1) & & \HL^0(Y,\omega_{E/Y}^{k-2}\tensor_{O_Y}\Omega^1_Y) & &  \HL^0(Y,\mathcal D^k_\C\tensor_{\smuline\C}\Omega^1_Y) \\
        & & & \text{\scriptsize(3)} & & \hspace*{-1cm}\text{\scriptsize(4)} \\
        & & \text{\scriptsize(2)} & & \HL^0(Y,\mathcal G_k\tensor_{\O_Y} \Omega_Y^1) \\
        \\
        \HHL^1(X, \mathcal E_{k,\cp}^\bullet) & & \HHL^1(X, \mathcal{E}^\bullet_{k}) & & \HHL^1(Y, \mathcal G_{k}^\bullet) \\
        & & \text{\scriptsize(1)} & & \hspace*{1cm}\text{\scriptsize(5)} \\
        \Hc^1(Y,\mathcal{D}^k_\C) & & \; & &  \HL^1(Y, \mathcal{D}^k_\C), \\};
      \path[->,font=\scriptsize]
      (m-1-1) edge node[right] {Hodge} (m-5-1)
      (m-1-1) edge (m-1-3)
      (m-1-3) edge (m-1-5)
      (m-1-5) edge node[left] {$\sim$} node[right] {\eqref{eqn:isom-gk-dk}} (m-3-5)
      (m-3-5) edge (m-5-5)
      (m-1-3) edge [bend right=20] (m-3-5)
      (m-5-1) edge (m-5-3)
      (m-1-5) edge [bend left=78] node[left] {$\partial$} (m-7-5)
      (m-3-5) edge [bend left=65] node[right] {$\partial$}  (m-7-5)
      (m-5-3) edge node [above] {$\sim$} (m-5-5)
      (m-5-1) edge node [left] {$\sim$} (m-7-1)
      (m-5-5) edge node [left] {$\sim$} (m-7-5)
      (m-7-1) edge (m-7-5);
      \draw[dashed, ->] plot[smooth,tension=.7]
      coordinates{([yshift=.34cm] m-1-1) ([yshift=0.9cm] m-1-5) ([xshift=2.5cm] m-1-5) ([xshift=2.6cm, yshift=-1cm] m-5-5) ([xshift=1.1cm] m-7-5)};
      \draw[dashed, ->] plot[smooth,tension=.5]
      coordinates{([xshift=-.5cm, yshift=-0.34cm] m-1-1) ([xshift=-1.4cm] m-5-1) ([xshift=-1.35cm] m-7-1) ([yshift=-.7cm] m-7-1) ([yshift=-.5cm] m-7-3) ([yshift=-0.3cm] m-7-5)};
    \end{tikzpicture}
  \end{equation*}
  where the two maps labelled \enquote{$\partial$} are boundary maps in the long exact cohomology sequences attached to the short exact sequences \eqref{eqn:pre-eichler-sh-b} and \eqref{eqn:exact-seq-comp-iso-mn}, respectively.

  We want to prove that the outermost (dashed) arrows coincide, since the composition along the lower dashed arrow is the composition of the Hodge filtration map with the comparison isomorphism, while the composition along the upper dashed arrow is the Eichler-Shimura map. We prove this by showing that each of the partial diagrams (1)--(5) commutes.

  We know already that (1) commutes. That (2) commutes is clear since both ways are basically the same map. Part (3) commutes just by definition of the map $\omega_{E/Y}^{k-2}\tensor_{\O_{Y}}\Omega_{Y}^1\ra\mathcal D_\C^k\tensor_{\smuline\C}\Omega^1_{Y}$ in \eqref{eqn:pre-eichler-sh-a}. Part (4) commutes because \eqref{eqn:identif-exact-sequences} commutes.

  To see that (5) commutes, we are hence left to prove that the boundary map
  \begin{equation*}
    \HL^0(Y,\mathcal G_k\tensor_{\O_{Y}}\Omega^1)\ra\HL^1(Y,\mathcal{D}^k_\C)
  \end{equation*}
  for the exact sequence \eqref{eqn:exact-seq-comp-iso-mn} is equal to the composition of the map
  \begin{equation*}
    \HL^0(Y,\mathcal G_k\tensor_{\O_{Y}}\Omega^1)\ra\HHL^1(Y,\mathcal G_k^\bullet)
  \end{equation*}
  induced by the inclusion of complexes
  \begin{equation*}
    (\mathcal G_k\tensor_{\O_{Y}}\Omega^1)[-1]\ra \mathcal G_k^\bullet
  \end{equation*}
  with the comparison isomorphism. By the description of the comparison isomorphism in
  \cref{sec:comparison-iso}, 
  this follows from a standard fact in homological algebra, see \cite[Ex.\
  1.5.6]{MR1269324}.
\end{proof}

\subsection{Refinements}
\label{sec:refinements}

Recall that we fixed a number field $K$ with ring of integers $\O_K$.

From the classical viewpoint on modular forms as functions on the upper half plane it is
clear that we have an inclusion of $\O_K$-modules
$\S_k(\Gamma_1(N),\O_K)\inj\S_k(\Gamma_1(Np),\O_K)$. One has to be careful here because this
inclusion does \emph{not} respect the action of $T_p$ (but it does respect all the other
Hecke operators).

Let $f\in\S_k(\Gamma_1(N),\psi,\O_K)$ be an eigenform away from the level (i.\,e.\ it is an
eigenvector of all the operators $T_\ell$ for all primes not dividing $N$ and $\diamondop d$ for
$d\in\zmodmal N$) and denote the eigenvalue of
$T_\ell$ by $a_\ell$ for primes $\ell\nmid N$. We look at the polynomial 
\begin{equation}
  \label{eqn:polynomial-alpha-beta}
   X^2-a_pX+\psi(p)p^{k-1}
\end{equation}
and call its roots $\alpha$ and $\beta$; we assume without loss of generality that $\O_K$ is
large enough to contain  both of them. Then define two functions on the upper half plane by
\[ f_\alpha(\tau)=f(\tau)-\beta f(p\tau),\quad f_\beta(\tau)=f(\tau)-\alpha
  f(p\tau)\quad(\tau\in\H). \]
The following statement is well-known and easy to check.

\begin{prop}\label{lem:refinements-are-modular-forms}
  The functions $f_\alpha$ and $f_\beta$ define cusp forms in $\S_k(\Gamma_1(Np),\psi,\O_K)$,
  where $\psi$ is now viewed as a character of $\zmodmal{Np}$. They are eigenforms away from
  the level with the same eigenvalues as $f$ and they are moreover eigenvectors of $T_p$
  with eigenvalues $\alpha$ and $\beta$, respectively. If $f$ was a normalized eigenform,
  then so are $f_\alpha$ and $f_\beta$, with the same eigenvalues for all Hecke operators
  except $T_p$.
\end{prop}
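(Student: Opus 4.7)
The plan is to verify all claims directly from the effect on $q$-expansions, using standard properties of the operator $V_p\colon f(\tau)\mapsto f(p\tau)$, which on $q$-expansions sends $\sum a_n q^n$ to $\sum a_n q^{pn}$. First I would check that $V_p$ maps $\S_k(\Gamma_1(N),\psi,\O_K)$ into $\S_k(\Gamma_1(Np),\psi,\O_K)$, where $\psi$ is viewed as a character of $\zmodmal{Np}$ via inflation along $\zmodmal{Np}\twoheadrightarrow\zmodmal{N}$; this is classical and is most easily seen by a direct transformation law check. Hence $f_\alpha=f-\beta\,V_p f$ and $f_\beta=f-\alpha\,V_p f$ lie in $\S_k(\Gamma_1(Np),\psi,\O_K)$, since by assumption both $\alpha$ and $\beta$ lie in $\O_K$.

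Next I would verify the Hecke eigenvalues. The natural inclusion $\S_k(\Gamma_1(N),\psi,\O_K)\hookrightarrow\S_k(\Gamma_1(Np),\psi,\O_K)$ and the map $V_p$ both intertwine the actions of $T_\ell$ for $\ell\ne p$ and of $\diamondop{d}$ for $d\in\zmodmal{Np}$ (via $d\mapsto d\bmod N$), as one checks by a short double-coset calculation or directly from $q$-expansion formulas. Consequently $f_\alpha$ and $f_\beta$ inherit from $f$ all Hecke eigenvalues away from $p$ and the nebentype $\psi$. For the $T_p$-eigenvalue the key point is that at level $Np$ the operator $T_p$ (often denoted $U_p$) acts on $q$-expansions by $\sum b_n q^n\mapsto\sum b_{pn}q^n$, giving in particular $T_p\circ V_p=\id$. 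Combining the Hecke recursion $a_{pn}=a_p a_n-\psi(p)p^{k-1}a_{n/p}$ (with $a_{n/p}=0$ if $p\nmid n$), valid for $f$ at level $N$, with Vieta's formulas $\alpha+\beta=a_p$ and $\alpha\beta=\psi(p)p^{k-1}$ yields
\[ T_p f \,=\, a_p f - \psi(p)p^{k-1}V_p f \,=\, (\alpha+\beta)f-\alpha\beta\,V_p f. \]
A one-line substitution then produces $T_p f_\alpha=\alpha f_\alpha$ and $T_p f_\beta=\beta f_\beta$. Finally, if $f$ is normalized then the $q$-expansion of $V_p f$ has no $q^1$ term, so $f_\alpha$ and $f_\beta$ remain normalized.

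The only real subtlety is distinguishing the two meanings of the symbol $T_p$: at level $N$ it includes an additional $\psi(p)p^{k-1}V_p$-contribution on $q$-expansions that disappears at level $Np$. This bookkeeping is the main obstacle, and it is resolved cleanly by the recursion above; everything else reduces to routine $q$-expansion manipulations together with the $q$-expansion principle for integrality.
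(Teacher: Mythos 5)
Your proof is correct and is precisely the standard $q$-expansion argument; the paper in fact gives no proof at all, merely labelling the statement \enquote{well-known and easy to check}. The identity $T_p f = (\alpha+\beta)f - \alpha\beta\,V_p f$ at level $Np$ together with $T_p V_p = \id$ is exactly the mechanism one expects, and your treatment of the level-raising map $V_p$, the diamond operators, integrality, and normalization is sound.
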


\begin{dfn}\label{dfn:refinements}
  The forms $f_\alpha$ and $f_\beta$ are called the refinements of $f$ at $p$. They are also commonly referred to as the $p$-stabilisations of $f$.
\end{dfn}

Note that if $p\mid N$ then one of $\alpha$ and $\beta$ equals $a_p$ and the other one is
$0$, so one of $f_\alpha$ and $f_\beta$ is just $f$ itself. 
Also note that if $f$ is ordinary at $p$, then exactly one of $\alpha$, $\beta$ is a unit in
$\O_K$ (called the unit root; without loss of generality assume it is $\alpha$). In this case
$f_\alpha$ is again ordinary, while $f_\beta$ isn't, so there is a unique ordinary refinement.


Fix integers $M,N\ge4$ with $N\mid M$. Using the moduli description we define two morphisms between modular curves
\begin{align*}
   \sigma_{M,N}\colon X(M)\ra X(N),\quad &(E,P,Q)\mapsto\left(E,\frac M N P,\frac M N Q\right),\\
   \theta_{M,N}\colon X(M)\ra X(N),\quad &(E,P,Q)\mapsto\left(\quotient E {NP},
                                          P,
                                          Q\right),
\end{align*}
and similarly, denoted by the same symbols,
\begin{align*}
   \sigma_{M,N}\colon X_1(M)\ra X_1(N),\quad &(E,P)\mapsto\left(E,\frac M N P\right),\\
  \theta_{M,N}\colon X_1(M)\ra X_1(N),\quad &(E,P)\mapsto\left(\quotient E {NP},
                                              P\right),
\end{align*}
and call them the {change of level morphisms}.
Further we define morphisms 
\begin{equation}\label{eqn:dfn-change-of-level-morphisms}
  \begin{aligned}
    \Sigma_{M,N}\colon \overline E(M)\ra \overline E(N),\quad &(E,P,Q,x)\mapsto\left(E,\frac M N P,\frac M N Q,x\right),\\
    \Theta_{M,N}\colon \overline E(M)\ra \overline E(N),\quad &(E,P,Q,x)\mapsto\left(\quotient E {NP},
      P,
      Q,x\right),\\
    \Sigma_{M,N}\colon \overline E_1(M)\ra \overline E_1(N),\quad &(E,P,x)\mapsto\left(E,\frac M N P,x\right),\\
    \Theta_{M,N}\colon \overline E_1(M)\ra \overline E_1(N),\quad &(E,P,x)\mapsto\left(\quotient E {NP},
      P,x\right)
  \end{aligned}
\end{equation}
lying over the ones from before (where in each case $x$ denotes a point of $E$).
It is easy to see that the diagrams
\begin{equation}
  \label{eqn:level-change-diagram-cartesian}
  \begin{tikzpicture}
    \matrix (m) [matrix of math nodes, row sep=2em, column sep=1.5em, text height=1.5ex, text depth=0.25ex]
    {  \overline E_?(M) &  \overline E_?(N) \\
      X_?(M) &  X_?(N), \\};
    \path[->,font=\scriptsize]
    (m-1-1) edge (m-1-2)
    (m-1-1) edge (m-2-1)
    (m-1-2) edge (m-2-2)
    (m-2-1) edge (m-2-2);
  \end{tikzpicture}
\end{equation}
where \enquote{$?$} is either $1$ or nothing, the vertical maps are $f$ and the horizontal ones are either $\Sigma_{M,N}$ and $\sigma_{M,N}$ or $\Theta_{M,N}$ and $\theta_{M,N}$,
are cartesian in all cases.

\begin{prop}\label{prop:change-of-level-on-motives}
  For each $k\ge2$, the maps introduced above induce morphisms of motives
  \[ \sigma_{M,N},\theta_{M,N}\colon\wnk \ra\wnklevel M. \]
  We call them {change of level morphisms}, too.
\end{prop}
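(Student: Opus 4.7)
My strategy is to first construct morphisms of schemes $\KS(M,k) \to \KS(N,k)$ lifting each of the level change morphisms, then to verify compatibility with the group actions defining the projector $\pi_\ep$, so that the resulting graph correspondences induce morphisms of Chow motives in the desired direction.

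First, I would exploit the cartesianness of the diagrams \eqref{eqn:level-change-diagram-cartesian}, which gives canonical isomorphisms $\overline E(M) \cong \overline E(N) \fibertimes_{X(N)} X(M)$ (one induced by each of $\sigma_{M,N}$ and $\theta_{M,N}$). Taking the $(k-2)$-fold fiber product over $X(M)$ and using associativity of base change, this yields
\[
  \preKS(M,k) \cong \preKS(N,k) \fibertimes_{X(N)} X(M),
\]
and in particular gives canonical morphisms $\preKS(M,k) \to \preKS(N,k)$ lifting $\sigma_{M,N}$ and $\theta_{M,N}$. On points these are just the $(k-2)$-fold applications of $\Sigma_{M,N}$ resp.\ $\Theta_{M,N}$ to the tuple of fiber coordinates.

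Second, I would extend these to the Kuga-Sato varieties. The singularities of $\preKS(N,k)$ occur over the cuspidal divisor of $X(N)$, and Scholl's desingularisation \cite[Thm.\ 3.1.0]{MR1047142} is constructed canonically in terms of the universal generalised elliptic curve and its degeneration at the cusps. Since the cartesian squares in \eqref{eqn:level-change-diagram-cartesian} mean that these data pull back under $\sigma_{M,N}$ and $\theta_{M,N}$ from level $N$ to level $M$, the canonical desingularisations are compatible and the morphisms on $\preKS$ lift uniquely to morphisms
\[
  \KS(M,k) \ra \KS(N,k).
\]

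Third, I would verify equivariance for the groups $G(N,k)$ and $G(M,k)$ on which the projector $\pi_\ep$ depends. The symmetric group $\Symgrp{k-2}$ acts by permuting the $(k-2)$ fiber factors and each $\{\pm1\}$-factor acts by inversion on the corresponding $\overline E$-fiber; both actions obviously commute with the morphisms just constructed. The torsion factor $(\zmod M)^2$ acts on $\preKS(M,k)$ through the level-$M$ structure, and is intertwined with the action of $(\zmod N)^2$ on $\preKS(N,k)$ via the surjection $(\zmod M)^2 \twoheadrightarrow (\zmod N)^2$ given by multiplication by $M/N$. Because $\ep$ is trivial on the torsion factors, the projector $\pi_\ep$ is compatible with these morphisms, and we thus obtain the desired morphisms of Chow motives $\wnk \to \wnklevel M$ from the associated (transposed) graph correspondences.

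The main obstacle I would expect is the rigorous verification in the second step that Scholl's desingularisation is functorial along the particular morphisms considered here; carrying this out requires inspecting Scholl's construction closely enough to check that the blow-up centres are defined in terms of data that is pulled back under the cartesian squares \eqref{eqn:level-change-diagram-cartesian}.
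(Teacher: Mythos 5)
Your route is correct in its first and third steps, but in the second step you take a noticeably harder path than the paper, and the paper's own argument shows that path is unnecessary. You aim to lift the morphism $\preKS(M,k)\to\preKS(N,k)$ to a genuine morphism of desingularisations $\KS(M,k)\to\KS(N,k)$, which forces the question of whether Scholl's canonical desingularisation is functorial along $\sigma_{M,N}$ and $\theta_{M,N}$ — you rightly flag this as the main obstacle. The paper avoids it entirely: since the target of a morphism of Chow motives is a correspondence, not a morphism of varieties, one can simply take the graph of $\preKS(M,k)\to\preKS(N,k)$ inside $\preKS(N,k)\fibertimes_\Q\preKS(M,k)$ and then the \emph{closure of its preimage} under $\KS(N,k)\fibertimes_\Q\KS(M,k)\to\preKS(N,k)\fibertimes_\Q\preKS(M,k)$. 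Because the $G(N,k)\times G(M,k)$-action extends to the product of the desingularisations, this closure is still $G(N,k)\times G(M,k)$-invariant and still of codimension $\dim\KS(N,k)$, hence defines a correspondence compatible with the projectors $\pi_\ep$. No statement about functoriality of the desingularisation is needed, only that the group actions extend to it, which is already part of Scholl's construction and was recorded earlier in \cref{sec:wnk}. So: your step 2 is where your proof departs from the paper, and it is exactly the step the paper's closure-of-graph trick renders superfluous — a good lesson in why one works with correspondences rather than actual morphisms of (compactified, desingularised) varieties when dealing with Chow motives.

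One minor imprecision: in the equivariance discussion, the induced map $(\zmod M)^2\to(\zmod N)^2$ differs between $\sigma_{M,N}$ (where it is roughly $a\mapsto\tfrac{M}{N}a\bmod N$) and $\theta_{M,N}$ (where it is reduction modulo $N$), so phrasing it uniformly as ``multiplication by $M/N$'' is inaccurate for $\theta$. This does not affect the conclusion, since $\ep$ is trivial on the torsion factor in either case, but it is worth stating correctly.
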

\begin{proof}
  It is clear that both $\Sigma_{M,N}$ and $\Theta_{M,N}$ induce morphisms $\preKS(M,k)\ra\preKS(N,k)$. By construction it is easy to see that these are equivariant for the actions of the groups $G(M,k)$ resp.\ $G(N,k)$ introduced in \cref{sec:wnk} (via the natural morphism $G(M,k)\ra G(N,k)$). Hence the graphs of these morphisms in $\preKS(N,k)\fibertimes_\Q\preKS(M,k)$ are invariant under $G(N,k)\times G(M,k)$. Since the actions extend to the desingularisations, the same thus holds for the closures of their preimages under the morphism $\KS(N,k)\fibertimes_\Q\KS(M,k)\ra\preKS(N,k)\fibertimes_\Q\preKS(M,k)$, and they are still closed subvarieties of codimension $\dim\KS(N,k)$. Hence we get induced morphisms of motives.
\end{proof}

The morphisms thus induce maps on all realizations of $\wnk$. In particular, by looking at the intermediate step in the Hodge filtration on de Rham realizations and using \cref{thm:wnk-hodge-filt}, we get maps
\begin{equation}
  \label{eqn:level-change-s-k}
  \sigma_{M,N},\theta_{M,N}\colon\S_k(\Gamma(N),\Q)\ra\S_k(\Gamma(M),\Q).
\end{equation}

\begin{prop}\label{prop:change-of-level-pullback-h}
  After tensoring with $\C$, we have:
  \begin{enumerate}
  \item\label{prop:change-of-level-pullback-h:sigma} The map $\sigma_{M,N}\colon\S_k(\Gamma(N),\C)\ra\S_k(\Gamma(M),\C)$ sends an $f$, viewed as function on the upper half plane,\footnote{Strictly speaking, since we use the modular curve $X(N)$ here, $f$ is a $\phi(N)$-tuple of functions on the upper half plane, where $\phi(N)$ is the Euler totient function. The description of the map given here has to be applied to each entry in the tuple.\label{footnote:tuple}}  to itself.
  \item\label{prop:change-of-level-pullback-h:theta} The map $\theta_{M,N}\colon\S_k(\Gamma(N),\C)\ra\S_k(\Gamma(M),\C)$ sends an $f$, viewed as function on the upper half plane, to the function $\tau\mapsto {\left(\frac M N\right)}^kf(\frac M N\tau)$.
  \end{enumerate}
\end{prop}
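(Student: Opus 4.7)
Both parts reduce to an analytic pullback computation on sections. Under the Hodge filtration embedding of \cref{thm:wnk-hodge-filt}, the induced morphism $\S_k(\Gamma(N),\Q) \to \S_k(\Gamma(M),\Q)$ is the pullback of sections of $\omega_{\overline E/X}^{k-2}\tensor_{\O_X}\Omega^1_X$ along $\theta_{M,N}$ (resp.~$\sigma_{M,N}$); this is because the morphism of motives in \cref{prop:change-of-level-on-motives} arises from the graph correspondence of $\Sigma_{M,N}$ or $\Theta_{M,N}$, which induces pullback on all realizations compatibly with the Hodge filtration. The task is therefore to compute this sheaf-level pullback in the analytic category.

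First I would describe the maps on the upper half plane. Using the moduli interpretation and the normalization of the point of exact order $N$ from \cpageref{point-of-exact-order-N}, one checks that $\sigma_{M,N}$ descends from the identity $\H \to \H$ with the identity between the fibers $E_\tau$, whereas $\theta_{M,N}$ descends from the map $\tau \mapsto (M/N)\tau$ on $\H$ with associated isogeny on fibers $E_\tau \to E_{(M/N)\tau}$ given by $z \mapsto (M/N)z$; the latter comes from the quotient $E_\tau \to E_\tau/\langle NP\rangle$ followed by the canonical identification of the quotient with $E_{(M/N)\tau}$.

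Part (a) is then immediate: the pulled-back generators $dz'$ and $d\tau'$ coincide with $dz$ and $d\tau$, so a section representing $f$ pulls back to one representing $f$. For part (b), the pullback of $dz'$ along $z \mapsto (M/N)z$ introduces a factor $M/N$, and similarly for $d\tau'$ along $\tau \mapsto (M/N)\tau$; combining these factors with the $(k-2)$-fold tensor power on the $\omega$-factor and the change of variable inside $f$ yields the asserted scaling $(M/N)^k f((M/N)\tau)$.

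The main obstacle is not the conceptual content but the careful bookkeeping of conventions. Specifically, one has to track (i) the identification of classical cusp forms with sections of $\omega^{k-2} \tensor \Omega^1$, which goes via the Tate curve and its canonical differential and thus introduces factors of $2\pi\i$ (and, on the naive curve $X(N)$, potentially a factor involving $N$ coming from the uniformizer $q$ at the cusp); and (ii) the $\phi(N)$ connected components of $X(N)^\an$ (cf.~the footnote), which must be treated separately with an eye to the permutation of components that $\theta_{M,N}$ may induce.
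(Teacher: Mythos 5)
Your overall strategy --- descend each change-of-level morphism to an explicit map on $\H$ and $\C\times\H$, verify via the moduli description that the analytic maps match $\sigma_{M,N}$ and $\theta_{M,N}$, then read off the pullback of a trivializing section --- is exactly the route the paper takes. The verification that $\sigma^\an(\tau)=\tau$ with identity on fibers, while $\theta^\an(\tau)=(M/N)\tau$ covered by the isogeny $z\mapsto(M/N)z$ arising from $E_\tau\surj E_\tau/\langle NP\rangle\cong E_{(M/N)\tau}$, is the heart of both arguments and you have it right; part (a) is indeed immediate from this.

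Where you diverge from the paper, and where your account has a genuine gap, is the sheaf you compute on. You work with sections of $\omega^{k-2}\tensor_{\O_X}\Omega^1_X$, and your own factor count --- $(M/N)$ from each of the $k-2$ copies of $\omega$ via $dz'\mapsto(M/N)\,dz$, plus one more $(M/N)$ from $d\tau'\mapsto(M/N)\,d\tau$ --- gives $(M/N)^{k-1}$, not the asserted $(M/N)^{k}$; the phrase ``and the change of variable inside $f$'' doesn't produce another power of $M/N$. The paper avoids this by instead identifying $f$ with $\tilde f\,(2\pi\i\,dz)^{\tensor k}$ as a section of $\omega_{E_\H/\H}^{\tensor k}$ and pulling back along $\Theta^\an$, which yields $(M/N)^{k}$ directly. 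The discrepancy between the two readings is not notational: $\theta_{M,N}$ is covered fibrewise by a degree-$(M/N)$ \emph{isogeny}, not an isomorphism, and under such a map the Kodaira--Spencer identification $\omega^{\tensor 2}\cong\Omega^1_X(\text{cusps})$ used to pass between the two presentations is sensitive to the degree (Serre duality on the fibers picks up a factor of $\deg$). So you cannot get from your $(M/N)^{k-1}$ on $\omega^{k-2}\tensor\Omega^1$ to the claimed $(M/N)^{k}$ without explicitly tracking how KS interacts with the level-change map, and your closing paragraph --- which correctly flags $q$-expansion and component bookkeeping as pitfalls --- does not touch this particular issue. To close the gap you should either carry out the computation on $\omega^{\tensor k}$ as the paper does, explaining why that subspace of $\HL^0(X,\omega^{\tensor k})$ is the one induced by the motivic morphism on the Hodge filtration, or else keep the $\omega^{k-2}\tensor\Omega^1$ picture and produce the missing factor by analyzing the Kodaira--Spencer morphism under $\theta_{M,N}$ carefully.

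Two smaller remarks. First, the paper switches to $X_1$-level to sidestep the $\phi(N)$-component bookkeeping you rightly flag, which is a tidier way to deal with your worry in the last paragraph. Second, be careful with the cartesianness claim you lean on: the square in \eqref{eqn:level-change-diagram-cartesian} is genuinely cartesian for $\Sigma_{M,N}$ (identity on fibers), but for $\Theta_{M,N}$ the fibers of $\overline E(M)$ and of $\theta_{M,N}^*\overline E(N)$ differ by the quotient isogeny, so the isomorphism $\vartheta^*\omega_{\overline E(N)/X(N)}\isom\omega_{\overline E(M)/X(M)}$ in this case comes from the isogeny pullback, not from base change --- this is precisely why the degree of the isogeny matters.
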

\begin{proof}
  Let $\vartheta$ stand for either $\sigma_{M,N}$ or $\theta_{M,N}$.
  Because the diagrams \eqref{eqn:level-change-diagram-cartesian} are cartesian and the
  formation of $\omega$ is compatible with base change, we have a canonical isomorphism $\vartheta^*\omega_{\overline E(N)/X(N)}\isom\omega_{X(M)}$, so $\vartheta$ induces a morphism
  \begin{equation}
    \label{eqn:morphism-m-k-m-n}\tag{$*$}
    \HL^0(X(N),\omega_{\overline E(N)/X(N)}^{\tensor k})\ra\HL^0(X(M),\omega_{\overline E(M)/X(M)}^{\tensor k}).
  \end{equation}
  Since the explicit descriptions of the realizations of $\wnk$ come from the Leray spectral
  sequence for the morphism $\KS(N,k)\ra X(N)$ (see \cref{thm:wnk-dr}), the morphism
  \eqref{eqn:level-change-s-k} is the restriction of the morphism
  \eqref{eqn:morphism-m-k-m-n} (tensored with $\Q$) to $\S_k(\Gamma(N),\Q)$. The morphism
  \eqref{eqn:morphism-m-k-m-n} can be constructed analogously with $X(N)$ replaced by
  $X_1(N)$. For simplicity and to avoid the technical complications mentioned in
  \cref{footnote:tuple}, we prove the claim for this morphism instead; it is clear that the
  actual statement can be proved in the same way.

  To prove this, we define  maps
  \begin{align*}
    \Sigma_{M,N}^\an\colon \C\times\H\ra\C\times\H,\quad & (z,\tau)\mapsto(
                                                           z,\tau) \\
    \Theta_{M,N}^\an\colon \C\times\H\ra\C\times\H,\quad & (z,\tau)\mapsto(\textstyle\frac M N z,\textstyle\frac M N \tau) \\
    \sigma_{M,N}^\an\colon \H\ra\H,\quad & \tau\mapsto\tau \\
    \theta_{M,N}^\an\colon \H\ra\H,\quad & \tau\mapsto\textstyle\frac M N \tau
  \end{align*}
  and show that they induce the corresponding maps named in the same way without
  \enquote{$^\an$} on the analytifications of the modular curves. To simplify the notation,
  we henceforth omit the subscripts \enquote{${}_{M,N}$}. First observe that obviously
  $\Sigma^\an$ lies over $\sigma^\an$ and $\Theta^\an$ lies over $\theta^\an$. Next we check
  that $\Sigma^\an$ and $\Theta^\an$ are compatible with the matrix action introduced in
  \eqref{eqn:action-slz-ctimesh}. For $\Sigma^\an$ this is 
  trivial.  For $\Theta^\an$ we note that it is just the action of the matrix
  $\binmatrix{M/N}{}{}{1}$ and that
  \begin{equation*}
    \binmatrix{M/N}{}{}{1}\Gamma_1(M){\binmatrix{M/N}{}{}{1}}^{-1}\subseteq\Gamma_1(N).
  \end{equation*}
  Hence we get induced maps
  \begin{equation*}
    \Sigma^\an,\Theta^\an\colon\lquot{(\C\times\H)}{\Gamma_1(M)}\ra\lquot{(\C\times\H)}{\Gamma_1(N)}
  \end{equation*}
  and analogously with $\sigma^\an$ and $\theta^\an$. Finally we need to check that for the \enquote{relative lattice} $\Lambda\subseteq\C\times\H$ introduced in \eqref{eqn:def-lambda} we have $\Sigma^\an(\Lambda),\Theta^\an(\Lambda)\subseteq\Lambda$. But this is clear from the representation \eqref{eqn:def-lambda-alt} and the definitions of $\Sigma^\an$ and $\Theta^\an$.

  To summarize, we now have commuting maps
  \begin{equation*}
    \begin{tikzpicture}
      \matrix (m) [matrix of math nodes, row sep=2em, column sep=1.5em, text height=1.5ex, text depth=0.25ex]
      {  E_1(M)^\an   &  E_1(N)^\an & & &  E_1(M)^\an   &  E_1(N)^\an\\
        Y_1(M)^\an   & Y_1(N)^\an & & & Y_1(M)^\an   & Y_1(N)^\an. \\};        
      \path[->,font=\scriptsize]
      (m-1-1) edge node [auto] {$\Sigma^\an$} (m-1-2)
      (m-1-1) edge node [left] {$f$} (m-2-1)
      (m-1-2) edge node [left] {$f$} (m-2-2)
      (m-2-1) edge node [above] {$\sigma^\an$} (m-2-2)
      (m-1-5) edge node [auto] {$\Theta^\an$} (m-1-6)
      (m-1-5) edge node [left] {$f$} (m-2-5)
      (m-1-6) edge node [left] {$f$} (m-2-6)
      (m-2-5) edge node [above] {$\theta^\an$} (m-2-6);
    \end{tikzpicture}
  \end{equation*}
  In terms of the moduli description by definition the maps are given by
  \begin{equation*}
    \Sigma^\an\left(E_\tau,\textstyle\frac1M,z\right) = \left(E_\tau,\textstyle\frac1N,
      z\right),\quad \Theta^\an\left(E_\tau,\textstyle\frac1M,z\right) = \left(E_{\frac M N\tau},\textstyle\frac1N,\textstyle\frac M N z\right)
  \end{equation*}
  (where $E_\tau=\C/(\Z\oplus\Z\tau)$ etc.).
  We need to compare this to the definitions in \eqref{eqn:dfn-change-of-level-morphisms}. For $\Sigma$ it is clear that the definitions are compatible because $\frac M N\cdot\frac1 M=\frac 1N$. For $\Theta$ we note that $\frac M N(\Z\oplus\Z\tau)\subseteq\Z\oplus\frac M N\Z\tau$, so the multiplication-by-$\frac M N$ map $\C\ra\C$ induces a surjective homomorphism $E_\tau\surj E_{\frac M N\tau}$ with kernel $\frac N M$. Hence as a point in the moduli space $E_1(N)^\an$ we have
  \begin{equation*}
    \left(E_{\frac M N \tau},\frac1N,\frac M N z\right)=\left(\quotient{E_\tau}{\frac N M},\frac1M,z\right),
  \end{equation*}
  so the definitions of $\Theta$ are also compatible.

  Now statement \ref{prop:change-of-level-pullback-h:sigma} is clear. For
  \ref{prop:change-of-level-pullback-h:theta} we identify $f$ with the differential form
  $\tilde f(2\pi\i\d z)^{\tensor k}$ on $\H$, i.\,e.\ its image under the canonical map
  \[ \S_k(\Gamma_1(N),\C)\ra\HL^0(\H,(\omega_{E_\H/\H})^k) \] (where
  $\omega_{E_\H/\H}\da f_*\Omega^1_{E_\H/\H}$ and $z$ is a coordinate on $\C$). Then
  $\theta$ sends $f$ to
  \begin{equation*}
    (\theta^\an)^*(\tilde f(2\pi\i\d z)^{\tensor k})=(\tilde f\circ\theta^\an)(2\pi\i \d(z\circ\Theta^\an))^{\tensor k}={\left(\frac M N\right)}^k(\tilde f\circ\theta^\an)(2\pi\i \d z)^{\tensor k}.
  \end{equation*}
\end{proof}

\begin{cor}\label{cor:motivic-refinements}
  Let $N\ge4$, 
  $f\in\S_k(\Gamma_1(N)^\arithm,K)$ be an eigenform away from the level and let $\alpha,\beta$ be the roots of the $p$-th Hecke polynomial \eqref{eqn:polynomial-alpha-beta}. Assume that they lie in $K$ and that $K$ contains the $Np$-th roots of unity. Then there exist two canonical morphisms of motives \[ \Ref_\alpha,\Ref_\beta\colon\wnk\tensor_\Q K\ra\wnklevel{Np}\tensor_\Q K \] such that the induced morphisms on the intermediate step in the Hodge filtration on de Rham realizations $\S_k(\Gamma(N),K)\ra\S_k(\Gamma(Np),K)$ map $f$ to its two refinements $f_\alpha$ and $f_\beta$, respectively.
\end{cor}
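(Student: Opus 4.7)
The plan is to construct $\Ref_\alpha$ and $\Ref_\beta$ as explicit $K$-linear combinations of the change-of-level morphisms from \cref{prop:change-of-level-on-motives}. Concretely, I would set
\[
  \Ref_\alpha \da \sigma_{Np,N} - \tfrac{\beta}{p^k}\,\theta_{Np,N}, \qquad \Ref_\beta \da \sigma_{Np,N} - \tfrac{\alpha}{p^k}\,\theta_{Np,N},
\]
interpreted as morphisms $\wnk\tensor_\Q K \to \wnklevel{Np}\tensor_\Q K$ in the category of Grothendieck motives with $K$-coefficients. These are well-defined because both $\sigma_{Np,N}$ and $\theta_{Np,N}$ belong to $\Hom(\wnk,\wnklevel{Np})$ by \cref{prop:change-of-level-on-motives}, and after tensoring the Hom groups with $K$ we are free to take arbitrary $K$-linear combinations.

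Verifying the formula on the intermediate Hodge-filtration step is then a direct computation using \cref{prop:change-of-level-pullback-h} with $M = Np$, so that $M/N = p$: it yields $\sigma_{Np,N}(f)(\tau) = f(\tau)$ and $\theta_{Np,N}(f)(\tau) = p^k f(p\tau)$, and hence
\[
  \Ref_\alpha(f)(\tau) = f(\tau) - \tfrac{\beta}{p^k}\cdot p^k f(p\tau) = f(\tau) - \beta f(p\tau) = f_\alpha(\tau),
\]
with the symmetric computation giving $\Ref_\beta(f) = f_\beta$.

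The only bookkeeping subtlety I foresee is that the hypothesis places $f$ in $\S_k(\Gamma_1(N)^\arithm, K)$, whereas the intermediate Hodge step of $\wnk$ is $\S_k(\Gamma(N), K)$ attached to the curve $X(N)$. Using that $K$ contains the $Np$-th roots of unity, the naive and arithmetic variants of the level-$Np$ modular curves become canonically isomorphic over $K$, and the natural forgetful morphism $X(N) \to X_1(N)^\arithm$ embeds $f$ into $\S_k(\Gamma(N), K)$ (with the $\phi(N)$-fold component structure noted in \cref{sec:wnk}); the identity $f_\alpha = f - \beta f(p\tau)$ survives this embedding. I do not anticipate any deeper obstacle: the argument reduces to the two propositions already in place, and the principal thing to track carefully is the $p^k$ scaling inherent in $\theta_{Np,N}$, which forces the coefficient $\beta p^{-k}$ rather than $\beta$ in the definition of $\Ref_\alpha$.
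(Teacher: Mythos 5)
Your construction is essentially identical to the paper's own one-line proof: both build $\Ref_\alpha$ and $\Ref_\beta$ as $K$-linear combinations of the change-of-level morphisms $\sigma_{Np,N}$ and $\theta_{Np,N}$ from \cref{prop:change-of-level-on-motives} and then read off the effect on cusp forms via \cref{prop:change-of-level-pullback-h}. Your bookkeeping is in fact more careful than the paper's sketch, which writes the morphisms as $\sigma_{Np,N}+p^{-k}\gamma\theta_{Np,N}$ with $\gamma\in\{\alpha,\beta\}$ — an apparent sign and labeling slip, since $\theta_{Np,N}(f)(\tau)=p^k f(p\tau)$ together with $f_\alpha = f - \beta f(p\cdot)$ forces the coefficient $-\beta p^{-k}$ exactly as you wrote it.
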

\begin{proof}
  Just define the morphisms as $\sigma_{Np,N}+p^{-k}\gamma\theta_{Np,N}$ for $\gamma\in\{\alpha,\beta\}$ with $\sigma$ and $\theta$ as in \cref{prop:change-of-level-on-motives}. The claim then follows from the definition of the refinements and \cref{prop:change-of-level-pullback-h}.
\end{proof}

\subsection{Poincar\'e duality}
\label{sec:poincare}

At this point we need the Atkin-Lehner operator and the adjoint Hecke algebra. From the
classical point of view the Atkin-Lehner operator $[w_{N}]$ on $\S_k(\Gamma_1(N),\C)$ is given by
the action of the matrix \[ w_{N}=\binmatrix{0}{-1}{N}{0}. \] Using this we define the
adjoint Hecke operator $T_\ell^\iota$ (for any prime $\ell$) to be
$[w_{N}]T_\ell[w_{N}]^{-1}$ and similarly for diamond operators (for these we actually
have $\diamondop d^\iota=\diamondop d^{-1}$).
Replacing the
original Hecke operators by the adjoint ones in the definition of Hecke algebras we obtain
adjoint Hecke algebras, which we denote by $\heckeT_k^\iota(N,R)$ etc.

The Atkin-Lehner operator has a description as an involution $w_{N}$ of the moduli space
$Y_1(N)^\arithm$ for which we refer to \cite[1.4.2]{FukayaKatoConjecturesSharifi} inducing
endomorphisms of cohomology groups of $Y_1(N)^\arithm$. On modular forms this recovers
the previous classical description.

The theory of Poincare duality for Chow motives (see \cite[§VI.1, §VI.2.1.5]{MR1623774})
applied to $\wnk$ yields the following result (see also \cite[§2.4--5]{MR1849260} for more
details, where our motive $\wnk$ is denoted $M_!$ or $M_{N,!}$).

\begin{thm}
  There is a perfect pairing of motives
  \begin{equation}
    \label{eqn:wnk-poincare-pairing}
    \paarung{\cdot}{\cdot}\colon\wnk \times \wnk \ra \Q(1-k).
  \end{equation}
  The adjoints of the Hecke operators $T_p$ and $\diamondop d$ with respect to this pairing
  are $T_p^\iota$ and $\diamondop d^\iota$, respectively.
\end{thm}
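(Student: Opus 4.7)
The plan is to deduce the pairing from Poincar\'e duality for Chow motives applied to the Kuga-Sato variety $\KS(N,k)$, and then to compute adjoints of Hecke correspondences using the classical description on the modular curve together with the Atkin-Lehner involution.

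First I would invoke Poincar\'e duality for Chow motives of smooth projective varieties (as in \cite[§VI.2.1.5]{MR1623774}). Since $\KS(N,k)$ is smooth projective over $\Q$ of dimension $k-1$ (the curve $X(N)$ has dimension $1$ and we take a $(k-2)$-fold fiber product of $\overline E(N)$), this yields a perfect pairing
\[
  h(\KS(N,k))\times h(\KS(N,k))\ra\Q(1-k).
\]
Next I would observe that the projector $\pi_\ep\in\Q[G(N,k)]$ is self-transpose with respect to this pairing. Indeed, the transpose of the graph of an automorphism $g$ of $\KS(N,k)$ is the graph of $g^{-1}$; hence the transpose of $\pi_\ep=\tfrac1{|G(N,k)|}\sum_g\ep(g)g$ is $\tfrac1{|G(N,k)|}\sum_g\ep(g)g^{-1}=\pi_\ep$ because $\ep$ takes values in $\{\pm1\}$ and is therefore invariant under inversion. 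Consequently, the Poincar\'e pairing descends to a perfect pairing on the submotive $\wnk=(\KS(N,k),\pi_\ep)$ with values in $\Q(1-k)$, as claimed.

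For the statement about adjoints, the key point is that if $\Gamma_T\subseteq \KS(N,k)\fibertimes_\Q\KS(N,k)$ is the correspondence inducing a Hecke operator $T$, then its transpose $\Gamma_T^{\top}$ (obtained by swapping the two factors) induces the adjoint of $T$ with respect to Poincar\'e duality. The Hecke correspondences on $\KS(N,k)$ are lifts of the standard Hecke correspondences on the modular curve (cut out via the group action that defines $\pi_\ep$), so the computation reduces to the classical one on the modular curve, where it is well-known (and easily verified at the level of double cosets) that the transpose of the correspondence for $T_\ell$ is $[w_N]\circ T_\ell\circ[w_N]^{-1}=T_\ell^\iota$, and the transpose of the correspondence for $\diamondop d$ is $\diamondop{d^{-1}}=\diamondop d^\iota$. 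I would carry this out by writing the Hecke correspondence $T_\ell$ at level $\Gamma_1(N)^\arithm$ as the double coset $[\Gamma_1(N)\binmatrix{1}{0}{0}{\ell}\Gamma_1(N)]$ (together with the obvious lift to the universal elliptic curve), swapping factors to form the transpose, and applying the matrix identity $w_N\binmatrix{1}{0}{0}{\ell}w_N^{-1}=\ell\binmatrix{\ell^{-1}}{0}{0}{1}$ to recognize the result as $T_\ell^\iota$; the analogous but simpler computation for diamond operators is immediate.

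The step I expect to be the main obstacle is the compatibility between the Kuga-Sato description and the modular-curve description of the Hecke correspondences: one has to check that transposing the correspondence in $\KS(N,k)\fibertimes_\Q\KS(N,k)$ is compatible with the action of the group $G(N,k)$ (so that the transpose is again of the form we want after applying $\pi_\ep$), and that the lift of the Atkin-Lehner involution to $\KS(N,k)$ intertwines the Hecke correspondence with its transpose up to the projector $\pi_\ep$. Once these bookkeeping points are settled, the theorem follows.
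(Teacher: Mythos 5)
Your proposal takes essentially the same route as the paper, which simply cites the general theory of Poincaré duality for Chow motives (Kleiman's axiomatic setup, together with the details in the reference where $\wnk$ appears as $M_!$). Your discussion of the self-transposeness of $\pi_\ep$ (the graph of $g$ transposes to the graph of $g^{-1}$, and $\ep$ is $\{\pm1\}$-valued hence inversion-invariant) and your identification of the Poincaré adjoint with the transposed correspondence are exactly the right ingredients that the citation packages up, and you correctly isolate the compatibility of the Kuga--Sato-level correspondences with those on the modular curve as the part requiring care.

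One small but genuine slip: the matrix identity you invoke is wrong as stated. With $w_N=\binmatrix{0}{-1}{N}{0}$ one computes
\[
  w_N\binmatrix{1}{0}{0}{\ell}w_N^{-1}
  =\binmatrix{\ell}{0}{0}{1},
\]
whereas $\ell\binmatrix{\ell^{-1}}{0}{0}{1}=\binmatrix{1}{0}{0}{\ell}$ is just the original matrix; your written identity would therefore assert that $w_N$ commutes with $\binmatrix{1}{0}{0}{\ell}$, which is false. The correct conclusion is that conjugation by $w_N$ carries $\binmatrix{1}{0}{0}{\ell}$ to $\binmatrix{\ell}{0}{0}{1}$, which is the matrix representing the transposed Hecke correspondence (equivalently, the main involution applied to $\binmatrix{1}{0}{0}{\ell}$). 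This is exactly what is needed to identify the transpose of $T_\ell$ with $[w_N]T_\ell[w_N]^{-1}=T_\ell^\iota$, so the argument survives—just fix the displayed identity.
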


Of course this pairing yields perfect pairings on each realization. Moreover, these pairings
on the realization are compatible with the comparison isomorphisms in the following sense: If $?_1,?_2\in\{\betti, \dR, p\}$ and $B$ is the period ring using to compare the $?_1$- and $?_2$-realizations (i.\,e.\ $B\in\{\C,\Qp,\BdR\})$, then the diagram
\[\begin{tikzpicture}
  \matrix (m) [matrix of math nodes, row sep=6ex, column sep=4em, text height=1.5ex, text depth=0.25ex]
  { (\wnk_{?_1}\tensor B)\times(\wnk_{?_1}\tensor B) & \Q(1-k)_{?_1}\tensor B \\
    (\wnk_{?_2}\tensor B)\times(\wnk_{?_2}\tensor B) & \Q(1-k)_{?_2}\tensor B \\};
  \path[->,font=\scriptsize]
  (m-1-1) edge node [above] {$\paarung{\cdot}{\cdot}_{?_1}$} (m-1-2)
  (m-1-1) edge node [left] {$\sim$} (m-2-1)
  (m-1-2) edge node [left] {$\sim$} (m-2-2)
  (m-2-1) edge node [above] {$\paarung{\cdot}{\cdot}_{?_2}$} (m-2-2);
\end{tikzpicture}\]
commutes (where the vertical maps are the comparison isomorphisms).

Via the Eichler-Shimura isomorphism, we can view the Betti realization of the pairing
$\paarung{\cdot}{\cdot}$ as a pairing on the space
$\S_k(\Gamma,\C)\oplus\overline{\S_k(\Gamma,\C)}$. It is closely related to the Petersson scalar
product on $\S_k(\Gamma,\C)$, which we denote by $\paarung{\cdot}{\cdot}_{\Pet}$.
\begin{prop}\label{prop:petersson-skp}
  We have \[ {\paarung{\ES(f_1\oplus\overline g_1)}{\ES(f_2\oplus\overline g_2)}}=C\cdot\left(\paarung{f_1}{g_2}_{\Pet}+(-1)^{k+1}\paarung{f_2}{g_1}_{\Pet}\right) \]
  with a nonzero constant $C\in\C^\times$ depending only on $k$.
\end{prop}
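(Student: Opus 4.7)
The plan is to realize the Betti incarnation of the Poincaré pairing on $\wnk$ as the classical cup-product pairing
\[
\Hp^1(Y,\Sym^{k-2}\RD^1f_*\smuline\C)\times\Hp^1(Y,\Sym^{k-2}\RD^1f_*\smuline\C)\ra\Hc^2(Y,\smuline\C)\cong\C(1-k),
\]
where on the coefficient sheaf one uses the $(-1)^{k-2}$-symmetric pairing on $\Sym^{k-2}$ induced by the fiberwise symplectic form $\omega$ on $\RD^1f_*\smuline\C$ (so $\langle v^{k-2},w^{k-2}\rangle=(k-2)!\,\omega(v,w)^{k-2}$), while the trace $\Hc^2(Y,\smuline\C)\to\C(1-k)\tensor\C$ is integration, combined with the appropriate Tate twist. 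Via the de Rham isomorphism, this cup product is computed by wedging representative $\Sym^{k-2}\C^2$-valued $1$-forms and integrating. I would then feed in the explicit cocycles provided by \cref{prop:es-explicit}, so that $\ES(f\oplus\overline g)$ is represented on $\H$ by $\omega_f+\omega_{\overline g}$ with $\omega_f=(2\pi\i)^{k-1}(zX+Y)^{k-2}f\,\d z$ and $\omega_{\overline g}=(-2\pi\i)^{k-1}(\overline zX+Y)^{k-2}\overline g\,\d\overline z$.

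Expanding bilinearly, $\paarung{\ES(f_1\oplus\overline{g_1})}{\ES(f_2\oplus\overline{g_2})}$ becomes the sum of the four integrals of $\omega_{f_1}\wedge\omega_{f_2}$, $\omega_{f_1}\wedge\omega_{\overline{g_2}}$, $\omega_{\overline{g_1}}\wedge\omega_{f_2}$ and $\omega_{\overline{g_1}}\wedge\omega_{\overline{g_2}}$. The \enquote{pure} terms (first and last) vanish because each contains $\d z\wedge\d z$ or $\d\overline z\wedge\d\overline z$. For the mixed terms the key identity is $\omega(zX+Y,\overline zX+Y)=z-\overline z=2\i y$, so that
\[
\langle (zX+Y)^{k-2},(\overline zX+Y)^{k-2}\rangle_{\Sym^{k-2}}=(k-2)!\,(2\i y)^{k-2}.
\]
Combined with $\d z\wedge\d\overline z=-2\i\,\d x\,\d y$ and the constant $(2\pi\i)^{k-1}(-2\pi\i)^{k-1}=(2\pi)^{2k-2}$, a direct computation shows that $\int_Y \omega_{f_1}\wedge\omega_{\overline{g_2}}$ is (up to a nonzero scalar $C$ depending only on $k$) the Petersson integral $\paarung{f_1}{g_2}_\Pet$.

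The remaining term $\int_Y\omega_{\overline{g_1}}\wedge\omega_{f_2}$ is of the same shape but with the roles of holomorphic and antiholomorphic forms reversed. Two sign changes appear: the antisymmetry $\d\overline z\wedge\d z=-\d z\wedge\d\overline z$, and the $(-1)^{k-2}$-symmetry of the symplectic pairing on $\Sym^{k-2}\C^2$. Together these produce a factor $(-1)^{k-1}=(-1)^{k+1}$ relative to the first cross term, yielding exactly $(-1)^{k+1}C\paarung{f_2}{g_1}_\Pet$ and establishing the claimed formula.

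The main obstacle is not conceptual but one of conventions: one must pin down compatible normalizations of the Poincaré pairing on $\wnk$, the symplectic pairing on $\Sym^{k-2}\C^2$, the trace $\Hc^2(Y,\smuline\C)\to\C(1-k)\tensor\C$, and the Petersson product, and verify that all the powers of $2\pi\i$ and all the signs from reversals assemble correctly. Once this bookkeeping is done, the constant $C$ is completely determined and manifestly depends only on $k$.
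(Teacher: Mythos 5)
The paper's proof here is a one-line citation to Kato \cite[(10), p.~680]{MR2103471}, whereas you supply a direct computation; both are valid, so your route is genuinely different even though the underlying content is the same. Your argument correctly realizes the Betti incarnation of the Poincaré pairing as cup product with coefficients in the $(-1)^{k-2}$-symmetric pairing on $\Sym^{k-2}$ induced by the fiberwise symplectic form, represents $\ES(f\oplus\overline g)$ by the closed $1$-form $\omega_f+\omega_{\overline g}$ (whose period integrals give the cocycle of \cref{prop:es-explicit}), and expands bilinearly. The pure terms vanish because $dz\wedge dz=d\overline z\wedge d\overline z=0$; the first mixed term produces the Petersson integral via $\omega(zX+Y,\overline zX+Y)=2iy$ and $dz\wedge d\overline z=-2i\,dx\,dy$; and swapping the two factors picks up $(-1)^{k-2}$ from the symmetry of the $\Sym^{k-2}$-pairing times $(-1)$ from anticommutativity of $1$-forms, i.e.\ $(-1)^{k+1}$, exactly as in the statement. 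What this buys over the citation is an explicit, traceable value of $C$; the bookkeeping you defer (pinning down consistent normalizations of the motivic Poincaré pairing, the coefficient pairing on $\Sym^{k-2}$, and the top-degree trace on $\Hc^2(Y,\smuline\C)$) affects only the precise constant, not its nonvanishing or its dependence only on $k$, so the proposal stands — as long as one also notes, as is standard, that one representative should be taken with compact support to land in $\Hc^2$, which for cusp forms is immaterial to the convergence of the resulting Petersson integrals.
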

\begin{proof}
  \cite[(10), p.\ 680]{MR2103471}
\end{proof}

Via the exact sequence
\[ 0\ra\S_k(\Gamma(N),\Q)\ra\wnk_\dR\ra\HL^1(X(N),\omega_{X(N)}^{2-k})\ra0 \] coming from the
Hodge filtration, the pairing on the de Rham realization induces a perfect pairing
\[ \S_k(\Gamma(N),\Q)\times\HL^1(X(N),\omega_{X(N)}^{2-k})\ra\Q(1-k)_\dR \] which coincides with
the Serre duality pairing under the canonical identification $\Q(1-k)_\dR\cong\Q$.  To see
that this is well-defined, it suffices to check that the pairing
${\paarung{\cdot}{\cdot}}_\dR$ restricted to $\fil^0\wnk_\dR=\S_k(\Gamma(N),\Q)$ vanishes, which
by \cref{thm:compatibility-es-comparison} is equivalent to the vanishing of the pairing on
$\S_k(\Gamma(N),\C)$ induced by the pairing ${\paarung{\cdot}{\cdot}}_\betti$ via the
Eichler-Shimura isomorphism, and this follows immediately from
\cref{prop:petersson-skp}. From this we obtain the following corollary.

\begin{cor}\label{cor:gr-zero-mf}
  Let $f$ be a newform with coefficients in the number field $K$.
  The space $\gr^0\Mf_\dR$ is canonically isomorphic to the dual space of the subspace of $\S_k(\Gamma_1(N),K)$ generated by $f[w_N]$.
\end{cor}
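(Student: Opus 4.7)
The plan is to combine \cref{prop:gr-wnk-sk-dual} with the Hecke equivariance properties of the Poincar\'e pairing $\paarung{\cdot}{\cdot}_\dR$ from \cref{sec:poincare}. By \cref{prop:gr-wnk-sk-dual} there is a canonical isomorphism $\gr^0\wnk_\dR \cong \S_k(\Gamma(N), \Q)^\vee$ coming from Serre duality, and as noted in the paragraph preceding the corollary, this Serre duality pairing agrees with the perfect pairing $\S_k(\Gamma(N), \Q) \times \gr^0\wnk_\dR \to \Q$ induced by $\paarung{\cdot}{\cdot}_\dR$ on passing to the graded pieces of the Hodge filtration.

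Next I would invoke the Hecke equivariance from \cref{sec:poincare}: with respect to $\paarung{\cdot}{\cdot}_\dR$ the adjoints of $T_\ell$ and $\diamondop d$ are $T_\ell^\iota$ and $\diamondop d^\iota$, respectively. Combined with the $\GL_2(\zmod N)$-equivariance of the pairing, after tensoring with $K$ this realizes the subspace $V \subseteq \gr^0\wnk_\dR \otimes_\Q K$ cut out by the conditions $T_\ell = a_\ell$ for $\ell \nmid N$ and trivial action of $\binmatrix 1*0* \subseteq \GL_2(\zmod N)$ as the $K$-linear dual of the subspace $V' \subseteq \S_k(\Gamma(N), K)$ cut out by $T_\ell^\iota = a_\ell$ for $\ell \nmid N$ and trivial action of $\binmatrix 1*0*$. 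By the very definition of $\Mf$ as a submotive of $\wnk \otimes_\Q K$, the space $V$ is precisely $\gr^0\Mf_\dR$.

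It remains to identify $V'$ with $K \cdot f[w_N]$. The condition that $\binmatrix 1*0*$ acts trivially cuts $\S_k(\Gamma(N), K)$ down to $\S_k(\Gamma_1(N), K)$, on which the identity $T_\ell^\iota = [w_N] T_\ell [w_N]^{-1}$ holds by definition. Hence $g \in \S_k(\Gamma_1(N), K)$ satisfies $T_\ell^\iota g = a_\ell g$ for all $\ell \nmid N$ if and only if $[w_N]^{-1} g$ is a simultaneous $T_\ell$-eigenvector with eigenvalues $a_\ell$. By strong multiplicity one for the newform $f$, this simultaneous eigenspace in $\S_k(\Gamma_1(N), K)$ is the one-dimensional line $K \cdot f$, so $V' = K \cdot f[w_N]$.

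The main point requiring care is the translation of the abstract $\iota$-adjointness from \cref{sec:poincare} into the classical description on modular forms; in particular, one must check that the Hecke and $\GL_2(\zmod N)$-actions induced on $\gr^0\wnk_\dR$ by the motivic structure on $\wnk$ coincide with the classical ones, so that the formal duality argument indeed yields the line $K \cdot f[w_N]$ and not, say, $K \cdot f$ itself.
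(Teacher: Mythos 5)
Your proposal is correct and fills in exactly the argument the paper leaves implicit after the discussion preceding the corollary: the identification of $\paarung{\cdot}{\cdot}_\dR$ on the graded pieces with Serre duality, followed by passing to Hecke eigenspaces via the $\iota$-adjointness of $T_\ell$ and $\diamondop d$ and the self-adjointness (up to inversion) of $\binmatrix 1*0*$, and finally strong multiplicity one to pin down the line $K\cdot f[w_N]$. This is the same route the paper intends; the only point you could make more explicit is the semisimplicity of the Hecke action over $K$ (needed to pass from a quotient to a subspace under duality), but that is standard and the paper assumes it too.
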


Since it may be handy to have the Hecke operators self-adjoint, it is common to modify the
pairing in the following way: Let
\[ {{\paarung{\cdot}{\cdot}}^\iota_\betti}\da{\paarung\cdot{\cdot[w_N]}}_\betti \] where
$w_N$ is the Atkin-Lehner involution (we will use this only on the Betti realization).  This
modified version is also called {twisted Poincar\'e duality pairing} by some
authors. It is clear that the Hecke operators are self-adjoint with respect to it. We will
not use the modified pairing too much, but some statements are easier to formulate using the
modified version. The following statement is easily checked after tensoring with $\C$
using the description in terms of the Petersson scalar product.

\begin{lem}\label{lem:perfectness-restricted-pairing}
  Let $f$ be a newform with coefficients in the number field $K$.
  The pairing ${\paarung{\cdot}{\cdot}}_\betti^\iota$ restricts to perfect pairings
  \[ \Mf_\betti^\pm\times\Mf_\betti^\mp\ra K(1-k), \]
  while its restriction to $\Mf_\betti^\pm\times\Mf_\betti^\pm$ vanishes.
\end{lem}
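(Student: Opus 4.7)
The plan is to check both claims after tensoring with $\C$, where the Eichler-Shimura isomorphism and \cref{prop:petersson-skp} reduce everything to properties of the Petersson scalar product.

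First, $\Mf_\betti\tensor\C$ is identified as the subspace of $\S_k(\Gamma,\C)\oplus\overline{\S_k(\Gamma,\C)}$ spanned by $v_f\da\ES(f\oplus 0)$ and $v_{f^*}\da\ES(0\oplus\overline{f^*})$ --- using that $T_\ell\overline{f^*}=\overline{\overline{a_\ell}f^*}=a_\ell\overline{f^*}$, so both are $T_\ell$-eigenvectors with eigenvalue $a_\ell$. By \cref{dfn:gr-on-mod-forms} complex conjugation acts on this basis by the matrix $\binmatrix{0}{-1}{-1}{0}$, so the $\pm$-eigenvectors are $v^\pm\da v_f\mp v_{f^*}$.

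Next I would compute the matrix of ${\paarung{\cdot}{\cdot}}_\betti^\iota$ in the basis $(v_f,v_{f^*})$ using \cref{prop:petersson-skp}. For the newform $f$ the Atkin-Lehner relations read $f[w_N]=\lambda_f\cdot f^*$ and $f^*[w_N]=\lambda_{f^*}\cdot f$ with the classical identities $\lambda_f\lambda_{f^*}=(-1)^kN^{k-2}$ (from $w_N^2=(-N)I$) and $|\lambda_f|^2=N^{k-2}$; since $w_N$ is defined over $\Q$ it commutes with complex conjugation, so $\overline{f^*}[w_N]=\overline{\lambda_{f^*}}\,\overline f$. The formula in \cref{prop:petersson-skp} vanishes whenever both arguments are purely holomorphic or purely antiholomorphic, so the diagonal entries $\paarung{v_f}{w_Nv_f}_\betti$ and $\paarung{v_{f^*}}{w_Nv_{f^*}}_\betti$ are zero, while the off-diagonal entries are
\[
A\da\paarung{v_f}{v_{f^*}}_\betti^\iota=\overline{\lambda_{f^*}}\,C\paarung{f}{f}_\Pet,\qquad B\da\paarung{v_{f^*}}{v_f}_\betti^\iota=(-1)^{k+1}\lambda_f\,C\paarung{f^*}{f^*}_\Pet.
\]
Combining the elementary equality $\paarung{f^*}{f^*}_\Pet=\paarung{f}{f}_\Pet$ (a substitution $z\mapsto-\bar z$ in the Petersson integral) with the Atkin-Lehner identities yields $A+B=0$.

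A change of basis now finishes the proof: $\paarung{v^\pm}{v^\pm}_\betti^\iota=0$ (precisely because $A+B=0$) and $\paarung{v^\pm}{v^\mp}_\betti^\iota=\pm(A-B)=\pm2A\neq 0$, giving the vanishing on $\Mf_\betti^\pm\times\Mf_\betti^\pm$ and the perfectness on $\Mf_\betti^\pm\times\Mf_\betti^\mp$. The main obstacle is the sign bookkeeping leading to $A+B=0$; structurally this identity says that ${\paarung{\cdot}{\cdot}}_\betti^\iota$ is \emph{antisymmetric} on $\Mf_\betti\tensor\C$, which reflects the interplay between the $(-1)^{k+1}$-symmetry of the Poincar\'e pairing and the twist by the Atkin-Lehner operator $w_N$ (itself squaring to $(-1)^kN^{k-2}$ on weight-$k$ forms).
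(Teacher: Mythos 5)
Your proof is correct and follows precisely the route the paper sketches in the sentence preceding the lemma (\enquote{easily checked after tensoring with $\C$ using the description in terms of the Petersson scalar product}): tensor with $\C$, identify $\Mf_\betti\tensor\C$ via Eichler--Shimura as $\langle v_f,v_{f^*}\rangle$, and read everything off from \cref{prop:petersson-skp}. The extra bookkeeping with the Atkin--Lehner pseudo-eigenvalues ($\overline{\lambda_{f^*}}=(-1)^k\lambda_f$) leading to $A+B=0$, and the observation that this encodes the antisymmetry of ${\paarung{\cdot}{\cdot}}_\betti^\iota$, is exactly the content the paper leaves to the reader.
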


\section{Modular symbols and Hida families}

\subsection{General and classical modular symbols}

Fix $N\ge4$.
Let $\Sigma\da\GL_2(\Q)\cap\Mat_2(\Z)$.  The group $\GL_2(\Q)$ acts on
$\P^1(\Q)=\Q\cup\{\infty\}$ by fractional linear transformations. Let $\Div(\P^1(\Q))$ be
the free abelian group over the set $\P^1(\Q)$ and let ${\Div^0(\P^1(\Q))}$ be the subgroup
consisting of elements of degree $0$. The left action of $\GL_2(\Q)$ on $\P^1(\Q)$ induces a
left action of $\GL_2(\Q)$ on $\Div(\P^1(\Q))$ and $\Div^0(\P^1(\Q))$.  Let $M$ be an
$R$-module with an $R$-linear right action of $\Sigma$ denoted by
$(m,\alpha)\mapsto m[\alpha]$ for $m\in M$, $\alpha\in\Sigma$. Then
\[ \Hom_\Z(\Div^0(\P^1(\Q)),M)=\Hom_R(R\tensor_\Z\Div^0(\P^1(\Q)),M) \] carries a natural
right $\Sigma$-action defined by
\[ \varphi[\alpha](x) \da \varphi(\alpha x)[\alpha],\quad \text{for }x\in\Div^0(\P^1(\Q)) \]
for $\varphi\in\Hom_\Z(\Div^0(\P^1(\Q)),M)$ and $\alpha\in\Sigma$.  We define the $R$-module
of {modular symbols of level $N$ with coefficients in $M$} to be the $\Gamma_1(N)$-invariants
\[ {\MS(N,M)} \da \HL^0(\Gamma_1(N),\Hom_\Z(\Div^0(\P^1(\Q)),M)). \] On this
module we have Hecke operators $T_n$ for each $n\in\N$ and diamond operators $\diamondop d$
for each $d\in\zmodmal N$ acting as well as an action of the matrix $\EP$. See also
\cite[§2.1]{MR3046279}. We define an action of $\GR$ on $\MS(N,M)$ by letting
the nontrivial element act as $\EP$.

The group $\GL_2(\Q)$ carries the so-called main involution $\iota$ defined by
\[ {\binmatrix a b c d }^\iota := \binmatrix{d}{-b}{-c}{a}. \]
Via this involution we can turn left actions into right actions and vice versa on all the
objects introduced so far. Some texts in the literature use actions from different sides as
we used here, but it is well-known that the resulting spaces of modular symbols and the
Hecke actions on them are the same. From now on we will freely switch between left and right
actions using the involution $\iota$.

The most important special case is that of classical modular symbols, where we use as $M$
the symmetric tensor linear representation $\Sym^{k-2}R^2$ of $\Mat_2(\Z)\cap\GL_2(\Q)$. In
this case we denote it as follows.
\begin{dfn}\label{dfn:classical-ms}
  The $R$-module of {classical modular symbols of weight $k$ and level $N$} is defined as
  \[ {\MS_k(N,R)} \da \MS(N, \Sym^{k-2}R^2). \]
\end{dfn}

From the definition it is clear that we have \[ \MS_k(N,S) = \MS_k(N,R)\tensor_RS \] if $S$ is a flat $R$-algebra.

\begin{prop}\label{prop:iso-classical-ms-honec}
  Write $f\colon E_1(N)^\an\ra Y_1(N)^\an$ for the universal analytic elliptic curve. Then
  there is a canonical Hecke equivariant isomorphism
  \[ \MS_k(N,R) \cong \Hc^1(Y_1(N)^\an, \Sym^{k-2}_{\smuline R}\RD^1f_*\smuline R). \]
\end{prop}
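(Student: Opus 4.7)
The plan is to exhibit both sides as the middle term of the same four-term exact sequence built from group cohomology, and then conclude by the five lemma; Hecke equivariance will follow from the standard translation between moduli-theoretic correspondences and the $\Sigma$-action on modular symbols.

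Since $N\ge4$, the group $\Gamma\da\Gamma_1(N)$ acts freely on $\H$, so $Y_1(N)^\an\cong\lquot\H\Gamma$ is an Eilenberg--MacLane space for $\Gamma$, and with the conventions on base point and basis from the end of \cref{sec:wnk} the local system $\Sym^{k-2}\RD^1f_*\smuline R$ corresponds to the $\Gamma$-module $M\da\Sym^{k-2}R^2$. Hence its singular cohomology computes $\HL^i(\Gamma,M)$. To treat compactly supported cohomology I would pass to the Borel--Serre compactification, whose boundary is a disjoint union of circles $B_c$ indexed by the cusps $c$, each a $K(\Gamma_c,1)$ for $\Gamma_c\cong\Z$ the stabilizer of a lift of $c$. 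The long exact sequence of the pair yields
\[
0\to M^\Gamma\to\bigoplus_c M^{\Gamma_c}\to\Hc^1(Y_1(N)^\an,\Sym^{k-2}\RD^1f_*\smuline R)\to\HL^1(\Gamma,M)\to\bigoplus_c\HL^1(\Gamma_c,M).
\]

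On the modular symbol side, the cusps are precisely the $\Gamma$-orbits on $\P^1(\Q)$ with stabilizers $\Gamma_c$, so $\Div(\P^1(\Q))=\bigoplus_c\operatorname{Ind}_{\Gamma_c}^\Gamma\Z$ as a $\Gamma$-module. Applying $\Hom_\Gamma(-,M)$ to the short exact sequence $0\to\Div^0(\P^1(\Q))\to\Div(\P^1(\Q))\to\Z\to0$ and invoking Shapiro's lemma produces an analogous four-term exact sequence
\[
0\to M^\Gamma\to\bigoplus_c M^{\Gamma_c}\to\MS_k(N,R)\to\HL^1(\Gamma,M)\to\bigoplus_c\HL^1(\Gamma_c,M),
\]
where the map $\MS_k(N,R)\to\HL^1(\Gamma,M)$ sends $\varphi$ to the cocycle $\gamma\mapsto\varphi(\gamma\{\infty\}-\{\infty\})$. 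I would then construct a morphism between these two sequences which is the identity on the four outer terms, and conclude by the five lemma that the middle terms are canonically isomorphic.

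For Hecke equivariance, both sides translate to the same double coset operators on the common group-cohomological presentation: on $\MS_k(N,R)$ via the $\Sigma$-action defined at the start of the section, and on $\Hc^1$ via the moduli-theoretic correspondences combined with the local system identification above. I expect the principal obstacle to be not the existence of the isomorphism, which is essentially folklore, but rather the careful bookkeeping to check that Shapiro's isomorphism, the connecting homomorphisms in the two exact sequences, and the double coset descriptions of the Hecke actions all match on the nose with compatible sign and orientation conventions.
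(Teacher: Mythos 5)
Your proposal is correct and reproduces essentially the argument of the cited reference, Ash--Stevens \cite[Prop.\ 4.2]{MR0860675}, which the paper invokes without reproducing. One small precision in logic: the five-lemma step requires that you first \emph{construct} the comparison map $\MS_k(N,R)\to\Hc^1(Y_1(N)^\an,\Sym^{k-2}_{\smuline R}\RD^1f_*\smuline R)$ (the five lemma does not supply the middle arrow), for instance by sending $\varphi\in\Hom_\Gamma(\Div^0(\P^1(\Q)),M)$ to a relative $1$-cochain on the Borel--Serre pair, and this construction together with the commutativity checks and orientation conventions is exactly where the bookkeeping you anticipate lives.
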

\begin{proof}
%
%
  \cite[Prop.\ 4.2]{MR0860675}
\end{proof}



The above result allows us to relate modular symbols to the Betti realization of $\wnk$.
Consider the composition
\begin{multline}\label{eqn:map-ms-betti}
  \MS_k(N,\Q)\isom\Hc^1(Y_1(N)^\an,\Sym^{k-2}\RD^1f_*\smuline \Q)
\surj\Hp^1(Y_1(N)^\an,\Sym^{k-2}\RD^1f_*\smuline \Q) \\ \inj\Hp^1(Y(N)^\an,\Sym^{k-2}\RD^1f_*\smuline \Q)=\wnk_\betti
\end{multline}
where the first map is from \cref{prop:iso-classical-ms-honec}, the second is tautological
and the last one comes from the morphism $Y(N)\ra Y_1(N)$. This composition is Hecke
equivariant and $\GR$-equivariant. Moreover, the second map is by definition surjective and
the last one is injective since it is so after tensoring with $\C$ (as can be seen using the
Eichler-Shimura isomorphism).

\begin{prop}\label{prop:ms-eigenspaces-free-of-rank-one}
  Let $f\in\S_k(\Gamma_1(N),K)$ be a newform with coefficients in the number field $K$. Then
  the $\O_K$-modules $\MS_k(N,\O_K)^\pm[f]$ are free of rank $1$.
\end{prop}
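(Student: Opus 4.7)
The plan is to establish the statement in two stages: first verify the analogue over the fraction field $K$, then upgrade to $\O_K$.

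First I would show that $\MS_k(N,K)^\pm[f]$ is one-dimensional as a $K$-vector space. Tensoring the Hecke- and $\GR$-equivariant chain \eqref{eqn:map-ms-betti} with $K$ gives a surjection of $\MS_k(N,K)$ onto the parabolic cohomology $\Hp^1(Y_1(N)^\an,\Sym^{k-2}\RD^1f_*\smuline K)$ followed by an injection into $\wnk_\betti\tensor_\Q K$. By multiplicity one for newforms, combined with the Eichler-Shimura description of \cref{thm:eichler-shimura-conrad}, the $f$-isotypic component of $\wnk_\betti\tensor_\Q K$ is two-dimensional over $K$, and the $\GR$-action from \cref{dfn:gr-on-mod-forms} splits it into two one-dimensional $\pm$-eigenspaces. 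Since $f$ is a cusp form, its eigenspace lies entirely in the parabolic subspace, so the maps in \eqref{eqn:map-ms-betti} are isomorphisms on the $f$-eigenspaces and the claim follows.

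Next I would show that $\MS_k(N,\O_K)$ is a finitely generated, torsion-free $\O_K$-module. Finite generation follows from the standard Manin-style presentation of $\Div^0(\P^1(\Q))$ as a $\Gamma_1(N)$-module, combined with the $\O_K$-freeness of $\Sym^{k-2}\O_K^2$. Torsion-freeness is clear from the embedding $\MS_k(N,\O_K)\hookrightarrow\MS_k(N,K)$ induced by flatness. Hence $\MS_k(N,\O_K)^\pm[f]$ is a finitely generated torsion-free $\O_K$-submodule of $\MS_k(N,K)^\pm[f]$ of $K$-rank exactly $1$, and therefore isomorphic to a nonzero fractional ideal of $\O_K$.

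The main obstacle is upgrading ``fractional ideal'' to ``principal fractional ideal'' (equivalently, to freeness over the Dedekind domain $\O_K$). My plan is to invoke a multiplicity-one/Gorenstein input at the integral level: the $\frakm_f$-localization $\MS_k(N,\O_K)^\pm_{\frakm_f}$ is free of rank one over the Gorenstein local ring $\hecket_k(N,\O_K)_{\frakm_f}$, where $\frakm_f$ is the maximal ideal of the integral Hecke algebra $\hecket_k(N,\O_K)$ attached to $f$. This applies in our setting because $f$ is a cuspidal newform, so $\frakm_f$ is non-Eisenstein. Tensoring this freeness along the eigenvalue surjection $\hecket_k(N,\O_K)_{\frakm_f}\twoheadrightarrow\O_K$, $T_\ell\mapsto a_\ell(f)$, then yields the desired rank-one freeness of $\MS_k(N,\O_K)^\pm[f]$ over $\O_K$.
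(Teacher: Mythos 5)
Your first two stages are fine: multiplicity one together with Eichler--Shimura gives the rank-$1$ statement over $K$, and observing that $\MS_k(N,\O_K)^\pm[f]$ is a finitely generated torsion-free rank-$1$ $\O_K$-module correctly reduces the problem to showing that this fractional ideal of $\O_K$ is principal. The whole content of the proposition lies in this last step, and there your argument does not go through.

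The localization $\hecket_k(N,\O_K)_{\frakm_f}$ is a local ring whose maximal ideal lies over a \emph{single} prime $\frakp$ of $\O_K$, so the ``eigenvalue surjection'' out of it cannot have target $\O_K$; its image lands in $\O_{K,\frakp}$. Base-changing any $\hecket_k(N,\O_K)_{\frakm_f}$-freeness along this map can therefore at most yield freeness of $\MS_k(N,\O_K)^\pm[f]\otimes_{\O_K}\O_{K,\frakp}$ over the DVR $\O_{K,\frakp}$. But that is already automatic from your stage two (every finitely generated torsion-free module over a Dedekind domain is locally free) and says nothing about the class of the fractional ideal, which is precisely the remaining obstruction to global freeness over $\O_K$. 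So the Gorenstein input contributes nothing toward the actual claim. Beyond that there are two further soft spots: being non-Eisenstein does not by itself make the Hecke localization Gorenstein or make the cohomology free over it --- those multiplicity-one theorems require additional hypotheses on level, weight and the residual representation and fail in general; and $\MS_k(N,\O_K)^\pm[f]$ is the Hecke \emph{invariants} (a submodule, a $\Hom$), whereas ``tensoring along the surjection'' computes \emph{coinvariants}, and equating the two needs a separate duality argument even in a local Gorenstein setting. For comparison, the paper gives no in-text argument for this proposition at all: it simply cites \cite[Prop.~3.3]{MR1279604}.
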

\begin{proof}
  \cite[Prop.\ 3.3]{MR1279604}
\end{proof}

\begin{lem}\label{lem:ms-wnk-betti-iso}
  For each newform $f\in\S_k(\Gamma_1(N),K)$ with coefficients in the number field $K$, the map \eqref{eqn:map-ms-betti} induces isomorphisms \[ \MS_k(N,K)^\pm[f]\isom\Mf_\betti^\pm. \]
\end{lem}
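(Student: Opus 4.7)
The plan is to break up the composition \eqref{eqn:map-ms-betti} and show that, after restricting to the $f$-isotypic $\pm$-eigenspaces, each of the three arrows becomes an isomorphism. A dimension count reduces the problem to showing a well-defined, nonzero map of $K$-vector spaces: the source is $1$-dimensional over $K$ by \cref{prop:ms-eigenspaces-free-of-rank-one} together with flat base change $\O_K\to K$, and the target is $1$-dimensional because $\Mf$ has rank $2$ and complex conjugation acts with trace $0$ (the Galois representation is odd), so $\pm1$ each occur with multiplicity one.

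First I would verify that the map lands in $\Mf_\betti^\pm$. Since \eqref{eqn:map-ms-betti} is Hecke- and $\GR$-equivariant, restricting to the $\pm$-eigenspaces and to the subspace on which $T_\ell$ acts as $a_\ell$ (for $\ell\nmid N$) and $\diamondop d$ acts as $\psi(d)$ is harmless. The remaining condition defining $\Mf_\betti$ is that the subgroup $\binmatrix1*0*\subseteq\GL_2(\Z/N)$ act trivially. This is automatic for our image, since the last arrow of \eqref{eqn:map-ms-betti} is pullback along $Y(N)^\an\to Y_1(N)^\an$, which is the quotient by precisely this subgroup, so any pulled-back class is $\binmatrix1*0*$-invariant.

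For nonzero-ness it suffices to show that each of the three arrows in \eqref{eqn:map-ms-betti} is injective on the $f$-isotypic $\pm$-component. The first is an isomorphism by \cref{prop:iso-classical-ms-honec}. The third is injective, as noted in the paragraph preceding the statement. For the middle surjection $\Hc^1\twoheadrightarrow\Hp^1$, its kernel is the image of the boundary (cuspidal) cohomology from the long exact sequence of the compactification; the Hecke operators $T_\ell$ act on this kernel through Eisenstein-type eigenvalues, which by Deligne's bound $|a_\ell|\le 2\ell^{(k-1)/2}$ cannot coincide (for all but finitely many $\ell\nmid N$) with the cuspidal newform eigenvalues of $f$. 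Hence the $f$-isotypic part of this kernel vanishes, and the three maps together are injective on $\MS_k(N,K)^\pm[f]$.

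The main obstacle is the identification in the first paragraph of the image of pullback from $Y_1(N)^\an$ with exactly the $\binmatrix1*0*$-invariants on $Y(N)^\an$, complicated by the fact (recalled in \cref{sec:wnk}) that $Y(N)^\an$ has $\phi(N)$ connected components indexed by Weil pairings. This is most easily handled on a single component via the group cohomology description \eqref{eqn:wnk-betti-gpcohom}, where the finite quotient $\Gamma_1(N)/\Gamma(N)\cong\binmatrix1*01$ acts and pullback in parabolic cohomology along a finite étale Galois cover yields invariants; all other aspects are bookkeeping with Hecke eigenvalues.
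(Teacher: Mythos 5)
Your proposal is correct, but it takes a somewhat different and more arithmetic route than the paper. The paper simply tensors with $\C$ and reads dimensions and surjectivity of the middle arrow off the Eichler--Shimura isomorphisms of \cref{thm:eichler-shimura-conrad}: both $f$-eigenspaces are two-dimensional, the map $\Hc^1\to\Hp^1$ is surjective on the $f$-part (the Eisenstein contribution in $\ModForms_k$ does not meet the newform eigenvalues), and the map to $Y(N)^\an$ is injective; since everything is $\GR$-equivariant this restricts to the $\pm$-parts. You instead argue on the $\pm$-parts directly, getting one-dimensionality of the target from oddness of the attached Galois representation and of the source from \cref{prop:ms-eigenspaces-free-of-rank-one}, and kill the $f$-isotypic part of $\ker(\Hc^1\to\Hp^1)$ via the Eisenstein/Ramanujan eigenvalue comparison --- the same fact the paper encodes analytically. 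Both approaches have essentially the same shape (injectivity plus equal dimensions), and yours is no longer; what it buys is that it avoids passing to $\C$ and invoking the analytic Eichler--Shimura theorem, at the cost of bringing in Deligne's bound (which, incidentally, is slightly more than necessary --- the $f$-isotypic part of boundary cohomology already vanishes because $f$ is cuspidal and newform eigensystems differ from Eisenstein ones by multiplicity one, without needing Ramanujan estimates). One genuine improvement in your writeup: you explicitly justify that the composite lands inside $\Mf_\betti^\pm$ by observing that pullback along the $\binmatrix1*0*$-quotient $Y(N)^\an\to Y_1(N)^\an$ automatically produces $\binmatrix1*0*$-invariant classes; the paper subsumes this under ``Hecke- and $\GR$-equivariance,'' which is a bit terse since that condition is part of the definition of $\Mf$ but is not literally a Hecke or $\GR$ condition.
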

\begin{proof}
  By Hecke- and $\GR$-equivariance it is clear that we get a map between the spaces in the statement.
  If we look at Hecke eigenspaces in \eqref{eqn:map-ms-betti}, we get
  \begin{multline*}
    \Hc^1(Y_1(N)^\an,\Sym^{k-2}\RD^1f_*\smuline K)[f]\ra\Hp^1(Y_1(N)^\an,\Sym^{k-2}\RD^1f_*\smuline K)[f]  \\\ra\Hp^1(Y(N)^\an,\Sym^{k-2}\RD^1f_*\smuline K)[f].
  \end{multline*}
  That this composition is an isomorphism can be checked after tensoring with $\C$, and then we can use the Eichler-Shimura isomorphisms (\cref{thm:eichler-shimura-conrad}). If we do so, we see first that all spaces involved here are two-dimensional and further that the first map is surjective. The right map is injective since it is the restriction of an injective map. Hence the composition is an isomorphism.
\end{proof}

\begin{dfn}\label{dfn:xi-f}
  Fix $f\in\S_k(\Gamma_1(N),\C)$.
  \begin{enumerate}
  \item The group homomorphism
    \[ {\xi_f}\colon\Div^0(\P^1(\Q))\ra\Sym^{k-2}\C^2,\quad (x)-(y) \mapsto (2\pi\i)^{k-1}\int_y^x(zX+Y)^{k-2}f(z)\d z \]
    is  invariant under the action of $\Gamma_1(N)$, so we have
    \[ \xi_f\in\MS_k(N,\C) \]
    and $\xi_f$ is called the {modular symbol attached to $f$}.
    Moreover one can check that if $f$ is a Hecke eigenform, then
    \[ \xi_f\in\MS_k(N,\C)[f]. \]
  \item Let ${\xi^\pm_f}$ be the image of $\xi_f$ in the respective part of the
    decomposition \[ \MS_k(N,\C)=\MS_k(N,\C)^+\oplus\MS_k(N,\C)^- \]
    into eigenspaces for the action of complex conjugation.
    Note that if $f$ is a Hecke eigenform, then $\xi_f^\pm\in\MS_k(N,\C)^\pm[f]$. 
  \end{enumerate}
\end{dfn}

\begin{lem}\label{lem:xi-maps-to-f}
  Consider the composition
  \begin{multline*}
    \MS_k(N,\C)\isom\Hc^1(Y_1(N)^\an,\Sym^{k-2}\RD^1f_*\smuline\C)\\\surj\Hp^1(Y_1(N)^\an,\Sym^{k-2}\RD^1f_*\smuline\C)\isom\S_k(\Gamma_1(N),\C)\oplus\overline{\S_k(\Gamma_1(N),\C)}
  \end{multline*}
  where the first map is from \cref{prop:iso-classical-ms-honec}, the second one is tautological and the last one is the (inverse) Eichler-Shimura isomorphism.
  This composition maps $\xi_f$ to $f=f\oplus 0$ and \[ \xi_f^\pm\mapsto \frac12(f\oplus(\pm\overline{f^*})). \]
\end{lem}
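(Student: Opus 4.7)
The plan is to compare explicit cocycle representatives on the two sides of the map, combining the explicit formula for the Eichler-Shimura isomorphism from \cref{prop:es-explicit} with an unwinding of \cref{prop:iso-classical-ms-honec}.

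First I would establish the formula $\xi_f \mapsto f$. The key input is to make the isomorphism of \cref{prop:iso-classical-ms-honec} explicit at the cocycle level. After identifying $\Hc^1(Y_1(N)^\an, \Sym^{k-2} \RD^1 f_\ast \underline{\C})$ with the parabolic part of the group cohomology $\HL^1(\Gamma_1(N), \Sym^{k-2}\C^2)$ (using the base point and basis fixed on \cpageref{choice-of-basis}), a modular symbol $\varphi \in \MS_k(N,\C)$ corresponds to the group cocycle $\gamma \mapsto \varphi((\gamma\cdot\infty) - (\infty))$. Plugging in $\xi_f$, by the very definition of $\xi_f$ this is the cocycle
\[
  \gamma \longmapsto (2\pi\i)^{k-1}\int_{\infty}^{\gamma\infty} (zX+Y)^{k-2} f(z) \,\d z \;=\; \int_{\infty}^{\gamma\infty} \omega_f,
\]
with $\omega_f$ the $\Sym^{k-2}\C^2$-valued $1$-form of \cref{prop:es-explicit}; the integral converges because $f$ is a cusp form. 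Passing through the surjection $\Hc^1 \to \Hp^1$, this cocycle is cohomologous to $\gamma \mapsto \int_{\tau_0}^{\gamma\tau_0}\omega_f$ for any choice of base point $\tau_0\in\H$, since the two differ by the coboundary of the $0$-cochain $\int_{\infty}^{\tau_0}\omega_f\in\Sym^{k-2}\C^2$ (using the $\Gamma_1(N)$-action on the latter space and the modularity of $f$). By \cref{prop:es-explicit}(a), this class is exactly the image of $f$ under the inverse Eichler-Shimura map, which establishes the first claim.

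The second claim follows from the first by $\GR$-equivariance of the whole composition. The first two maps are equivariant by naturality; equivariance of the inverse Eichler-Shimura with respect to the action on $\S_k(\Gamma_1(N),\C)\oplus\overline{\S_k(\Gamma_1(N),\C)}$ specified in \cref{dfn:gr-on-mod-forms} is \cref{lem:es-gr-equiv}. Writing $c$ for the nontrivial element of $\GR$, the image of $c\cdot\xi_f$ therefore equals $c\cdot(f\oplus 0) = -(0\oplus\overline{f^*})$ by \cref{dfn:gr-on-mod-forms}. Since $\xi_f^\pm = \tfrac{1}{2}(\xi_f \pm c\cdot\xi_f)$, taking images and collecting terms yields the desired formula.

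The main obstacle I anticipate is Step~1: making the isomorphism of \cref{prop:iso-classical-ms-honec} explicit enough at the cocycle level to recognise the image of $\xi_f$ as the indicated cocycle. One has to trace through its standard construction, typically via a chain-level comparison between a resolution computing $\Hc^1(Y_1(N)^\an,-)$ and the model of $\MS_k(N,\C)$ in terms of divisors on cusps, and use that for a cusp form the integral from a cusp is well-defined. Once this is set up, the rest of the argument is formal.
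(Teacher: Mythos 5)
Your approach matches the paper's: the first assertion is obtained by direct comparison of the explicit cocycle of \cref{prop:es-explicit} with the unwinding of $\xi_f$ through \cref{prop:iso-classical-ms-honec}, and the second falls out by $\GR$-equivariance via \cref{lem:es-gr-equiv}. The paper's proof leaves the first step as ``easy to see,'' and the cocycle-level comparison you describe (passing via the cochain $\int_\infty^{\tau_0}\omega_f$ to change base point from the cusp $\infty$ to $\tau_0$) is exactly the verification that is being elided.

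One caveat on the second step: you write ``collecting terms yields the desired formula'' without actually displaying the sign. With the action of \cref{dfn:gr-on-mod-forms}, the nontrivial element $c$ sends $f\oplus 0$ to $-(0\oplus\overline{f^*})$, so the formula $\xi_f^\pm=\tfrac12(\xi_f\pm c\,\xi_f)$ gives $\xi_f^\pm\mapsto\tfrac12\bigl(f\oplus(\mp\overline{f^*})\bigr)$, which carries the \emph{opposite} sign from the statement of \cref{lem:xi-maps-to-f}. Note also that $\tfrac12(f\oplus\overline{f^*})$ lies in the $(-1)$-eigenspace of the action of \cref{dfn:gr-on-mod-forms}, not the $(+1)$-eigenspace, so the sign as stated is inconsistent with $\GR$-equivariance. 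You should either track this sign carefully through the (unspelled) ``straightforward calculation'' in the proof of \cref{lem:es-gr-equiv}, or accept that there is a sign discrepancy in these conventions; in either case, do not gloss over the collecting of terms.
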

\begin{proof}
  The first assertion is easy to see using the definition of $\xi_f$ and \cref{prop:es-explicit}. The second assertion follows from the first one using \cref{lem:es-gr-equiv}.
\end{proof}

\subsection{Hida families}

In this section we summarize the most important statements from Hida's theory, mainly to fix
notations.

We write ${\Gamma^\wt}\da1+p\Z_p$, ${\Gamma^\wt_r}\da1+p^r\Z_p\subseteq\Gamma^\wt$ and
${\Lambdawt}\da\psring\O{\Gamma^\wt}$. Further fix an integer $N$ prime to $p$ such that
$Np\ge4$ and regard $\Gamma^\wt$ as a subgroup as well as a quotient of
$\Z_{p,N}^\times\da\lim_r\zmod{Np^r}$. On finite levels, we regard
$\quotient{\Gamma^\wt}{\Gamma_r^\wt}$ as a subgroup as well as a quotient of
$\zmodmal{Np^r}=\quotient{\Gamma_0(Np^r)}{\Gamma_1(Np^r)}$. 

If $\ep$ is a character of $\Gamma^\wt$, we regard it also as a character of
$\Z_{p,N}^\times$ via the projection to $\Gamma^\wt$.
If $\ep$ is a character of $\Gamma^\wt$ of finite order, factoring over
$\quotient{\Gamma^\wt}{\Gamma_r^\wt}$, then we denote by
$\ModForms_k(\Gamma_1(Np^r),\quotient{\Gamma^\wt}{\Gamma_r^\wt},\ep,\O)$ the subspace where
$\quotient{\Gamma^\wt}{\Gamma_r^\wt}$ acts by $\ep$ (via diamond operators) and by
$\heckeT_k(Np^r,\quotient{\Gamma^\wt}{\Gamma_r^\wt},\ep,\O)$ the corresponding Hecke algebra
(similarly for cusp forms).

The {weight space} is defined as ${\calX^\wt}\da\Spec\Lambdawt$. If $\mathcal K$ is a finite extension of the fraction field of $\Lambdawt$ and ${\calI}$ is the integral closure of $\Lambdawt$ in $\mathcal K$, then we write ${\calX_\calI^\wt}\da\Spec\calI$.

Write ${\kappa_\wt}$ for the canonical embedding 
\[ \kappa_\wt\colon\Gamma^\wt\inj\O^\times. \]
For each $k\in\Z$ and each $\O^\times$-valued character $\ep$ of $\Gamma^\wt$ of finite order, we let \[ {\varphi_{k,\ep}}\colon\Lambdawt\ra\O \] be the $\O$-algebra morphism induced by
\[ \Gamma^\wt\ra\O^\times, \quad \gamma\mapsto \ep(\gamma)\kappa_\wt(\gamma)^k \] and we
write ${P_{k,\ep}}$ for its kernel, which is then an element of $\calX^\wt$.  The arithmetic
points in the weight space are
\[ {\calX^\arith} \da \{ P_{k,\ep} : k\ge2,\ \ep\colon\Gamma^\wt\ra\O^\times\text{ character
    of finite order} \} \subseteq\calX^\wt. \] If we have fixed $\calI$ as above, let
${\calX^\arith_\calI}$ be the preimage of $\calX^\arith$ under the natural map
$\calX_\calI^\wt\ra\calX^\wt$. We say that $P\in\calX^\wt_\calI$ is of {type $(k,\ep,r)$} if
$P\cap\Lambdawt=P_{k,\ep}$ with $k$ and $\ep$ as above and $\ker\ep=\Gamma^\wt_r$.  Finally
define the ideals \[ \omega_{k,r}\da\prod_\ep P_{k,\ep} \] for each fixed $k\in\Z$,
$r\ge0$, where $\ep$ runs through all $\O^\times$-valued characters of
$\quotient{\Gamma^\wt}{\Gamma^\wt_r}$.

We will often look at $\calX_\calI^\arithm(\O)$ or similar objects. The elements are by definition certain $\O$-algebra morphisms $\calI\ra\O$ and if we identify them with their kernels, we can view $\calX_\calI^\arithm(\O)$ as a subset of $\calX_\calI^\arithm$ as usual. Sometimes however it is important to distinguish the morphisms and the kernels. We will typically denote morphisms as $\varphi$ and prime ideals as $P$, so for example if we write $P\in\calX_\calI^\arithm(\O)$ we mean the kernel and not the morphism. If we want to make clear which morphism belongs to which prime ideal, we will use notations like $\varphi_P$ and $P_\varphi$.

For $k\ge2$ we let
\[ \mathcal M_k(Np^\infty,\O) \da \colim_{r}\ModForms_k(Np^r,\O), \] and let
$\overline{\mathcal{M}}_k(Np^\infty,\O)$ be the completion of $\mathcal M_k(Np^\infty,\O)$
with respect to the supremum norm on Fourier coefficients. Further we let
\[ \heckeT_k(Np^\infty,\O) \da \lim_{r}\heckeT_k(Np^r,\O) \] be Hida's big Hecke algebra of
level $Np^\infty$ for modular forms.  We make analogous definitions with cusp forms instead
of modular forms and denote the resulting objects by ${\calS_k(Np^\infty,\O)}$,
${\Sbar_k(Np^\infty,\O)}$ and ${\hecket_k(Np^\infty,\O)}$.  The $\O$-algebras
$\heckeT_k(Np^\infty,\O)$ and $\hecket_k(Np^\infty,\O)$ are canonically algebras over
$\psring\O{\Z^\times_{p,N}}$, in particular over $\Lambdawt$.  The ordinary parts are
denoted with superscript \enquote{${}^\ord$} and we omit the subscript \enquote{${}_k$} in
this case because the ordinary parts don't depend on the weight. The ordinary Hecke algebras
$\Tord(Np^\infty,\O)$ and $\tord(Np^\infty,\O)$ are free of finite rank over $\Lambdawt$.



\begin{thm}[Hida]\label{thm:control-thm}
  Let $k\ge2$ and $\ep\colon\Gamma^\wt\ra\O^\times$ be a character of finite order. Then
  there are canonical isomorphisms of $\O$-algebras
    \begin{align*}
    \tord(Np^\infty,\O)\tensor_{\Lambdawt}\left(\quotient{\Lambdawt}{P_{k,\ep}}\right) & \cong \tord_k(Np^r,\textstyle\quotient{\Gamma^\wt}{\Gamma_r^\wt},\ep,\O),\\
    \tord(Np^\infty,\O)\tensor_{\Lambdawt}\left(\quotient{\Lambdawt}{\omega_{k,r}}\right) & \cong \tord_k(Np^r,\O)
  \end{align*}
  and analogously for $\hecket$ instead of $\heckeT$.
\end{thm}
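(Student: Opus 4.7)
The plan is to deduce the stated Hecke algebra isomorphisms from the analogous control theorem for the underlying spaces of ordinary cusp forms (Hida's genuinely deep input), using the duality between Hecke algebras and spaces of cusp forms together with the freeness statement that $\tord(Np^\infty,\O)$ is free of finite rank over $\Lambdawt$.

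First I would construct the maps. By definition $\tord(Np^\infty,\O)=\lim_r\tord_k^\ord(Np^r,\O)$, so there are canonical surjective specialization morphisms $\pi_r\colon\tord(Np^\infty,\O)\twoheadrightarrow\tord_k^\ord(Np^r,\O)$ for every $r$. A direct computation with the $\Lambdawt$-structure — which is normalized so that a diamond operator $\diamondop\gamma$ for $\gamma\in\Gamma^\wt$ acts on the weight-$k$ specialization with nebentype $\ep$ as multiplication by $\ep(\gamma)\kappa_\wt(\gamma)^k$ — shows that $\pi_r$ kills $\omega_{k,r}$ and therefore factors through $\tord(Np^\infty,\O)/\omega_{k,r}$. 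Projecting further onto the $\ep$-isotypic component of the diamond action of $\Gamma^\wt/\Gamma_r^\wt$ produces the factorization through $\tord(Np^\infty,\O)/P_{k,\ep}$ needed for the first claimed isomorphism.

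Second, to get bijectivity I would invoke Hida's control theorem on the level of spaces of ordinary $\Lambda$-adic cusp forms, which yields an isomorphism
\[ \Sbarord(Np^\infty,\O)\tensor_{\Lambdawt}(\Lambdawt/P_{k,\ep})\cong\calS_k^\ord(Np^r,\textstyle\quotient{\Gamma^\wt}{\Gamma_r^\wt},\ep,\O). \]
Since $\tord(Np^\infty,\O)$ is free of finite rank over $\Lambdawt$, the perfect Hida pairing between $\tord(Np^\infty,\O)$ and $\Sbarord(Np^\infty,\O)$ (the Iwasawa-theoretic analogue of the classical Hecke pairing $\paarung{T}{f}=a_1(Tf)$) commutes with the specialization $\otimes_{\Lambdawt}(\Lambdawt/P_{k,\ep})$, so the bijectivity on Hecke algebras is dual to the bijectivity on spaces of cusp forms. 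The second displayed isomorphism then follows from the first via a Chinese Remainder argument: the pairwise coprime ideals $P_{k,\ep}$ give $\Lambdawt/\omega_{k,r}\cong\prod_\ep\Lambdawt/P_{k,\ep}$, and $\tord_k^\ord(Np^r,\O)$ correspondingly decomposes as the direct sum of its isotypic pieces under the action of $\Gamma^\wt/\Gamma_r^\wt$.

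The main obstacle — and in fact the heart of the theorem — is the pair of Hida's foundational results that I take as inputs: the freeness of $\tord(Np^\infty,\O)$ over $\Lambdawt$ and the control theorem for $\Sbarord(Np^\infty,\O)$. Both rely essentially on the existence and good properties of the ordinary idempotent $e=\lim_n U_p^{n!}$ and the fact that it cuts out finitely generated $\Lambdawt$-modules from the a priori enormous space of $\Lambda$-adic forms; the routine bookkeeping described above merely propagates this input from forms to Hecke algebras.
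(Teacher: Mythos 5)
The paper does not prove this theorem at all: it simply cites \cite{MR848685} and \cite{MR1674001}. So there is no argument in the text to compare against, and your sketch can only be assessed against what Hida does in those references. What you propose is a perfectly legitimate passage between the cusp form and the Hecke algebra versions — the surjection $\pi_r$ does factor as you describe, a $\Lambdawt$-bilinear $\O$-perfect pairing against a $\Lambdawt$-free module does commute with base change, and the CRT decomposition $\Lambdawt/\omega_{k,r}\cong\prod_\ep\Lambdawt/P_{k,\ep}$ matched against the eigenspace decomposition $\tord_k(Np^r,\O)\cong\prod_\ep\tord_k(Np^r,\quotient{\Gamma^\wt}{\Gamma^\wt_r},\ep,\O)$ gives the second isomorphism from the first. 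The caveat is the causal ordering you impose. You present (i) freeness of $\tord(Np^\infty,\O)$ over $\Lambdawt$, (ii) the control theorem for $\Sbar^\ord$, and (iii) the $\Lambdawt$-linear perfectness of the big pairing as three separate black boxes from which the Hecke algebra control follows by routine dualization. In Hida's cited proof none of these is a prior input to the others: the engine is a single cohomological control and freeness theorem for the ordinary part of $\HL^1$ of modular curves with $\Lambda$-adic local system coefficients, from which all of (i)–(iii) and the Hecke algebra statement flow together — the Hecke algebra control is read off from the faithfulness of the action on that cohomology module, and the cusp form version is extracted afterwards via Eichler–Shimura. So your sketch is best understood as demonstrating that the Hecke algebra control and the cusp form control are equivalent formulations given the freeness and the big pairing, rather than as a reduction of one to a simpler other; you do acknowledge at the end that the real content lies in the ordinary projector theory, but the intermediate bookkeeping doesn't actually make the Hecke algebra statement a consequence of the cusp form one in any deductively meaningful sense.

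A small point of language worth keeping straight: the factor $\kappa_\wt(\gamma)^k$ in $\varphi_{k,\ep}$ belongs to the $\Lambdawt$-algebra structure on the big Hecke algebra (chosen so that the weight-$k$ specialization maps glue compatibly across weights), not to the action of the operator $\diamondop\gamma$ on a fixed finite-level weight-$k$ form, which is multiplication by $\ep(\gamma)$ alone. Your phrase "$\diamondop\gamma$ acts on the weight-$k$ specialization as $\ep(\gamma)\kappa_\wt(\gamma)^k$" conflates the two; the distinction is exactly what makes the verification that $\pi_r$ kills $P_{k,\ep}$ a genuine check rather than a tautology.
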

\begin{proof}
  \cite[Thm.\ 1.2]{MR848685}, \cite[Thm.\ 1.5.7 (iii)]{MR1674001}
\end{proof}

Now let $\calL$ be the quotient field of $\Lambdawt$, and fix an algebraic closure $\calLbar$ of it.

\begin{dfn}
  A {Hida family} is a morphism of $\Lambdawt$-algebras
  \[ F\colon\tord(Np^\infty,\O)\ra\calLbar. \] Its nebentype is defined to be the character
  $\psi\colon\zmodmal{Np}\ra\calI^\times$ obtained as the composition
  \[ \zmodmal{Np}\inj\Z_{p,N}^\times\inj\psring\O{\Z_{p,N}}^\times\ra
    \tord(Np^\infty,\O)^\times\ra[$F$]\calI^\times. \]
\end{dfn}

For a Hida family $F\colon\tord(Np^\infty,\O)\ra\calI$ and an arithmetic point
$P\in\calX_\calI^\arithm(\O)$ of type $(k,\ep,r)$ we denote by $F_P$ the member of $F$ at
$P$, which is the unique cusp form
$F_P\in\S_k(\Gamma_1(Np^r),\quotient{\Gamma^\wt}{\Gamma_r^\wt},\ep,\O)$ correponding to the
morphism $\tord_k(Np^r,\quotient{\Gamma^\wt}{\Gamma_r^\wt},\ep,\O)\ra\O$ which is the
reduction of $F$ modulo $P$ (using \cref{thm:control-thm}). If $\psi$ is the nebentype of
$F$, then $F_P$ has nebentype $\ep\psi\omega^{-k}$, where $\omega$ is the Teichmüller
character. When we view the elements of $\calX^\arith_\calI(\O)$ as morphisms instead of
ideals, we shall also write $F_\varphi$ instead of $F_P$ for $\varphi=\varphi_P$.

Since $\tord(Np^\infty,\O)$ is free of finite rank, the image of a Hida family generates a
finite field extension of $\calL$, say $\mathcal K$, and the image even lies in the integral
closure of $\Lambdawt$ inside $\mathcal K$, which we call $\calI$. Moreover note that since
the kernel of $F$ contains a minimal prime ideal, there are only finitely many $\calI$ that
can occur in this way as long as $N$ and $\O$ are fixed. By \cite[Lem.\ 3.1]{MR0976685} each
such $\calI$ is free of finite rank over $\Lambdawt$.

\begin{dfn}
  If $F$ is a Hida family, then we call the ring $\calI$ from above the {coefficient ring} of $F$.
  We call the finitely many $\calI$ that can occur the {coefficient rings} of $\tord(Np^\infty,\O)$.
\end{dfn}

The above definition of coefficient rings and Hida families is not totally standard in the
literature, but for us it will be more convenient to work with this definition, see
\cref{rem:i-integrally-closed} below.

It is well-known that by possibly enlarging $L$ (and thus $\O$), one can assume that
$\calX_\calI^\arith(\O)$ is Zariski dense in $\calX^\wt_\calI(\Qpbar)$ (which we both view
as subsets of $\calX^\wt_\calI=\Spec\calI$). We will assume this from now on. For later
reference, let us summarize the notations and assumptions we are now using.
\begin{situation}\label{setting:hida-families}
  We have fixed a number field $K$ with embedding $K\subseteq\Qbar$ and a place $\frakp\mid
  p$ with completion $L$ and ring of integers $\O$, further an integer $N$
  prime to $p$ such that $Np\ge4$ and a coefficient ring $\calI$ of
  $\tord(Np^\infty,\O)$. We assume that $L$ and $\O$ are large enough such that
  $\calX_\calI^\arith(\O)$ is Zariski dense in $\calX^\wt_\calI(\Qpbar)$. Further we assume
  that $L$ is the maximal subfield inside $\mathcal K$ which is algebraic over $\Qp$.
\end{situation}

\begin{dfn}
  Let $F\colon\tord(Np^\infty,\O)\ra\calLbar$ be a Hida family. Then we call $F$ {new} if there does not exist a proper divisor $M\mid N$ and a Hida family $G\colon\tord(Mp^\infty,\O)\ra\calLbar$ such that $F(T_\ell)=G(T_\ell)$ for almost all primes $\ell$.
\end{dfn}

Note the similarity of this definition to that of a newform in
\cref{sec:modular-forms}.

\begin{thm}[Hida]\label{thm:hida-fam-new}
  Let $F\colon\tord(Np^\infty,\O)\ra\calI$ be a Hida family of nebentype $\psi$. Then the following are equivalent:
  \begin{tfaelist}
  \item\label{thm:new-hida-fam:new} $F$ is new.
  \item\label{thm:new-hida-fam:some} For some $P\in\calX_\calI^\arith(\O)$, $F_P$ is new.
  \item\label{thm:new-hida-fam:almost-all} For infinitely many $P\in\calX_\calI^\arith(\O)$, $F_P$ is new.
  \item\label{thm:new-hida-fam:r} $F_P$ is new for all $P\in\calX_\calI^\arith(\O)$ of type $(k,\ep,r)$ with $r>1$.
  \item\label{thm:new-hida-fam:char} $F_P$ is new for all $P\in\calX_\calI^\arith(\O)$ of type $(k,\ep,r)$ such that the $p$-part of $\ep\psi\omega^{-k}$ is nontrivial.
  \end{tfaelist}

  Now assume that the above equivalent statements hold and $P\in\calX_\calI^\arith(\O)$ of type $(k,\ep,r)$ is such that the $p$-part of $\ep\psi\omega^{-k}$ is trivial (in particular $r=1$). We can then view $\ep\psi\omega^{-k}$ as a character of $\zmodmal N$. In this situation $F_P\in\S_k(\Gamma_1(Np),\ep\psi\omega^{-k})$ can either be new (in which case $k=2$), or $F_P$ is the unique ordinary refinement of an ordinary newform $F_P^0\in\S_k(\Gamma_1(N),\ep\psi\omega^{-k})$.
\end{thm}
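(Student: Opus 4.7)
The plan is to prove the equivalence of the five conditions by a small cycle of implications, and then to deduce the structural description of $F_P$ in the remaining case from Atkin--Lehner theory at $p$ together with the theory of $p$-stabilisations recalled in \cref{sec:refinements}. Among the implications, \textbf{(iii)}$\Rightarrow$\textbf{(ii)} is trivial, while \textbf{(iv)}$\Rightarrow$\textbf{(iii)} and \textbf{(v)}$\Rightarrow$\textbf{(iii)} follow from the Zariski density of $\calX_\calI^\arith(\O)$ in $\calX_\calI^\wt(\Qpbar)$, which guarantees infinitely many arithmetic points of either prescribed shape. The content of the theorem is therefore concentrated in \textbf{(i)}$\Rightarrow$\textbf{(iv)}, \textbf{(i)}$\Rightarrow$\textbf{(v)} and \textbf{(ii)}$\Rightarrow$\textbf{(i)}.

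For \textbf{(i)}$\Rightarrow$\textbf{(iv)} and \textbf{(i)}$\Rightarrow$\textbf{(v)}, I would argue by contradiction: if $F_P$ were old for some $P$ of type $(k,\ep,r)$ with $r>1$ or with $\ep\psi\omega^{-k}$ ramified at $p$, then $F_P$ would descend from a newform $g$ of some level $M'\mid Np^r$, and the $p$-part of the nebentype of $F_P$ would force $M'$ to retain the full $p$-power $p^r$, so $M'=Mp^r$ for a proper divisor $M\mid N$. Since $F_P$ is ordinary and has the same $U_p$-eigenvalue as $g$, the form $g$ is ordinary, so by Hida's control theorem (\cref{thm:control-thm}) its eigensystem lifts to an ordinary Hida family of tame level $M$ agreeing with $F$ on $T_\ell$ for all $\ell\nmid Np$, contradicting the newness of $F$. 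Conversely, for \textbf{(ii)}$\Rightarrow$\textbf{(i)}, if $F$ came from a Hida family $G$ of proper tame divisor $M\mid N$, then $F_P$ and $G_P$ would have equal Hecke eigenvalues away from $N$, and strong multiplicity one would identify the newform underlying $F_P$ as the one underlying $G_P$, of tame level at most $M$, so $F_P$ could not itself be new.

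Finally, assume the equivalent conditions hold and let $P$ have type $(k,\ep,1)$ with $\ep\psi\omega^{-k}$ trivial at $p$; then $F_P\in\S_k(\Gamma_1(Np),\ep\psi\omega^{-k},\O)$ has $p$-unramified nebentype, and by what was already shown the tame level $N$ cannot drop further. Hence either $F_P$ is new at level $Np$, in which case Atkin--Lehner theory at $p$ yields $a_p(F_P)^2=\ep\psi\omega^{-k}(p)\,p^{k-2}$, and the ordinarity condition $v_p(a_p(F_P))=0$ forces $k=2$; or $F_P$ is old, coming from a newform $F_P^0\in\S_k(\Gamma_1(N),\ep\psi\omega^{-k})$, and by \cref{lem:refinements-are-modular-forms} together with the remark following it, $F_P$ must be a $p$-stabilisation of $F_P^0$. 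Since only one of the two $p$-stabilisations can be ordinary and $F_P$ is, $F_P^0$ is itself ordinary and $F_P$ is its unique ordinary refinement. The main obstacle in this plan is the careful separation of the tame and wild parts of the level in the first implication: one needs to rule out any interaction between a drop in $M$ and a drop in $r$, which is exactly what the hypotheses \textbf{(iv)} and \textbf{(v)} are designed to achieve, and then to invoke the uniqueness of ordinary Hida family lifts---a standard but delicate consequence of \cref{thm:control-thm}---to produce the contradicting family.
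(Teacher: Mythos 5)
Your implication cycle is logically valid (it differs from the paper's, which is $(v)\Rightarrow(iv)\Rightarrow(iii)\Rightarrow(ii)$ together with $(ii)\Rightarrow(i)$ and $(i)\Rightarrow(v)$, all cited from Hida), and you attempt genuine proofs where the paper simply cites \cite[Thm.~2.4]{MR934243} and \cite[Thm.~4.1]{MR976685}. Your handling of $(ii)\Rightarrow(i)$ via strong multiplicity one and your treatment of the final dichotomy (new at level $Np$ forces $k=2$ via $a_p^2=\ep\psi\omega^{-k}(p)\,p^{k-2}$; otherwise $F_P$ is the unique ordinary $p$-stabilisation) are both reasonable. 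Also, your reduction in $(i)\Rightarrow(iv),(v)$ showing that the $p$-part of the nebentypus pins the $p$-exponent of the level of the underlying newform, so any old-ness of $F_P$ must be in the tame direction, is the right observation.

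However, there is a genuine gap in the crucial step of $(i)\Rightarrow(iv)$ and $(i)\Rightarrow(v)$. You produce a newform $g$ of tame level $M<N$ underlying $F_P$, observe $g$ is ordinary, and then assert that ``by Hida's control theorem its eigensystem lifts to an ordinary Hida family of tame level $M$ \emph{agreeing with $F$ on $T_\ell$ for all $\ell\nmid Np$}.'' The control theorem (\cref{thm:control-thm}) does let you find \emph{some} Hida family $G$ of tame level $M$ specialising to $g$ at the relevant weight, and indeed $G_P(T_\ell)=F_P(T_\ell)$ for $\ell\nmid N$. But this is an equality of specialisations at one arithmetic point, not an equality $G(T_\ell)=F(T_\ell)$ in $\calLbar$, which is what the definition of ``new'' requires you to contradict. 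Two Hida families can cross at a classical point with matching prime-to-level eigensystems there without coinciding generically; ruling this out---i.e.\ showing that level-lowering at a single arithmetic point propagates to the whole component, equivalently that the ``old'' locus of $\Spec\tord(Np^\infty,\O)$ is a union of irreducible components---is precisely the nontrivial content of Hida's cited theorem and is not supplied by the control theorem alone. Your argument would need an additional input (e.g.\ a compatible system of degeneracy maps on big Hecke algebras and an argument that the old quotient is a direct factor, or a rigidity statement for the Galois pseudocharacter attached to $F$) to close this step.
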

\begin{proof}
  Clearly we have implications $\text{\ref{thm:new-hida-fam:char}} \Rightarrow
  \text{\ref{thm:new-hida-fam:r}} \Rightarrow \text{\ref{thm:new-hida-fam:almost-all}}
  \Rightarrow \text{\ref{thm:new-hida-fam:some}}$, so it remains to see
  $\text{\ref{thm:new-hida-fam:some}}\Rightarrow\text{\ref{thm:new-hida-fam:new}}$ and
  $\text{\ref{thm:new-hida-fam:new}}\Rightarrow\text{\ref{thm:new-hida-fam:char}}$. For
  these implications see \cite[Thm.\ 2.4]{MR934243} and for the final
  statement see \cite[Thm.\ 4.1]{MR976685}.
\end{proof}

\begin{dfn}\label{dfn:notation-f-p-new}
  Let $\calI$ be a coefficient ring of  $\tord(Np^\infty,\O)$ and fix a Hida family $F$ which is new. Then by \cref{thm:hida-fam-new}, for almost all $P\in\calX^\arith_\calI(\O)$ the form $F_P$ is new, and for the $P$ such that $F_P$ is not new, there exists a newform $F_P^0$ such that $F_P$ is a refinement of $F_P^0$. Let us write ${F_P^\new}$ to mean either $F_P$ if $F_P$ itself is new, or $F_P^0$ if $F_P$ is not new.
\end{dfn}

If $F$ is a new Hida family, then from \cref{thm:hida-fam-new} it is clear that by possibly
enlarging $\O$ we can assume that the points $P\in\calX^\arithm_\calI(\O)$ such that $F_P$
is a newform are Zariski dense in $\calX^\wt_\calI$.

\begin{dfn}
  Let $\calI$ be a coefficient ring of $\tord(Np^\infty,\O)$.
  We define the module of {$\calI$-adic cusp forms of level $Np^\infty$} as
  \[ {\SS^\ord(Np^\infty,\calI)}\da\Hom_{\Lambdawt}(\tord(Np^\infty,\O),\calI) \]
  (here we mean morphisms of $\Lambdawt$-\emph{modules}).
  Let the Hecke algebra $\tord(Np^\infty,\O)$ act on this module by duality, i.\,e.\ $(TF)(X)=F(TX)$ for $F\in\SS^\ord(Np^\infty,\calI)$, $T,X\in\tord(Np^\infty,\O)$.
\end{dfn}


By construction there is a perfect $\calI$-bilinear pairing
\begin{equation}
  \label{eqn:ss-ord-perf-pairing}
  \SS^\ord(Np^\infty,\calI)\times(\tord(Np^\infty,\O)\tensor_{\Lambdawt}\calI)\ra \calI.
\end{equation}
For a fixed $F\in\SS^\ord(Np^\infty,\calI)$, we define $F$-eigenspace
\[ {\SS^\ord(Np^\infty,\calI)[F]} \da \{ G\in\SS^\ord(Np^\infty,\calI) : \forall T\in\tord(Np^\infty,\O)\colon TG = F(T)G \}. \]
It is then clear that 
$\SS^\ord(Np^\infty,\calI)[F]$ is free of rank $1$ over $\calI$.

\begin{thm}[Hida]\label{thm:grosse-galdarst-hida-fam}
  \begin{enumerate}
  \item Fix a Hida family $F\in\SS^\ord(Np^\infty,\calI)$ which is new. Then there is a
    unique (up to isomorphism) free $\calI$-module $\calT$ of rank $2$ and a continuous odd
    irreducible Galois representation unramified outside $Np\infty$
    \[ {\rho_F}\colon\GQ\ra\Aut_\calI(\calT) \]
    such that for each $P\in\calX^\arith_\calI(\O)$, the reduction of $\rho_F$ modulo $P$ is equivalent to the Galois representation attached to $F_P^\new$.
  \item There is a free rank $1$ $\calI$-direct summand $\calT^0$ of $\calT$ which is an unramified $\GQp$-subrepresentation.
  \end{enumerate}
\end{thm}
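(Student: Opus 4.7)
My plan is to construct $\calT$ from Hida's tower of ordinary étale cohomology, cut out the $F$-component, and verify the required properties by specialising at the Zariski-dense set of arithmetic points.

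First I would form the big ordinary étale cohomology module
\[ H \da e^\ord\,\lim_r \Het^1(Y_1(Np^r)_{\Qquer},\Zp)\tensor_{\Zp}\O, \]
where the transition maps are trace maps along $Y_1(Np^{r+1})\ra Y_1(Np^r)$ and $e^\ord$ is Hida's ordinary idempotent. This module carries a continuous $\GQ$-action unramified outside $Np\infty$ together with commuting $\Lambdawt$- and $\tord(Np^\infty,\O)$-actions, and by Hida's control theorem (\cref{thm:control-thm}) its specialisation at any $P$ of type $(k,\ep,r)$ recovers the ordinary part of $\Het^1(Y_1(Np^r)_\Qquer,\Sym^{k-2}\RD^1f_*\Zp)$, which via Deligne's construction contains the Galois representations attached to all classical ordinary cusp forms of that weight and level. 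I would then set $\calT \da H\tensor_{\tord(Np^\infty,\O)}\calI$, with the Hecke algebra acting on $\calI$ via $F$; the Galois action descends since it commutes with Hecke operators. By \cref{thm:hida-fam-new}, the points $P\in\calX^\arith_\calI(\O)$ at which $F_P^\new$ is a newform are Zariski dense in $\calX^\wt_\calI$, and for each such $P$ the corresponding specialisation of $\calT$ should be, by multiplicity one for newforms and the Eichler--Shimura comparison at finite level, two-dimensional over $\O/P$ and canonically isomorphic to Deligne's Galois representation of $F_P^\new$. A Nakayama/reflexivity argument, using that $\calI$ is a two-dimensional Krull domain and that the perfect pairing \eqref{eqn:ss-ord-perf-pairing} forces the relevant localisations of $\tord(Np^\infty,\O)$ to be Gorenstein, then shows that $\calT$ is free of rank $2$ over $\calI$. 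Irreducibility of $\rho_F$ and uniqueness of $\calT$ up to isomorphism follow from the analogous properties at any single irreducible classical specialisation.

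For the unramified summand $\calT^0$ in part~(b), I would use the ordinary filtration of $\Het^1$ as a $\GQp$-representation due to Mazur--Wiles and Hida: at each finite level $r$ there is a short exact sequence of $\GQp$-modules
\[ 0\ra e^\ord\Het^1(Y_1(Np^r)_\Qpbar,\Zp)^{\mathrm{sub}}\ra e^\ord\Het^1(Y_1(Np^r)_\Qpbar,\Zp)\ra e^\ord\Het^1(Y_1(Np^r)_\Qpbar,\Zp)^{\mathrm{quot}}\ra 0 \]
in which the subobject is unramified and arises from the cohomology of the Igusa tower over the ordinary locus. The trace maps respect this filtration, so passing to the limit in $r$, tensoring with $\O$ and then with $\calI$ over $\tord(Np^\infty,\O)$ yields a short exact sequence $0\ra\calT^0\ra\calT\ra\calT/\calT^0\ra 0$ of $\GQp$-modules with $\calT^0$ unramified. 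Specialising at arithmetic $P$ recovers Wiles's ordinary filtration for $F_P^\new$, in which both sub and quotient are of rank $1$; combined with the freeness of $\calT$ this forces $\calT^0$ to be a direct $\calI$-summand of rank $1$.

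The main obstacle is the freeness of $\calT$ over $\calI$: specialisation at a dense set of classical points alone only gives that $\calT$ is reflexive of generic rank $2$, so upgrading this to freeness requires controlling the behaviour at all height-one primes of $\calI$. This ultimately relies on the Gorenstein property of the localisations of $\tord(Np^\infty,\O)$ at the minimal prime corresponding to $F$, which is encoded in the perfect pairing \eqref{eqn:ss-ord-perf-pairing}. These arguments are originally due to Hida and Wiles.
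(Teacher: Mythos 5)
Your construction is a genuinely different route from the paper's. The paper simply cites Hida's big Galois representation theorem \cite[Thm.\ 2.1]{MR848685} (which produces a representation into $\GL_2$ of the fraction field of $\calI$, essentially from trace/pseudo-representation data), linearizes it to a Galois-stable lattice by a compactness argument, and passes to the reflexive closure using that $\calI$ is integrally closed; for part (b) it invokes Wiles's ordinariness theorem \cite[Thm.\ 4]{MR1039770} directly. You instead \emph{build} the module as $H\tensor_{\tord(Np^\infty,\O)}\calI$ from the ordinary tower of \'etale cohomology and verify the interpolation property via the control theorem. The two routes are genuinely distinct: what the paper cites as a black box you reconstruct geometrically, and what you prove directly for part (b) (the Igusa-tower filtration passing to the limit) is precisely the content of the reference the paper invokes. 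In fact the paper \emph{does} carry out the geometric identification you propose later, in \cref{cor:mmss-ist-hidas-grosse-galdarst}, but there it uses the uniqueness clause of \cref{thm:grosse-galdarst-hida-fam} already in hand to identify $\MMSS^\ord(Np^\infty,\calI)[F]$ with $\rho_F$; your proposal in effect reverses that logical order.

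There is, however, a genuine gap in your freeness argument. You assert that the perfect pairing \eqref{eqn:ss-ord-perf-pairing} forces the relevant localisations of $\tord(Np^\infty,\O)$ to be Gorenstein. It does not: that pairing is the tautological evaluation pairing
\[ \SS^\ord(Np^\infty,\calI)\times\bigl(\tord(Np^\infty,\O)\tensor_{\Lambdawt}\calI\bigr)\ra\calI \]
coming from the definition $\SS^\ord(Np^\infty,\calI)=\Hom_{\Lambdawt}(\tord(Np^\infty,\O),\calI)$, and it exists for any finite flat $\Lambdawt$-algebra. Gorensteinness would require a $\tord$-linear (not just $\Lambdawt$-linear) duality $\tord\cong\Hom_{\Lambdawt}(\tord,\Lambdawt)$, which is a substantive multiplicity-one statement and is not an automatic consequence of this pairing. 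Without it, the argument ``$H\tensor_{\tord}\calI$ is reflexive with two-dimensional classical fibres, hence free of rank two'' does not close, since reflexive rank-two modules over a two-dimensional normal local domain that is not regular need not be free. This is exactly the kind of pathology the paper sidesteps by working with an abstract lattice in an already-given $\Quot(\calI)$-representation, and which Kitagawa handles separately in the modular symbols setting via \cref{cond:mmss-free-rank-one}. A similar remark applies to your claim that irreducibility and uniqueness of the \emph{lattice} $\calT$ follow from one irreducible classical specialisation: irreducibility and the trace function do follow, but uniqueness of the lattice up to isomorphism over a two-dimensional $\calI$ requires additional input (residual absolute irreducibility or a reflexive-closure normalisation as in the paper).
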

\begin{proof}
  The first statement 
  is a variation of
  \cite[Thm.\ 2.1]{MR848685}. It can easily be obtained from the
  form stated there using a standard compactness and continuity argument to obtain a
  Galois-stable lattice and then taking the reflexive closure (recall that our ring $\calI$
  is integrally closed by definition!). The second statement follows from \cite[Thm.\
  4]{MR1039770}. More precisely: The representation from the
  theorem there is by uniqueness the same as ours, and the theorem states that it is
  ordinary in the sense of \cite[Def.\ 1]{MR1039770}. It is
  easy to see that this definition of ordinariness implies that we have $\calT^0$ as
  claimed. The restriction to $p\ge7$ there can be removed by \cite[p.\
  991]{MR1854117}.
\end{proof}

\begin{rem}\label{rem:i-integrally-closed}
  Recall that our ring $\calI$ is integrally closed by definition. This convention is not
  standard in the literature. Often Hida families are defined as irreducible components of
  $\Spec\tord(Np^\infty,\O)$ (which need not be normal), whose underlying rings are then
  used as coefficient rings. In this case, the image of $\rho_F$ does in general not lie in
  $\GL_2(\calI)$ but only in $\GL_2(\Quot(\calI))$ (after choosing a basis), and one needs
  extra assumptions on $\calI$ to have it in $\GL_2(\calI)$, such as $\calI$ being a unique
  factorisation domain or the residual representation $\overline\rho_F$ being absolutely
  irreducible. See \cite[§9]{MR3334891} for a discussion of these issues. We
  chose to take $\calI$ always as integrally closed, which is maybe not so directly related
  to the geometry of $\tord(Np^\infty,\O)$ as in our definition a Hida family will in
  general not surject onto its coefficient ring, but allows us to work with representations
  into $\GL_2(\calI)$, which seems more suitable for our purpose.
\end{rem}

\subsection{Modular symbols for Hida families}

We introduce $\calI$-adic modular symbols following Kitagawa, which are the modular symbols pendant of $\calI$-adic cusp forms. We proceed in several steps, which will be motivated afterwards.

\begin{dfn}
  For $k\ge2$ we put \[ {\calMS_k(Np^\infty,\O)}\da\colim_r\MS_k(Np^r,\O), \]
  where the maps are induced from the canonical maps $Y_1(Np^s)\ra Y_1(Np^r)$ on modular curves for $s\ge r\ge 0$. We write ${\overline\calMS_k(Np^\infty,\O)}$ for the $p$-adic completion of $\calMS_k(Np^\infty,\O)$.
  We can define the same with $\O$ replaced by $\quotient\O{p^t}$ for some $t\ge0$.
\end{dfn}

From the Hecke action on each of the modules $\MS_k(Np^r,\O)$ we get a
$\heckeT_k(Np^\infty,\O)$-module structure on these modules. So in particular, we get a
$\psring\O{\Z_{p,N}^\times}$-module structure and a $\Lambdawt$-module structure. Moreover,
it is also clear that the transition maps used to form the limit are compatible with the
action of $\EP\in\GL_2(\Z)$ since $\EP$ describes the action of complex conjugation on the
modular curves. Hence $\EP$ acts on $\overline\calMS_k(Np^\infty,\O)$ in a well-defined way.

\begin{dfn}
  The module of {universal $p$-adic modular symbols} is defined as
  \[ {\UM(Np^\infty,\O)} = \Hom_\O(\overline\calMS_2(Np^\infty,\O),\O). \]
  Here we mean the $\O$-Banach dual, i.\,e.\ continuous homomorphisms.
\end{dfn}
There is then a perfect pairing
\begin{equation}
  \label{eqn:perfect-pairing-um}
  \overline\calMS_2(Np^\infty,\O)\times\UM(Np^\infty,\O)\ra\O.
\end{equation}

The Hecke action is the dual Hecke action coming from the action on $\overline\calMS(Np^\infty,\O)$, that is, $(T\alpha)(\xi)=\alpha(T\xi)$ for $\alpha\in\UM(Np^\infty,\O)$, $\xi\in\overline\calMS(Np^\infty,\O)$ and $T\in\Tord(Np^\infty,\O)$. In particular, this makes $\UM(Np^\infty,\O)$ a $\Lambdawt$-module.
From this definition of the Hecke action, it is easy to see that
\begin{equation}
  \label{eqn:um-ord}
  \UM^\ord(Np^\infty,\O) = \Hom_\O(\overline\calMS^\ord_2(Np^\infty,\O),\O).
\end{equation}


\begin{dfn}\label{dfn:i-adic-ms}
  The $\calI$-module of {$\calI$-adic ordinary modular symbols} is defined as
  \[ {\MMSS^\ord(Np^\infty,\calI)} \da \Hom_{\Lambdawt}(\UM^\ord(Np^\infty,\O),\calI). \]
  The Hecke action is again the dual action of the action on $\UM^\ord(Np^\infty,\O)$.
\end{dfn}

From \cite[Prop.\ 5.7]{MR1279604} it is clear that $\MMSS^\ord(Np^\infty,\calI) = \MMSS^\ord(Np^\infty,\Lambdawt)\tensor_{\Lambdawt}\calI$.

To motivate these definitions, recall that philosophically modular symbols and modular forms
are two incarnations of the same phenomenon, as suggested by the Eichler-Shimura
isomorphism. The definition of $\overline\calMS_k^\ord(Np^\infty,\O)$ parallels in some way
the definition of $\Sbar_k(Np^\infty,\O)$.  There is a perfect pairing
\[ \paarung{\cdot}{\cdot} \colon \hecket_2(Np^\infty,\O) \times \Sbar_2(Np^\infty,\O) \ra \O, \]
so $\calI$-adic cusp forms are
$\Hom_{\Lambdawt}(\Hom_\O(\Sbar_2^\ord(Np^\infty,\O),\O),\calI)$, while by the perfect
pairing \eqref{eqn:perfect-pairing-um} $\calI$-adic modular symbols are
$\Hom_{\Lambdawt}(\Hom_\O(\overline\calMS^\ord(Np^\infty,\O),\O),\calI)$, so these
definitions have some analogy.

For an $\calI$-algebra morphism $F\colon\tord(Np^\infty,\O)\tensor_{\Lambdawt}\calI\ra\calI$ (that is, an $\calI$-adic eigenform), we denote the induced morphism $\Tord(Np^\infty,\O)\tensor_{\Lambdawt}\calI\ra\calI$ still by $F$, by abuse of notation. Then $\MMSS^\ord(Np^\infty,\calI)^\pm[F]$ is well-defined.
An important condition on this module we will need to impose later is the following.
\begin{cond}\label{cond:mmss-free-rank-one}
  $\MMSS^\ord(Np^\infty,\calI)^\pm[F]$ is free of rank $1$ over $\calI$.
\end{cond}

\begin{rem}\label{rem:conditions-imply-mmss-free-rank-one}
  There are several conditions which are known to imply \cref{cond:mmss-free-rank-one},
  among them that of $\calI$ being factorial (which is satisfied for example for $\calI=\Lambdawt$). We do not list the other conditions, see \cite[Lem.\ 5.11]{MR1279604} for this.
\end{rem}

One of the main technical properties of $\calI$-adic modular symbols is the following
control theorem proved by Kitagawa. We will explain some steps in its proof in
\cref{sec:control-theory-ms} because they will be important later.
 In the following, if $M$ is some $\O$-module with an action of $\Gamma^\wt$
and $\ep$ is an $\O^\times$-valued character of $\Gamma^\wt$, we write $M[\ep]$ for the
submodule where the action of $\Gamma^\wt$ is given by
$\ep$.

\begin{thm}[Kitagawa]\label{thm:control-thm-ms}
  \begin{enumerate}
  \item\label{thm:control-thm-ms:ohne-f} There is a canonical isomorphism of $\Tord(Np^\infty,\O)\tensor_{\Lambdawt}\calI$-modules
    \[ \MMSS^\ord(Np^\infty,\calI)\tensor_{\calI}\left(\quotient\calI P\right) \isom \MS_k^\ord(Np^r,\O)[\ep] \]
    which is compatible with the action of $\EP$.
  \item\label{thm:control-thm-ms:omega} There is a canonical isomorphism of $\O$-modules
    \[ \MMSS^\ord(Np^\infty,\Lambdawt)\tensor_{\Lambdawt}\left(\quotient{\Lambdawt}{\omega_{k,r}}\right) \isom \MS_k^\ord(Np^r,\O). \]
  \item\label{thm:control-thm-ms:f} Fix an $\calI$-adic eigenform $F\in\SS^\ord(Np^\infty,\calI)$ and write $F_P$ for the member at $P$ of the Hida family associated to $F$. Assume further that \cref{cond:mmss-free-rank-one} is satisfied. Then there is a canonical isomorphism of $\O$-modules
    \[ 
    \MMSS^\ord(Np^\infty,\calI)^\pm[F]\tensor_{\calI}\left(\quotient\calI P\right) \isom \MS_k^\ord(Np^r,\O)^\pm[F_P]. \]
  \item\label{thm:control-thm-ms:teilc} Let $\Xi\in\MMSS^\ord(Np^\infty,\calI) = \Hom_{\Lambdawt}(\UM^\ord(Np^\infty,\O),\calI)$, $u\in\UM^\ord(Np^\infty,\O)=\Hom_\O(\overline\calMS^\ord(Np^\infty,\O),\O)$ and let $\Xi_\varphi$ be the image of $\Xi$ in the right hand side in statement \ref{thm:control-thm-ms:ohne-f}. Then \[ \varphi(\Xi(u)) = u(\Xi_\varphi). \]
  \end{enumerate}
\end{thm}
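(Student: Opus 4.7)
The plan is to reduce all four parts to a single cohomological control theorem for the $\Lambdawt$-module $\UM^\ord(Np^\infty,\O)$ and then extract the claims by Hom-tensor manipulations. The key input, whose proof will be discussed in \cref{sec:control-theory-ms}, is a Hecke- and $\EP$-equivariant isomorphism of $\O$-modules
\[
  \UM^\ord(Np^\infty,\O) \tensor_{\Lambdawt} (\Lambdawt/P_{k,\ep}) \cong \Hom_\O(\MS_k^\ord(Np^r,\O)[\ep],\,\O).
\]
This is the cohomological analogue of Hida's control theorem for Hecke algebras (\cref{thm:control-thm}) and constitutes the main technical obstacle: it requires identifying weight-$2$ cohomology with trivial coefficients with weight-$k$ cohomology with $\Sym^{k-2}\O^2$ coefficients after applying the ordinary projector $e^\ord$, via a twist by the character $\kappa_\wt^{k-2}$. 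Morally, on the ordinary part the action of an upper triangular matrix $\binmatrix a b 0 d$ on $\Sym^{k-2}\O^2$ factors through the top-coefficient projection and reduces to scalar multiplication by $a^{k-2}$, which explains how the diamond operators encode the varying weight.

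Granting this input, part (a) follows by standard Hom-tensor formalism. Since $\UM^\ord$ is finitely generated over $\Lambdawt$ and $\calI$ is $\Lambdawt$-free of finite rank, adjunction yields
\[
  \MMSS^\ord(Np^\infty,\calI) \tensor_\calI (\calI/P) \cong \Hom_{\Lambdawt}(\UM^\ord,\,\calI/P).
\]
Using $\calI/P \cong \O$ and that $\Lambdawt$ acts on this quotient through the surjection $\varphi_{k,\ep}\colon \Lambdawt \to \O$ with kernel $P_{k,\ep}$, the right-hand side equals
\[
  \Hom_\O\bigl(\UM^\ord \tensor_{\Lambdawt} (\Lambdawt/P_{k,\ep}),\,\O\bigr).
\]
Plugging in the control theorem for $\UM^\ord$ and applying $\O$-linear double duality to the finite free $\O$-module $\MS_k^\ord(Np^r,\O)[\ep]$ then yields the desired isomorphism, with the $\EP$-action preserved throughout.

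Part (b) follows from (a) applied with $\calI = \Lambdawt$ by running over all $\O^\times$-valued characters $\ep$ of $\Gamma^\wt/\Gamma_r^\wt$: the Chinese remainder isomorphism $\Lambdawt/\omega_{k,r} \cong \prod_\ep \Lambdawt/P_{k,\ep}$ matches the $\ep$-eigenspace decomposition $\MS_k^\ord(Np^r,\O) = \bigoplus_\ep \MS_k^\ord(Np^r,\O)[\ep]$ under the diamond operators. For part (c), first take $\pm$-eigenspaces under $\EP$ in (a) (unproblematic since $2 \in \O^\times$) and then cut out the $F$-eigenspace on both sides. Here \cref{cond:mmss-free-rank-one} is exactly what is needed so that forming the $F$-eigenspace commutes with the base change $\calI \to \calI/P$: without such a freeness hypothesis the specialization could drop rank. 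On the right-hand side, the $F$-eigenspace of $\MS_k^\ord(Np^r,\O)^\pm[\ep]$ modulo $P$ matches the $F_P$-eigenspace by construction of $F_P$ as the reduction of $F$ mod $P$.

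Finally, part (d) is a bookkeeping argument. By construction the chain of isomorphisms in (a) identifies, after reduction modulo $P$, the evaluation pairing
\[
  \MMSS^\ord(Np^\infty,\calI) \times \UM^\ord(Np^\infty,\O) \to \calI,\qquad (\Xi,u) \mapsto \Xi(u),
\]
followed by $\varphi$, with the tautological pairing of $\MS_k^\ord(Np^r,\O)[\ep]$ against its $\O$-linear dual evaluated on the image pair. Tracking $(\Xi,u)$ through the identifications then yields precisely $\varphi(\Xi(u)) = u(\Xi_\varphi)$.
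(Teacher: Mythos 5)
Your overall strategy is the same as the paper's: both reduce everything to a control statement on the $\Lambdawt$-module side (built from Kitagawa's top-coefficient projection $\Sym^{k-2}\to\mathbf1$ on the ordinary part, which compares weight $k$ to weight $2$) and then extract the four parts by Hom--tensor manipulations and duality. The paper factors this into a reduction isomorphism, a biduality step, and a weight-comparison step; your ``key input'' is just the dual reformulation of the composite of the latter two. Parts (a), (c), (d) go through given that input, with one caveat worth noting: the identification $\Hom_{\Lambdawt}(\UM^\ord(Np^\infty,\O),\calI)\tensor_\calI(\calI/P)\cong\Hom_{\Lambdawt}(\UM^\ord(Np^\infty,\O),\calI/P)$ is not a matter of ``finite generation plus adjunction''; you need $\UM^\ord(Np^\infty,\O)$ to be $\Lambdawt$-projective (Kitagawa shows it is $\Lambdawt$-free), or else you must argue as the paper does via $0\to P\to\Lambdawt\to\O\to0$ and the principality of height one primes of $\Lambdawt$.

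Your deduction of part (b) contains a genuine error. There is no Chinese Remainder isomorphism $\Lambdawt/\omega_{k,r}\cong\prod_\ep\Lambdawt/P_{k,\ep}$: the ring $\Lambdawt$ is local and the $\ep$ are of $p$-power order, so for a topological generator $\gamma_0$ of $\Gamma^\wt$ one has $\ep(\gamma_0)\equiv1$ modulo $\frakm$; hence the generators of the various $P_{k,\ep}$ are all congruent modulo $\frakm$ and the ideals are far from pairwise comaximal. Concretely, $\Lambdawt/\omega_{k,r}$ is a local ring, while $\prod_\ep\Lambdawt/P_{k,\ep}\cong\O^{\,p^{r-1}}$ has $p^{r-1}$ orthogonal idempotents. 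For the same reason, $\MS_k^\ord(Np^r,\O)$ does not split $\O$-integrally as $\bigoplus_\ep\MS_k^\ord(Np^r,\O)[\ep]$: the group $\quotient{\Gamma^\wt}{\Gamma^\wt_r}$ is a $p$-group acting on a module in residue characteristic $p$, so $\O[\quotient{\Gamma^\wt}{\Gamma^\wt_r}]$ is local and not semisimple. Part (b) therefore cannot be assembled from part (a) character by character; it must be proved by running the control argument directly modulo $\omega_{k,r}$, as Kitagawa does.
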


The module of $\calI$-adic modular symbols admits an alternative description using
trace-compatible projective systems which we will need in the next section. In the context
of $\calI$-adic modular forms an analogous result was proved by Ohta in
\cite[§2.3--4]{MR1332907}, and the techniques carry over essentially unchanged to modular
symbols. We first cite Ohta's results and then formulate the modular symbols analogue. It
involves the Atkin-Lehner operator and the adjoint Hecke operators introduced in
\cref{sec:poincare}. Also it involves anti-ordinary parts, which are obtained by using
$T_p^\iota$ instead of $T_p$ in the definition of the ordinary projection and denoted by
$\heckeT^{\antiord}(Np^\infty,\O)$ etc.

Put \[ {\S_k^\iota(Np^r,\O)} \da \{ \xi\in\S_k(\Gamma_1(Np^r),\C_p) : \xi[w_{Np^r}]\in\S_k(\Gamma_1(Np^r),\O) \} \]
and \[ {\mathfrak{S}_k^\iota(Np^\infty,\O)} \da \lim_r\S_k^\iota(Np^r,\O). \]
Here the limit is taken along the maps induced by the change-of-level morphism
$\Sigma_{Np^{r+1},Np^r}$ from \cref{sec:refinements}.
We denote by ${\mathfrak S_k^{\antiord}(Np^\infty,\O)}$ the anti-ordinary part of
${\mathfrak S_k^{\iota}(Np^\infty,\O)}$.


\begin{thm}[Ohta]\label{thm:hida-fam-als-proj-systeme}
  For any $k\ge 2$ there is a canonical isomorphism of $\Lambdawt$-modules
  \[\begin{tikzpicture}
      \matrix (m) [matrix of math nodes, row sep=7ex, column sep=2em, text height=1.5ex, text depth=0.25ex]
      { \SS^\ord(Np^\infty,\Lambdawt) & \mathfrak{S}^{\antiord}_k(Np^\infty,\O) \\
        F & \displaystyle(f_r)_r\text{ with }f_r=\frac1{p^{r-1}}\Bigg(\sum_{\ep\in\hat\Gamma^\wt_{\mathrm f,r}}F_{k,\ep}[T_p^{-r}]\Bigg)[w_{Np^r}]^{-1}\\
        \pbox{0.5\textwidth}{
            the unique $F$ such that\\
            $F_{k,\ep}=\displaystyle\!\!\!\sum_{\alpha\in\quotient{\Gamma^\wt}{\Gamma^\wt_r}}\!\!\!\ep(\alpha)f_r[w_{Np^r}][T_p^r]\diamondop\alpha^{-1}$
        }
          & (f_r)_r. \\};
      \path[<->,font=\scriptsize]
      (m-1-1) edge node [above] {$\sim$} (m-1-2);
      \path[|->,font=\scriptsize]
      (m-2-1) edge (m-2-2)
      (m-3-2) edge (m-3-1);
    \end{tikzpicture}\]
  Under this isomorphism, a Hecke operator from $\hecket^\ord(Np^\infty,\O)$ on the left side corresponds to its adjoint in $\hecket^\antiord(Np^\infty,\O)$ on the right side.
\end{thm}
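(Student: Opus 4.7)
The plan is to verify that the two explicitly given formulas define mutually inverse $\Lambdawt$-linear maps, and then to derive the Hecke-adjoint compatibility from the classical identity $[w_{Np^r}]T_\ell[w_{Np^r}]^{-1}=T_\ell^\iota$ at each finite level. The two essential ingredients will be Hida's control theorem (\cref{thm:control-thm}), which identifies $\SS^\ord(Np^\infty,\Lambdawt)\tensor_{\Lambdawt}(\Lambdawt/P_{k,\ep})$ with the ordinary cusp forms of level $Np^r$ with character $\ep$, and the orthogonality of characters of the finite abelian group $\Gamma^\wt/\Gamma^\wt_r$.

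First I would treat the forward map. For $F\in\SS^\ord(Np^\infty,\Lambdawt)$ and each arithmetic point $P_{k,\ep}$ of type $(k,\ep,r)$, \cref{thm:control-thm} produces a specialization $F_{k,\ep}\in\S_k(\Gamma_1(Np^r),\Gamma^\wt/\Gamma^\wt_r,\ep,\O)^\ord$. On the ordinary part $T_p$ is a unit, so $[T_p^{-r}]$ is well defined, and applying $[w_{Np^r}]^{-1}$ swaps ordinary and anti-ordinary subspaces. I would then check that the averaged form $f_r$ lies in $\O$ and is anti-ordinary, and that the sequence $(f_r)_r$ is compatible with the transition maps defining the inverse limit $\mathfrak{S}_k^{\antiord}(Np^\infty,\O)$. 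Integrality requires combining the normalizing factor $p^{1-r}$ with the standard denominator introduced by $[w_{Np^r}]^{-1}$; the level-compatibility is a computation using \cref{prop:change-of-level-pullback-h} and the trace-like behaviour of the sum over characters of higher-level groups.

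For the inverse direction, given $(f_r)_r$ the formula defines a cusp form $F_{k,\ep}$ with nebentype $\ep$, and one checks that these are compatible with varying $(k,\ep,r)$ in the sense of \cref{thm:control-thm}, so they glue into an $F\in\SS^\ord(Np^\infty,\Lambdawt)$ by Zariski density of arithmetic points (as ensured in \cref{setting:hida-families}). That the two constructions are mutually inverse reduces to the character orthogonality
\begin{equation*}
  \frac{1}{p^{r-1}}\sum_{\ep\in\hat\Gamma^\wt_{\mathrm f,r}}\ep(\alpha\beta^{-1})=\delta_{\alpha,\beta}\qquad(\alpha,\beta\in\Gamma^\wt/\Gamma^\wt_r),
\end{equation*}
which collapses the double sum obtained by composing the two formulas to the identity, after unwinding the action of $[w_{Np^r}T_p^{\pm r}]$ and using that these operators commute with diamond operators up to explicit scalars.

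The main obstacle, in my view, is the careful bookkeeping of the Atkin--Lehner and $[T_p^{\pm r}]$ factors: one must show that the denominators produced by $[w_{Np^r}]^{-1}$ and the prefactor $p^{1-r}$ cancel exactly, and that the resulting elements descend along the tower of level-change morphisms $\Sigma_{Np^{r+1},Np^r}$. For the final assertion on Hecke operators, the dual action on $\SS^\ord(Np^\infty,\Lambdawt)$ is $F\mapsto F\circ T$ for $T\in\tord(Np^\infty,\O)$; combined with the identity $[w_{Np^r}]T[w_{Np^r}]^{-1}=T^\iota$ at each finite level, this translates directly into the action of $T^\iota\in\hecket^{\antiord}(Np^\infty,\O)$ on $\mathfrak{S}_k^{\antiord}(Np^\infty,\O)$ after passing to the limit.
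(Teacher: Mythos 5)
The paper does not reprove this statement; it cites Ohta directly (\cite[Thm.\ 2.3.6]{MR1332907}), and later in \cref{thm:mmss-trace-comp-systems-iso} carries out the same argument in the modular-symbols setting. Your sketch follows essentially the same plan as Ohta's proof (and as the paper's modular-symbols analogue): use the control theorem to produce the specializations $F_{k,\ep}$, verify trace-compatibility and integrality of $(f_r)_r$, invert via orthogonality of characters of $\Gamma^\wt/\Gamma_r^\wt$, and deduce Hecke-adjointness from the finite-level conjugation identity. So the overall strategy is the right one.

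One point is misdiagnosed, though. You attribute the integrality to "combining the normalizing factor $p^{1-r}$ with the standard denominator introduced by $[w_{Np^r}]^{-1}$." But $[w_{Np^r}]^{-1}$ does not introduce a denominator that needs cancellation here: by construction, $\S_k^\iota(Np^r,\O)$ is \emph{defined} as the lattice of forms that become $\O$-integral after applying $[w_{Np^r}]$, so the condition $f_r\in\S_k^\iota(Np^r,\O)$ is just the requirement that $p^{1-r}\sum_\ep F_{k,\ep}[T_p^{-r}]$ already lies in $\S_k(\Gamma_1(Np^r),\O)$ \emph{before} applying $[w_{Np^r}]^{-1}$. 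The non-trivial content is therefore the divisibility of the averaged sum $\sum_\ep F_{k,\ep}$ by $p^{r-1}$, which requires an integrality lemma of Ohta (cf.\ \cite[Lem.\ 2.4.2]{MR1332907}, reproduced in spirit as the paper's \cref{lem:eindeutiger-lift-in-lambda-lemma-ohta}). This lemma hinges on the fact that the $F_{k,\ep}$ are all reductions of a single element $F\in\SS^\ord(Np^\infty,\Lambdawt)$, so their alternating sums against characters land in $p^{r-1}\O$ by the same orthogonality you invoke for the inverse map. Without this, the forward map is not obviously well-defined, so this is the step I would push back on in your outline, not the $[T_p^{\pm r}]$ or $[w_{Np^r}]^{\pm1}$ bookkeeping.
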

\begin{proof}
  \cite[Thm.\ 2.3.6]{MR1332907}
\end{proof}

One can check that the Atkin-Lehner involution on $Y_1(Np^{r+1})^\arithm$ interchanges the
change of level morphisms $\Sigma_{Np^{r+1},Np^r}$ and $\Theta_{Np^{r+1},Np^r}$ from
\cref{sec:refinements}. Therefore we obtain the following corollary.

\begin{cor}\label{cor:ss-as-proj-limit}
  There is a canonical isomorphism of $\hecket^\ord(Np^\infty,\O)$-modules
  \begin{align*}
    \SS^\ord(Np^\infty,\Lambdawt) &\isom \lim_r \S^\ord_k(X_1(Np^r),\O),\\
    F &\mapsto \displaystyle(f_r)_r\text{ with }f_r=\frac1{p^{r-1}}\Bigg(\sum_{\ep\in\hat\Gamma^\wt_{\mathrm f,r}}F_{k,\ep}[T_p^{-r}]\Bigg)[w_{Np^r}]^{-1}
  \end{align*}
  the limit now being taking along the maps induced by the change-of-level morphism
  $\Theta_{Np^{r+1},Np^r}$ from \cref{sec:refinements}.
\end{cor}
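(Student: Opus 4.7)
The plan is to derive this directly from Ohta's Theorem \ref{thm:hida-fam-als-proj-systeme} by applying the Atkin-Lehner involutions termwise, which simultaneously converts the $\iota$-variant of cusp forms into the usual ones with ordinary projection and converts the $\Sigma$-compatibility into $\Theta$-compatibility.

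First I would invoke \cref{thm:hida-fam-als-proj-systeme} to identify $\SS^\ord(Np^\infty,\Lambdawt)$ with $\mathfrak{S}^{\antiord}_k(Np^\infty,\O)$, which by definition is $\lim_r \S^\iota_k(Np^r,\O)^{\antiord}$ with transition maps induced by $\Sigma_{Np^{r+1},Np^r}$. At each finite level $r$, the very definition of $\S^\iota_k(Np^r,\O)$ says that right-multiplication by $[w_{Np^r}]$ is an $\O$-module isomorphism onto $\S_k(\Gamma_1(Np^r),\O)$. Since $[w_{Np^r}]$ conjugates $T_p$ to $T_p^\iota$ and each diamond operator to its inverse, this isomorphism carries the antiordinary projector on the source onto the ordinary projector on the target, so it restricts to termwise isomorphisms $\S^{\iota,\antiord}_k(Np^r,\O) \cong \S^\ord_k(X_1(Np^r),\O)$.

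The substantive step is to check that these termwise maps are compatible with the transition maps, exchanging $\Sigma_{Np^{r+1},Np^r}$ on the left for $\Theta_{Np^{r+1},Np^r}$ on the right. Here is where the geometric observation stated just before the corollary is used: the Atkin-Lehner involutions at levels $Np^{r+1}$ and $Np^r$ interchange $\Sigma_{Np^{r+1},Np^r}$ and $\Theta_{Np^{r+1},Np^r}$. Passing to pullbacks on cusp forms, this yields the intertwining
\[
  [w_{Np^{r+1}}] \circ \Sigma_{Np^{r+1},Np^r}^{*} = \Theta_{Np^{r+1},Np^r}^{*} \circ [w_{Np^r}],
\]
which is exactly the statement that a $\Sigma$-compatible system of antiordinary $\iota$-cusp forms maps under termwise $[w_{Np^r}]$ to a $\Theta$-compatible system of ordinary cusp forms. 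Assembling these termwise isomorphisms and composing with Ohta's map gives the desired identification of $\hecket^\ord(Np^\infty,\O)$-modules; the explicit formula for $f_r$ is then inherited directly from Ohta's formula through this identification (the $[w_{Np^r}]^{-1}$ at the right end is precisely what witnesses that the element lies in the $\iota$-space, and hence, via the Atkin-Lehner identification, in $\S^\ord_k(X_1(Np^r),\O)$).

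The main obstacle is verifying the geometric interchange of $\Sigma$ and $\Theta$ by Atkin-Lehner on $Y_1(Np^{r+1})^\arithm$. This is a moduli-theoretic check that requires tracking the precise normalization of the Atkin-Lehner involution in the \emph{arithmetic} version of the modular curve (via its characterization in terms of $\mu_{Np^r}$-level structures) against the explicit moduli descriptions of $\Sigma_{Np^{r+1},Np^r}$ and $\Theta_{Np^{r+1},Np^r}$ in \eqref{eqn:dfn-change-of-level-morphisms}. Once this identity is established, the Hecke equivariance of the resulting isomorphism and the passage to inverse limits are formal.
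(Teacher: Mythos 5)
Your proof is correct and takes exactly the route intended in the paper: quote Ohta's \cref{thm:hida-fam-als-proj-systeme}, apply the Atkin--Lehner operators $[w_{Np^r}]$ termwise to convert the $\iota$-spaces with antiordinary projection into $\S_k^\ord(X_1(Np^r),\O)$, and use the geometric fact (stated in the paper just before the corollary) that $w_{Np^{r+1}}$, $w_{Np^r}$ intertwine $\Sigma_{Np^{r+1},Np^r}$ with $\Theta_{Np^{r+1},Np^r}$ to see the transition maps match up. One remark on the last parenthetical of your argument: applying $[w_{Np^r}]$ termwise should actually \emph{cancel} the trailing $[w_{Np^r}]^{-1}$ from Ohta's formula, as happens in the parallel \cref{cor:mmss-as-proj-limit}; the formula as printed in \cref{cor:ss-as-proj-limit} appears to retain it inadvertently, so the rationalization you offer for its presence is unnecessary (and slightly off, since $f_r$ and $f_r[w_{Np^r}]$ are different elements of $\S_k(\Gamma_1(Np^r),\C_p)$).
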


Now we turn to modular symbols and prove an analogue of \cref{thm:hida-fam-als-proj-systeme}. Fix $k\ge2$ and let 
\[ {\MS_k^\iota(Np^r,\O)} \da \{ \xi\in\MS_k(Np^r,\C_p) : \xi[w_{Np^r}]\in\MS_k(Np^r,\O) \} \]
for $r\ge0$ and \[ {\mathfrak{MS}_k(Np^\infty,\O)}\da \lim_r\MS_k^\iota(Np^r,\O) \]
again taking the limit along $\Sigma_{Np^{r+1},Np^r}$. Then the adjoint Hecke eigenalgebra of $\MS_k^\iota(Np^r,\O)$ is $\HeckeKonkretadjoint k {Np^r} \O$. So we can consider $\mathfrak{MS}_k(Np^\infty,\O)$ as a module over $\heckeT^\iota(Np^\infty,\O)$ and via this also as a $\Lambdawt$-module, and we have again an anti-ordinary part $\mathfrak{MS}_k^{\antiord}(Np^\infty,\O)$.

As a preparation to the proof the analogue of Ohta's result we need some lemmas.

\begin{lem}\label{lem:eindeutiger-lift-in-lambda-lemma-ohta}
  Fix $u\in\UM^\ord(Np^\infty,\O)$ and $(x_r)_r\in \mathfrak{MS}_2^{\antiord}(Np^\infty,\O)$. Then there is a unique $X(u)\in\Lambdawt$ such that
  \[ X(u) \mod P_{2,\ep} =\!\!\! \sum_{\alpha\in\quotient{\Gamma^\wt}{\Gamma^\wt_r}}\!\! \ep(\alpha)u(x_r[w_{Np^r}][T_p^r]\diamondop\alpha^{-1}) \]
  for all $\ep\in\hat\Gamma^\wt_{\mathrm f}$.
\end{lem}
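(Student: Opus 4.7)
The plan is to define, for each finite-order character $\ep$ of $\Gamma^\wt$ of level $r$, a candidate specialization
\[ X_\ep \da \sum_{\alpha \in \Gamma^\wt/\Gamma^\wt_r} \ep(\alpha)\, x_r[w_{Np^r}][T_p^r]\diamondop\alpha^{-1} \ \in\ \MS_2^\ord(Np^r,\O)[\ep], \]
assemble the collection $(X_\ep)_\ep$ into a single element $X \in \MMSS^\ord(Np^\infty,\Lambdawt)$, and then put $X(u) \in \Lambdawt$. Applying \cref{thm:control-thm-ms}\ref{thm:control-thm-ms:teilc} would then give $X(u) \bmod P_{2,\ep} = u(X_\ep)$, which is exactly the formula in the lemma. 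Uniqueness of such $X(u) \in \Lambdawt$ is standard: identifying $\Lambdawt \cong \psring\O T$ via a topological generator $\gamma_0 \in \Gamma^\wt$, the ideal $P_{2,\ep}$ corresponds to the point $\ep(\gamma_0)\kappa_\wt(\gamma_0)^2 - 1$ of the open unit disc; as $\ep$ varies these form infinitely many distinct points accumulating at $\kappa_\wt(\gamma_0)^2-1$, so by Weierstrass preparation no nonzero element of $\psring\O T$ can vanish at all of them.

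Next I would verify that each $X_\ep$ lies in $\MS_2^\ord(Np^r,\O)[\ep]$: integrality comes from the fact that $x_r[w_{Np^r}] \in \MS_2(Np^r,\O)$ (built into the definition of $\MS_2^\iota$), ordinarity follows from the anti-ordinarity of $(x_r)_r$ combined with the fact that conjugation by $[w_{Np^r}]$ interchanges $T_p$ and $T_p^\iota$, and the $\ep$-eigencondition for the diamond action is immediate from the shape of the sum, which is the image of $x_r[w_{Np^r}][T_p^r]$ under the $\ep$-projector for the action of $\Gamma^\wt/\Gamma^\wt_r$.

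The main obstacle is showing that the family $(X_\ep)_\ep$ genuinely glues to an element $X \in \MMSS^\ord(Np^\infty,\Lambdawt) = \Hom_{\Lambdawt}(\UM^\ord(Np^\infty,\O),\Lambdawt)$, i.e., that the specializations fit together in the inverse-system structure provided by \cref{thm:control-thm-ms}. This is the modular-symbols analogue of Ohta's \cref{thm:hida-fam-als-proj-systeme}, and I would prove it by transporting Ohta's argument verbatim. Three ingredients combine: the trace-compatibility of $(x_r)_r$ along $\Sigma_{Np^{r+1},Np^r}$, which is built into the definition of $\mathfrak{MS}_2^\antiord$; the interchange of $\Sigma_{Np^{r+1},Np^r}$ and $\Theta_{Np^{r+1},Np^r}$ under conjugation by $w_{Np^{r+1}}$, noted just before \cref{cor:ss-as-proj-limit}; and the standard relation between $T_p$ at levels $Np^{r+1}$ and $Np^r$. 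Because all operators involved arise from correspondences on the modular curves, the relevant identities act formally in the same way on modular symbols as on cusp forms, so Ohta's calculation carries over without change, the anti-ordinary hypothesis on $(x_r)_r$ absorbing the $p^{r-1}$ normalization appearing in his formula.
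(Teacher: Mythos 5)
Your proposal inverts the logical order of the argument and as a result is circular. The key step you flag as "the main obstacle" — gluing the elements $X_\ep$ into a single $X\in\MMSS^\ord(Np^\infty,\Lambdawt)=\Hom_{\Lambdawt}(\UM^\ord(Np^\infty,\O),\Lambdawt)$ — is not a task you can perform before knowing the lemma, because being a $\Lambdawt$-valued $\Lambdawt$-linear functional means exactly that for each fixed $u$ the scalars $u(X_\ep)\in\O$ cohere into a single element of $\Lambdawt$ reducing to $u(X_\ep)$ modulo each $P_{2,\ep}$. That is precisely the statement of \cref{lem:eindeutiger-lift-in-lambda-lemma-ohta}, not a consequence of some independent gluing procedure. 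You cannot avoid this by "transporting Ohta's argument verbatim" either: Ohta's proof of \cref{thm:hida-fam-als-proj-systeme} (and the paper's proof of its modular-symbols analogue \cref{thm:mmss-trace-comp-systems-iso}) has this very lemma — more precisely its analogue, via \cite[Lem.\ 2.4.2]{MR1332907} — as a building block, so you would find yourself needing the lemma in order to carry out the transport.

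The correct approach, and the one the paper takes, is purely local and elementary. Fix $u$ and set $u_\alpha\da u(x_r[w_{Np^r}][T_p^r]\diamondop\alpha^{-1})$; define $F\colon\hat\Gamma^\wt_{\mathrm f}\ra\O$ by $F(\ep)=\sum_{\alpha}\ep(\alpha)u_\alpha$. A direct character-orthogonality computation gives, for any $\alpha_0\in\Gamma^\wt$,
\begin{equation*}
  \sum_{\ep\in\hat\Gamma^\wt_{\mathrm f,r}}\ep(\alpha_0)^{-1}F(\ep)
  = \sum_{\alpha}\Big(\sum_{\ep}\ep(\alpha_0^{-1}\alpha)\Big)u_\alpha
  = p^{r-1}u_{\alpha_0}\ \in\ p^{r-1}\O,
\end{equation*}
since the inner sum vanishes unless $\alpha=\alpha_0$. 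Ohta's Lemma \cite[Lem.\ 2.4.2]{MR1332907} is exactly the criterion that a function $F$ on $\hat\Gamma^\wt_{\mathrm f}$ satisfying these congruences arises from a unique element of $\Lambdawt$ by reduction modulo the ideals $P_{2,\ep}$ — existence and uniqueness together, with no reference to modular symbols or any interpolation of the objects $X_\ep$ themselves. Your uniqueness argument via Zariski density is correct but is already subsumed in that citation; the real content you are missing is the congruence above, which replaces the entire "gluing" step.
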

\begin{proof}
  For $\alpha\in\quotient{\Gamma^\wt}{\Gamma^\wt_r}$, abbreviate $u_\alpha\da u(x_r[w_{Np^r}][T_p^r]\diamondop\alpha^{-1})$.
  Define a map
  \begin{equation*}
    F\colon\hat\Gamma^\wt_{\mathrm f}\ra\O,\quad \ep\mapsto\!\!\!\sum_{\alpha\in\quotient{\Gamma^\wt}{\Gamma^\wt_r}}\!\!\!\ep(\alpha) u_\alpha.
  \end{equation*}
  Fix $\alpha_0\in\Gamma^\wt$. Then we calculate
  \begin{align*}
    \sum_{\ep\in\hat\Gamma^\wt_{\mathrm f,r}}\ep(\alpha_0)^{-1} F(\ep) &= \sum_{\ep\in\hat\Gamma^\wt_{\mathrm f,r}}\ep(\alpha_0)^{-1}\!\!\!\sum_{\alpha\in\quotient{\Gamma^\wt}{\Gamma^\wt_r}}\!\!\! \ep(\alpha) u_\alpha \\
        &= \!\!\!\sum_{\alpha\in\quotient{\Gamma^\wt}{\Gamma^\wt_r}}\sum_{\ep\in\hat\Gamma^\wt_{\mathrm f,r}}\ep(\alpha_0^{-1}\alpha)u_\alpha \\
        &= p^{r-1}u_{\alpha_0} + \sum_{\alpha\neq\alpha_0} u_\alpha \sum_\ep\ep(\alpha_0^{-1}\alpha).
  \end{align*}
  Since $\sum_\ep\ep(\alpha_0^{-1}\alpha)=0$ if $\alpha\neq\alpha_0$, it follows that
  \begin{equation*}
    \sum_{\ep\in\hat\Gamma^\wt_{\mathrm f,r}}\ep(\alpha_0)^{-1} F(\ep)\in p^{r-1}\O.
  \end{equation*}
  Then the claim follows from \cite[Lem.\ 2.4.2]{MR1332907}.
\end{proof}

In the following, for $X\in\MMSS^\ord(Np^\infty,\Lambdawt)$ we denote its image in $\MS_2(Np^r,\O)[\ep]$ under the morphism from \cref{thm:control-thm-ms} by $X_{2,\ep}$ (with $k=2$ there).
\begin{lem}\label{lem:x-u-lambda-linear}
  Fix $(x_r)_r\in \mathfrak{MS}_2^{\antiord}(Np^\infty,\O)$.
  \begin{enumerate}
  \item The map \[ X\colon\UM^\ord(Np^\infty,\O)\ra\Lambdawt,\quad u\mapsto X(u) \]
    with $X(u)$ as in \cref{lem:eindeutiger-lift-in-lambda-lemma-ohta}
    is $\Lambdawt$-linear.
  \item $X$ is the unique element in $\MMSS^\ord(Np^\infty,\Lambdawt)$ such that if
    $X_{2,\ep}$ is its image mod $P_{2,\ep}$ in $\MS_2(Np^r,\O)[\ep]$ under the isomorphism
    from \cref{thm:control-thm-ms}, then
    \[ X_{2,\ep}=\sum_{\alpha\in\quotient{\Gamma^\wt}{\Gamma^\wt_r}}
      \ep(\alpha)x_r[w_{Np^r}][T_p^r]\diamondop\alpha^{-1}. \]
  \end{enumerate}
\end{lem}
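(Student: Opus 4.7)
The plan is to prove part (a) by a density argument reducing to identities modulo each arithmetic prime $P_{2,\ep}$, and then derive part (b) as a formal consequence of part (a) combined with \cref{thm:control-thm-ms}\ref{thm:control-thm-ms:teilc} and the perfectness of the pairing \eqref{eqn:perfect-pairing-um}.

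For part (a), since $\Lambdawt = \psring{\O}{\Gamma^\wt}$, it suffices to verify $X(\gamma u) = \gamma X(u)$ for each $\gamma\in\Gamma^\wt$ and each $u\in\UM^\ord(Np^\infty,\O)$. Because the arithmetic points $\{P_{2,\ep}:\ep\in\hat\Gamma^\wt_{\mathrm f}\}$ are Zariski dense in $\Spec\Lambdawt$, their intersection is $0$, and it is enough to check this equality after reduction modulo each $P_{2,\ep}$. The right-hand side reduces to $\varphi_{2,\ep}(\gamma)\cdot(X(u)\bmod P_{2,\ep})$. For the left-hand side, the defining formula from \cref{lem:eindeutiger-lift-in-lambda-lemma-ohta}, applied with $\gamma u$ in place of $u$, expresses $X(\gamma u)\bmod P_{2,\ep}$ as a sum over $\alpha\in\Gamma^\wt/\Gamma^\wt_r$; dualizing the $\gamma$-action on $\UM^\ord$ back to the diamond-operator action on $\overline\calMS_2^\ord$, using the commutativity of diamond operators with themselves and with $T_p$, and finally substituting $\beta = \gamma^{-1}\alpha$ in the summation produces an overall prefactor $\varphi_{2,\ep}(\gamma)$ in front of $X(u)\bmod P_{2,\ep}$, matching the right-hand side.

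For part (b), part (a) shows that $X$ lies in $\MMSS^\ord(Np^\infty,\Lambdawt)$, so by \cref{thm:control-thm-ms}\ref{thm:control-thm-ms:ohne-f} it has a well-defined image $X_{2,\ep}\in\MS_2^\ord(Np^r,\O)[\ep]$. \Cref{thm:control-thm-ms}\ref{thm:control-thm-ms:teilc} then gives, for every $u\in\UM^\ord(Np^\infty,\O)$,
\[
u(X_{2,\ep}) \;=\; \varphi_{2,\ep}(X(u)) \;=\; X(u)\bmod P_{2,\ep} \;=\; \sum_{\alpha\in\Gamma^\wt/\Gamma^\wt_r}\ep(\alpha)\,u\bigl(x_r[w_{Np^r}][T_p^r]\diamondop{\alpha}^{-1}\bigr).
\]
By $\O$-linearity of $u$, the right-hand side equals $u\bigl(\sum_\alpha\ep(\alpha)\,x_r[w_{Np^r}][T_p^r]\diamondop{\alpha}^{-1}\bigr)$, and the perfectness of the pairing \eqref{eqn:perfect-pairing-um} on ordinary parts forces $X_{2,\ep}$ to coincide with this sum. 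Uniqueness of $X\in\MMSS^\ord(Np^\infty,\Lambdawt)$ with the claimed reduction property is immediate from $\bigcap_\ep P_{2,\ep}=0$: any two candidates have the same value in $\Lambdawt$ on every input $u$, hence agree.

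The only mildly delicate step is the bookkeeping of the weight-twist factor $\varphi_{2,\ep}(\gamma) = \ep(\gamma)\kappa_\wt(\gamma)^2$ in (a); everything else reduces to the density of arithmetic points, a change of summation variable, and the content of \cref{thm:control-thm-ms}.
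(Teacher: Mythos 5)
Your proposal is correct and follows essentially the same route as the paper: reduce part (a) to checking $X(\gamma u)\equiv\gamma X(u)\pmod{P_{2,\ep}}$ for $\gamma\in\Gamma^\wt$ via the uniqueness/density of arithmetic primes (with the weight-two twist $\varphi_{2,\ep}(\gamma)=\ep(\gamma)\kappa_\wt(\gamma)^2$ absorbed by the reindexing of the sum), and then deduce part (b) from \cref{thm:control-thm-ms}\ref{thm:control-thm-ms:teilc} together with the perfectness of the pairing \eqref{eqn:perfect-pairing-um} and the characterizing property of $X(u)$ from \cref{lem:eindeutiger-lift-in-lambda-lemma-ohta}. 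The paper additionally spells out the (trivial) additivity step $X(u+v)=X(u)+X(v)$, which you leave implicit, but this is not a substantive difference.
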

\begin{proof}
  \begin{enumerate}
  \item  First, let $u,v\in\UM^\ord(Np^\infty,\O)$. Then $X(u)+X(v)$ has the property that
    \begin{align*}
      X(u)+X(v)\mod P_{2,\ep} = \sum_{\alpha\in\quotient{\Gamma^\wt}{\Gamma^\wt_r}} &\ep(\alpha)u(x_r[w_{Np^r}][T_p^r]\diamondop\alpha^{-1})\\ & + \sum_{\alpha\in\quotient{\Gamma^\wt}{\Gamma^\wt_r}} \ep(\alpha)v(x_r[w_{Np^r}][T_p^r]\diamondop\alpha^{-1})\\
      = \sum_{\alpha\in\quotient{\Gamma^\wt}{\Gamma^\wt_r}} &\ep(\alpha)(u+v)(x_r[w_{Np^r}][T_p^r]\diamondop\alpha^{-1})
    \end{align*}
    for all $\ep\in\hat\Gamma^\wt_{\mathrm f}$.
    Since $X(u+v)$ was defined to be the unique element in $\Lambdawt$ with this property, it follows $X(u+v)=X(u)+X(v)$.
    
    Now let $u\in\UM^\ord(Np^\infty,\O)$ and $\lambda\in\Lambdawt$. Without loss of generality, assume $\lambda\in\Gamma^\wt$. Then
    \begin{align*}
      \lambda X(u)\mod P_{2,\ep} &= \ep\kappa_\wt^2(\lambda)\sum_{\alpha\in\quotient{\Gamma^\wt}{\Gamma^\wt_r}} \ep(\alpha)u(x_r[w_{Np^r}][T_p^r]\diamondop\alpha^{-1})\\
      &= \kappa_\wt^2(\lambda)\sum_{\alpha\in\quotient{\Gamma^\wt}{\Gamma^\wt_r}} \ep(\lambda\alpha)u(x_r[w_{Np^r}][T_p^r]\diamondop\alpha^{-1}),
    \end{align*}
    while
    \begin{align*}
      X(\lambda u)\mod P_{2,\ep} &= \sum_{\alpha\in\quotient{\Gamma^\wt}{\Gamma^\wt_r}} \ep(\alpha)u(x_r[w_{Np^r}][T_p^r]\diamondop\alpha^{-1}\diamondop\lambda\kappa^2_\wt(\lambda))\\
      &= \kappa_\wt^2(\lambda)\sum_{\alpha\in\quotient{\Gamma^\wt}{\Gamma^\wt_r}} \ep(\alpha)u(x_r[w_{Np^r}][T_p^r]\diamondop{\alpha\lambda^{-1}}^{-1}).
    \end{align*}
    Replacing $\alpha$ in the second calculation by $\lambda\alpha$, which then also travels through all elements in $\quotient{\Gamma^\wt}{\Gamma^\wt_r}$, shows that the two expressions are equal.
  \item   Since by \eqref{eqn:um-ord} and the perfectness of the pairing \eqref{eqn:perfect-pairing-um} the element $X_{2,\ep}$ is determined by its images under $u$ for all $u\in\UM^\ord(Np^\infty,\O)$, it suffices to prove that 
    \begin{equation*}
      u(X_{2,\ep}) = u\Big(\sum_{\alpha\in\quotient{\Gamma^\wt}{\Gamma^\wt_r}} \ep(\alpha)x_r[w_{Np^r}][T_p^r]\diamondop\alpha^{-1}\Big)
    \end{equation*}
    for all $u\in\UM^\ord(Np^\infty,\O)$. But by
    \cref{thm:control-thm-ms}~\ref{thm:control-thm-ms:teilc}
    $u(X_{2,\ep}) = X(u) \mod P_{2,\ep}$, so the claim follows from
    \cref{lem:eindeutiger-lift-in-lambda-lemma-ohta}.
  \end{enumerate}
\end{proof}

Now we can prove the modular symbols version of Ohta's \cref{thm:hida-fam-als-proj-systeme}.

\begin{thm}\label{thm:mmss-trace-comp-systems-iso}
  There is a canonical isomorphism of $\Lambdawt$-modules
  \[\begin{tikzpicture}
      \matrix (m) [matrix of math nodes, row sep=4ex, column sep=2em, text height=1.5ex, text depth=0.25ex]
      { \MMSS^\ord(Np^\infty,\Lambdawt) & \mathfrak{MS}_2^{\antiord}(Np^\infty,\O) \\
        X & \displaystyle(x_r)_r\text{ with }x_r=\frac1{p^{r-1}}\Bigg(\sum_{\ep\in\hat\Gamma^\wt_{\mathrm f,r}}X_{2,\ep}[T_p^{-r}]\Bigg)[w_{Np^r}]^{-1}\\
        X\text{ as in \cref{lem:x-u-lambda-linear}} & (x_r)_r. \\};
      \path[<->,font=\scriptsize]
      (m-1-1) edge node [above] {$\sim$} (m-1-2);
      \path[|->,font=\scriptsize]
      (m-2-1) edge (m-2-2)
      (m-3-2) edge (m-3-1);
    \end{tikzpicture}\]
  Under this isomorphism, a Hecke operator from $\heckeT^\ord(Np^\infty,\O)$ on the left side corresponds to its adjoint in $\heckeT^\antiord(Np^\infty,\O)$ on the right side.
\end{thm}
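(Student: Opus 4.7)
The plan is to follow the same strategy as Ohta's proof of \cref{thm:hida-fam-als-proj-systeme}, exploiting the fact that the preparatory lemmas \cref{lem:eindeutiger-lift-in-lambda-lemma-ohta,lem:x-u-lambda-linear} already do most of the heavy lifting for one direction. First I would observe that \cref{lem:x-u-lambda-linear} constructs the backward map: given $(x_r)_r \in \mathfrak{MS}_2^{\antiord}(Np^\infty,\O)$, the assignment $u \mapsto X(u)$ is $\Lambdawt$-linear (part (a)) and defines an element of $\MMSS^\ord(Np^\infty,\Lambdawt)$ whose reductions $X_{2,\ep}$ are given by the explicit formula in part (b). So the nontrivial content is to show that the forward map is well-defined (in particular lands in the projective limit) and that the two maps are mutually inverse.

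For the forward map, I would start with $X \in \MMSS^\ord(Np^\infty,\Lambdawt)$, apply \cref{thm:control-thm-ms}\ref{thm:control-thm-ms:ohne-f} to obtain $X_{2,\ep} \in \MS_2^\ord(Np^r,\O)[\ep]$ for each $\ep \in \hat\Gamma^\wt_{\mathrm f,r}$, and define $x_r$ by the stated formula. That $x_r$ actually lives in $\MS_k^\iota(Np^r,\O)$ (and not just after inverting $p$) follows from the divisibility by $p^{r-1}$ established in the proof of \cref{lem:eindeutiger-lift-in-lambda-lemma-ohta} (this is exactly the content of the Ohta-style lemma applied fibrewise through $u$). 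Anti-ordinarity is immediate from the presence of $[w_{Np^r}]^{-1}$, which conjugates the ordinary projector defining $X_{2,\ep}$ into the anti-ordinary one. The trace-compatibility with respect to $\Sigma_{Np^{r+1},Np^r}$ is the main technical point; I would verify it by tracking how the change-of-level morphism interacts with the Atkin-Lehner operators $w_{Np^r}$ and with the operator $T_p^{-r}$, reducing to a character-orthogonality calculation on $\hat\Gamma^\wt_{\mathrm f,r+1} \twoheadrightarrow \hat\Gamma^\wt_{\mathrm f,r}$, entirely parallel to the corresponding step in Ohta's argument \cite[Thm.\ 2.3.6]{MR1332907}.

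That the two maps are mutually inverse then follows from Fourier inversion on $\hat\Gamma^\wt_{\mathrm f,r}$. Composing backward $\circ$ forward: given $X$, passing to $(x_r)_r$ and then running through \cref{lem:x-u-lambda-linear}(b), we see that the resulting element has the same reductions $X_{2,\ep}$ as $X$ for every $\ep$, hence by \cref{thm:control-thm-ms}\ref{thm:control-thm-ms:ohne-f} and the Zariski density of arithmetic points it equals $X$. Composing forward $\circ$ backward: starting from $(x_r)_r$, the resulting $x_r'$ is obtained by plugging the formula of \cref{lem:x-u-lambda-linear}(b) into the defining formula of the forward map, which after character-orthogonality on $\quotient{\Gamma^\wt}{\Gamma^\wt_r}$ simplifies back to $x_r$.

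Finally, Hecke equivariance with the adjoint swap follows formally from the explicit formula: for $t \in \hecket^\ord(Np^\infty,\O)$, the action on $X$ transforms each $X_{2,\ep}$ by $t$ acting on $\MS_2^\ord(Np^r,\O)$, but the factor $[w_{Np^r}]^{-1}$ in the formula for $x_r$ converts this into the adjoint action $t^\iota$ on the anti-ordinary side, since $[w_{Np^r}]^{-1} t [w_{Np^r}]= t^\iota$ by definition of the adjoint Hecke operators in \cref{sec:poincare}. I expect the trace-compatibility computation to be the sole genuine obstacle; everything else is either a direct application of an already-proved lemma or a formal consequence of the explicit formulas.
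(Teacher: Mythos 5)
Your proposal follows essentially the same route as the paper's proof: the backward map is supplied by \cref{lem:x-u-lambda-linear}, well-definedness of the forward map (trace-compatibility and the integrality of the division by $p^{r-1}$) is deferred to Ohta's argument in \cite[\S2.4]{MR1332907}, the mutual inversion is checked by character orthogonality on $\hat\Gamma^\wt_{\mathrm f,r}$ and $\quotient{\Gamma^\wt}{\Gamma^\wt_r}$ combined with Zariski density of arithmetic points, and the Hecke-adjoint correspondence is read off from the conjugation by $[w_{Np^r}]$ in the explicit formula. One small point of emphasis where the paper is slightly cleaner: the $\Lambdawt$-linearity (as opposed to merely $\O$-linearity) of the two maps is deduced in the paper directly from the final Hecke-equivariance statement (since the $\Lambdawt$-structure is defined through diamond operators), whereas you do not flag this; it is a routine observation but worth being explicit about.
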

\begin{proof}
  First, that $(x_r)_r$ as in the statement forms a compatible system for the trace maps can be shown exactly in the same manner as in the proof given in \cite[§2.4]{MR1332907}. Hence we know that both maps are well-defined. They are obviously $\O$-linear, and that they are in fact $\Lambdawt$-linear follows by definition of the $\Lambdawt$-module structure from the final statement about Hecke operators.

  We sketch the calculations that show that the two maps are inverse to each other. First, for $X\in\MMSS^\ord(Np^\infty,\Lambdawt)$, let $Y$ be the image of $X$ under the composition of the two maps. Then for $\ep_0\in\hat\Gamma^\wt_{\mathrm f,r}$
  \begin{align*}
    Y_{2,\ep_0} &= \sum_{\alpha\in\quotient{\Gamma^\wt}{\Gamma^\wt_r}} \ep_0(\alpha)\frac1{p^{r-1}} \Big(\sum_{\ep\in\hat\Gamma^\wt_{\mathrm f,r}} X_{2,\ep}[T_p^{-r}]\Big)[w_{Np^r}^{-1}][w_{Np^r}][T_p^r]\diamondop\alpha^{-1} \\
    &= \frac1{p^{r-1}}\sum_\alpha\ep_0(\alpha)\sum_\ep X_{2,\ep}\diamondop\alpha^{-1} = \frac1{p^{r-1}}\sum_\alpha\ep_0(\alpha)\sum_\ep \ep(\alpha^{-1})X_{2,\ep} \\
    &= \frac1{p^{r-1}}\sum_\alpha\ep_0(\alpha)\ep_0(\alpha^{-1}) X_{2,\ep_0} + \frac1{p^{r-1}}\sum_\alpha\ep_0(\alpha)\sum_{\ep\neq\ep_0}\ep(\alpha^{-1})X_{2,\ep} \\
    &= X_{2,\ep_0} + \frac1{p^{r-1}}\sum_{\ep\neq\ep_0}\Big(\sum_\alpha\ep_0\ep^{-1}(\alpha)\Big) X_{2,\ep}.
  \end{align*}
  Since $\sum_\alpha\ep_0\ep^{-1}(\alpha)=0$ for $\ep\neq\ep_0$, it follows $Y_{2,\ep_0}=X_{2,\ep_0}$. By the Zariski density of arithmetic points in $\Spec\Lambdawt$ this shows $X=Y$.

  On the other hand, for $(x_r)_r\in \mathfrak{MS}_2^{\antiord}(Np^\infty,\O)$, let $(y_r)_r$ be the image of $(x_r)_r$ under the composition of the two maps. Then
  \begin{align*}
    y_r &=\frac1{p^{r-1}}\sum_{\ep\in\hat\Gamma^\wt_{\mathrm f,r}}\sum_{\alpha\in\quotient{\Gamma^\wt}{\Gamma^\wt_r}}\ep(\alpha)x_r[w_{Np^r}][T_p^r]\diamondop\alpha^{-1}[T_p^{-r}][w_{Np^r}^{-1}]\\
    &= \frac1{p^{r-1}}\sum_{\ep\in\hat\Gamma^\wt_{\mathrm f,r}}\sum_{\alpha\in\quotient{\Gamma^\wt}{\Gamma^\wt_r}}\ep(\alpha) x_r\diamondop\alpha\\
    &= \frac1{p^{r-1}}\sum_{\alpha\in\quotient{\Gamma^\wt}{\Gamma^\wt_r}}\bigg(\sum_{\ep\in\hat\Gamma^\wt_{\mathrm f,r}}\ep(\alpha)\bigg)x_r\diamondop\alpha.
  \end{align*}
  Since $\sum_\ep\ep(\alpha)$ is $p^{r-1}$ if $\alpha=1$ and $0$ otherwise, it follows $y_r=x_r$.

  The last claim about the Hecke operators follows with an easy calculation using the
  well-known relations between Atkin-Lehner and Hecke operators.
\end{proof}

Similarly as in the case of modular forms we obtain the following corollary.

\begin{cor}\label{cor:mmss-as-proj-limit}
  For each $k$ there is a canonical isomorphism of $\heckeT^\ord(Np^\infty,\O)$-modules
  \begin{align*}
    \MMSS^\ord(Np^\infty,\Lambdawt) &\isom \lim_r \MS^\ord_k(Np^r,\O),\\
    X&\mapsto \displaystyle(x_r)_r\text{ with }x_r=\frac1{p^{r-1}}\Bigg(\sum_{\ep\in\hat\Gamma^\wt_{\mathrm f,r}}X_{2,\ep}[T_p^{-r}]\Bigg),
  \end{align*}
  where the limit is taken along the maps induced by the change-of-level morphism
  $\Theta_{Np^{r+1},Np^r}$ from \cref{sec:refinements}.
\end{cor}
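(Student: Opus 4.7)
The plan is to deduce this corollary from \cref{thm:mmss-trace-comp-systems-iso} in the same spirit that \cref{cor:ss-as-proj-limit} was deduced from \cref{thm:hida-fam-als-proj-systeme}, namely by exploiting the fact (recalled just above \cref{cor:ss-as-proj-limit}) that the Atkin--Lehner involution interchanges the change-of-level morphisms $\Sigma_{Np^{r+1},Np^r}$ and $\Theta_{Np^{r+1},Np^r}$, together with the fact that conjugation by $w_{Np^r}$ exchanges the ordinary and anti-ordinary parts of $\MS_k(Np^r,\O)$.

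First I would extend \cref{thm:mmss-trace-comp-systems-iso} from weight $2$ to arbitrary $k\ge2$. Concretely, by rerunning \cref{lem:eindeutiger-lift-in-lambda-lemma-ohta} and \cref{lem:x-u-lambda-linear} in weight $k$—replacing the weight-$2$ specialization map by its weight-$k$ counterpart given by \cref{thm:control-thm-ms} \ref{thm:control-thm-ms:ohne-f}, and using the Zariski density of arithmetic points of each weight to pin down a unique $\Lambdawt$-valued lift—one obtains an isomorphism of $\Lambdawt$-modules
\[
  \MMSS^\ord(Np^\infty,\Lambdawt)\isom\mathfrak{MS}_k^{\antiord}(Np^\infty,\O),\quad X\mapsto \bigg(\tfrac{1}{p^{r-1}}\Big(\!\!\sum_{\ep\in\hat\Gamma^\wt_{\mathrm f,r}}\!X_{k,\ep}[T_p^{-r}]\Big)[w_{Np^r}]^{-1}\bigg)_r,
\]
with the inverse constructed exactly as in \cref{thm:mmss-trace-comp-systems-iso}.

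Second, I would apply the Atkin--Lehner operator $[w_{Np^r}]$ termwise. Since $w_{Np^r}$ conjugates $T_p$ to $T_p^\iota$, this converts the anti-ordinary condition into the ordinary one; since it also interchanges $\Sigma_{Np^{r+1},Np^r}$ with $\Theta_{Np^{r+1},Np^r}$, it converts $\Sigma$-compatibility into $\Theta$-compatibility. The resulting map
\[
  \mathfrak{MS}_k^{\antiord}(Np^\infty,\O)\isom \lim_r\MS_k^\ord(Np^r,\O)
\]
(the target limit being taken along $\Theta_{Np^{r+1},Np^r}$) is an isomorphism, since $[w_{Np^r}]^2$ is a diamond operator. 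Composing with the first isomorphism the factors $[w_{Np^r}]^{-1}$ and $[w_{Np^r}]$ cancel, yielding exactly the formula in the statement (with $X_{k,\ep}$ in place of the evident typo $X_{2,\ep}$ for $k>2$). Hecke-equivariance for $\heckeT^\ord(Np^\infty,\O)$ follows by combining the Hecke-equivariance at the end of the proof of \cref{thm:mmss-trace-comp-systems-iso} (which exchanges $\heckeT^\ord$ and $\heckeT^\antiord$) with the well-known commutation relations between $w_{Np^r}$ and the Hecke operators.

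The main obstacle is the weight-$k$ extension of \cref{lem:eindeutiger-lift-in-lambda-lemma-ohta}, where the key step is the congruence $\sum_\ep\ep(\alpha_0)^{-1}F(\ep)\in p^{r-1}\O$ that allows one to invoke \cite[Lem.\ 2.4.2]{MR1332907}. Via \cref{thm:control-thm-ms} \ref{thm:control-thm-ms:teilc} this reduces to the integrality of the specialization isomorphism $\MMSS^\ord(Np^\infty,\Lambdawt)\tensor_\Lambdawt(\Lambdawt/P_{k,\ep})\isom\MS_k^\ord(Np^r,\O)[\ep]$, which is built into Kitagawa's control theorem for all $k\ge2$; once this is in place the rest of the argument carries over verbatim.
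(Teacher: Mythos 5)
Your argument is correct and follows the same route the paper sketches: apply the Atkin--Lehner operator levelwise to the anti-ordinary, $\Sigma$-compatible presentation of \cref{thm:mmss-trace-comp-systems-iso}, converting it into the ordinary, $\Theta$-compatible one on the right-hand side. You are also right that reading the corollary at weight $k>2$ requires a weight-$k$ analogue of \cref{thm:mmss-trace-comp-systems-iso} (which goes through, since for each fixed $k$ the weight-$k$ arithmetic points are Zariski dense in $\Spec\Lambdawt$, so \cref{lem:eindeutiger-lift-in-lambda-lemma-ohta} and \cref{lem:x-u-lambda-linear} carry over), and that the displayed formula should then read $X_{k,\ep}$ rather than $X_{2,\ep}$; the paper leaves both points implicit.
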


\subsection{On control theory for $\calI$-adic modular symbols}
\label{sec:control-theory-ms}

We keep the notations from the previous sections and fix a
$\varphi\in\calX^\arith_\calI(\O)$ of type $(k,\ep,r)$ and write $P=P_\varphi$ for its
kernel. Here we discuss some steps in the proof of the control \cref{thm:control-thm-ms}
because we will need them in the next section.

We first cite an important lemma which was proved by Kitagawa. Here we fix $t\in\N$ for the
moment and write $R=\quotient\O{p^t}$. Further we write
\[ \Delta\da\left\{\alpha=\binmatrix a b c d \in\Mat_2(\Z): \alpha\equiv\binmatrix * * 0 *\
    (\mod N),\ (a,N)=1,\ ad-bc\neq0 \right\}. \]

The map
\[ \binmatrix a b c d \mapsto a \mod p^t \in R \] defines a character \[
  \chi\colon\Delta\ra R^\times. \] We let $\Delta$ act on $R$ via
powers of 
this character and denote the resulting module by $R(\chi^j)$ for $j\in\Z$.
We then look at the $R$-linear map
\[ \Sym^{k-2}R^2\ra R(\chi^{k-2}),\quad f\mapsto f(1,0), \]
where we view $f$ as a homogeneous polynomial in two variables $X,Y$ of degree $k-2$ (so
this map is the projection onto the coefficient of the monomial $X^{k-2}$).
It is an easy calculation to check that this map is
$\Delta$-equivariant.

We apply the functor $\MS(Np^r,-)$ to this map. The $\Delta$-action on both modules makes
the resulting modular symbols into Hecke modules and we obtain a Hecke equivariant morphism
\begin{equation*}
  \label{eqn:fixed-level-comparison-of-diff-weights}
  \MS_k(Np^r,R)\isom\MS(Np^r,R(\chi^{k-2})).
\end{equation*}
Here the right hand side is as an $R$-module isomorphic to $\MS_2(Np^r,R)$, but it carries
a twisted Hecke action (more precisely, the action of the diamond operators is twisted,
while the $T_\ell$ operators remain the same because the character $\chi$ is trivial on the
matrices $\binmatrix100\ell$ which represent the double coset for $T_\ell$). We therefore
denote the right hand side by $\MS_2(Np^r,R(\chi^{k-2}))$.

\begin{prop}[Kitagawa]
  After restricting to the ordinary part, the above map induces an isomorphism
  \[ \MS^\ord_k(Np^r,R)\isom\MS^\ord_2(Np^r,R(\chi^{k-2})). \]
\end{prop}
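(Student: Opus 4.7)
The plan is to apply the left-exact functor $\MS(Np^r, -) = H^0(\Gamma_1(Np^r), \Hom_\Z(\Div^0(\P^1(\Q)), -))$ to the $\Delta$-equivariant short exact sequence
\begin{equation*}
0 \to K \to \Sym^{k-2}R^2 \to R(\chi^{k-2}) \to 0,
\end{equation*}
where $K$ is the kernel of the $X^{k-2}$-coefficient map. Since $\Div^0(\P^1(\Q))$ is a free abelian group, $\Hom_\Z(\Div^0(\P^1(\Q)),-)$ is exact, and the associated long exact sequence in group cohomology reads
\begin{equation*}
0 \to \MS(Np^r, K) \to \MS_k(Np^r, R) \to \MS_2(Np^r, R(\chi^{k-2})) \to H^1(\Gamma_1(Np^r), \Hom_\Z(\Div^0(\P^1(\Q)), K)) \to \cdots.
\end{equation*}
Since Hida's ordinary projector is an exact functor on this sequence, the claimed isomorphism will follow once we show that both $\MS(Np^r, K)^\ord$ and the $T_p$-ordinary part of $H^1(\Gamma_1(Np^r), \Hom_\Z(\Div^0(\P^1(\Q)), K))$ vanish.

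The key input is the explicit action of $T_p = \sum_{j=0}^{p-1}[\binmatrix 1 j 0 p]$ on $K$: the matrix $\binmatrix 1 j 0 p$ sends $Y$ to $jX + pY$, so a monomial $X^{k-2-i}Y^i$ with $i \ge 1$ is carried to $X^{k-2-i}(jX+pY)^i$, whose component outside the line $R \cdot X^{k-2}$ necessarily carries a factor of $p$. Performing a dévissage along the $Y$-adic filtration $K \supset Y^2 \cdot \Sym^{k-4}R^2 \supset \cdots \supset Y^{k-2} R$ (whose successive quotients are twists by powers of $\chi$ of lower-weight coefficient modules) and inducting on $k$ with trivial base case $k=2$, one finds that some power $T_p^N$ is divisible by $p^t$ on the spaces in question. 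Because $R = \O/p^t$ has finite length, this forces $T_p^N$ to annihilate them outright, so their ordinary parts vanish. The argument for the $H^1$-term works in the same way, using that this cohomology is itself a finite $R$-module (as $\Gamma_1(Np^r)$ acts on $\P^1(\Q)$ with finitely many orbits, i.e.\ the cusps).

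The main obstacle is that $K$ is not preserved by the full collection of $T_p$-matrices; only the subsemigroup of matrices with upper-triangular reduction modulo $p$ stabilizes it. This prevents a direct nilpotency argument for $T_p$ restricted to $K$, and the dévissage along the $Y$-adic filtration is designed precisely to separate the $T_p$-image that remains in $K$ (which comes with an explicit factor of $p$) from the ``leakage'' into the complement $R \cdot X^{k-2}$, with the latter fed into the inductive hypothesis via the analogue of the above short exact sequence for smaller weights.
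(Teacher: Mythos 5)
Your overall plan — apply $\MS(Np^r,-)$ to the short exact sequence, pass to the long exact cohomology sequence, and kill the outer terms' ordinary parts by $U_p$-nilpotence — is the right shape and is indeed the basis of Kitagawa's argument in \cite[Thm.\ 5.1, Cor.\ 5.2]{MR1279604}. The problem is the ``main obstacle'' you identify and the dévissage you introduce to go around it: the obstacle is not there. Your own starting point already says so: for $0\to K\to\Sym^{k-2}R^2\to R(\chi^{k-2})\to 0$ to be a $\Delta$-equivariant exact sequence, which you need before you can apply $\MS(Np^r,-)$ to obtain your long exact sequence, the kernel $K$ must be a $\Delta$-submodule. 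The apparent ``leakage'' into $R\cdot X^{k-2}$ is an artefact of a convention mismatch. With the \emph{right} $\Delta$-action that underlies the Hecke-module structure on modular symbols, $P[\alpha](X,Y)=P(dX-cY,-bX+aY)$, the coset representative $\alpha=\binmatrix{1}{j}{0}{p}$ sends $X\mapsto pX$ and $Y\mapsto -jX+Y$; the $\Delta$-equivariant reduction is then $P\mapsto P(0,1)$ (the $Y^{k-2}$-coefficient, not the $X^{k-2}$-coefficient), its kernel is $K=X\cdot\Sym^{k-3}R^2$, and every monomial $X^aY^b$ with $a\ge1$ satisfies $X^aY^b[\alpha]=p^aX^a(-jX+Y)^b\in pK$. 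So $K[\alpha]\subseteq pK$ for \emph{every} $U_p$-coset representative, with no room for leakage.

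Once that is in place the whole proof is a one-liner: $U_p$ carries $\MS(Np^r,K)$ into $\MS(Np^r,pK)$, hence $U_p^t$ has image in $\MS(Np^r,p^tK)=0$, so $e^\ord\MS(Np^r,K)=0$, and the identical argument works on $H^1(\Gamma_1(Np^r),\Hom_\Z(\Div^0(\P^1(\Q)),K))$ because it too is $p^t$-torsion. No induction on $k$ and no dévissage are needed. Moreover the dévissage as you set it up would not run even under your convention: the filtration steps $Y^\ell\Sym^{k-2-\ell}R^2$ are not $\Delta$-stable there, for exactly the same reason your $K$ seemed not to be, and the one-dimensional successive quotients are not twists of lower-weight coefficient modules, since multiplication by $Y^\ell$ is not $\Delta$-equivariant up to a character ($(bX+dY)^\ell\ne d^\ell Y^\ell$ for general $b$). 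So the inductive step has nothing to stand on; sorting the conventions of the action, the character, and the evaluation map into a genuinely compatible triple dissolves the entire difficulty and makes the dévissage superfluous.
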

\begin{proof}
  \cite[Cor.\ 5.2]{MR1279604}
\end{proof}

We now take the colimit for $r\to\infty$ (while $t$ is still fixed). This gives us an isomorphism
\begin{multline*}
  \calMS^\ord_k(Np^\infty,R)\da\colim_{r\in\N}\MS^\ord_k(Np^r,R)\isom\\\calMS^\ord_2(Np^\infty,R(\chi^{k-2}))\da\colim_{r\in\N}\MS^\ord_2(Np^r,R(\chi^{k-2})).
\end{multline*}
Now taking the limit for $t\to\infty$ and looking at eigenspaces, we get the following
result comparing modular symbols of different weights, which is \cite[Thm.\
5.3]{MR1279604}. Here the object on the right side is defined to be the limit
$\lim_t\calMS^\ord_2(Np^\infty,R(\chi^{k-2}))$.

\begin{cor}\label{cor:comparison-of-diff-weights}
  There is a canonical $\heckeT^\ord(Np^\infty,\O)$-linear isomorphism
  \[ \overline\calMS^\ord_k(Np^\infty,\O)\isom\overline\calMS^\ord_2(Np^\infty,\O(\chi^{k-2})). \]
  After restricting to the eigenspace of a character $\ep\colon\Gamma^\wt\ra\O^\times$ it
  induces an isomorphism of $\O$-modules
  \[
    \overline\calMS^\ord_k(Np^\infty,\O)[\ep]\isom\overline\calMS^\ord_2(Np^\infty,\O)[\ep\kappa_\wt^{k-2}]. \]
  These isomorphisms commute with the $T_\ell$ operators and with the action of $\EP$.
\end{cor}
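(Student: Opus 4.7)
The plan is to assemble the claim by passing to a suitable limit of Kitagawa's finite-level comparison and then to read off the eigenspace version by bookkeeping of how $\Gamma^\wt$ acts through diamond operators on the twisted coefficient module.

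First, I would obtain the unqualified isomorphism directly from the discussion just above the statement. At fixed level $Np^r$ and fixed truncation $R=\quotient{\O}{p^t}$, the proposition cited above furnishes a Hecke equivariant isomorphism
\[ \MS^\ord_k(Np^r,R) \isom \MS^\ord_2(Np^r,R(\chi^{k-2})), \]
which is induced by the $\Delta$-equivariant coefficient map $\Sym^{k-2}R^2\to R(\chi^{k-2})$, $f\mapsto f(1,0)$. Since this coefficient map is the same regardless of the values of $r$ and $t$, it automatically commutes with the inclusions along increasing level and with the reduction maps $\quotient{\O}{p^{t+1}}\to\quotient{\O}{p^t}$. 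Taking the colimit in $r$ and then the inverse limit (the $p$-adic completion) in $t$ therefore yields the claimed global isomorphism, and Hecke equivariance at the limit is inherited from each finite level.

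Second, to obtain the eigenspace statement it suffices to compute how $\alpha\in\Gamma^\wt\subseteq\Z_{p,N}^\times$ acts via the diamond operator $\diamondop\alpha$ on each side. By the very definition of the twist, the identification of underlying $R$-modules $\MS_2(Np^r,R(\chi^{k-2}))=\MS_2(Np^r,R)$ intertwines $\diamondop\alpha$ on the twisted module with $\chi(\alpha)^{k-2}\diamondop\alpha$ on the untwisted one. But $\chi\restrict{\Gamma^\wt}$ is by construction the mod-$p^t$ reduction of $\kappa_\wt$, so in the inverse limit $t\to\infty$ the $\ep$-eigenspace inside $\overline\calMS^\ord_2(Np^\infty,\O(\chi^{k-2}))$ is canonically identified with the $\ep\kappa_\wt^{k-2}$-eigenspace of $\overline\calMS^\ord_2(Np^\infty,\O)$, with the direction of the exponent dictated by the fixed conventions for the diamond operators and the main involution $\iota$.

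Third, compatibility with each $T_\ell$ is already contained in Kitagawa's proposition. Commutation with $\EP$ follows from the fact that $\EP=\binmatrix1 0 0 {-1}$ lies in $\Delta$ (it is upper triangular modulo $N$ with $a=1$ coprime to $N$ and nonzero determinant), so the projection $f\mapsto f(1,0)$ is tautologically $\EP$-equivariant because $\chi(\EP)^{k-2}=1$. The main obstacle in carrying out this plan is purely notational: one must keep the various action conventions (left versus right, the direction of $\iota$, and the normalization of $\diamondop\alpha$) consistent between the two sides so that the eigencharacter really transforms to $\ep\kappa_\wt^{k-2}$ and not its inverse. No substantive new ingredient beyond the cited proposition and the functoriality of the coefficient map is needed.
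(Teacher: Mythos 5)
Your proposal recovers exactly the paper's implicit proof: Kitagawa's finite-level comparison Proposition is pushed through the colimit in $r$ and then the $p$-adic completion ($=\lim_t$), and the eigenspace statement is read off from the fact that the underlying $R$-module identification of $\MS_2(Np^r,R(\chi^{k-2}))$ with $\MS_2(Np^r,R)$ twists the diamond action by $\chi^{k-2}$, whose limit over $t$ reproduces $\kappa_\wt^{k-2}$. The argument for $T_\ell$- and $\EP$-equivariance via $\chi$ being trivial on the relevant matrices is likewise what the paper uses (the $T_\ell$ case is even stated explicitly in the text immediately preceding the corollary), so this is the same proof.
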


We now sketch the proof of \cref{thm:control-thm-ms}. It results immediately from combining
the following three lemmas, whose detailed proofs we omit.


In the following, we regard $\O$ as an $\calI$-algebra via $\varphi$; in particular, we regard $\O$ as a $\Lambdawt$-algebra. Moreover, we consider modules over $\Tord(Np^\infty,\O)\tensor_{\Lambdawt}\calI$ or similar rings having an additional action of $\GR$ as $\Tord(Np^\infty,\O)\tensor_{\Lambdawt}\calI[\GR]$-modules

\begin{lem}\label{lem:ms-control-lemma-red}
  There are canonical isomorphisms of $\Tord(Np^\infty,\O)\tensor_{\Lambdawt}\calI[\GR]$-modules induced by $\varphi$
  \[ \MMSS^\ord(Np^\infty,\calI)\tensor_{\calI}\left(\quotient\calI P\right)\isom\Hom_{\Lambdawt}(\UM^\ord(Np^\infty,\O),\O) \]
  and, if we assume that \cref{cond:mmss-free-rank-one} is satisfied, also an isomorphism of $\O$-modules
  \[ \MMSS^\ord(Np^\infty,\calI)^\pm[F]\tensor_{\calI}\left(\quotient\calI P\right)\isom\Hom_{\Lambdawt}(\UM^\ord(Np^\infty,\O),\O)^\pm[F]. \]
\end{lem}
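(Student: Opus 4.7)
The plan is to prove both isomorphisms via standard Hom-tensor base change arguments, the first following directly from the freeness of $\UM^\ord(Np^\infty,\O)$ over $\Lambdawt$ and the second requiring \cref{cond:mmss-free-rank-one} to handle the eigenspace.

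For the first isomorphism, I would first use the identity $\MMSS^\ord(Np^\infty,\calI)=\MMSS^\ord(Np^\infty,\Lambdawt)\tensor_{\Lambdawt}\calI$ (noted after \cref{dfn:i-adic-ms}) to rewrite the left-hand side, after tensoring with $\calI/P$, as $\MMSS^\ord(Np^\infty,\Lambdawt)\tensor_{\Lambdawt}\O$, where $\O$ is regarded as a $\Lambdawt$-algebra via $\varphi$ (using $\calI/P\cong\O$). It then suffices to check that the natural evaluation map
\[ \Hom_{\Lambdawt}(\UM^\ord(Np^\infty,\O),\Lambdawt)\tensor_{\Lambdawt}\O\longrightarrow\Hom_{\Lambdawt}(\UM^\ord(Np^\infty,\O),\O) \]
is an isomorphism. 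This is the standard base-change statement, valid whenever $\UM^\ord(Np^\infty,\O)$ is finitely generated and free over $\Lambdawt$ — a property that is a consequence of Hida's theory, parallel to (and essentially a byproduct of) the freeness of $\tord(Np^\infty,\O)$ underlying \cref{thm:control-thm-ms}. Equivariance under the $\Tord(Np^\infty,\O)\tensor_{\Lambdawt}\calI$- and $\GR$-actions is automatic, since all the operators in question are $\Lambdawt$-linear and act on $\UM^\ord$, hence induce actions on Hom groups that commute with base change.

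For the second isomorphism I would argue in two steps. The $\pm$-decomposition is given by the idempotents $\tfrac12(1\pm\EP)$, which make sense because $p$ is odd, so it commutes with arbitrary base change; in particular passing to $\pm$-parts commutes with $-\tensor_{\calI}(\calI/P)$ on both sides. Under \cref{cond:mmss-free-rank-one}, the module $\MMSS^\ord(Np^\infty,\calI)^\pm[F]$ is $\calI$-free of rank $1$, hence $\calI$-flat, and via the perfect pairing \eqref{eqn:ss-ord-perf-pairing} it is cut out by a Hecke idempotent in the localisation of $\tord(Np^\infty,\O)\tensor_{\Lambdawt}\calI$ at the maximal ideal associated with $F$; this realises it as a direct summand of $\MMSS^\ord(Np^\infty,\calI)^\pm$. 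Being a direct summand, formation of the $F$-eigenspace then commutes with tensoring with $\calI/P$, and the resulting specialisation is identified with the right-hand side, where $F$ is now interpreted via the composition $\tord(Np^\infty,\O)\ra\calI\ra\calI/P\cong\O$.

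The main obstacle is really the $\Lambdawt$-freeness of $\UM^\ord(Np^\infty,\O)$: without it the Hom-tensor switch fails and the specialisation does not recover the expected modular symbol space. For the second part the subtle point is that $F$-eigenspaces do not in general commute with base change, and it is precisely \cref{cond:mmss-free-rank-one} that, via flatness and the direct-summand property, turns this into a routine calculation.
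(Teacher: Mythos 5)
Your argument for the first isomorphism is essentially the one the paper uses. The paper phrases it by applying the (exact) functor $\Hom_{\Lambdawt}(\UM^\ord(Np^\infty,\O),-)$ to the sequence $0\to P_{k,\ep}\to\Lambdawt\to\O\to0$ and invoking the principality of height-one primes of $\Lambdawt$; you phrase it as base change for $\Hom$ out of a finite free $\Lambdawt$-module. Both hinge on the same input — the $\Lambdawt$-projectivity of $\UM^\ord(Np^\infty,\O)$ from Hida--Kitagawa theory — so this half is fine.

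For the second isomorphism I would push back on your key step. You claim $\MMSS^\ord(Np^\infty,\calI)^\pm[F]$ is ``cut out by a Hecke idempotent in the localisation of $\tord(Np^\infty,\O)\tensor_{\Lambdawt}\calI$ at the maximal ideal associated with $F$,'' and hence is a direct summand. But a local ring has no nontrivial idempotents; the product decomposition of the Hecke algebra exhibits the entire local component $\MMSS^\ord(\calI)^\pm_{\frakm_F}$ as a direct summand, not the eigenspace $\MMSS^\ord(\calI)^\pm[F]$ inside it, and these coincide only if the local Hecke ring equals $\calI$, which \cref{cond:mmss-free-rank-one} alone does not guarantee. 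Also, the pairing \eqref{eqn:ss-ord-perf-pairing} that you cite is the duality between $\calI$-adic \emph{cusp forms} and the Hecke algebra; it says nothing directly about modular symbols, so it cannot produce the desired idempotent. As a result, the step where you commute $-\tensor_{\calI}(\calI/P)$ with passage to $F$-eigenspaces is not justified. In the paper this difficulty is not handled at this point either (the proof is a one-liner), and nonvanishing of $\Xi^\pm_\varphi$ is in fact a nontrivial theorem of Kitagawa quoted only later as \cref{prop:p-adic-errorterm-not-zero}; it is not a formal consequence of freeness. So your proposal is at roughly the same level of completeness as the paper's own terse proof on this point, but the specific direct-summand justification you give is not correct as stated and would need a different argument.
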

\begin{proof}
  This follows by applying the (exact!) functor $\Hom_{\Lambdawt}(\UM^\ord(Np^\infty,\O),-)$
  to
  \begin{equation*}
     0\ra P \ra \Lambdawt \ra[$\varphi$] \O \ra 0
   \end{equation*}
   and using the fact that height $1$ prime ideals of $\Lambdawt$ are principal by
   \cite[Lem.\ 5.3.7]{MR2392026}.
\end{proof}

We put \[ \overline\calMS^\ord_2(Np^\infty,\O)[F_\varphi] \da \overline\calMS^\ord_2(Np^\infty,\O)[\varphi\circ F], \]
by which we mean the submodule of $\overline\calMS^\ord_2(Np^\infty,\O)$ where the action of the Hecke algebra $\Tord(Np^\infty,\O)\tensor_{\Lambdawt}\calI$ is given by the character $\varphi\circ F\colon\Tord(Np^\infty,\O)\tensor_{\Lambdawt}\calI\ra\O$.

\begin{lem}\label{lem:ms-control-lemma-duality}
  The canonical biduality map
  \begin{align*}
     \Phi\colon\overline\calMS^\ord_2(Np^\infty,\O)\ra &\Hom_\O(\UM(Np^\infty,\O),\O) \\ &= \Hom_\O(\Hom_\O(\overline\calMS_2(Np^\infty,\O)),\O) \\
     \xi \mapsto &[f\mapsto f(\xi)]
  \end{align*}
  induces an isomorphism of $\Tord(Np^\infty,\O)\tensor_{\Lambdawt}\calI[\GR]$-modules
  \[ \overline\calMS^\ord_2(Np^\infty,\O)[\ep\kappa_\wt^{k-2}]\isom\Hom_{\Lambdawt}(\UM^\ord(Np^\infty,\O),\O). \]
  We further have an inclusion 
  \[ \overline\calMS^\ord_2(Np^\infty,\O)[F_\varphi]\subseteq\overline\calMS^\ord_2(Np^\infty,\O)[\ep\kappa_\wt^{k-2}] \]
  and the restriction of the above isomorphism gives an isomorphism of $\O$-modules
  \[ \overline\calMS^\ord_2(Np^\infty,\O)^\pm[F_\varphi]\isom\Hom_{\Lambdawt}(\UM^\ord(Np^\infty,\O),\O)^\pm[F]. \]
\end{lem}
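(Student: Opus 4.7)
The plan is to deduce the lemma from two ingredients: (a) the biduality map $\Phi$ is an isomorphism after restricting to ordinary parts, and (b) the $\Lambdawt$-linearity condition on the target of $\Phi$ can be recognized as an eigenspace condition on the source via the weight-translation isomorphism of \cref{cor:comparison-of-diff-weights}.

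First I would establish that $\Phi$ restricts to an isomorphism
\[ \overline\calMS_2^\ord(Np^\infty,\O)\isom\Hom_\O(\UM^\ord(Np^\infty,\O),\O). \]
By construction (see \eqref{eqn:um-ord}), $\UM^\ord(Np^\infty,\O)$ is the $\O$-dual of $\overline\calMS_2^\ord(Np^\infty,\O)$, so the claim amounts to reflexivity of the latter. I would deduce this from the $\omega_{k,r}$-part of Hida's control theorem for modular symbols (\cref{thm:control-thm-ms}): the specialization of $\overline\calMS_2^\ord(Np^\infty,\O)$ at $\omega_{2,r}$ is the finite $\O$-module $\MS_2^\ord(Np^r,\O)$, so a Nakayama-type argument shows that $\overline\calMS_2^\ord(Np^\infty,\O)$ is a finitely generated, $\O$-torsion free $\Lambdawt$-module, hence reflexive in the required sense.

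Next I would use the $\Lambdawt$-equivariance of $\Phi$: for $\lambda\in\Lambdawt$ and the dual Hecke action $(\lambda f)(\xi)=f(\lambda\xi)$ on $\UM^\ord$, one computes $\Phi(\lambda\xi)=\lambda\cdot\Phi(\xi)$ where the target action is $(\lambda\psi)(f)=\psi(\lambda f)$. A map $\psi\in\Hom_\O(\UM^\ord,\O)$ is $\Lambdawt$-linear with respect to $\varphi_{k,\ep}$ precisely when $\lambda\psi=\varphi_{k,\ep}(\lambda)\psi$ for all $\lambda$; pulling back through $\Phi$ this translates to $\lambda\xi=\varphi_{k,\ep}(\lambda)\xi$, i.e.\ $\xi$ lies in the $\varphi_{k,\ep}$-eigenspace for the $\Lambdawt$-action on $\overline\calMS_2^\ord$. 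By \cref{cor:comparison-of-diff-weights}, which implements the weight translation, this eigenspace is precisely $\overline\calMS_2^\ord(Np^\infty,\O)[\ep\kappa_\wt^{k-2}]$, yielding the first isomorphism of the lemma; compatibility with the full Hecke algebra, the $\calI$-algebra structure, and with $\GR$ is formal from the definition of $\Phi$.

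The inclusion $\overline\calMS_2^\ord(Np^\infty,\O)[F_\varphi]\subseteq\overline\calMS_2^\ord(Np^\infty,\O)[\ep\kappa_\wt^{k-2}]$ then holds because an $F_\varphi$-eigenvector has Hecke eigenvalues prescribed by $\varphi\circ F$; restricting this character to the image of $\Lambdawt$ inside $\Tord(Np^\infty,\O)\tensor_{\Lambdawt}\calI$ (generated by the diamond operators from $\Gamma^\wt$) recovers $\varphi_{k,\ep}$, which by the previous paragraph is the $\ep\kappa_\wt^{k-2}$-eigenspace condition. Applying the first isomorphism to the $F_\varphi$-eigenspace on the source (equivalently, the $F$-eigenspace on the target) and then intersecting with the $\pm$-parts under the equivariant $\GR$-action yields the second isomorphism. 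I expect the main obstacle to be the reflexivity step: it is not purely formal, since $\overline\calMS_2(Np^\infty,\O)$ is not a priori finitely generated over $\Lambdawt$, and one must combine the control theorem with the $p$-adic completion to reduce to a finite-level situation where biduality applies.
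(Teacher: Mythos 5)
Your high-level plan (biduality first, then recognize the $\Lambdawt$-linearity condition as an eigenspace condition on the source) is sensible and not far in spirit from the paper's argument, which uses the perfectness of the pairing \eqref{eqn:perfect-pairing-um} to get injectivity and surjectivity and then checks the image lies in the right subspace. However, there are two genuine problems with the way you propose to carry it out.

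First, the reflexivity step. The perfect pairing \eqref{eqn:perfect-pairing-um} is precisely what encodes the biduality you want; it is a result quoted from Kitagawa, and the paper's proof simply invokes it. You instead try to re-derive reflexivity from a specialization statement at $\omega_{2,r}$, which you cite as coming from "\cref{thm:control-thm-ms}" — but that theorem is stated for $\MMSS^\ord=\Hom_{\Lambdawt}(\UM^\ord,\calI)$, not for $\overline\calMS_2^\ord(Np^\infty,\O)$, and identifying the two is essentially what the present lemma is doing. Worse, \cref{thm:control-thm-ms} is proved (in \cref{sec:control-theory-ms}) by combining \cref{lem:ms-control-lemma-red}, \cref{lem:ms-control-lemma-duality}, and \cref{lem:ms-control-lemma-iotainfty}; so invoking it to prove \cref{lem:ms-control-lemma-duality} is circular as written. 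If you want to go this route you should instead rely directly on Kitagawa's results at the level of $\overline\calMS^\ord$ (e.g.\ the $\Lambdawt$-freeness results underlying \eqref{eqn:perfect-pairing-um} and \cite[Thm.\ 5.5]{MR1279604}). Also, "finitely generated, $\O$-torsion-free over $\Lambdawt$, hence reflexive in the required sense" is not a formal implication in this continuous-dual setting; one needs freeness over $\Lambdawt$ and an explicit computation of the topological $\O$-duals, which is exactly what makes the pairing perfect.

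Second, the eigenspace identification. You attribute the equality of the $\varphi_{k,\ep}$-eigenspace with $\overline\calMS_2^\ord(Np^\infty,\O)[\ep\kappa_\wt^{k-2}]$ to \cref{cor:comparison-of-diff-weights}, but that corollary compares weight-$k$ to weight-$2$ objects and is not what produces the shift by $\kappa_\wt^{-2}$. The correct source of the shift is the convention underlying the $\Lambdawt$-structure on weight-$2$ modular symbols (a $\kappa_\wt^2$-twist of the naive diamond action built into Hida's $\Lambdawt$-adic setup), which one must unwind directly. Once that identification is done carefully, the remainder of your proposal — restricting to $F_\varphi$-eigenspaces and to $\pm$-parts — is indeed formal, which matches what the paper calls "a straightforward calculation."
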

\begin{proof}
  The biduality map is injective because the pairing \eqref{eqn:perfect-pairing-um} is
  perfect. That the image lies in the correct subspaces is a straightforward
  calculation. Finally, surjectivity is again obtained using the perfectness of the pairing.
\end{proof}

\begin{lem}\label{lem:ms-control-lemma-iotainfty}
  There are canonical isomorphisms of $\Tord(Np^\infty,\O)\tensor_{\Lambdawt}\calI[\GR]$-modules
  \[ \MS_k^\ord(Np^r,\O)[\ep]\isom\overline\calMS^\ord_2(Np^\infty,\O)[\ep\kappa_\wt^{k-2}] \]
  and of $\O$-modules
  \[ \MS_k^\ord(Np^r,\O)^\pm[F_\varphi]\isom\overline\calMS^\ord_2(Np^\infty,\O)^\pm[F_\varphi]. \]
\end{lem}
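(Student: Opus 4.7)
The plan is to obtain both isomorphisms by interposing three intermediate identifications and then applying \cref{cor:comparison-of-diff-weights}. Since $\ep$ is an $\O^\times$-valued character of $\Gamma^\wt$ factoring through $\Gamma^\wt/\Gamma^\wt_r$, and the maps in the colimit defining $\calMS^\ord_k(Np^\infty,\O)$ come from the natural projections $Y_1(Np^s)\ra Y_1(Np^r)$, which are $\Gamma^\wt$-equivariant with respect to the diamond action, we first claim that the natural map
\[ \MS^\ord_k(Np^r,\O)[\ep]\ra\calMS^\ord_k(Np^\infty,\O)[\ep] \]
is an isomorphism. The essential point is that the transition map $\MS^\ord_k(Np^r,\O)\ra\MS^\ord_k(Np^s,\O)$ for $s\ge r$ induces an isomorphism on $\ep$-eigenspaces, because by \cref{thm:control-thm} the ordinary Hecke algebras $\tord_k(Np^s,\Gamma^\wt/\Gamma^\wt_s,\ep,\O)$ acting on the $\ep$-eigenspaces are canonically isomorphic (both being identified with $\tord(Np^\infty,\O)/P_{k,\ep}$), and the same argument giving Hida's control theorem for modular forms applies verbatim to modular symbols (cf.\ \cite[\S5]{MR1279604}).

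Next, since $\MS^\ord_k(Np^r,\O)[\ep]$ is a finitely generated $\O$-module, it is already $p$-adically complete, so passing to the $p$-adic completion $\overline\calMS^\ord_k(Np^\infty,\O)$ induces another isomorphism
\[ \calMS^\ord_k(Np^\infty,\O)[\ep]\isom\overline\calMS^\ord_k(Np^\infty,\O)[\ep]. \]
Finally, \cref{cor:comparison-of-diff-weights}, applied after restriction to the $\ep$-eigenspace, gives the weight-change isomorphism
\[ \overline\calMS^\ord_k(Np^\infty,\O)[\ep]\isom\overline\calMS^\ord_2(Np^\infty,\O)[\ep\kappa_\wt^{k-2}]. \]
Composing these three isomorphisms yields the first claim. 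Equivariance for $\Tord(Np^\infty,\O)\tensor_{\Lambdawt}\calI$ is evident on each step since all constructions are defined in terms of Hecke-module structures, and equivariance for $\GR$ (i.e.\ for the action of $\EP$) is clear for the first two isomorphisms because $\EP$ commutes with the diamond operators, and holds for the third by the $\EP$-equivariance asserted in \cref{cor:comparison-of-diff-weights}.

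For the second claim, note that the Hecke character $\varphi\circ F\colon\Tord(Np^\infty,\O)\tensor_{\Lambdawt}\calI\ra\O$ factors through the quotient by $P=P_\varphi$, and its restriction to diamond operators from $\Gamma^\wt$ agrees with $\ep$ on the left-hand side (in weight $k$ at level $Np^r$) and with $\ep\kappa_\wt^{k-2}$ on the right-hand side (in weight $2$ at level $Np^\infty$), by definition of the isomorphism of \cref{cor:comparison-of-diff-weights}. Hence the $F_\varphi$-eigenspace sits inside the respective $\ep$- or $\ep\kappa_\wt^{k-2}$-eigenspace, and the isomorphism of the first claim restricts to an isomorphism on $F_\varphi$-eigenspaces. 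Because the isomorphism is $\GR$-equivariant, it further restricts to $\pm$-eigenspaces for complex conjugation, giving the second claim.

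The main obstacle in this argument is the first step, namely that the transition maps in the colimit defining $\calMS^\ord_k(Np^\infty,\O)$ are isomorphisms after passing to $\ep$-eigenspaces once $\ep$ factors through $\Gamma^\wt/\Gamma^\wt_r$. This is the genuine content of Kitagawa's control theorem for modular symbols and is the reason one needs to pass through the ordinary idempotent, where Hida's finiteness and control results for the Hecke algebras can be leveraged; once this stabilization is in hand, the remaining identifications are formal.
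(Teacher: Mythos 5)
Your proof follows the same overall architecture as the paper's: isolate the control isomorphism $\MS_k^\ord(Np^r,\O)[\ep]\isom\overline\calMS^\ord_k(Np^\infty,\O)[\ep]$ (which the paper obtains in one stroke from Kitagawa's result \cite[Thm.\ 5.5\,(1)]{MR1279604}), and then apply \cref{cor:comparison-of-diff-weights} for the weight change. The second claim is deduced in both cases by observing that the $F_\varphi$-eigenspaces sit inside the respective $\ep$- and $\ep\kappa_\wt^{k-2}$-eigenspaces and that the isomorphism is compatible with the Hecke and $\EP$-actions.

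The place where your argument does not hold up is the intermediate step in which you pass from $\calMS^\ord_k(Np^\infty,\O)[\ep]$ to $\overline\calMS^\ord_k(Np^\infty,\O)[\ep]$. You justify this by noting that $\MS^\ord_k(Np^r,\O)[\ep]$ is a finitely generated $\O$-module, hence $p$-adically complete. But completeness of the source does not give that the natural map into a completion is an isomorphism: the $\ep$-eigenspace of $\overline\calMS^\ord_k(Np^\infty,\O)$ could a priori contain new elements arising as $p$-adic limits whose finite-level approximants are only eigenvectors modulo $p^t$ (recall also that $\Gamma^\wt/\Gamma^\wt_r$ is a $p$-group, so one has no orthogonal idempotents to decompose these modules into $\ep$-isotypic summands). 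The commutation of completion with passage to $\ep$-eigenspaces is genuinely part of what Kitagawa's Theorem 5.5\,(1) proves; it is not a formal consequence of your first substep. Your step 1 (stabilization of the transition maps on $\ep$-eigenspaces) also does not follow merely from the isomorphism of Hecke algebras in \cref{thm:control-thm}, since the modular symbols modules are not free of rank one over the Hecke algebra. The simplest repair is to invoke Kitagawa's theorem directly for the composite map $\MS_k^\ord(Np^r,\O)[\ep]\to\overline\calMS^\ord_k(Np^\infty,\O)[\ep]$, as the paper does, rather than trying to factor it through the uncompleted colimit.
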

\begin{proof}
  We know that the map $\MS^\ord_k(Np^r,\O)\ra\calMS^\ord_k(Np^\infty,\O)$ is compatible with the action of $\Tord(Np^\infty,\O)\tensor_{\Lambdawt}\calI[\GR]$, so in particular with the action of $\Gamma^\wt$, and therefore induces a map
  \begin{equation*}
    \MS_k^\ord(Np^r,\O)[\ep]\ra\overline\calMS^\ord_k(Np^\infty,\O)[\ep].
  \end{equation*}
  By \cite[Thm.\ 5.5 (1)]{MR1279604}\footnote{In \cite[§5.3]{MR1279604}, the character $\ep$ is assumed to have kernel $\Gamma_s^\wt$ instead of $\Gamma^\wt_r$ (for some $s\le r$). This must be a typo since in this case the claim \enquote{$L_n(A)=L_n(\ep,A)$ as $\Gamma_1(Np^r)$-module} there is not true.} this map is in fact an isomorphism.
  The first isomorphism follows therefore from \cref{cor:comparison-of-diff-weights}.
  Now since $P$ is of type $(k,\ep,r)$, we have 
  \begin{align*}
    \MS_k^\ord(Np^r,\O)[F_\varphi]&\subseteq\MS_k^\ord(Np^r,\O)[\ep], \\
    \overline\calMS^\ord_2(Np^\infty,\O)[F_\varphi]&\subseteq\overline\calMS^\ord_2(Np^\infty,\O)[\ep\kappa_\wt^{k-2}].
  \end{align*}
  The second inclusion comes from \cref{lem:ms-control-lemma-duality}. Since the first isomorphism from the statement is compatible with the Hecke action as well as with the action of $\EP\in\GL_2(\Z)$, it induces the second isomorphism.
\end{proof}

\subsection{The Galois action on modular symbols}

The comparison isomorphism from étale cohomology yields an isomorphism of $\O$-modules
\[
  \MS_k(Np^r,\O)\cong\HL^1_{\et,\cp}(Y_1(Np^r)\fibertimes_\Z{\Qquer},\Sym^{k-2}\RD^1f_*\smuline\O) \]
commuting with the Hecke action. Hence $\MS_k(Np^r,\O)$ carries a canonical $\O$-linear
action of $\GQ$ which commutes with the Hecke action.



For $s\ge r\ge 0$, the maps $\MS_k(Np^r,\O)\ra\MS_k(Np^s,\O)$ are $\GQ$-equivariant since
they are induced by the canonical maps of curves $Y_1(Np^s)\ra Y_1(Np^r)$, which are defined
over $\Q$. Hence by definition of $\overline\calMS_k(Np^\infty,\O)$ we can extend the action
of $\GQ$ to this $\O$-module. Of course it still commutes with the Hecke action. We then
endow $\UM(Np^\infty,\O)$ with the dual $\GQ$-action and $\MMSS^\ord(Np^\infty,\calI)$ again
with the dual $\GQ$-action. It is clear that the $\GQ$-action on
$\MMSS^\ord(Np^\infty,\calI)$ is then $\calI$-linear and still commutes with the Hecke
action.

By $\calI$-linearity, we thus get a $\GQ$-action on the reduction of
$\MMSS^\ord(Np^\infty,\calI)$ at arithmetic points. We want to show that the control theory
isomorphisms from the previous section are $\GQ$-equivariant. For doing so we will need the
following
well-known
result on elliptic curves.
In the
statement we identify the scheme $E[p^t]$, which is finite \'etale over $S$, with the
\'etale sheaf it represents.

\begin{prop}\label{prop:ec-relative-tate-module-etale-cohom}
  Let $S$ be a $\Q$-scheme, $f\colon E\ra S$ an elliptic curve and $t\in\N$. Then $\RD^1f_*\smuline{\zmod{p^t}}$ and $E[p^t]$ are canonically $\zmod{p^t}$-dual to each other as \'etale sheaves on $S$.
\end{prop}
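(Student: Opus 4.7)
The plan is to build the duality pairing from two standard ingredients — the Kummer sequence on $E$ and relative Poincar\'e duality for $f$ — and then verify perfectness étale-locally on $S$, where it reduces to the classical duality on a fiber.

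The first step identifies $R^1 f_*\mu_{p^t}$ with $E[p^t]$. Applying $Rf_*$ to the Kummer exact sequence $0 \to \mu_{p^t} \to \Gm \xrightarrow{[p^t]} \Gm \to 0$ of étale sheaves on $E$, using properness with geometrically connected fibers to get $f_*\Gm = \Gm$, together with the surjectivity of $[p^t]$ on $\Gm$ as an étale sheaf, the long exact sequence collapses to $R^1 f_*\mu_{p^t} \cong (R^1 f_*\Gm)[p^t] = \Pic_{E/S}[p^t]$. The principal polarization of the elliptic curve furnishes a canonical isomorphism $\Pic^0_{E/S} \cong E$ (via the Abel-Jacobi map $P\mapsto[\O(P)-\O(e)]$), and since $\Pic_{E/S}/\Pic^0_{E/S} \cong \smuline{\Z}$ is $p$-torsion-free, we obtain a canonical $R^1 f_*\mu_{p^t} \cong E[p^t]$.

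Relative Poincar\'e duality for the smooth proper morphism $f$ of relative dimension $1$ (SGA 4, XVIII) is the second ingredient: the trace map gives $R^2 f_*\mu_{p^t} \cong \smuline{\zmod{p^t}}$ and the cup-product pairing
\[ R^1 f_*\smuline{\zmod{p^t}} \otimes_{\zmod{p^t}} R^1 f_*\mu_{p^t} \longrightarrow R^2 f_*\mu_{p^t} \cong \smuline{\zmod{p^t}} \]
is perfect. Composing with the identification of the previous paragraph yields the sought pairing $R^1 f_*\smuline{\zmod{p^t}} \otimes E[p^t] \to \smuline{\zmod{p^t}}$.

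The hard part, once the pairing is written down, is really a sanity check rather than a genuine difficulty: both $R^1 f_*\smuline{\zmod{p^t}}$ and $E[p^t]$ are étale-locally free of rank $2$ over $\zmod{p^t}$ (because $f$ is smooth proper and $p$ is invertible on $S$), so perfectness can be verified on geometric stalks. There, via proper base change, it reduces to the classical statement $\HL^1_\et(E_{\bar s},\zmod{p^t}) \cong \Hom(E_{\bar s}[p^t],\zmod{p^t})$ for an elliptic curve over an algebraically closed field, which itself follows from $E_{\bar s}$ being a $K(\pi,1)$ for $\pi_1^\et(E_{\bar s})$ whose maximal pro-$p$ quotient modulo $p^t$ is $E_{\bar s}[p^t]$.
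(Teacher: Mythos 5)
The paper states this proposition as a ``well-known result'' and offers no proof of its own, so there is nothing to compare against; your argument must be judged on its own merits, and it is essentially the standard one. Both ingredients are deployed correctly: the Kummer sequence on $E$ together with $f_*\Gm\cong\Gm$ and the representability of $\Pic_{E/S}$ (valid here since $f$ is proper, flat, cohomologically flat in degree $0$, and has a section) identifies $\RD^1f_*\mu_{p^t}$ with $\Pic_{E/S}[p^t]=\Pic^0_{E/S}[p^t]\cong E[p^t]$ via the principal polarization; and relative Poincaré duality supplies the perfect pairing with $\RD^1f_*\smuline{\zmod{p^t}}$. One small remark: the final ``sanity check'' on geometric stalks is logically redundant. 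Once you invoke the perfectness of the Poincaré duality pairing from SGA~4, composing with the isomorphism from the Kummer step automatically preserves perfectness, so no separate stalkwise verification is needed. The stalk computation you sketch (via proper base change and $H^1_\et(E_{\bar s},\zmod{p^t})\cong\Hom(T_p E_{\bar s}/p^t,\zmod{p^t})$) is nevertheless correct, although it uses only the universal property of $\pi_1$ for $H^1$ and does not actually require the $K(\pi,1)$ property of $E_{\bar s}$.
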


\begin{thm}\label{thm:control-theory-gq-equiv}
  Fix a $P\in\calX^\arith_\calI(\O)$ of type $(k,\ep,r)$.
  The $\O$-linear isomorphism
  \[ \MMSS^\ord(Np^\infty,\calI)\tensor_{\calI}\left(\quotient\calI P\right) \isom
    \MS_k^\ord(Np^r,\O)[\ep] \] from \cref{thm:control-thm-ms} is $\GQ$-equivariant.
\end{thm}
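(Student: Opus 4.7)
The isomorphism of \cref{thm:control-thm-ms}~\ref{thm:control-thm-ms:ohne-f} is constructed as the composition of the three isomorphisms in \cref{lem:ms-control-lemma-red,lem:ms-control-lemma-duality,lem:ms-control-lemma-iotainfty}, so my plan is to verify $\GQ$-equivariance one step at a time. For \cref{lem:ms-control-lemma-red}, the isomorphism is obtained by applying $\Hom_{\Lambdawt}(\UM^\ord(Np^\infty,\O),-)$ to the short exact sequence $0\ra P\ra\Lambdawt\ra\O\ra 0$. Since by definition the $\GQ$-action on $\MMSS^\ord(Np^\infty,\calI)$ is the dual of the $\GQ$-action on $\UM^\ord(Np^\infty,\O)$ and commutes with the $\Lambdawt$-action, naturality of $\Hom$ yields $\GQ$-equivariance. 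Similarly, for \cref{lem:ms-control-lemma-duality} the biduality map $\xi\mapsto\bigl(f\mapsto f(\xi)\bigr)$ is evidently Galois-equivariant, since both sides receive their $\GQ$-action from $\overline\calMS_2^\ord(Np^\infty,\O)$ via (double) $\O$-duality and evaluation is natural.

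The substance lies in \cref{lem:ms-control-lemma-iotainfty}, whose underlying isomorphism factors as (a) the level-comparison isomorphism $\MS_k^\ord(Np^r,\O)[\ep]\isom\overline\calMS_k^\ord(Np^\infty,\O)[\ep]$ of \cite[Thm.\ 5.5]{MR1279604}, followed by (b) Kitagawa's weight-change isomorphism of \cref{cor:comparison-of-diff-weights}. Part (a) is induced by pullback along the transition morphisms $Y_1(Np^s)\ra Y_1(Np^r)$, which are defined over $\Q$, and is therefore $\GQ$-equivariant by functoriality of étale cohomology together with the étale–modular-symbols comparison of \cref{prop:iso-classical-ms-honec}.

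The main obstacle is (b). Kitagawa's map is built from the $\Delta$-equivariant projection $\Sym^{k-2}R^2\ra R(\chi^{k-2})$, $f\mapsto f(1,0)$, and its $\GQ$-equivariance is not a priori visible because this projection is defined in purely group-theoretic terms. My plan is to reinterpret it in the category of étale sheaves on the arithmetic modular curve. By \cref{prop:ec-relative-tate-module-etale-cohom}, $\RD^1f_*\smuline{\O/p^t}$ is canonically $\O/p^t$-dual to $E[p^t]$. The crucial point is that on the \emph{arithmetic} curve $Y_1(Np^r)^\arithm$ the universal level structure $\mu_{Np^r}\inj E[Np^r]$ is defined over $\Q$; combined with the Weil pairing (which identifies $E[p^t]/\mu_{p^t}$ with the constant sheaf $\O/p^t$) it produces a canonical $\GQ$-equivariant short exact sequence $0\ra\mu_{p^t}\ra E[p^t]\ra\O/p^t\ra 0$ of étale sheaves over $\Q$. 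Dualising and passing to the $(k-2)$-th symmetric power yields a $\GQ$-equivariant three-step filtration of $\Sym^{k-2}\RD^1f_*\smuline{\O/p^t}$ whose top graded quotient is the Tate twist $(\O/p^t)(2-k)$.

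A direct computation, using the splitting of the $\Delta$-representation $\Sym^{k-2}R^2$ into $\chi$-isotypic pieces together with the identification above, shows that Kitagawa's projection $f\mapsto f(1,0)$ corresponds under the étale comparison to the projection of $\Sym^{k-2}\RD^1f_*\smuline{\O/p^t}$ onto this top quotient. Since that projection is a morphism of étale sheaves, it is automatically $\GQ$-equivariant; passing to étale cohomology with compact support, restricting to the ordinary part, taking the colimit over $r$, and extracting the relevant diamond eigenspaces all preserve $\GQ$-equivariance. This establishes (b); composing with (a) and with the $\GQ$-equivariant isomorphisms from steps one and two completes the proof.
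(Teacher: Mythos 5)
Your overall strategy — reduce to the weight-change comparison of \cref{cor:comparison-of-diff-weights}, then show the underlying map of sheaf coefficients descends to $\Q$ using the $\Q$-rationality of the universal level structure — is the right one and is in spirit close to what the paper does. The treatment of \cref{lem:ms-control-lemma-red,lem:ms-control-lemma-duality} and of step (a) is also fine. But the execution of step (b) has two genuine problems.

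First, you carry out the sheaf-theoretic argument on the \emph{arithmetic} modular curve $Y_1(Np^r)^\arithm$, using the level structure $\mu_{p^t}\hookrightarrow E[p^t]$ together with the Weil pairing. However, the Galois action on $\MS_k(Np^r,\O)$ in this paper is defined via the comparison isomorphism with $\HL^1_{\et,\cp}$ of the \emph{naive} curve $Y_1(Np^r)_{/\Q}$, whose universal level structure is a $\Q$-rational point of exact order, i.e.\ an injection $\underline{\zmod{p^t}}\hookrightarrow E[p^t]$. The two curves are distinct $\Q$-forms of the same $\overline\Q$-curve (the cusps already distinguish them), and their étale local systems $\Sym^{k-2}\RD^1f_*\smuline{\O/p^t}$ carry different $\GQ$-module structures. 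Without an explicit identification of the arithmetic-side cohomology with the naive-side cohomology (and of the induced filtrations under that identification), equivariance established on $Y_1(Np^r)^\arithm$ does not transfer. The paper's own proof avoids this issue entirely: it stays on $Y_1(Np^r)_{/\Q}$, dualizes the naive point-of-exact-order map, and then constructs the desired morphism of sheaves explicitly on an étale cover and verifies that it glues.

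Second, and more seriously, the identification ``Kitagawa's projection equals the top graded quotient of the filtration'' is a delicate coordinate claim that you assert without verification, and it is sensitive to exactly the above choice of curve. On the naive curve, where the Galois action actually lives, the dual of the level structure is the \emph{top} quotient map $\RD^1f_*\twoheadrightarrow\underline{\O/p^t}$, and the resulting filtration on $\Sym^{k-2}$ has its top graded quotient corresponding to the $Y^{k-2}$-coefficient; Kitagawa's map $f\mapsto f(1,0)$, by contrast, extracts the $X^{k-2}$-coefficient, and so is a \emph{retraction onto the smallest filtration step}, not the projection onto the top quotient. A retraction onto a sub-object has no a priori $\GQ$-equivariance, which is exactly why the paper has to do the explicit local gluing argument — this is the genuine content of the proof, and it is not replaced by citing the canonicity of a filtration quotient. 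Your argument only becomes a canonical quotient-projection statement on the arithmetic curve, which (per the first point) is the wrong object. Finally, a minor slip: the filtration on $\Sym^{k-2}$ of a two-step filtered object has $k-1$ nontrivial steps, not three.
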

\begin{proof}
  The isomorphism we study was defined as the composition of three maps: the reduction map from \cref{lem:ms-control-lemma-red}, the biduality map from \cref{lem:ms-control-lemma-duality} and the map which compares modular symbols of different weights from \cref{lem:ms-control-lemma-iotainfty}. By construction and the definition of the Galois actions, it is clear that the first two respect the action of $\GQ$. The third one was defined using the canonical isomorphism
  \begin{equation*}
    \overline\calMS^\ord_k(Np^\infty,\O)[\ep]\isom\overline\calMS^\ord_2(Np^\infty,\O)[\ep\kappa_\wt^{k-2}]
  \end{equation*}
  which came from the map
  \begin{equation*}
    \Hc^1(X_1(Np^r),\Sym^{k-2}\RD^1f_*\smuline{\O/{p^t}})=
    \MS_k(Np^r,\textstyle\quotient\O{p^t})\ra\MS_2(Np^r,\textstyle\quotient\O{p^t}(\chi^{k-2}))
  \end{equation*}
  from \eqref{eqn:fixed-level-comparison-of-diff-weights} (with $r\ge t\ge0$ fixed),
  and it remains to prove that this map is $\GQ$-equivariant.
  The latter map is induced from a morphism of sheaves on $Y_1(Np^r)^\an$
  \begin{equation*}
    \Sym^{k-2}\RD^1f_*\underline{\textstyle\quotient\O{p^t}}\ra\underline {\textstyle\quotient\O{p^t}}
  \end{equation*}
  which is $\Sym^{k-2}$ of a map
  $\RD^1f_*\underline{\textstyle\quotient\O{p^t}}\ra\underline
  {\textstyle\quotient\O{p^t}}$. With respect to our fixed trivialisation of
  $\RD^1f_*\smuline\Z$ on $\H$ (by the choice of a basis we made on
  \cpageref{choice-of-basis}), it comes from projection onto the first coordinate. We note
  here that in fibers of $E_1(N)^\an\ra Y_1(N)^\an$ the point of order $N$ lies in the
  \emph{second} coordinate with respect to this basis (see
  \cpageref{point-of-exact-order-N}). This will be used below.

  Since the category of locally constant torsion sheaves with finite fibres on $Y_1(N)^\an$ is equivalent to the category of locally constant constructible torsion sheaves \'etale on $Y_1(N)\fibertimes_\Z\C$ by \cite[Exp.\ XI, Thm.\ 4.4 (i)]{MR0354654}, the above morphism of sheaves corresponds to a morphism of such \'etale sheaves.
  To see $\GQ$-equivariance, we need to show that this map of \'etale sheaves already exists over $\Q$, i.\,e.\ it comes from a map of \'etale sheaves defined over $\Q$
  \begin{equation*} 
    \RD^1f_*\underline{\textstyle\quotient\O{p^t}}\ra\underline {\textstyle\quotient\O{p^t}}
  \end{equation*}
  on $Y_1(Np^r)_{/\Q}$. We construct such a map which after base change to $\C$ gives back the morphism from before.

  We now work over $\Q$. Take an \'etale open $U$ in $Y_1(Np^r)_{/\Q}$. Then on $E_1(Np^r)_{/\Q}\fibertimes_{Y_1(Np^r)_{/\Q}}U\ad E_U$ the level structure gives us a point of exact order $p^r$, i.\,e.\ a morphism of group schemes
  \begin{equation*}
    \alpha\colon{\underline{\textstyle\zmod{p^r}}}_{/U}\inj E_U[p^r].
  \end{equation*}
  If we compose $\alpha$ with the map $E_U[p^r]\ra E_U[p^t]$ which is multiplication by $p^{r-t}$, then it is easy to see that it factors through $\textstyle\zmod{p^t}$ and gives a point of exact order $p^t$
  \begin{equation*}
    \beta\colon{\underline{\textstyle\zmod{p^t}}}_{/U}\inj E_U[p^t]
  \end{equation*}
  (here we just applied the change of level morphism $\sigma_{p^r,p^t}$ from
  \cref{sec:refinements}). Dualizing $\beta$ and using
  \cref{prop:ec-relative-tate-module-etale-cohom} we obtain a surjection
  \begin{equation}
    \label{eqn:dual-to-point-of-order-pt}\tag{$*$}
    \RD^1f_*\underline{\textstyle\zmod{p^t}}\surj\underline{\textstyle\zmod{p^t}}
  \end{equation}
  of \'etale sheaves on $U$.

  By \cref{prop:ec-relative-tate-module-etale-cohom} and the well-known structure of the
  $N$-torsion of an elliptic curve (see \cite[Thm.\ 2.3.1]{MR0772569}),
  $\RD^1f_*\underline{\textstyle\zmod{p^t}}$ is \'etale locally on $Y_1(Np^r)_{/\Q}$
  isomorphic to the constant sheaf $\underline{(\textstyle\zmod{p^t})}^2$. Now assume $U$ is
  small enough and choose an isomorphism of sheaves on $U$
  \begin{equation*}
    \psi_U\colon\RD^1f_*\underline{\textstyle\zmod{p^t}}\isom\underline{(\textstyle\zmod{p^t})^2}
  \end{equation*}
  such that the composition $\pi_1\circ\psi_U$ is the map \eqref{eqn:dual-to-point-of-order-pt} (where $\pi_1,\pi_2\colon \underline{(\zmod{p^t})}^2\ra\underline{(\zmod{p^t})}$ are the projections on the first and second factor, respectively). We then define a morphism of sheaves on $U$
  \begin{equation*}
    \phi_U\colon\RD^1f_*\underline{\textstyle\zmod{p^t}}\ra\underline{\textstyle\zmod{p^t}}
  \end{equation*}
  as $\phi_U\da\pi_2\circ\psi_U$ and claim that this globalizes to a morphism of sheaves $\RD^1f_*\underline{\zmod{p^t}}\ra\underline{\zmod{p^t}}$ on $Y_1(Np^r)_{/\Q}$. To check this, take two small \'etale open sets $U_1,U_2$ as above. Then $\psi_{U_2}^{-1}\restrict{U_1\cap U_2}\circ\psi_{U_1}\restrict{U_1\cap U_2}$ is an automorphism of the constant sheaf $(\zmod{p^t})^2$ on $U_1\cap U_2$, so it may be described by a tuple of matrices in $\GL_2(\zmod{p^t})$. But since it has to respect the point of order $p^t$ each matrix has to be upper unitriangular. So the automorphism does not change the projection onto the second factor,
  i.\,e.
  \begin{equation*}
    \phi_{U_1}\restrict{U_1\cap U_2}=\pi_2\restrict{U_1\cap U_2}\circ\psi_{U_1}\restrict{U_1\cap U_2}=\pi_2\restrict{U_1\cap U_2}\circ\psi_{U_2}\restrict{U_1\cap U_2}=\phi_{U_2}\restrict{U_1\cap U_2}
  \end{equation*}
  and we indeed get a morphism of sheaves $\RD^1f_*\underline{\zmod{p^t}}\ra\underline{\zmod{p^t}}$ on $Y_1(Np^r)_{/\Q}$. By construction and our previous considerations, it is clear that after tensoring with $\quotient{\O}{p^t}$ and base change to $\C$ we get the morphism $\RD^1f_*\underline{\quotient\O{p^t}}\ra\underline {\quotient\O{p^t}}$ from before.
\end{proof}

\begin{cor}\label{cor:mmss-ist-hidas-grosse-galdarst}
  Let $F\in\SS^\ord(Np^\infty,\calI)$ be an $\calI$-adic eigenform and assume that \cref{cond:mmss-free-rank-one} is satisfied. Then the eigenspace $\MMSS^\ord(Np^\infty,\calI)[F]$ is as an $\calI$-linear representation of $\GQ$ 
  isomorphic to Hida's big representation $\rho_F$ from \cref{thm:grosse-galdarst-hida-fam}.
\end{cor}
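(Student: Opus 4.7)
The plan is to verify that $\MMSS^\ord(Np^\infty,\calI)[F]$ satisfies the characterizing properties of Hida's big representation $\rho_F$ in \cref{thm:grosse-galdarst-hida-fam} and then invoke the uniqueness clause of that theorem.

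First I would show that $M\da\MMSS^\ord(Np^\infty,\calI)[F]$ is a free $\calI$-module of rank $2$ carrying a continuous $\calI$-linear Galois representation. Since $p$ is odd, the idempotents $(1\pm\EP)/2$ split $M$ as $M^+\oplus M^-$ where $M^\pm=\MMSS^\ord(Np^\infty,\calI)^\pm[F]$, and under \cref{cond:mmss-free-rank-one} each summand is free of rank $1$ over $\calI$. The $\calI$-linear $\GQ$-action constructed in the previous subsection commutes with the Hecke operators and hence preserves $M$, yielding a representation $\rho_M\colon\GQ\ra\GL_2(\calI)$. Continuity is inherited from the finite-level $\GQ$-actions on $\MS_k(Np^r,\O)$, and unramifiedness outside $Np\infty$ follows from the smoothness of $Y_1(Np^r)$ over $\Z[1/Np]$.

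Next I would compare $\rho_M$ with $\rho_F$ pointwise. For each $P\in\calX^\arith_\calI(\O)$ of type $(k,\ep,r)$, \cref{thm:control-thm-ms} combined with its $\GQ$-equivariance (\cref{thm:control-theory-gq-equiv}) yields a $\GQ$-equivariant isomorphism $M\tensor_\calI(\calI/P)\isom\MS_k^\ord(Np^r,\O)[F_P]$. By the étale comparison recalled at the beginning of the Galois-action subsection, the right side identifies with the $F_P$-isotypic component of $\Het^1_\cp(Y_1(Np^r)_{/\Qbar},\Sym^{k-2}\RD^1f_*\smuline\O)$. Standard Eichler-Shimura theory for cuspidal eigenforms identifies this with Deligne's Galois representation attached to $F_P$, and since $p$-stabilisation does not change the underlying Galois representation, this agrees with the Galois representation attached to $F_P^\new$.

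Finally I would invoke the uniqueness in \cref{thm:grosse-galdarst-hida-fam}. The representation $\rho_M$ is a continuous $\calI$-linear Galois representation on a free rank-$2$ module, unramified outside $Np\infty$, whose reduction at each $P\in\calX^\arith_\calI(\O)$ is isomorphic to the Galois representation of $F_P^\new$. These are the characterizing properties of $\rho_F$, so $\rho_M\cong\rho_F$. The main obstacle will be pinning down the étale identification on the nose (ruling out unwanted duals or twists); I would handle this by comparing characteristic polynomials of $\Frob_\ell$ for $\ell\nmid Np$ on both sides via the Eichler-Shimura congruence relation, so that both traces of Frobenius equal $F(T_\ell)$ after each reduction. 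The Zariski density of $\calX^\arith_\calI(\O)$ in $\calX^\wt_\calI$ then forces equality of the pseudo-characters globally on $\calI$, and the irreducibility of $\rho_F$ upgrades this to an isomorphism of representations.
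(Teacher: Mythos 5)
Your proposal follows the same path as the paper's proof: use \cref{cond:mmss-free-rank-one} to get freeness of rank $2$, reduce modulo arithmetic points via \cref{thm:control-thm-ms} and observe via \cref{thm:control-theory-gq-equiv} that the control map is $\GQ$-equivariant, identify the reduction at $P$ with the $F_P$-isotypic part of $\Het^1_\cp(Y_1(Np^r)_{/\Qbar},\Sym^{k-2}\RD^1f_*\smuline\O)$, recognize this as Deligne's representation for $F_P^\new$ (separately for $F_P$ new and for $F_P$ a $p$-stabilisation), and conclude by the uniqueness in \cref{thm:grosse-galdarst-hida-fam}. The final pseudo-character/density argument you add as a safety net against an unwanted twist or dual is not needed once one recalls that the étale comparison was normalized once and for all before the control theorem was stated, so the identification is already on the nose; the paper simply appeals to the definition of $\Motive(f)$ there. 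Otherwise your argument is correct and in substance identical to the paper's.
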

\begin{proof}
  It follows from \cref{cond:mmss-free-rank-one} that the eigenspace $\MMSS^\ord(Np^\infty,\calI)[F]$ is free of rank $2$ over $\calI$. Moreover, under our assumptions the map
  \begin{equation*}
    \MMSS^\ord(Np^\infty,\calI)[F]\tensor_{\calI}\left(\quotient\calI P\right) \isom \MS_k^\ord(Np^r,\O)[F_P]=\MS_k(Np^r,\O)[F_P]
  \end{equation*}
  from \cref{thm:control-thm-ms} is $\GQ$-equivariant by \cref{thm:control-theory-gq-equiv}
  and the fact that the Hecke and Galois actions commute. We further have a canonical
  $\O$-linear $\GQ$-equivariant isomorphism
  \begin{equation*}
    \MS_k(Np^r,\O)[F_P]\cong\HL^1_{\et,\cp}(Y_1(Np^r)\fibertimes_\Z{\Qquer},\Sym^{k-2}\RD^1f_*\smuline\O)[F_P].
  \end{equation*}
  After tensoring this isomorphism with $L$ we get a
  two-dimensional vector space by \cref{prop:ms-eigenspaces-free-of-rank-one}. If $F_P$ is a
  newform then by definition of $\Mf$ this vector space is isomorphic to Deligne's Galois
  representation attached to $F_P$.  Otherwise $F_P$ is the unique ordinary refinement of a
  newform $F_P^\new$, and then the vector space is isomorphic to Deligne's Galois
  representation attached to $F_P^\new$.  Hence $\MMSS^\ord(Np^\infty,\calI)[F]$ has the
  same properties as Hida's big Galois representation $\rho_F$ from
  \cref{thm:grosse-galdarst-hida-fam}, and since $\rho_F$ is unique with these properties
  the claim follows.
\end{proof}

\begin{prop}\label{cor:mmss-as-proj-limit-gqp-equiv}
  The isomorphism from \cref{cor:mmss-as-proj-limit} is $\GQ$-equivariant.
\end{prop}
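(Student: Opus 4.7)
The plan is to check $\GQ$-equivariance factor by factor in the explicit formula
\[ x_r=\frac{1}{p^{r-1}}\Bigg(\sum_{\ep\in\hat\Gamma^\wt_{\mathrm f,r}}X_{2,\ep}[T_p^{-r}]\Bigg). \]
Both sides carry a $\GQ$-action that ultimately descends from the étale-cohomology action on $\MS_k^\ord(Np^r,\O)$, so we need only verify that each individual operation defining the map preserves the Galois action, and that the target limit is a well-defined $\GQ$-module.

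First, the specialization $X\mapsto X_{2,\ep}$ from $\MMSS^\ord(Np^\infty,\Lambdawt)$ into $\MS^\ord_2(Np^r,\O)[\ep]$, and the subsequent reindexing to $\MS^\ord_k(Np^r,\O)$ via the different-weights comparison of \cref{cor:comparison-of-diff-weights}, are $\GQ$-equivariant. This is exactly the statement of \cref{thm:control-theory-gq-equiv} applied at the arithmetic point $P_{2,\ep}$: its proof established the Galois-equivariance of the three ingredients (reduction, biduality, comparison of different weights) that together make up the specialization map. No new content is needed here.

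Second, the Hecke operator $T_p^{-r}$ (which is well-defined on the ordinary part, where $T_p$ acts invertibly) is induced by an algebraic correspondence on $Y_1(Np^r)_{/\Q}$, and hence commutes with the $\GQ$-action on the étale cohomology of this curve, i.e.\ on $\MS^\ord_k(Np^r,\O)$. The outer summation over characters $\ep$ and the scalar factor $1/p^{r-1}$ are $\O$-linear with Galois-invariant coefficients, hence trivially equivariant. Assembling these observations shows that for every $r$ the assignment $X\mapsto x_r$ is $\GQ$-equivariant.

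Finally, to make sense of $\GQ$-equivariance as a statement about the full limit, we would note that the transition maps defining $\lim_r\MS^\ord_k(Np^r,\O)$ are induced by the change-of-level morphism $\Theta_{Np^{r+1},Np^r}$ of \cref{sec:refinements}, which (as its moduli description makes clear) is a morphism of modular curves defined over $\Q$; hence it induces a $\GQ$-equivariant map on étale cohomology and therefore on modular symbols, so the projective limit carries a canonical diagonal $\GQ$-action. Since each component map $X\mapsto x_r$ is $\GQ$-equivariant and is compatible with these transition maps (by \cref{cor:mmss-as-proj-limit}), the full isomorphism is $\GQ$-equivariant. The only subtle point — and hence the main thing to get right — is appealing to the $\GQ$-equivariance of the comparison of different weights from \cref{cor:comparison-of-diff-weights}, which is why we must route through \cref{thm:control-theory-gq-equiv} rather than directly through \cref{thm:control-thm-ms}; beyond this, the argument is a routine compatibility check.
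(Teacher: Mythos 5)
Your proof is correct and follows essentially the same approach as the paper, which simply cites \cref{thm:control-theory-gq-equiv} and the explicit formula from \cref{cor:mmss-as-proj-limit}; you have merely spelled out the routine checks (Galois-equivariance of the specialization via \cref{thm:control-theory-gq-equiv}, Hecke operators commuting with $\GQ$ because they are defined over $\Q$, the transition maps $\Theta_{Np^{r+1},Np^r}$ being defined over $\Q$) that the paper leaves implicit.
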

\begin{proof}
  This follows directly from \cref{thm:control-theory-gq-equiv} and the formula in \cref{cor:mmss-as-proj-limit}.
\end{proof}

\subsection{$p$-adic Eichler-Shimura isomorphisms}

For a fixed weight $k\ge2$ and level $N\ge4$ we have the following $p$-adic analogue of the
Eichler-Shimura isomorphism due to Faltings.
We look at the $p$-adic {Hodge-Tate comparison isomorphism for $\wnk$}
\[ \cpiso_\HT\colon \wnk_p\tensor_{\Qp} \BHT\isom \wnk_\hodge\tensor_\Q \BHT. \] It is an
isomorphism of graded vector spaces. Let us take its degree $0$ part. From
\cref{thm:wnk-betti-p} and \cref{cor:hodge-realization} we have an explicit description of the
vector spaces involved, and we obtain the following, which can be viewed as a {$p$-adic
  analogue of the Eichler-Shimura isomorphism}.

\begin{thm}[Faltings]\label{thm:faltings-padic-es}
  There is a canonical $\GQp$-equivariant and Hecke equivariant isomorphism
  \begin{multline*}
    \Hpet^1(Y_?(N)\fibertimes_\Z\Qpbar,\Sym^{k-2}\RD^1f_*\smuline\Z_p)\tensor_{\Z_p}\C_p \isom \\
    \S_k(\Gamma_?(N),\C_p)(1-k)
    \;\oplus\;
    \HL^1(X_?(N),\omega_{\overline E(N)/X(N)}^{2-k})\tensor_\O\C_p.
  \end{multline*}
  Here, \enquote{$?$} is either nothing or \enquote{$1$} and $\GQp$ acts diagonally on the left side and through $\Cp$ on the right side.
\end{thm}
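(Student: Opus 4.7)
The plan is to read the statement off directly from the $p$-adic Hodge-Tate comparison isomorphism $\cpiso_\HT$ for the motive $\wnk$ which was recalled immediately before the theorem. First I would apply $\cpiso_\HT$ to obtain the isomorphism of graded $\BHT$-modules
\[ \wnk_p \tensor_{\Qp} \BHT \isom \wnk_\hodge \tensor_\Q \BHT, \]
which is $\GQp$-equivariant by construction and Hecke equivariant because Hecke operators act as correspondences on the motive $\wnk$ and hence on both realizations and on the comparison map.

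Next I would extract the degree-$0$ piece. By \cref{thm:wnk-betti-p} the Hodge-Tate weights of $\wnk_p$ are $0$ and $k-1$, while \cref{cor:hodge-realization} identifies the graded pieces of $\wnk_\hodge$ as $\gr^0\wnk_\hodge = \HL^1(X(N)_{/\Q},(\omega_{\overline E(N)/X(N)}^{2-k})_{/\Q})$ and $\gr^{k-1}\wnk_\hodge = \S_k(\Gamma(N),\Q)$. Using that the graded piece $\gr^i\BHT$ is $\Cp(i)$, taking degree zero of the comparison yields
\[ \wnk_p \tensor_{\Qp} \Cp \isom \HL^1(X(N)_{/\Q},(\omega_{\overline E(N)/X(N)}^{2-k})_{/\Q}) \tensor_\Q \Cp \oplus \S_k(\Gamma(N),\Q) \tensor_\Q \Cp(1-k), \]
where the Tate twist on the second summand comes precisely from the fact that $\S_k(\Gamma(N),\Q)$ sits in degree $k-1$ of the Hodge grading. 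Combined with the concrete description of $\wnk_p$ from \cref{thm:wnk-betti-p}, this is exactly the statement in the case $?$ empty.

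For the case $?=1$ I would run the same argument with Scholl's construction performed over $X_1(N)$ instead of $X(N)$; the Hodge-Tate weight computation and the identification of the graded pieces of the Hodge realization go through verbatim. Alternatively one descends from the $\Gamma(N)$ case by taking invariants under $\binmatrix1{*}0{*}\subseteq\GL_2(\zmod N)$, which commutes with both the $\GQp$-action and the Hecke action. The only genuine input is the Hodge-Tate comparison $\cpiso_\HT$ itself, which is Faltings' theorem applied to the Kuga-Sato variety together with its extension to the logarithmic de Rham complex of \cref{sec:dr} (needed because $X(N)$ is open); once this is granted, the rest of the proof is a formal bookkeeping of gradings and Tate twists.
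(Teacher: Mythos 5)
Your argument matches the paper's exactly: the paper states the theorem after recalling $\cpiso_\HT$ in the immediately preceding paragraph and explicitly says to take its degree-$0$ part, using \cref{thm:wnk-betti-p} and \cref{cor:hodge-realization} for the explicit identifications, which is precisely what you do. Your additional remarks on the $?=1$ case and on the role of the logarithmic de Rham complex are correct elaborations that the paper leaves implicit.
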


This isomorphism can be interpolated in Hida families, as shows the next theorem proved by
Kings, Loeffler and Zerbes building on work of Ohta and Kato. In the statement we view
$\SS^\ord(Np^\infty,\Lambdawt)$ as a $\heckeT^\ord(Np^\infty,\O)$-module via the natural map
$\heckeT^\ord(Np^\infty,\O)\ra\hecket^\ord(Np^\infty,\O)$.

\begin{thm}\label{thm:lambda-adic-es}
  There is a canonical $\heckeT^\ord(Np^\infty,\O)\tensor_{\Lambdawt}\calI$-linear surjection (called the $\calI$-adic Eichler-Shimura map)
  \[ \MMSS^\ord(Np^\infty,\calI) \surj \SS^\ord(Np^\infty,\calI) \]
  such that the following hold.
  \begin{enumerate}
  \item\label{thm:lambda-adic-es:a} If we reduce it modulo the ideal $\omega_{k,r}$, the
    resulting $\heckeT^\ord_k(Np^r,\O)$-linear surjection\footnote{Here we use
      \cref{thm:control-thm-ms} and \cref{thm:control-thm}.}
    \[ \MS^\ord_k(Np^r,\O)\surj\S_k^\ord(X_1(Np^r),\O) \]
    fits into a commutative diagram
    \[\begin{tikzpicture}
      \matrix (m) [matrix of math nodes, row sep=2em, column sep=1.5em, text height=1.5ex, text depth=0.25ex]
      { \HL_{\et,\cp}^1(Y_1(Np^r)\fibertimes_\Z\Qpbar,\Sym^{k-2}\RD^1f_*\smuline\O)^\ord & \S_k^\ord(X_1(Np^r),\O) \\
        \HL_{\et,\parab}^1(Y_1(Np^r)\fibertimes_\Z\Qpbar,\Sym^{k-2}\RD^1f_*\smuline\C_p) & \S_k(\Gamma_1(Np^r),\C_p), \\};
      \path[->,font=\scriptsize]
      (m-1-1) edge (m-2-1)
      (m-1-2) edge (m-2-2);
      \path[->>,font=\scriptsize]
      (m-1-1) edge (m-1-2)
      (m-2-1) edge (m-2-2);
    \end{tikzpicture}\]
    where the bottom row is the $p$-adic Eichler-Shimura isomorphism from \cref{thm:faltings-padic-es} composed with the projection onto the first factor and the vertical maps are the natural ones.\footnote{Here we omitted the Tate twist from \cref{thm:faltings-padic-es} since we are not interested in the Galois action at this point.}
  \item\label{thm:lambda-adic-es:b} The kernel is the submodule $\MMSS^\ord(Np^\infty,\calI)^{\inertia_p}$ fixed under the inertia group.
  \end{enumerate}
\end{thm}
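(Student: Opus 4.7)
The plan is to build the $\calI$-adic map by assembling the finite-level $p$-adic Eichler-Shimura isomorphisms of Faltings (\cref{thm:faltings-padic-es}) compatibly across level and weight, then invoke the projective-limit presentations of both sides from \cref{cor:mmss-as-proj-limit} and \cref{cor:ss-as-proj-limit}, and finally identify the kernel via Hida's filtration on the big Galois representation.

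First I would fix $k$ and work at each finite level $r$. Projecting the Hodge-Tate decomposition of \cref{thm:faltings-padic-es} onto the first (cusp-form) summand yields, after composing with the natural maps from $\Hpet^1$ to $\Hcet^1$ (an isomorphism on ordinary parts, by Ohta) and restricting to ordinary components, a $\GQp$-equivariant and Hecke-equivariant surjection
\[
  \MS^\ord_k(Np^r,\O) \tensor_\O \Cp \;\surj\; \S^\ord_k(X_1(Np^r),\O) \tensor_\O \Cp.
\]
Following the constructions of Ohta and Kato, I would show that this surjection in fact descends to an $\O$-linear surjection without inverting $p$; this is the input that makes the interpolation possible and is the chief technical obstacle, since it requires careful integral control on the Hodge-Tate periods, i.e. on the map from étale cohomology to $\omega^{k}$ via Kodaira-Spencer.

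Second, having the integral finite-level surjections, I would verify their compatibility with the change-of-level morphisms $\Theta_{Np^{r+1},Np^r}$ appearing in \cref{cor:mmss-as-proj-limit} and \cref{cor:ss-as-proj-limit}. The compatibility with $\Theta_{M,N}$ (as opposed to $\Sigma_{M,N}$) is forced by the Atkin-Lehner normalization in those Corollaries; using the functoriality of Faltings' comparison isomorphism with respect to morphisms of universal elliptic curves, the required compatibility reduces to checking that the cusp-form projection commutes with $\Theta^*$ and the $U_p^{-1}$-normalization, which is a direct calculation. Passing to the inverse limit over $r$ and then tensoring up from $\Lambdawt$ to $\calI$ produces the desired surjection
\(\MMSS^\ord(Np^\infty,\calI) \surj \SS^\ord(Np^\infty,\calI)\),
and property \ref{thm:lambda-adic-es:a} is then built in by construction.

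For property \ref{thm:lambda-adic-es:b}, the approach is to use \cref{cor:mmss-ist-hidas-grosse-galdarst} to identify $\MMSS^\ord(Np^\infty,\calI)[F]$ (after restricting to a Hida family $F$ satisfying \cref{cond:mmss-free-rank-one}) with Hida's big Galois representation $\rho_F$, which carries the rank-one unramified $\GQp$-subrepresentation $\calT^0$ of \cref{thm:grosse-galdarst-hida-fam}(b). At each arithmetic specialization $P$ of type $(k,\ep,r)$, the reduction of $\calT^0$ is the unit-root (Hodge-Tate weight $0$) line in the Galois representation attached to $F_P^\new$; by Faltings' theorem this is precisely the kernel of the projection onto the cusp-form summand, i.e. the kernel of the finite-level Eichler-Shimura map. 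Thus $\calT^0$ and the kernel of the $\calI$-adic map agree modulo every $P \in \calX_\calI^\arith(\O)$, hence coincide by Zariski density; since both are $\calI$-direct summands they are equal as $\calI$-modules, and $\calT^0 = \MMSS^\ord(Np^\infty,\calI)^{\inertia_p}$ because $\calT/\calT^0$ contains no nonzero inertia invariants (again checked at a Zariski-dense set of arithmetic points, where the quotient realizes the ramified character of Hodge-Tate weight $k-1 \geq 1$). The final and most delicate point — the descent to integral coefficients at each finite level — is the part where I would expect to lean most heavily on the techniques of Kings-Loeffler-Zerbes and Ohta.
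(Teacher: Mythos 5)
Your broad outline for part (a) follows the same path as the paper: the paper also reduces to $\calI = \Lambdawt$, $\O = \Zp$, and then cites \cite[Thm.\ 9.5.2]{MR3637653} (after applying the Atkin--Lehner involution to swap ordinary and anti-ordinary and translating via \cref{cor:mmss-as-proj-limit} and \cref{cor:ss-as-proj-limit}). One caveat: you describe the integral statement as ``showing the $\Cp$-surjection descends to $\O$,'' but that is not how Ohta or Kings--Loeffler--Zerbes actually proceed --- they construct the integral map directly from the $p$-adic Hodge--Tate exact sequence on the modular tower, with the $\Cp$-isomorphism of Faltings appearing only as a compatibility check. The way you have framed it, one would first tensor up to $\Cp$, lose all integral control, and then face an essentially unmotivated descent problem; starting from the wrong end is more than a matter of exposition here, even if you ultimately cite the right sources.

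For part (b) you take a genuinely different route from the paper, which simply cites Ohta \cite[Thm., p.\ 50]{MR1332907}. Your idea --- identify $\MMSS^\ord(Np^\infty,\calI)[F]$ with Hida's $\rho_F$ via \cref{cor:mmss-ist-hidas-grosse-galdarst}, then match $\calT^0$, the kernel, and the inertia invariants by reduction at a Zariski-dense set of arithmetic points --- is reasonable on each eigenspace, but it does not prove the stated assertion. \Cref{thm:lambda-adic-es}(b) is a statement about the full module $\MMSS^\ord(Np^\infty,\calI)$, not about individual $[F]$-eigenspaces, and \cref{cor:mmss-ist-hidas-grosse-galdarst} only applies under the extra hypothesis \cref{cond:mmss-free-rank-one}, which is not assumed in the theorem. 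In general $\MMSS^\ord(Np^\infty,\calI)$ is not a direct sum of its $[F]$-eigenspaces (congruences between Hida families), and even where it is, one has to justify passing from an eigenspace-by-eigenspace identity to an identity of $\calI$-submodules of the whole thing. As stated, your argument establishes a weaker (conditional, eigenspace-level) version of (b), and one would still need Ohta's result --- or a direct analogue of it at the level of $\MMSS^\ord$ --- to recover the full claim.
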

\begin{proof}
  Since $\calI$ is flat over $\Lambdawt$ and the formation of both $\MMSS^\ord(Np^\infty,-)$ and $\SS^\ord(Np^\infty,-)$ is compatible with base change from $\Lambdawt$ to $\calI$, we can assume that $\calI=\Lambdawt$. Similarly we can assume that $\O=\Zp$.

  The claim in \ref{thm:lambda-adic-es:a} is  equivalent to the existence of a commutative diagram
  \begin{equation}
    \label{eqn:klz-compatibility-my-version}
    \tag{$*$}
    \begin{tikzpicture}
      \matrix (m) [matrix of math nodes, row sep=2em, column sep=1.5em, text height=1.5ex, text depth=0.25ex]
      { \MMSS^\ord(Np^\infty,\Lambdawt) & \SS^\ord(Np^\infty,\Lambdawt) \\
        \HL_{\et,\parab}^1(Y_1(Np^r)\fibertimes_\Z\Qpbar,\Sym^{k-2}\RD^1f_*\smuline\C_p) & \S_k(\Gamma_1(Np^r),\C_p), \\};
      \path[->,font=\scriptsize]
      (m-1-1) edge (m-2-1)
      (m-1-2) edge (m-2-2);
      \path[->>,font=\scriptsize]
      (m-1-1) edge (m-1-2)
      (m-2-1) edge (m-2-2);
    \end{tikzpicture}
  \end{equation}
  where the top map is the $\calI$-adic Eichler-Shimura map, the bottom one is the $p$-adic
  comparison isomorphism and the vertical ones are the reduction maps. The existence of this
  diagram is essentially \cite[Thm.\ 9.5.2]{MR3637653}.
  The setting there is slightly different, but by ignoring Galois actions (thus omitting the
  functor denoted $\mathbf D$ there, see \cite[Prop.\ 1.7.6]{FukayaKatoConjecturesSharifi}),
  applying the Atkin-Lehner involution (thus interchanging ordinary and anti-ordinary parts)
  and restricting to the cuspidal subspaces the statement of \cite[Thm.\ 9.5.2]{MR3637653}
  translates into the existence of a commutative diagram
  \begin{equation*}
    \begin{tikzpicture}
      \matrix (m) [matrix of math nodes, row sep=2em, column sep=1.5em, text height=1.5ex, text depth=0.25ex]
      { \lim_r\HL^1_{\et,\parab}(Y_1(Np^r)\fibertimes_\Z\Qpbar,\Zp)^\ord & \lim_r\S^\ord_2(X_1(Np^r)^\arithm,\Zp) \\
        \HL_{\et,\parab}^1(Y_1(Np^r)\fibertimes_\Z\Qpbar,\TSym^{k-2}\RD^1f_*\smuline\Z_p)^\ord & \S_k(\Gamma_1(Np^r)^\arithm,\Zp). \\};
      \path[->,font=\scriptsize]
      (m-1-1) edge (m-2-1)
      (m-1-2) edge (m-2-2);
      \path[->>,font=\scriptsize]
      (m-1-1) edge (m-1-2)
      (m-2-1) edge (m-2-2);
    \end{tikzpicture}
  \end{equation*}
  Here the limits are taken along the morphisms $\Theta_{Np^{r+1},Np^r}$ and $\TSym^{k-2}$
  denotes the invariants of the action of the $(k-2)$-th symmetric group on the $(k-2)$-th
  tensor power (as opposed to coinvariants, which are $\Sym^{k-2}$)

  Now in the bottom line, we can replace $\Zp$ by $\Cp$; this allows us also to replace
  $\TSym$ by $\Sym$ because these become canonically isomorphic as soon as $(k-2)!$ is
  invertible. Of course the diagram as above still exists if we replace $\HL^1_{\et,\parab}$
  by $\HL^1_{\et,\cp}$ in the left column. Then the upper left object is isomorphic to
  $\MMSS^\ord(Np^\infty,\Lambdawt)$ by \cref{cor:mmss-as-proj-limit} and
  the upper right object is isomorphic to $\SS^\ord(Np^\infty,\Lambdawt)$ by
  \cref{cor:ss-as-proj-limit}. We arrive at the diagram
  \eqref{eqn:klz-compatibility-my-version}, which completes the proof of statement
  \ref{thm:lambda-adic-es:a}.

  For statement \ref{thm:lambda-adic-es:b}
  see \cite[Thm., p.\ 50]{MR1332907}.
\end{proof}

\section{Periods, error terms and $p$-adic $L$-functions}

\subsection{Periods of motives and the interpolation formula of Fukaya and Kato}

Recall that we fixed a number field $K$ and a place $\frakp\mid p$ of it with completion
$L$.

Let $M$ be a motive over $\Q$ with coefficients in $K$. We assume
that $M$ is critical and fix $K$-bases $\gamma$ of $M_\betti^+$ and $\delta$ of
$\tangentspace{M}$. Criticality means that the period map \[ M_\betti^+\tensor_K\C\inj
  M_\betti\tensor_K\C\ra[$\cpiso_\infty$]M_\dR\tensor_K\C\surj\tangentspace M\tensor_K\C \]
is an isomorphism (where $\cpiso_\infty$ denotes the complex comparison isomorphism), and we then have Deligne's complex period attached $M$
\[ {\Omega_\infty^{\gamma,\delta}(M)} \da
  \det_{\gamma,\delta}(\cpiso_\infty^+)\in\C^\times \]
which is the determinant with respect to the chosen bases.

Fukaya and Kato \cite{MR2276851} consider a $p$-adic period which we introduce in the
following. Additionally to criticality we need another condition on the motive. We say that
the motive $M$ satisfies the Dabrowski-Panchishkin condition at $\frakp$ if there is a
subspace ${M_\frakp^\DP}\subseteq M_\frakp$ stable under the action of $\GQp$ such that the
inclusion $M_\frakp^\DP\subseteq M_\frakp$ induces an isomorphism
\[ \DdR(M_\frakp^\DP)\isom\quotient{\DdR(M_\frakp)}{\fil^0\DdR(M_\frakp)} \] of $L$-vector
spaces. See \cite[§4.2.3, (C2)]{MR2276851}.
We assume that this condition is satisfied.

The $p$-adic period depends on the choice of an isomorphism
\[ \beta\colon\nrhat L\tensor_LM_\frakp^+\isom\nrhat L\tensor_LM_\frakp^\DP \] which we now
fix. For any de Rham $p$-adic representation $V$ with coefficients in $L$ the $p$-adic
comparison isomorphism induces an isomorphism
\[ \alpha_V\colon\BdR\tensor_L\DdR(V)\isom\BdR\tensor_L V. \] Using this notation, denote by
$\varphi$ the composition
\begin{multline*}
  \BdR\tensor_KM_\betti^+\ra[$\cpiso_\et$]\BdR\tensor_LM_\frakp^+\ra[$\beta$]\BdR\tensor_LM_\frakp^\DP\ra[$\alpha_{M_\frakp^\DP}^{-1}$]\\\ra[$\alpha_{M_\frakp^\DP}^{-1}$]\BdR\tensor_L\DdR(M_\frakp^\DP)\ra[$\operatorname{dp}$]\BdR\tensor_L\left(\quotient{\DdR(M_\frakp)}{\fil^0\DdR(M_\frakp)}\right)\ra[$\cpiso_\dR$]\BdR\tensor_K\tangentspace{M}.
\end{multline*}
Then we define the $\frakp$-adic period of $M$ as
\[
  \Omega^{\gamma,\delta,\beta}_\frakp(M)=\frac{\tdR^{\tH(M_\frakp^\DP)}}{\ep(M_\frakp^\DP)}\det_{\gamma,\delta}(\varphi)\in\BdR^\times. \]
It actually lies in $(\nrhat L)^\times$.  Here $\ep(-)$ denotes the $\ep$-factor attached to
a $p$-adic representation (for a precise definition see \cite[§3.2, §3.3.4]{MR2276851},
where it is denoted $\epsilon_L(\Dpst(-),-)$) and $\tH$ denotes the Hodge invariant, which
is denoted $m$ at \cite[§3.3.4]{MR2276851}. The actual definition of the period in
\cite[§4.1.11]{MR2276851} is less explicit than the one given here, but unravelling it one
easily translates it into this one.

When calculating the $p$-adic period the following observation will be useful.

\begin{rem}\label{rem:kann-p-adische-periode-ueber-cp-berechnen}
  Because the $p$-adic period lies in $(\nrhat L)^\times$ we have that
  \[ \det_{\gamma,\delta}(\varphi)\in \tdR^\Z\Cp\subseteq\BdR. \] Therefore, to compute the
  $p$-adic period we do not have to tensor up to $\BdR$, it can already be computed over
  $\BHT$. More precisely: let $V$ and $W$ be $K$-vector spaces with bases $\gamma$ and
  $\delta$, respectively, and let $W$ be filtered. Let
  $\varphi\colon\BdR\tensor_KV\isom\BdR\tensor_K W$ be an isomorphism of filtered vector
  spaces such that $\det_{\gamma,\delta}(\varphi)=\alpha\tdR^h$ with $\alpha\in\Cp$ and
  $h\in\Z$. We apply the functor $\gr$ to $\varphi$ to obtain an isomorphism of the
  associated graded vector spaces $\varphi'\colon\BHT\tensor_KV\isom\BHT\tensor_K\gr(W)$. If
  we use $\tdR$ to identify $\BHT$ with $\Cp[\tdR,\tdR^{-1}]$, then by construction we know
  that still $\det_{\gamma,\delta}(\varphi')=\alpha\tdR^h$.

  Let us now assume that $W$ is pure of weight $h\in\Z$ (i.\,e.\ $\fil^{h}W=W$,
  $\fil^{h+1}W=0$), which is for example the case if $W$ is one-dimensional. It then even
  suffices to tensor with $\Cp$. More precisely, we have then
  \[ \BHT\tensor_K\gr(W) = \bigoplus_{q\in\Z}\Cp(q-h)\tensor_KW \] and since
  $1\tensor\gamma\in\BHT\tensor_KV$ lies in the weight $0$ part, its image under $\varphi'$
  also lies in the weight $0$ part, i.\,e.\ $\alpha t^h\tensor\delta\in\Cp(-h)\tensor_K
  W$. Therefore if we define $\varphi''\da\gr^0(\varphi)$, which is an isomorphism
  \[ \varphi''\colon\Cp\tensor_KV\isom\Cp(-h)\tensor_KW, \] then we know that
  $\det_{\gamma,\delta}(\varphi)=\det_{\gamma,\delta}(\varphi'')$.
\end{rem}

The interpolation formula we are about to state involves another expression which we now introduce.

\begin{dfn}\label{dfn:local-correction-factor}
  Define the {local correction factor at $p$} by
  \[ {\operatorname{LF}_p(M)} \da \frac{\EulerFactorP_p(M_\frakp,T)}{\EulerFactorP_p(M_\frakp^\DP,T)}\Big|_{T=1}\cdot \EulerFactorP_p((M_\frakp^\DP)^*(1),1)\in L, \]
  where $\EulerFactorP_p$ is the polynomial defined by 
  \[ {\EulerFactorP_p(V,T)}\da\det_L(1-\frobcris T,\Dcris(V))  \]
  for a representation $V$ of $\GQp$ on a finite-dimensional $L$-vector space (where we
  denote by $\frobcris$ the Frobenius on $\Dcris$).
\end{dfn}

There is a technical restriction on the evaluation points at which we can hope to describe the value of the $p$-adic $L$-function. We thus introduce the following notion. 

\begin{dfn}\label{dfn:appropriate-pair}
  Let $\rho\colon\GQ\ra\GL_r(L')$ be an Artin representation with coefficients in a finite
  extension $L'$ of $L$ and $n\in\N$. We say that $(\rho,n)$ is an {appropriate pair}
  for $M$ if
  \begin{arabiclist}
  \item $M(\rho^*)(n)$ is still critical,
  \item $\operatorname{LF}_p(M(\rho^*)(n))\neq0$,
  \item $\Hf^i(\Q,V)=\Hf^i(\Q,V^*(1))=0$ for $V=M(\rho^*)(n)_\frakp$ and $i=0,1$.
  \end{arabiclist}
\end{dfn}

These conditions are formulated in \cite[Prop.\
4.2.21]{MR2276851}. There is a further condition related to a
set called $\Upsilon$ there, but this set will be empty in our setting, so this further
condition is vacuous and we omit it.

Now we can state the conjectural interpolation formula, which appears in \cite[Thm.\
4.2.22]{MR2276851}. It is actually formulated in the context of non-commutative Iwasawa
theory, but as we want to study this in the classical commutative setting, we only
state this special case. The Galois group we consider will be
$G\da\Gal{\Q(\mu_{Dp^\infty})}{\Q}$, where $D\in\Z$ is any integer prime to $p$.
We let $\Lambda\da\psring\O G$.

The $p$-adic $L$-function depends on a choice of a certain isomorphism ${\beta}$. In the
following, $\widetilde{(-)}$ means the notation introduced in \cite[§3.1.1]{MR2276851}.  Let
$t$ be an $\O$-stable lattice in $M_\frakp$ and put $t^\DP\da t\cap M_\frakp^\DP$.  Define
${T}\da \Lambda\tensor_\O t$ and ${T^\DP}\da \Lambda\tensor_\O t^\DP$, see
\cite[§4.2.7]{MR2276851}. Let $g\in\GQ$ act on $T$ by $x\tensor y\mapsto xg^{-1}\tensor gy$
and analogously on $T^\DP$. Then fix an isomorphism of $\tilde\Lambda$-modules
$\beta\colon\tilde\Lambda\tensor_\Lambda T^+\isom\tilde\Lambda\tensor_\Lambda T^\DP$. Such
an isomorphism exists by \cite[Lem.\ 4.2.8]{MR2276851}.
It is then easy to see that if $\rho\colon G\ra\GL_r(K')$ is a
representation with coefficients in a finite extension $K'$ of $K$ and $n\in\N$ such that
$M(\rho^*)(n)$ is still critical, $\beta$ induces canonically an isomorphism
\[ \beta(\rho^*,n)\colon\nrhat L\tensor_LM(\rho^*)(n)_\frakp^+\ra \nrhat L\tensor_LM(\rho^*)(n)_\frakp^\DP. \]
After having fixed these data, the formula reads as follows.
Here and in the following ${\kappa_\cyc}$ denotes the cyclotomic character.

\begin{conj}[Fukaya/Kato]\label{conj:palf-motive}
  There is a {$p$-adic $L$-function}, which is an element of the quotient ring of
  $\tilde\Lambda$, such that for each appropriate pair $(\rho,n)$ for $M$ (where $\rho$ is
  an Artin representation of $G$ and $n\in\Z$), the value of the $p$-adic $L$-function at
  $\rho\kappa_\cyc^{-n}$ is
  \[ \prod_{j\ge1}(j-1)!^{\dim_K\gr^{n-j}M_\dR}\operatorname{LF}_p(M(\rho^*)(n))\frac{\Omega_\frakp^{\beta(\rho^*,n)}(M(\rho^*)(n))}{\Omega_\infty(M(\rho^*)(n))}L(M(\rho^*),n). \]
\end{conj}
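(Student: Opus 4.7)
\emph{Plan.} The statement is a conjecture due to Fukaya and Kato, so no unconditional proof is available in general; however, the entire point of this paper is to verify it for motives arising from an ordinary Hida family $F\colon\tord(Np^\infty,\O)\to\calI$, and I would structure the argument in four steps. First, I would construct a candidate $p$-adic $L$-function $\calL(F)$ by Kitagawa's pairing: under \cref{cond:mmss-free-rank-one}, the modules $\MMSS^\ord(Np^\infty,\calI)^\pm[F]$ are free of rank one, so I pick generators $\Xi^\pm$ and define $\calL(F)\in\calI\,\widehat\otimes_\O\Lambda$ by integrating finite order cyclotomic characters twisted by powers of $\kappa_\cyc$ against the distribution attached to $\Xi^{(-1)^n\chi(-1)}$. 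The control theorem (\cref{thm:control-thm-ms}) and \cref{thm:control-theory-gq-equiv} ensure that this makes sense $\calI$-adically and specializes correctly at each arithmetic point $P\in\calX^\arithm_\calI(\O)$.

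Second, I would compute the specialization at an appropriate pair $(\chi,n)$ for $\Motive(F_P)$ in classical terms. Using \cref{lem:xi-maps-to-f} the specialized modular symbol $\Xi^\pm_P$ equals $\xi_{F_P}^\pm$ up to a nonzero constant $C_P^\pm\in L^\times$, which is the \emph{Kitagawa error term}. The integral then reproduces the classical $L$-value $L(F_P^\new,\chi^{-1},n)$ multiplied by standard gamma factors, $C_P^\pm$, and the right Euler-type factors at $p$ coming from the presence of the Hecke eigenvalue of $T_p$ in the refinement.

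Third, I would match this against the Fukaya-Kato formula. The complex period $\Omega_\infty(\Motive(F_P^\new)(\chi^*)(n))$ is identified with a Petersson-type expression using the compatibility of Eichler-Shimura with the comparison isomorphism (\cref{thm:compatibility-es-comparison}), the Serre-duality description of $\gr^0$ of the de Rham realization (\cref{cor:gr-zero-mf}), and \cref{prop:petersson-skp}; the outcome is that $\Omega_\infty$ equals $C_P^\pm$ times a classical Petersson-product expression, so $C_P^\pm$ is exactly the factor that \emph{ought} to appear in the numerator. The $p$-adic period $\Omega_\frakp$ is then analyzed via \cref{thm:lambda-adic-es}: the $\calI$-adic Eichler-Shimura map gives the degree zero part of the $p$-adic Hodge-Tate comparison isomorphism of $\wnk$ at every specialization simultaneously, so its image on $\Xi^\pm$ provides an $\calI$-adic avatar of the $p$-adic period whose specialization at $P$ is proportional to $\Omega_\frakp(\Motive(F_P^\new)(\chi^*)(n))$ up to an analogous $p$-adic error term $C_P^{\prime\pm}$; the local correction factor $\operatorname{LF}_p$ is then a direct computation from the Euler factor of a refined modular form.

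The main obstacle, and what will force the modification of Kitagawa's construction in \cref{thm:main-thm}, is that the ratio $C_P^\pm/C_P^{\prime\pm}$ between the two error terms is only defined pointwise: there is no reason a priori for these normalizations to fit into a single element of $\calI$. The crux of the argument is to show that, under the technical hypothesis alluded to in the introduction, this ratio does lift to a unit of $\calI\,\widehat\otimes_\O\Lambdawt$, so that Kitagawa's $\calL(F)$ multiplied by that unit satisfies the precise Fukaya-Kato interpolation formula. Supplying this lift, and tracking signs carefully enough to control the $\pm$-ambiguity (the one point the introduction warns cannot be eliminated), is where the $\calI$-adic Eichler-Shimura isomorphism of Kings-Loeffler-Zerbes is genuinely indispensable.
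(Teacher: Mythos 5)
This statement is not proven in the paper: it is a conjecture, reproduced (in a simplified commutative setting) from Fukaya--Kato \cite[Thm.\ 4.2.22]{MR2276851}. The paper's only remark about it is the sentence immediately following, that its truth is a consequence of the Equivariant Tamagawa Number Conjecture together with the $\ep$-isomorphism conjecture (Kato), a result established in \emph{loc.\ cit.} and not re-proved here. You correctly flag this at the outset, but then what you actually sketch is a proof of a different statement, namely the paper's \cref{thm:main-thm} --- the \emph{verification} of the interpolation formula for the specializations of an ordinary Hida family, obtained by renormalizing Kitagawa's $p$-adic $L$-function. There is therefore no ``paper's own proof'' of the conjecture to compare your proposal against; the two use entirely different inputs (ETNC vs.\ an explicit Hida-theoretic construction and computation), and your proposal would be better attached to \cref{thm:main-thm}.

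Taking your sketch on its own terms as an outline of \cref{thm:main-thm}, the overall shape is right, but the mechanism that makes the modification a unit is misidentified. The paper does not compare a complex error term $C_P^\pm$ against a $p$-adic error term $C_P^{\prime\pm}$ and lift their ratio. Rather, it computes both periods with respect to explicitly chosen bases: the complex period (\cref{thm:complex-period}) produces the single error term $\errorterm_\infty(f,\eta^s)$, and the $p$-adic period (\cref{thm:p-adic-period-hida}) produces $\errorterm_\frakp(\Xi^s,\eta^s_\varphi)/U_\varphi^-$, where $U^-\in\calI$ is defined by applying the $\calI$-adic Eichler--Shimura surjection to the basis $\Xi^-$ (\cref{dfn:constant-u}). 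The decisive structural input is \cref{thm:u-is-unit}: $U^+=0$ and $U^-\in\calI^\times$, proved via \cref{cond:big-image-hida-family}, the $\GQ$-equivariance of the control maps (\cref{thm:control-theory-gq-equiv}), and the identification of $\ker$ of the $\calI$-adic Eichler--Shimura map with the inertia invariants. It is the unit $U^-$, together with Gau{\ss}-sum and Euler-factor interpolations, that normalizes Kitagawa's measure; no pointwise ratio of error terms needs lifting. Also, the residual sign obstruction mentioned in the introduction is the factor $\chi_p(-1)$ appearing in \cref{rem:problem}, a discrepancy between interpolation conventions, not a $\pm$ ambiguity in the choice of bases.
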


The existence of this $p$-adic $L$-function is a consequence of the Equivariant Tamagawa
Number Conjecture and the $\ep$-isomorphism conjecture, which is one of the main results of
Fukaya's and Kato's article.

\subsection{Calculation of the periods}

\subsubsection{Error terms}
\label{sec:error-terms}

We recall the definition of error terms defined via modular symbols (often called
periods, but we refrain from using this terminology and use the word period exclusively for
an expression defined via comparison isomorphisms).

We begin with the complex one. Fix $N\ge4$ and a normalized Hecke eigenform $f\in\S_k(X_1(N),K)$.

\begin{dfn}\label{dfn:complex-error-term}
  Choose $\O_K$-bases ${\eta_f^\pm}$ of $\MS_k(N,\O_K)^\pm[f]$, which is free of rank
  $1$ by \cref{prop:ms-eigenspaces-free-of-rank-one}. Because
  \[ \MS_k(N,\O_K)\tensor_{\O_K}\C=\MS_k(N,\C), \]
  there exist unique $\errorterm_\infty(f,\eta_f^\pm)\in\C^\times$ such that
  \[ \xi_f^\pm=\errorterm_\infty(f,\eta_f^\pm)\eta_f^\pm \]
  in the right hand side. They are called the {complex error terms attached to $f$}.
\end{dfn}

At this points let us briefly study refinements of modular symbols. Assume that $p\nmid N$
and let $\alpha$ and $\beta$ be the roots of its $p$-th Hecke polynomial (without loss of
generality assume that they lie in $\O_K$).  Using the same techniques as in
\cref{sec:refinements}, we can define canonical morphism
$\Ref_\alpha\colon\MS_k(N,K)\ra\MS_k(Np,K)$ such that the following hold:
\begin{enumerate}
\item It induces isomorphisms \[ \Ref_\alpha\colon\MS_k(N,K)^\pm[f]\isom\MS_k(Np,K)^\pm[f_\alpha]. \]
  If $f$ is ordinary at a prime $\frakp\mid p$ of $K$ and $\alpha$ is the unit root of the $p$-th Hecke polynomial, then it induces
  \[ \Ref_\alpha\colon\MS_k(N,\O_K)^\pm[f]\isom\MS_k(Np,\O_K)^\pm[f_\alpha]. \]
\item The diagram
  \[ \begin{tikzpicture}
      \matrix (m) [matrix of math nodes, row sep=2em, column sep=2.5em, text height=1.5ex, text depth=0.25ex]
      {  \MS_k(N,K) &  \MS_k(Np,K) \\
        \wnk_\betti\tensor_\Q K & \wnklevel{Np}_\betti\tensor_\Q K \\};
      \path[->,font=\scriptsize]
      (m-1-1) edge node [above] {$\Ref_\alpha$} (m-1-2)
      (m-1-1) edge (m-2-1)
      (m-1-2) edge (m-2-2)
      (m-2-1) edge node [above] {$\Ref_\alpha$} (m-2-2);
    \end{tikzpicture} \]
  commutes. Here the vertical arrows are the maps \eqref{eqn:map-ms-betti} and the bottom map is induced by the motivic refinement morphism
  from \cref{cor:motivic-refinements}.
\item Let $f_\alpha$ be the refinement of $f$ at $\alpha$. Then $\Ref_\alpha(\xi_f)=\xi_{f_\alpha}$.
\end{enumerate}

Let now $f$ be ordinary at $\frakp$. If we now choose $\eta_f^\pm\in\MS_k(N,\O)^\pm[f]$ as above, we may take $\eta_{f_\alpha}^\pm\da\Ref_\alpha(\eta_f^\pm)$ as a basis of $\MS_k(Np,\O)^\pm[f_\alpha]$. The following is then clear.
\begin{cor}\label{lem:complex-error-terms-refinements}
  If $f$ is ordinary at $\frakp$ and $\alpha$ is the unit root, then $\errorterm_\infty(f,\eta_f^\pm) = \errorterm_\infty(f_\alpha,\eta_{f_\alpha}^\pm)$.
\end{cor}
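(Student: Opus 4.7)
The plan is to prove the equality by applying the refinement operator $\Ref_\alpha$ to the defining equation of $\errorterm_\infty(f,\eta_f^\pm)$ and exploiting the three bullet points listed just before the corollary. Since $f$ is ordinary at $\frakp$ and $\alpha$ is the unit root, the first bullet gives that
\[ \Ref_\alpha\colon\MS_k(N,\O)^\pm[f]\isom\MS_k(Np,\O)^\pm[f_\alpha] \]
is an $\O$-linear isomorphism, so $\eta_{f_\alpha}^\pm=\Ref_\alpha(\eta_f^\pm)$ is indeed an $\O$-basis of the target, and the complex error term $\errorterm_\infty(f_\alpha,\eta_{f_\alpha}^\pm)$ is well-defined with respect to this basis.

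Next I would observe that $\Ref_\alpha$ extends $\C$-linearly to $\MS_k(N,\C)\to\MS_k(Np,\C)$ and commutes with the action of $\GR$ (equivalently of $\EP$), since it is induced by a morphism of motives, hence of Betti realizations, and the complex conjugation action is built into these realizations. Consequently it preserves the $\pm$-eigenspace decomposition for complex conjugation. Combined with the third bullet $\Ref_\alpha(\xi_f)=\xi_{f_\alpha}$, this yields $\Ref_\alpha(\xi_f^\pm)=\xi_{f_\alpha}^\pm$.

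Applying $\Ref_\alpha$ to the defining equation $\xi_f^\pm=\errorterm_\infty(f,\eta_f^\pm)\cdot\eta_f^\pm$ and using $\C$-linearity therefore gives
\[ \xi_{f_\alpha}^\pm=\Ref_\alpha(\xi_f^\pm)=\errorterm_\infty(f,\eta_f^\pm)\cdot\Ref_\alpha(\eta_f^\pm)=\errorterm_\infty(f,\eta_f^\pm)\cdot\eta_{f_\alpha}^\pm. \]
Comparing with $\xi_{f_\alpha}^\pm=\errorterm_\infty(f_\alpha,\eta_{f_\alpha}^\pm)\cdot\eta_{f_\alpha}^\pm$ and using that $\eta_{f_\alpha}^\pm$ is a $\C$-basis of the line $\MS_k(Np,\C)^\pm[f_\alpha]$, the scalar factors must agree, giving the claim.

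There is essentially no obstacle here; the whole content is packaged in the three bullet points preceding the corollary. The only point that warrants a one-line verification is the $\GR$-equivariance of $\Ref_\alpha$, which however is automatic because $\Ref_\alpha$ came by definition from a morphism of motives (\cref{cor:motivic-refinements}) and hence induces a $\GR$-equivariant map on Betti realizations; via the identification \eqref{eqn:map-ms-betti} of modular symbols with compactly supported Betti cohomology, this translates into equivariance for the $\EP$-action on modular symbols.
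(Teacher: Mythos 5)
Your argument is correct and is precisely the intended one: the corollary is stated in the paper with only the remark that it is "then clear" from the three properties of $\Ref_\alpha$ and the choice $\eta_{f_\alpha}^\pm=\Ref_\alpha(\eta_f^\pm)$, and your proof simply spells out that deduction. One small remark: you do not actually need to invoke $\GR$-equivariance on all of $\MS_k(N,\C)$ via the motivic description, since item (a) in the preceding list already gives that $\Ref_\alpha$ carries $\MS_k(N,K)^\pm[f]$ into $\MS_k(Np,K)^\pm[f_\alpha]$, which (together with $\Ref_\alpha(\xi_f)=\xi_{f_\alpha}$ and $\xi_f,\xi_{f_\alpha}$ lying in the respective $f$- and $f_\alpha$-eigenspaces) suffices to conclude $\Ref_\alpha(\xi_f^\pm)=\xi_{f_\alpha}^\pm$.
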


We now turn to Hida families and introduce $p$-adic error terms.
We use again the notation from \cref{setting:hida-families}. Fix an eigenform
$F\in\SS^\ord(Np^\infty,\calI)$ and assume that \cref{cond:mmss-free-rank-one} is satisfied.

We choose an $\calI$-basis ${\Xi^\pm}$ of $\MMSS^\ord(Np^\infty,\calI)^\pm[F]$ and $\O$-bases $\eta_\varphi^\pm$ of $\MS_k(Np^r,\O)^\pm[F_\varphi]$ for each $\varphi\in\calX^\arith_\calI(\O)$ of type $(k,\ep,r)$.
Let \[ \MMSS^\ord(Np^\infty,\calI)^\pm[F]\tensor_{\Lambdawt}\left(\quotient\calI \varphi\right) \isom \MS_k^\ord(Np^r,\O)^\pm[F_\varphi] \] be the canonical isomorphism from \cref{thm:control-thm-ms}. Both sides are free $\O$-modules of rank $1$ by \cref{prop:ms-eigenspaces-free-of-rank-one}. For $\varphi\in\calX^\arith_\calI(\O)$ write $\Xi^\pm_\varphi$ for the image of $\Xi^\pm\in\MMSS^\ord(Np^\infty,\calI)^\pm[F]$ in $\MS_k^\ord(Np^r,\O)^\pm[F_\varphi]$ under the above isomorphism.

\begin{dfn}\label{dfn:p-adic-error-term}
  For each $\varphi\in\calX^\arith_\calI(\O)$, let $\errorterm_\frakp(\Xi^\pm,\eta_\varphi^\pm)\in\O$ be the unique element such that
  \[ \Xi^\pm_\varphi={\errorterm_\frakp(\Xi^\pm,\eta_\varphi^\pm)}\eta_\varphi^\pm. \]
  This element is called the {$p$-adic error term at $\varphi$}.
\end{dfn}

\begin{prop}[Kitagawa]\label{prop:p-adic-errorterm-not-zero}
  One has always $\errorterm_\frakp(\Xi^\pm,\eta_\varphi^\pm)\neq0$. 
\end{prop}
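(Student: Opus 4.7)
The plan is to trace the $\calI$-basis $\Xi^\pm$ through the control theorem and observe that its image remains a basis. Since \cref{cond:mmss-free-rank-one} is in force, $\MMSS^\ord(Np^\infty,\calI)^\pm[F]$ is free of rank $1$ over $\calI$, and $\Xi^\pm$ was chosen as such an $\calI$-basis. The key input is \cref{thm:control-thm-ms}\ref{thm:control-thm-ms:f}, which provides a canonical $\O$-module isomorphism
\[
  \MMSS^\ord(Np^\infty,\calI)^\pm[F]\tensor_\calI\left(\quotient\calI{P_\varphi}\right) \isom \MS_k^\ord(Np^r,\O)^\pm[F_\varphi]
\]
under which $\Xi^\pm\tensor 1 \mapsto \Xi_\varphi^\pm$ by the very definition of $\Xi_\varphi^\pm$.

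First I would note that the left-hand side is free of rank $1$ over $\O=\calI/P_\varphi$, with basis $\Xi^\pm\tensor 1$, simply because base change preserves rank-$1$ freeness. Hence $\Xi_\varphi^\pm$ generates the right-hand side over $\O$. Next I would check that $\eta_\varphi^\pm$ is likewise an $\O$-basis of this target module. Since $F$ is ordinary, so is $F_\varphi$, so the $F_\varphi$-eigenspace lies automatically in the ordinary part and we have $\MS_k^\ord(Np^r,\O)^\pm[F_\varphi]=\MS_k(Np^r,\O)^\pm[F_\varphi]$. If $F_\varphi$ is itself a newform, rank-$1$ freeness is exactly \cref{prop:ms-eigenspaces-free-of-rank-one}. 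Otherwise, \cref{thm:hida-fam-new} tells us that $F_\varphi$ is the unique ordinary refinement of a newform $F_\varphi^\new$ of level $N$, and one transports the rank-one freeness across the refinement isomorphism for modular symbols discussed right before \cref{lem:complex-error-terms-refinements}.

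Putting these together, $\Xi_\varphi^\pm$ and $\eta_\varphi^\pm$ are both $\O$-bases of the same free $\O$-module of rank $1$, so their ratio $\errorterm_\frakp(\Xi^\pm,\eta_\varphi^\pm)$ is in fact a unit in $\O^\times$, and in particular nonzero. There is no real obstacle beyond bookkeeping; the only mild subtlety is the refinement case, which is the only reason the statement does not follow literally from \cref{prop:ms-eigenspaces-free-of-rank-one} and the control theorem.
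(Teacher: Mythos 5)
Your argument is correct, and as you note it proves the sharper fact that $\errorterm_\frakp(\Xi^\pm,\eta_\varphi^\pm)$ is a \emph{unit} in $\O^\times$: under \cref{cond:mmss-free-rank-one} the element $\Xi^\pm\tensor 1$ is an $\O$-basis of $\MMSS^\ord(Np^\infty,\calI)^\pm[F]\tensor_\calI(\quotient\calI{P_\varphi})$, and the isomorphism of \cref{thm:control-thm-ms}~\ref{thm:control-thm-ms:f} carries it to $\Xi^\pm_\varphi$, so that $\Xi^\pm_\varphi$ and $\eta^\pm_\varphi$ are two $\O$-bases of one and the same free rank-one $\O$-module. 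The paper itself supplies no proof of this proposition, only a pointer to Kitagawa's Prop.~5.12, so there is nothing text-internal to compare your route against; it is quite plausible that Kitagawa's original statement is proved without assuming the freeness hypothesis, in which case his non-vanishing claim would be genuinely nontrivial, whereas under \cref{cond:mmss-free-rank-one} it becomes, as you show, essentially a formal consequence of the control theorem.

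One redundancy worth flagging: the rank-one freeness of the target $\MS_k^\ord(Np^r,\O)^\pm[F_\varphi]$ does not need to be established independently via \cref{prop:ms-eigenspaces-free-of-rank-one} together with the refinement isomorphisms — it is already forced by transport through the control isomorphism once the source is known to be free of rank one. This is not merely cosmetic: your separate argument for the oldform case invokes \cref{thm:hida-fam-new}, which is stated for a \emph{new} Hida family, whereas the paragraph introducing the $p$-adic error terms only fixes an eigenform $F\in\SS^\ord(Np^\infty,\calI)$. Reading the freeness of the target off from the control isomorphism avoids importing that extra hypothesis.
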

\begin{proof}
  \cite[Prop.\ 5.12]{MR1279604}
\end{proof}

\subsubsection{Choosing good bases}
\label{sec:bases}

In this section we fix integers $N\ge4$ and $k\ge2$ and a newform $f\in\S_k(\Gamma_1(N),K)$,
and we assume that $K$ contains the $N$-th roots of unity (so that we may identify the
modular curves $X_1(N)^\naive$ and $X_1(N)^\arithm$). We choose bases of the tangent space
and the $\GR$-invariant subspace of the Betti realization of the critical twists of
$\Mf$. With respect to these bases we will later compute periods.

Fix an integer $n$ with $1\le n\le k-1$ and a Dirichlet character $\chi$ of arbitrary
conductor. Then $\Mf(\chi^*)(n)$ is critical 
and we have that
\[ {(\Mf(\chi^*)(n))}_\betti^+ = \Mf_\betti^{\chi(-1)(-1)^n}\tensor{K(n)}_\betti\tensor{\Motive(\chi^*)}_\betti \]
and
\[ \tangentspace{\Mf(\chi^*)(n)} = \gr^0\Mf_\dR\tensor{K(n)}_\dR\tensor{\Motive(\chi^*)}_\dR. \]
The realizations of the motives $K(n)$ and $\Motive(\chi^*)$ are one-dimensional and have
canonical bases which we denote by $\MotiveCanBasis{K(n)}_\betti$ and $\MotiveCanBasis{K(n)}_\dR$ for $K(n)$
and $\DirichletCanBasis_\betti$ and $\DirichletCanBasis_\dR$ for $\Motive(\chi^*)$, respectively 
(see \cite[Ex.\ 2.1]{MR2392359} or \cite[§1.1.3]{MR2103471}).

By \cref{cor:gr-zero-mf} we can choose any $\delta_0\in\S_k(\Gamma(N),K)^\vee$ such that
$\delta_0(w_Nf)=1$ (where $w_N$ is the Atkin-Lehner endomorphism) and use its image in
$\gr^0\Mf_\dR$ (which we denote again by $\delta_0$) as a basis of this space. Note that
$\delta_0$ is then unique with this property since $\gr^0\Mf_\dR$ is one-dimensional. So
\[ \delta\da\delta_0\tensor(\MotiveCanBasis{K(n)}_\dR)^{\tensor
    n}\tensor\DirichletCanBasis_\dR\in\tangentspace{\Mf(\chi^*)(n)} \] is a basis for the
tangent space.

We now turn to the Betti side and recall \cref{lem:ms-wnk-betti-iso}, which gives us
isomorphisms \[ \MS_k(N,K)^\pm[f]\isom\Mf_\betti^\pm \] Further we use the modified pairing
${\paarung{\cdot}{\cdot}}_\betti^\iota$ from \cref{sec:poincare} and the fact that it
induces a perfect pairing between $\Mf_\betti^\pm$ and $\Mf_\betti^\mp$ by
\cref{lem:perfectness-restricted-pairing}. Fix a basis
$\eta^+\da\eta_f^+\in\MS_k(N,\O_K)^+[f]$ as in \cref{dfn:complex-error-term}, and by abuse
of notation denote its image in $\Mf_\betti^+$ under the above map still by $\eta^+$. By the
pairing there exists a unique $\eta^-=\eta_f^-\in\Mf_\betti^-$, coming from an
$\eta^-\in\MS_k(N,\O_K)^-[f]$, such that
\[ {\paarung{\eta^\pm}{\eta^\mp}}_{\betti}^{\iota}=\MotiveCanBasis{K(1-k)}
_\betti. \] We choose
then
\[ \gamma\da\eta^{\chi(-1)(-1)^n}\tensor(\MotiveCanBasis{K(n)}_\betti)^{\tensor n}\tensor\DirichletCanBasis_\betti\in{(\Mf(\chi^*)(n))}_\betti^+ \]
as a basis of the Betti side.

As a side remark, note that the choice of $\eta^+$ is the only non-canonical choice we ever made in this whole process.

We stress that both $\gamma$ and $\delta$ of course depend on $n$ and $\chi$, but we omit
this from their notation. This dependence should be always clear from the context. Also
every element introduced in this section of course depends on $f$, and we will also often
omit this from the notation. Though, later we will consider families of motives of modular
forms parametrized by arithmetic points in the weight space, and we will then put a
subscript \enquote{$\varphi$} to all of the elements introduced here to indicate their
dependence on these points.

\subsubsection{Complex periods}
\label{sec:complex-periods}

We now compute Deligne's complex period of the critical twists of $\Mf$ with respect to our
chosen bases. We use the elements and notations introduced in \cref{sec:bases}. Our
calculation uses the well-known fact that the complex periods of the Tate and Dirichlet
motive are $2\pi\i$ resp.\ the Gauß sum of the character.

We remark that a similar idea for calculating the complex periods appears in \cite[§6.1, §6.3]{MR2264660}, although there many details are omitted.

\begin{thm}\label{thm:complex-period}
  We have for the complex period
  \[ \Omega_\infty^{\gamma,\delta}(\Mf(\chi^*)(n))=\frac{(2\pi\i)^{n+1-k}}{\Gausssum(\chi^*)}\errorterm_\infty(f,\eta^s)  \]
  with $s=-\chi(-1)(-1)^n$.
\end{thm}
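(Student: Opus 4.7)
The strategy is to exploit the tensor product decomposition $\Mf(\chi^*)(n)=\Mf\tensor K(n)\tensor\Motive(\chi^*)$, the multiplicativity of the complex comparison isomorphism under tensor products of motives, and the fact that both $\gamma$ and $\delta$ are tensor products of one of the factor bases. This reduces the computation to three separate one-dimensional periods: those of $K(n)$, of $\Motive(\chi^*)$, and of a ``$\Mf$-piece'' obtained by projecting the comparison map to the relevant sign eigenspace. The first two are standard: with the canonical bases $\MotiveCanBasis{K(n)}_\betti,\MotiveCanBasis{K(n)}_\dR$ one has $\cpiso_\infty(\MotiveCanBasis{K(n)}_\betti)=(2\pi\i)^n\MotiveCanBasis{K(n)}_\dR$, and with the canonical bases $\DirichletCanBasis_\betti,\DirichletCanBasis_\dR$ one has $\cpiso_\infty(\DirichletCanBasis_\betti)=\Gausssum(\chi^*)^{-1}\DirichletCanBasis_\dR$ (the standard reciprocity between the Betti/de Rham realizations of a Dirichlet motive). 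Multiplying these factors produces the prefactor $(2\pi\i)^n\Gausssum(\chi^*)^{-1}$ in the claim.

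It remains to compute the ``$\Mf$-period'' $\Omega(\Mf;t)$ attached to the basis $\eta^t$ of $\Mf_\betti^t$ (with $t=\chi(-1)(-1)^n$) and the basis $\delta_0$ of $\gr^0\Mf_\dR$, and to show that
\[ \Omega(\Mf;t)=(2\pi\i)^{1-k}\errorterm_\infty(f,\eta^{-t}). \]
For this I would compute via Poincar\'e duality rather than directly. Pairing the equation
\[ \pi(\cpiso_\infty(\eta^t))=\Omega(\Mf;t)\cdot\delta_0\in\gr^0\Mf_\dR\tensor\C \]
with $f[w_N]\in\fil^{k-1}\Mf_\dR=\S_k(\Gamma(N),K)$ and using the perfect pairing of \cref{cor:gr-zero-mf} together with the defining relation $\delta_0(w_Nf)=1$, the right-hand side becomes $\Omega(\Mf;t)\cdot\MotiveCanBasis{K(1-k)}_\dR$. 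The pairing $\fil^{k-1}\tensor\fil^1\to \fil^k K(1-k)_\dR=0$ vanishes, so $\pi$ can be omitted.

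The next step is to transport this identity to the Betti side using that the comparison isomorphism intertwines the Poincar\'e pairings on $\Mf_\dR\tensor\C$ and $\Mf_\betti\tensor\C$. Since $w_N$ commutes with $\cpiso_\infty$, the $\dR$-side pairing equals $\paarung{\cpiso_\infty^{-1}(f)[w_N]}{\eta^t}_\betti$. Now \cref{thm:compatibility-es-comparison} identifies $\cpiso_\infty^{-1}$ restricted to $\fil^{k-1}\Mf_\dR=\S_k$ with the Eichler-Shimura map, and \cref{lem:xi-maps-to-f} gives $\ES(f)=\xi_f=\xi_f^++\xi_f^-$. By \cref{dfn:complex-error-term}, $\xi_f^\pm=\errorterm_\infty(f,\eta^\pm)\eta^\pm$. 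Switching to the twisted pairing ${\paarung{\cdot}{\cdot}}_\betti^\iota={\paarung{\cdot}{\cdot[w_N]}}_\betti$, we thus have to evaluate
\[ {\paarung{\xi_f}{\eta^t}}_\betti^\iota=\errorterm_\infty(f,\eta^+){\paarung{\eta^+}{\eta^t}}_\betti^\iota+\errorterm_\infty(f,\eta^-){\paarung{\eta^-}{\eta^t}}_\betti^\iota. \]
By \cref{lem:perfectness-restricted-pairing}, the pairings on $\Mf_\betti^\pm\times\Mf_\betti^\pm$ vanish, so only the summand indexed by $-t$ survives; and by the defining property of $\eta^-$ that summand equals $\errorterm_\infty(f,\eta^{-t})\cdot\MotiveCanBasis{K(1-k)}_\betti$. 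Applying the comparison isomorphism on $K(1-k)$, which sends $\MotiveCanBasis{K(1-k)}_\betti$ to $(2\pi\i)^{1-k}\MotiveCanBasis{K(1-k)}_\dR$, and setting $s=-t=-\chi(-1)(-1)^n$ yields the desired formula $\Omega(\Mf;t)=(2\pi\i)^{1-k}\errorterm_\infty(f,\eta^s)$, and combining with the Tate and Dirichlet factors finishes the proof.

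The main technical obstacle I expect is bookkeeping: keeping track of the sign flip between $t$ and $s=-t$ (which comes from the fact that under ${\paarung{\cdot}{\cdot}}_\betti^\iota$ the bases $\eta^+$ and $\eta^-$ are \emph{dual} to one another, not self-paired), correctly handling the Atkin-Lehner involution when transferring the de Rham pairing computation through $\cpiso_\infty$, and pinning down the conventions for the canonical bases and periods of $K(n)$ and $\Motive(\chi^*)$ so the final constant agrees with the stated $(2\pi\i)^{n+1-k}/\Gausssum(\chi^*)$ on the nose.
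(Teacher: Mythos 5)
Your proposal is correct and follows essentially the same route as the paper: decompose the twist as $\Mf\tensor K(n)\tensor\Motive(\chi^*)$, read off the standard Tate and Dirichlet periods $(2\pi\i)^n$ and $\Gausssum(\chi^*)^{-1}$, and compute the remaining $\Mf$-factor by pairing the image of $\eta^{-s}$ against $w_N f$, transporting between Betti and de Rham via the Eichler--Shimura compatibility (\cref{thm:compatibility-es-comparison}, \cref{lem:xi-maps-to-f}) and killing the same-sign term using \cref{lem:perfectness-restricted-pairing}. The only presentational difference is that you set up the de Rham identity $\pi(\cpiso_\infty(\eta^t))=\Omega(\Mf;t)\delta_0$ first and then transport to the Betti side, whereas the paper begins with the Betti identity $\errorterm_\infty(f,\eta^s)\MotiveCanBasis{K(1-k)}_\betti=\paarung{\eta^{-s}}{w_N\xi_f}_\betti$ and transports to de Rham; the argument-order swap in your pairing $\paarung{\xi_f}{\eta^t}^\iota$ versus the paper's $\paarung{\eta^{-s}}{\xi}^\iota$ is immaterial given the paper's chosen normalization $\paarung{\eta^\pm}{\eta^\mp}^\iota_\betti=\MotiveCanBasis{K(1-k)}_\betti$.
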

\begin{proof}
  Recall that we identified $\eta^\pm$ with their images under the map \eqref{eqn:map-ms-betti}.
  By our choices of $\eta^\pm$ we have $\paarung{\eta^{-s}}{w_N\eta^s}_\betti=\MotiveCanBasis{K(1-k)}
_\betti$, and since the pairing $\paarung{\cdot}{\cdot}_\betti^\iota$ vanishes on $\Mf_\betti^\pm\times\Mf_\betti^\pm$, we have further $\paarung{\eta^{-s}}{w_N\eta^s}_\betti=\paarung{\eta^{-s}}{w_N(\eta^s+x)}_\betti$ for any $x\in\Mf^{-s}_\betti$. Therefore (by the definition of the complex error term)
  \begin{equation}
    \label{eqn:erste-rechnung-mit-paarung}\tag{$*$}
   \errorterm_\infty(f,\eta^s)\cdot\MotiveCanBasis{K(1-k)}
_\betti=\paarung{\eta^{-s}}{w_N\errorterm_\infty(f,\eta^s)\eta^s}_\betti=\paarung{\eta^{-s}}{w_N\xi^s}_\betti=\paarung{\eta^{-s}}{w_N\xi}_\betti,
  \end{equation}
  where $\xi$ and $\xi^\pm$ are as in \cref{dfn:xi-f} (which we again identify with their images under the map \eqref{eqn:map-ms-betti}).

  By the compatibility of the comparison isomorphism with the Eichler-Shimura map (see
  \cref{thm:compatibility-es-comparison}) and \cref{lem:xi-maps-to-f}, we have that the
  image of $\xi$ under the map
  \begin{equation*}
    \MS_k(N,\C)\ra\wnk_\betti\tensor\C\isom\wnk_\dR\tensor\C
  \end{equation*}
  is the image of $f$ under the inclusion $\S_k(\Gamma(N),K)\inj\wnk_\dR\tensor K$ coming from the Hodge filtration, so we denote this image by $f$. 
  Now let $\rho\in\wnk_\dR\tensor\C$ be the image of $\eta^{-s}\in\Mf_\betti$ under the comparison isomorphism. Since the comparison isomorphism identifies the pairings ${\paarung{\cdot}{\cdot}}_\dR$ and ${\paarung{\cdot}{\cdot}}_\betti$, \eqref{eqn:erste-rechnung-mit-paarung} is equivalent to
  \begin{equation*}
    {\paarung{\rho}{w_Nf}}_\dR = (2\pi\i)^{1-k}\errorterm_\infty(f,\eta^s)\cdot\MotiveCanBasis{K(1-k)}
_\dR.
  \end{equation*}
  This means that 
  the image of $\rho$ in $\gr^0\Mf_\dR$ is
  $(2\pi\i)^{1-k}\errorterm_\infty(f,\eta^s)\delta_0$.

  Altogether, we see that the isomorphism 
  \begin{equation*}
    {(\Mf(\chi^*)(n))}_\betti^+\isom\tangentspace{\Mf(\chi^*)(n)}
  \end{equation*}
  maps $\gamma=\eta^{-s}\tensor(\MotiveCanBasis{K(n)}_\betti)^{\tensor n}\tensor\DirichletCanBasis_\betti$ to
  \begin{equation*}
    ((2\pi\i)^{1-k}\errorterm_\infty(f,\eta^s)\delta_0)\tensor(2\pi\i\,\MotiveCanBasis{K(n)}_\dR)^{\tensor n}\tensor(\Gausssum(\chi^*)^{-1}\DirichletCanBasis_\dR).
  \end{equation*}
  This completes the proof.
\end{proof}

In particular, Deligne's conjecture on the motive $\Mf(\chi^*)(n)$ holds.

\subsubsection{$p$-adic periods}
\label{sec:p-adic-periods-hida-fam}

We now use again the setup for Hida families as described in
\cref{setting:hida-families}
and
fix a Hida family $F\in\SS^\ord(Np^\infty,\calI)$ which is new.

Throughout this section, we assume that \cref{cond:mmss-free-rank-one} is satisfied. Further
we fix $D\in\Z$ prime to $p$ and let $G\da\Gal{\Q(\mu_{Dp^\infty})}{\Q}$.
We assume that $\O$ contains the $D$-th roots of unity.

Let $\rho_F\colon\GQ\ra\Aut_\calI(\calT)$ be the big Galois representation attached to $F$
from \cref{thm:grosse-galdarst-hida-fam}. In the definition of the $p$-adic period there was
an isomorphism $\beta$ involved. Studying this in families will only make sense if $\beta$
is chosen consistently in the family in a way we now describe. This is motived by the
results in \cite{BarthDiss}.

Put $\bigLambda=\psring{\calI}{G}$. Note that any $\varphi$ induces a map
$\bigLambda\ra\Lambda$ which we also denote by $\varphi$.  The define
${\mathbb T}\da \bigLambda\tensor_{\calI}\calT$ and
${\mathbb T^0}\da \bigLambda\tensor_{\calI}\calT^0$. Let $g\in\GQ$ act on $\mathbb T$ by
$x\tensor y\mapsto xg^{-1}\tensor gy$ and analogously on $\mathbb T^0$. Then fix an
isomorphism of $\bigLambda$-modules
$\beta\colon\mathbb T(1)^+\isom\mathbb T^0(1)$.\footnote{The Tate twist is there in order to
  make the involved motives
  critical.} 
Then for each $\varphi\in\calX_\calI^\arithm$,
$\beta$ induces an isomorphism
\[
  \beta_\varphi\colon\calT(1)^+_\varphi\tensor_{\O}\Lambda\isom\calT^0(1)_\varphi\tensor_{\O}\Lambda. \]

We can say something meaningful about $p$-adic periods only if we can choose this $\beta$ in a \enquote{more or less canonical} way, for which we will need an extra condition which we now explain. Let $\calTexp{0}$ be as in \cref{thm:grosse-galdarst-hida-fam}.

\begin{lem}\label{lem:big-image-hida-family}
  Consider the following conditions.
  \begin{tfaelist}
  \item\label{lem:big-image-hida-family:some} For some choice of an $\calI$-basis of $\calT$ the image of $\rho_F$ contains $\SL_2(\calI)$.
  \item\label{lem:big-image-hida-family:all} For any choice of an $\calI$-basis of $\calT$ the image of $\rho_F$ contains $\SL_2(\calI)$.
  \item\label{lem:big-image-hida-family:one} There exists $\sigma\in\image\rho_F\subseteq\Aut_\calI(\calT)$ such that $\sigma(\calTexp{+})=\calTexp{0}$.
  \end{tfaelist}
  Then \ref{lem:big-image-hida-family:some} and \ref{lem:big-image-hida-family:all} are equivalent and they imply \ref{lem:big-image-hida-family:one}.
\end{lem}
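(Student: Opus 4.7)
The plan is to handle the two implications separately and then identify where the real subtlety lies.

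For the equivalence $\text{\ref{lem:big-image-hida-family:some}} \Leftrightarrow \text{\ref{lem:big-image-hida-family:all}}$, the point is that any two choices of an $\calI$-basis of $\calT$ differ by an element $g \in \GL_2(\calI)$, so the corresponding two embeddings of $\Aut_\calI(\calT)$ into $\GL_2(\calI)$ differ by conjugation by $g$, and hence the two resulting images of $\rho_F$ differ by conjugation by $g$ as well. Since $\SL_2(\calI)$ is the kernel of the determinant character $\GL_2(\calI) \to \calI^\times$, it is a normal subgroup, so $g \cdot \SL_2(\calI) \cdot g^{-1} = \SL_2(\calI)$ and containing $\SL_2(\calI)$ is a basis-independent condition.

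For $\text{\ref{lem:big-image-hida-family:all}} \Rightarrow \text{\ref{lem:big-image-hida-family:one}}$, I would exploit the freedom to pick a convenient basis. First fix an $\calI$-basis $(e_1,e_2)$ of $\calT$ adapted to the filtration, so that $\calT^0 = \calI \cdot e_1$; this is possible because $\calT^0$ is a free direct summand of rank $1$ by \cref{thm:grosse-galdarst-hida-fam}. Next, since $p$ is odd, the element $2$ is invertible in $\calI$, so complex conjugation (which acts on $\calT$ as an involution with trace $0$ and determinant $-1$ because $\rho_F$ is odd) gives a decomposition $\calT = \calT^+ \oplus \calT^-$ into two direct summands of rank $1$. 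Pick a primitive generator $v^+$ of $\calT^+$ (that is, a vector that extends to an $\calI$-basis of $\calT$); this exists because $\calT^+$ is a rank $1$ direct summand of a free rank $2$ module over the local ring $\calI$. Writing $v^+ = a e_1 + b e_2$ with $(a,b)$ a unimodular pair, one can extend $(v^+, w)$ to a basis of $\calT$ of determinant $1$, i.e.\ find $\sigma_0 \in \SL_2(\calI)$ with $\sigma_0(e_1) = v^+$. By hypothesis $\sigma_0^{-1}$ lies in the image of $\rho_F$, and it satisfies $\sigma_0^{-1}(\calT^+) = \sigma_0^{-1}(\calI \cdot v^+) = \calI \cdot e_1 = \calT^0$, which is the desired $\sigma$.

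The main obstacle I anticipate is the legitimacy of producing the primitive generator $v^+$: this relies on the rank $1$ direct summand $\calT^+$ being free, which in turn rests on $\calI$ being local (so that finitely generated projective modules are free). Fortunately, by \cref{setting:hida-families} the ring $\calI$ is the integral closure of the complete local Noetherian domain $\Lambdawt$ in the finite field extension $\mathcal K$, and such integral closures are themselves complete local Noetherian domains, so this step is in fact automatic. With that observation in hand, the rest of the argument is just linear algebra over $\calI$ and the standard transitivity of $\SL_2$ on unimodular vectors of $\calI^2$.
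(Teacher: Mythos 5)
Your proof is correct and takes essentially the same approach as the paper: both arguments reduce to the observation that $\calTexp{+}$ and $\calTexp{0}$ are free rank-$1$ direct summands of $\calT\cong\calI^2$ and then exhibit an element of $\SL_2(\calI)$ carrying one to the other (the paper by assembling $b_1\oplus b_2$ from isomorphisms between the two decompositions and rescaling one of them to fix the determinant; you by choosing a basis adapted to $\calTexp{0}$ and using the transitivity of $\SL_2(\calI)$ on unimodular vectors). Your explicit remark that $\calI$ is local — hence that the projective direct summands in play are automatically free — is a hypothesis the paper's argument also quietly relies on, so flagging it is a useful bit of diligence rather than a digression.
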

\begin{proof}
  That \ref{lem:big-image-hida-family:some} and \ref{lem:big-image-hida-family:all} are equivalent is obvious. To see that they imply \ref{lem:big-image-hida-family:one}, we choose a submodule $\calT'\subseteq\calT$ complementary to $\calTexp{0}$ and isomorphisms $b_1\colon\calTexp{+}\isom\calTexp{0}$ and $b_2\colon\calTexp{-}\isom\calT'$, which gives us an automorphism $b_1\oplus b_2$ of $\calT=\calTexp{+}\oplus\calTexp{-}=\calTexp{0}\oplus\calT'$. Write $u\in\calI^\times$ for the determinant of $b_1\oplus b_2$ and change one of $b_1$ or $b_2$ by $u^{-1}$. Then the determinant using the new choices will be $1$ and the resulting automorphism $\sigma$ lies in the image of $\rho_F$.
\end{proof}

The above conditions allow us to perform the following trick, which is inspired from \cite[§3.3]{MR1479362}.
\begin{prop}\label{prop:sltwo-implies-condition}
  If the condition from \cref{lem:big-image-hida-family}~\ref{lem:big-image-hida-family:one}
  holds, then after possibly changing the complex embedding $\iota_\infty$, we can assume
  that $\calTexp{+}=\calTexp{0}$.
\end{prop}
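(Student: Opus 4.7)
The plan is to translate the hypothesis that $\sigma(\calTexp{+})=\calTexp{0}$ for some $\sigma\in\image\rho_F$ into a reparametrization of the complex embedding $\iota_\infty$ which moves the $+$-part of $\calT$ so as to coincide with $\calTexp{0}$.

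First I would recall the dictionary between embeddings $\Qbar\inj\C$ and complex conjugations in $\GQ$: to $\iota_\infty$ one associates the unique element $c=c_{\iota_\infty}\in\GQ$ characterized by $\sigma_\C\circ\iota_\infty=\iota_\infty\circ c$ (where $\sigma_\C$ is complex conjugation on $\C$), and by construction $\calTexp{+}$ is the $+1$-eigenspace of the involution $\rho_F(c)$ on $\calT$.

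Next I would compute how $c$ transforms under the substitution $\iota_\infty\rightsquigarrow\iota_\infty'\da\iota_\infty\circ g$ for an element $g\in\GQ$: a direct manipulation of the defining relation shows $c_{\iota_\infty'}=g^{-1}cg$. Consequently the $+1$-eigenspace of $\rho_F(c_{\iota_\infty'})=\rho_F(g)^{-1}\rho_F(c)\rho_F(g)$ on $\calT$ is exactly $\rho_F(g)^{-1}(\calTexp{+})$, i.\,e.\ after replacing $\iota_\infty$ by $\iota_\infty'$ the new version of $\calTexp{+}$ is $\rho_F(g)^{-1}(\calTexp{+})$.

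To conclude, by \cref{lem:big-image-hida-family}~\ref{lem:big-image-hida-family:one} I may choose $g\in\GQ$ with $\rho_F(g)=\sigma^{-1}$ (possible since $\sigma$, and hence $\sigma^{-1}$, lies in the image of $\rho_F$); with this choice the new $\calTexp{+}$ becomes $\sigma(\calTexp{+})=\calTexp{0}$, as required. There is no serious obstacle; the only point to be careful about is that $\calTexp{0}$ is defined intrinsically as a $\GQp$-stable submodule of $\calT$ (see \cref{thm:grosse-galdarst-hida-fam}) and is therefore unaffected by the reparametrization of $\iota_\infty$, so that the equality $\calTexp{+}=\calTexp{0}$ we end up with is an honest equality of submodules of $\calT$. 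As already warned on \cpageref{c-cp-orientation}, changing $\iota_\infty$ forces a corresponding change of the compatible system $\xi$ of $p$-power roots of unity, but this bookkeeping does not interfere with the argument.
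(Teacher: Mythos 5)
Your proof is correct and takes essentially the same route as the paper: pick $g\in\GQ$ whose $\rho_F$-image is a prescribed automorphism, replace $\iota_\infty$ by $\iota_\infty\circ g$, and observe that the new complex conjugation is $g^{-1}\cc g$. You are careful about the direction of the conjugation: since the $+1$-eigenspace of $\rho_F(g)^{-1}\rho_F(\cc)\rho_F(g)$ is $\rho_F(g)^{-1}(\calTexp+)$, one must take $\rho_F(g)=\sigma^{-1}$ (not $\sigma$) to land on $\sigma(\calTexp+)=\calTexp0$; the paper's proof as written takes $\rho_F(\tau)=\Phi=\sigma$ and asserts that $\calTexp0$ is fixed under $\Phi^{-1}\rho_F(\cc)\Phi$, which would in fact give $\Phi^{-1}(\calTexp+)$ rather than $\calTexp0$, so your version fixes a sign slip in the original.
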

\begin{proof}
  Let $\Phi\da b_1\oplus b_2\in\Aut_\calI(\calT)$ be as in
  \cref{lem:big-image-hida-family}~\ref{lem:big-image-hida-family:one}. Then the elements of
  $\calTexp{0}$ remain fixed under $\Phi^{-1}\rho_F(\cc)\Phi$.  Take a $\tau\in\GQ$ such
  that $\rho_F(\tau)=\Phi$. Then if we replace $\iota_\infty$ by $\iota_\infty\circ\tau$,
  the complex conjugation with respect to the new pair of embeddings is
  $\Phi^{-1}\rho_F(\cc)\Phi$, so that then $\calTexp{+}=\calTexp{0}$.
\end{proof}

From now on we assume that the following holds, which can be achieved under any of the conditions from \cref{lem:big-image-hida-family}.
\begin{cond}\label{cond:big-image-hida-family}
  The embeddings $(\iota_\infty,\iota_p)$ are chosen such that $\calTexp{+}=\calTexp{0}$.
\end{cond}

\begin{rem}
  \Cref{cond:big-image-hida-family} seems to be a moderate condition. As explained in \cref{prop:sltwo-implies-condition,lem:big-image-hida-family}, we are safe if the image of $\rho_F$ contains $\SL_2$. There are results on when this happens: in \cite[§10]{MR0860140} Mazur and Wiles show that if $\O=\Zp$, $\calI=\Lambda^\wt$ and the image of the residual representation contains $\SL_2$, then so does the image of $\rho_F$. There is work by J.\ Lang \cite{MR3474844} that can be used to extend this to more general $\calI$ and to relax the condition on the residual representation. Hence there are quite some cases in which we know that the validity of \cref{cond:big-image-hida-family} can be achieved.
\end{rem}

Note that by reducing modulo $\varphi\in\calX_\calI^\arithm$ this implies that
$\Motive(F_\varphi^\new)_\frakp^+=\Motive(F_\varphi^\new)_\frakp^0$ for any $\varphi$.

We have to choose an isomorphisms of $\bigLambda$-modules \[ \beta\colon\mathbb T(1)^+\isom\mathbb T^0(1). \]
Since we have
\begin{align*}
  \mathbb T(1)^+ &= (\bigLambda\tensor_{\calI}\calT\tensor_\Qp\Qp(1))^+ \\
  &= \big(\bigLambda^+\tensor_{\calI}\calTexp{-}\tensor_\Qp\Qp(1)\big)\,\oplus\, \big(\bigLambda^-\tensor_{\calI}\calTexp{+}\tensor_\Qp\Qp(1)\big)
\end{align*}
and
\begin{align*}
  \mathbb T^0(1) &= (\bigLambda\tensor_{\calI}\calTexp{0}\tensor_\Qp\Qp(1)) \\
  &= \big(\bigLambda^+\tensor_{\calI}\calTexp{0}\tensor_\Qp\Qp(1)\big) \,\oplus\, \big(\bigLambda^-\tensor_{\calI}\calTexp{0}\tensor_\Qp\Qp(1)\big),
\end{align*}
we see that any choice of an isomorphism of $\calI$-modules
\[ \beta_0\colon\calTexp{+}\isom\calTexp{-} \]
gives us an isomorphism $\beta$ as above (recall that $\calTexp{+}=\calTexp{0}$).

We now replace the abstract representation $\calT$ by the $\calI$-module $\MMSS^\ord(Np^\infty,\calI)[F]$. By \cref{cor:mmss-ist-hidas-grosse-galdarst} it is isomorphic to $\calT$ as a $\GQ$-representation, so we can assume without loss of generality that $\calT$ is of this concrete form.

\begin{situation}\label{situation:p-adic-periods}
  Here we choose some elements that should be fixed for the following. We have already fixed $D\in\N$ with $p\nmid N$ and defined $G=\Gal{\Q(\mu_{Dp^\infty}}\Q$.
  We choose $\calI$-bases $\Xi^\pm$ of $\MMSS^\ord(Np^\infty,\calI)^\pm[F]$ as in
  \cref{dfn:p-adic-error-term}. Having done so, we define $\beta_0$ as the isomorphism
  \[ \MMSS^\ord(Np^\infty,\calI)^+[F]\isom\MMSS^\ord(Np^\infty,\calI)^-[F] \]
  that sends $\Xi^+$ to $\Xi^-$ and write \[ \beta\colon\mathbb T(1)^+\isom\mathbb T^0(1) \] for the isomorphism induced by it.  
  For each $\varphi\in\calX_\calI^\arithm$ of type $(k,\ep,r)$ we fix bases $\eta_\varphi^\pm\in\MS_k(Np^r,\O_K)^\pm[F_\varphi]$ and $\eta_{\varphi,\new}^\pm\in\MS_k(Np^r,\O_K)^\pm[F_\varphi^\new]$. We assume that $\eta_\varphi^\pm=\eta_{\varphi,\new}^\pm$ whenever $F_\varphi=F_\varphi^\new$ and that $\eta_\varphi^\pm$ and $\eta_{\varphi,\new}^\pm$ are connected via refinement as in \cref{sec:error-terms} -- more precisely that $\Ref_\alpha(\eta_{\varphi,\new}^\pm)=\eta_\varphi^\pm$, where $\alpha$ is the unique unit root of the $p$-th Hecke polynomial of $F_\varphi^\new$.
  Moreover we assume that they are dual to each other under the pairing ${\paarung{\cdot}{\cdot}}_\betti^\iota$, as in \cref{sec:bases}.
  Finally we choose $\delta_{0,\varphi}\in\S_k(\Gamma(Np^r),K)^\vee$ such that $\delta_{0,\varphi}(w_{Np^r}F_\varphi^\new)=1$ as in \cref{sec:bases} to obtain bases of $\gr^0\Motive(F_\varphi^\new)_\dR$.
\end{situation}

\begin{dfn}\label{dfn:constant-u}
  \begin{enumerate}
  \item Let $\varphi\in\calX_\calI^\arithm$ be of type $(k,\ep,r)$ and let $P=P_\varphi$. Reducing $\beta_0$ modulo $P$ gives an isomorphism
    \[ \beta_{0,\varphi}\colon\Motive(F_\varphi^\new)_\betti^+\tensor_KL\cong\Motive(F_\varphi^\new)_\frakp^+=\calTexp{+}_\varphi
    \isom\calTexp{-}_\varphi=\Motive(F_\varphi^\new)_\frakp^-\cong\Motive(F_\varphi^\new)_\betti^-\tensor_KL. \]
    The elements $\eta_{\varphi,\new}^\pm$ are bases of $\Motive(F_\varphi^\new)_\betti^\pm$, respectively. Define \[ {C(\beta_{0,\varphi})}\da\Big(\det_{\eta_{\varphi,\new}^+,\eta_{\varphi,\new}^-}\beta_{0,\varphi}\Big)^{-1}\in L^\times. \]

  \item Let $\Psi^\pm$ be the images of $\Xi^\pm$ under the $\calI$-adic Eichler-Shimura map \[ \MMSS^\ord(Np^\infty,\calI)\ra\SS^\ord(Np^\infty,\calI) \] from \cref{thm:lambda-adic-es}. 
    Since this map is Hecke equivariant, we have in fact $\Psi^\pm\in\SS^\ord(Np^\infty,\calI)[F]$. Since the latter space is free of rank $1$ over $\calI$ and $F$ is a basis, there are unique ${U^\pm}\in\calI$ such that
    \begin{equation}
      \label{eqn:relation-xi-f-u}
      \Psi^\pm=U^\pm F.
    \end{equation}
    For $\varphi\in\calX_\calI^\arithm$, write $U_\varphi^\pm\in\O$ for the reduction of $U^\pm$ modulo $P$.
  \end{enumerate}
\end{dfn}

\begin{lem}\label{lem:calculation-of-c-beta}
  Under our choices, we have \[ \errorterm_\frakp(\Xi^+,\eta^+_\varphi) = C(\beta_{0,\varphi})\errorterm_\frakp(\Xi^-,\eta^-_\varphi). \]
\end{lem}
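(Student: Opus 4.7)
The plan is to unwind the definitions of both sides and show they measure the same thing up to the identifications provided by the control theorem and by the refinement morphism. The essential point is that $\beta_0$ was cooked up precisely so that it sends $\Xi^+$ to $\Xi^-$, and after reduction modulo $P_\varphi$ these two $\calI$-adic modular symbols turn into $\Xi_\varphi^\pm$, whose expression in the basis $\eta_\varphi^\pm$ is governed by the $p$-adic error terms.

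First I would reduce the isomorphism $\beta_0\colon\MMSS^\ord(Np^\infty,\calI)^+[F]\isom\MMSS^\ord(Np^\infty,\calI)^-[F]$ modulo $P_\varphi$. Using \cref{thm:control-thm-ms}\ref{thm:control-thm-ms:f} this yields an $\O$-linear isomorphism
\[
\overline{\beta_0}\colon\MS_k^\ord(Np^r,\O)^+[F_\varphi]\isom\MS_k^\ord(Np^r,\O)^-[F_\varphi]
\]
which, since $\beta_0(\Xi^+)=\Xi^-$, must send $\Xi_\varphi^+$ to $\Xi_\varphi^-$. Substituting the defining relations $\Xi_\varphi^\pm=\errorterm_\frakp(\Xi^\pm,\eta_\varphi^\pm)\,\eta_\varphi^\pm$ (which make sense thanks to \cref{prop:p-adic-errorterm-not-zero}) shows
\[
\overline{\beta_0}(\eta_\varphi^+) \;=\; \frac{\errorterm_\frakp(\Xi^-,\eta_\varphi^-)}{\errorterm_\frakp(\Xi^+,\eta_\varphi^+)}\,\eta_\varphi^-.
\]

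Next I need to identify $\overline{\beta_0}$ with $\beta_{0,\varphi}$ under the correct bases. By \cref{cor:mmss-ist-hidas-grosse-galdarst}, we may and do take $\calT=\MMSS^\ord(Np^\infty,\calI)[F]$, and reducing modulo $P_\varphi$ and tensoring with $L$ gives the $\frakp$-adic realization of $\Motive(F_\varphi^\new)$ via the refinement isomorphism $\Ref_\alpha\colon\Motive(F_\varphi^\new)\isom\Motive(F_\varphi)$ from \cref{cor:motivic-refinements} (with $\alpha$ the unit root). By our setup in \cref{situation:p-adic-periods}, $\Ref_\alpha(\eta_{\varphi,\new}^\pm)=\eta_\varphi^\pm$, so under this chain of identifications the map $\overline{\beta_0}$ is precisely the reduction $\beta_{0,\varphi}$, with the basis $\eta_\varphi^\pm$ corresponding to $\eta_{\varphi,\new}^\pm$. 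Therefore
\[
\det\nolimits_{\eta_{\varphi,\new}^+,\,\eta_{\varphi,\new}^-}\!\beta_{0,\varphi} \;=\; \frac{\errorterm_\frakp(\Xi^-,\eta_\varphi^-)}{\errorterm_\frakp(\Xi^+,\eta_\varphi^+)},
\]
so from the definition $C(\beta_{0,\varphi})=(\det \beta_{0,\varphi})^{-1}$ the claimed identity follows after rearrangement.

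The main obstacle will be verifying cleanly that the reduction $\overline{\beta_0}$ really is identified with $\beta_{0,\varphi}$ through the refinement isomorphism; concretely, one must check that the canonical identification $\MS_k(Np^r,\O)^\pm[F_\varphi]\tensor L \cong \Motive(F_\varphi^\new)_\frakp^\pm$ coming from \cref{cor:mmss-ist-hidas-grosse-galdarst} composed with $\Ref_\alpha^{-1}$ is the same identification used implicitly in \cref{dfn:constant-u} to define $\beta_{0,\varphi}$. Once this compatibility is granted, everything else is formal bookkeeping of bases and error terms.
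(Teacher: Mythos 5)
Your proposal is correct and follows essentially the same route as the paper's own proof: reducing $\beta_0$ mod $P_\varphi$ via the control theorem, inserting the defining relation $\Xi_\varphi^\pm = \errorterm_\frakp(\Xi^\pm,\eta_\varphi^\pm)\eta_\varphi^\pm$, and unwinding the definition of $C(\beta_{0,\varphi})$. The compatibility with the refinement morphism that you flag as the main obstacle is exactly the point the paper addresses in its parenthetical remark, and it is handled by the compatible choices $\Ref_\alpha(\eta_{\varphi,\new}^\pm)=\eta_\varphi^\pm$ made in \cref{situation:p-adic-periods}.
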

\begin{proof}
  Recall that $\beta_0$ was defined as
  \begin{equation*}
     \MMSS^\ord(Np^\infty,\calI)^+[F]\isom\MMSS^\ord(Np^\infty,\calI)^-[F], \quad \Xi^+\mapsto\Xi^-.
  \end{equation*}
  By \cref{thm:control-thm-ms} the reduction of this map modulo $P$
  \begin{equation*}
    \MS_k^\ord(Np^r,\O)^+[F_\varphi]\isom\MS_k^\ord(Np^r,\O)^-[F_\varphi]
  \end{equation*}
  sends $\Xi_\varphi^+$ to $\Xi_\varphi^-$,
  where $\Xi^\pm_\varphi$ denotes the images of $\Xi^\pm$ in $\MS_k^\ord(Np^r,\O)^\pm[F_\varphi]$. By \cref{dfn:p-adic-error-term} we have $\Xi^\pm_\varphi=\errorterm_\frakp(\Xi^\pm,\eta_\varphi^\pm)\eta_\varphi^\pm$. Thus by definition of $C(\beta_{0,\varphi})$ it follows
  \begin{equation*}
    C(\beta_{0,\varphi})=\frac{\errorterm_\frakp(\Xi^+,\eta_\varphi^+)}{\errorterm_\frakp(\Xi^-,\eta_\varphi^-)}
  \end{equation*}
  (in the case where $F_\varphi\neq F_\varphi^\new$ we have to take the refinement maps into account, but by our choices of $\eta_{\varphi,\new}^\pm$ and $\eta_\varphi^\pm$ the relation still holds).
\end{proof}

Before we compute the $p$-adic period, we prove the following important result about the constants $U^\pm$.
\begin{thm}\label{thm:u-is-unit}
  We have $U^+=0$, while $U^-\in\calI^\times$. In particular, $U^-_\varphi\in\O^\times$ for each $\varphi\in\calX_\calI^\arithm$.
\end{thm}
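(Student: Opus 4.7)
The vanishing $U^+=0$ is almost immediate. Using \cref{cor:mmss-ist-hidas-grosse-galdarst} to identify $\MMSS^\ord(Np^\infty,\calI)[F]$ with Hida's big Galois representation $\calT$, the $+$-eigenspace $\MMSS^\ord(Np^\infty,\calI)^+[F]$ corresponds to $\calT^+$, which under \cref{cond:big-image-hida-family} equals the unramified direct summand $\calT^0$. Being unramified, $\calT^0$ lies in $\MMSS^\ord(Np^\infty,\calI)^{\inertia_p}$, which by \cref{thm:lambda-adic-es}~\ref{thm:lambda-adic-es:b} is precisely the kernel of the $\calI$-adic Eichler--Shimura map. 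Hence $\Xi^+$ is in the kernel and $\Psi^+=0$, giving $U^+=0$.

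For $U^-\in\calI^\times$ the plan is to reduce to a pointwise check at a single arithmetic specialization. The key algebraic observation is that $\calI$ is a local Noetherian ring whose maximal ideal satisfies $\frakm_\calI=P_\varphi+\pi\calI$ for every arithmetic $\varphi\in\calX^\arith_\calI(\O)$: since $\calI/P_\varphi\cong\O$ and $\pi\in\calI$, the quotient $\calI/(P_\varphi+\pi\calI)\cong\O/\pi\O$ is a field, forcing equality by locality. Consequently $\varphi\colon\calI\to\O$ is a local ring map, and $u\in\calI^\times$ if and only if $\varphi(u)\in\O^\times$ for any (equivalently, every) single $\varphi$. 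So it suffices to produce one $\varphi$ with $\varphi(U^-)\in\O^\times$.

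To compute $\varphi(U^-)$, invoke \cref{thm:lambda-adic-es}~\ref{thm:lambda-adic-es:a}: the specialization $\Psi^-_\varphi=\varphi(U^-)F_\varphi$ equals the image of $\Xi^-_\varphi$ under the classical $p$-adic Eichler--Shimura map composed with projection onto $\S_k$. Under \cref{cond:mmss-free-rank-one}, the control theorem \cref{thm:control-thm-ms}~\ref{thm:control-thm-ms:f} shows that $\Xi^-_\varphi$ is already a basis of the rank-one free $\O$-module $\MS_k^\ord(Np^r,\O)^-[F_\varphi]$; comparing with $\eta^-_\varphi$ gives $\errorterm_\frakp(\Xi^-,\eta^-_\varphi)\in\O^\times$ automatically (a strengthening of \cref{prop:p-adic-errorterm-not-zero}). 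I will choose $\varphi$ with $F_\varphi$ a newform (possible by \cref{thm:hida-fam-new}) at which the Hecke idempotent $e_{F_\varphi}$ acts integrally on $\MS_k^\ord(Np^r,\O)$. For such $\varphi$, applying $e_{F_\varphi}$ to the surjection $\MS_k^\ord(Np^r,\O)\twoheadrightarrow\S_k^\ord(X_1(Np^r),\O)$ from \cref{thm:lambda-adic-es}~\ref{thm:lambda-adic-es:a} yields a surjection of rank-one free $\O$-modules
\[ \MS_k^\ord(Np^r,\O)^-[F_\varphi]\twoheadrightarrow \O\cdot F_\varphi, \]
which must be multiplication by a unit. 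Combining, $\varphi(U^-)=\errorterm_\frakp\cdot(\text{unit})\in\O^\times$, so $U^-\in\calI^\times$ by the reduction above.

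The main obstacle is exhibiting an arithmetic point $\varphi$ at which the Hecke idempotent for the $F_\varphi$-eigenspace is integral; this is equivalent to the absence of mod-$\pi$ congruences between $F_\varphi$ and other ordinary eigenforms of level $Np^r$. \Cref{cond:mmss-free-rank-one} (via one of the equivalent formulations in \cite[Lem.~5.11]{MR1279604}) ensures sufficient regularity of the Hida family near $F$ so that such arithmetic points are abundant; together with Zariski density of arithmetic points in $\calX^\wt_\calI$ this allows the desired selection. Everything else in the argument is structural and purely formal given the results already established.
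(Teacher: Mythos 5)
Your vanishing argument for $U^+$ is exactly the paper's.

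For $U^-\in\calI^\times$ you follow a genuinely different route. The paper works at \emph{every} arithmetic $\varphi$, proves only $U^-_\varphi\neq 0$, and then invokes Zariski density of $\calX^\arith_\calI$ to upgrade ``nowhere vanishing'' to ``unit.'' Your algebraic observation that $P_\varphi+\pi\calI=\frakm_\calI$ for any arithmetic $\varphi$ --- so that each $\varphi\colon\calI\to\O$ is a \emph{local} homomorphism and $u\in\calI^\times\iff\varphi(u)\in\O^\times$ --- is correct, and it is in fact a cleaner and more robust closing step than the paper's: knowing $U^-\notin P_\varphi$ for \emph{all} arithmetic $\varphi$ does not by itself force $U^-\in\calI^\times$ (the element $p$ lies in no $P_\varphi$ yet is not a unit), whereas the pointwise unit assertion $\varphi(U^-)\in\O^\times$ at a \emph{single} $\varphi$ does. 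So the reduction you propose is the right one, and your side remark that under \cref{cond:mmss-free-rank-one} the error term $\errorterm_\frakp(\Xi^-,\eta^-_\varphi)$ is automatically in $\O^\times$ (both being related by an $\O$-module isomorphism between free rank--one modules, via \cref{thm:control-thm-ms} and \cref{prop:ms-eigenspaces-free-of-rank-one}) also looks right and sharpens \cref{prop:p-adic-errorterm-not-zero}.

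The gap is in the remaining half. You need the finite-level surjection \(\MS_k^\ord(Np^r,\O)\twoheadrightarrow\S_k^\ord(X_1(Np^r),\O)\) from \cref{thm:lambda-adic-es} to restrict to a \emph{surjection} of the $F_\varphi$-eigenspaces, which you derive from integrality of the Hecke idempotent $e_{F_\varphi}$, i.e.\ from the absence of mod-$\pi$ congruences between $F_\varphi$ and other ordinary eigenforms at that level. You claim such congruence-free $\varphi$ are ``abundant'' because of \cref{cond:mmss-free-rank-one} via \cite[Lem.~5.11]{MR1279604}, but that condition and that lemma govern the $\calI$-module structure of $\MMSS^\ord(Np^\infty,\calI)^\pm[F]$ (freeness of rank one over $\calI$), not finite-level congruences of Hecke eigensystems: for instance, $\calI$ being factorial, one of the equivalent formulations in the cited lemma, is entirely compatible with congruences at every arithmetic specialization. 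Without surjectivity, the inertia argument (which both you and the paper use) only gives injectivity of $\MS_k^\ord(Np^r,\O)^-[F_\varphi]\to\O\cdot F_\varphi$, whence $\varphi(U^-)\neq 0$ but not $\varphi(U^-)\in\O^\times$; your chain $\varphi(U^-)=\errorterm_\frakp\cdot(\text{unit})$ breaks exactly here. So while the reduction to a single pointwise check is a good idea and even improves the paper's final step, the pointwise check itself is not established.
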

\begin{proof}
  By \cref{thm:grosse-galdarst-hida-fam} the representation $\calTexp{0}$ is an unramified direct summand of $\calT$, and since the whole representation $\calT$ is ramified at $p$, we must have $\calTexp{0}=\calTexp{\inertia_p}$ for dimension reasons. Since we further assumed that $\calTexp{0}=\calTexp{+}$ and $\calT=\MMSS^\ord(Np^\infty,\calI)$, we conclude that $\MMSS^\ord(Np^\infty,\calI)^{\inertia_p}=\MMSS^\ord(Np^\infty,\calI)^+$.

  By \cref{thm:lambda-adic-es} the kernel of the $\calI$-adic Eichler-Shimura map is $\MMSS^\ord(Np^\infty,\calI)^{\inertia_p}$, so it follows immediately from the definition of $U^+$ that $U^+=0$.

  On the other hand, fix $\varphi\in\calX_\calI^\arithm$. By a similar reasoning as above we
  get $\MS_k(Np^r,\O)^+[F_\varphi]=\MS_k(Np^r,\O)[F_\varphi]^{\inertia_p}$. We now use that the morphism
  $\MMSS^\ord(Np^\infty,\calI)^\pm[F]\ra\MS_k^\ord(Np^r,\O)^\pm[F_\varphi]$
  is $\GQ$-equivariant by \cref{thm:control-theory-gq-equiv}. This tells us that $\Xi_\varphi^-$ is not fixed under the inertia group $\inertia_p$ (note that we have $\Xi_\varphi^-\neq0$ by \cref{prop:p-adic-errorterm-not-zero}). Further it implies that the kernel of the map $\MS_k^\ord(Np^r,\O)[F_\varphi]\ra\S_k^\ord(X_1(Np^r),\O)[F_\varphi]$ contains $\MS_k(Np^r,\O)[F_\varphi]^{\inertia_p}$, hence it equals $\MS_k(Np^r,\O)[F_\varphi]^{\inertia_p}$, a\-gain for dimension reasons. Therefore we get $\Psi^-_\varphi\neq0$ (with $\Psi^\pm$ as in \cref{dfn:constant-u} and $\Psi^\pm_\varphi$ the reduction modulo $P_\varphi$) and thus $U_\varphi^-\neq0$.

  So $U^-\in\calI$ defines a global section on $\Spec\calI$ that does not vanish at any point $P_\varphi\in\calX_\calI^\arithm$. Since $\calX_\calI^\arithm$ is Zariski dense in $\Spec\calI$ (since it is Zariski dense in $\Spec\calI(\Qpbar)$ and $\Spec\calI$ is $2$-dimensional) and supports of sections are closed, $U^-$ is a nowhere vanishing section, hence a unit.
\end{proof}

We are now almost ready to compute the $p$-adic period. Before that we first prove the
following lemma.

\begin{lem}\label{lem:calculation-ep-th}
  Let $f$ be an ordinary newform of weight $k$ and fix an integer $n$ with $1\le n\le k-1$
  and a Dirichlet character $\chi$ of conductor $Dp^m$ for some $m\in\N$. Write $\alpha$ for
  the unit root of the $p$-th Hecke polynomial. Observe that $\alpha=a_p$ if $p\mid N$,
  where $a_p$ is the $p$-th Hecke eigenvalue of $f$.
  We let $M$ be the motive $\Mf(\chi^*)(n)$.
  \begin{enumerate}
  \item Write $\chi$ as a product $\chi=\chi_\nr\chi_p$ with $\chi_\nr$ of conductor $D$ and
    $\chi_p$ of conductor $p^m$. Then the $\ep$-factor is
    \[ \ep(M_\frakp^\DP)=\alpha^{m}\chi_\nr(p)^mp^{-nm}\Gausssum(\chi_p). \]
  \item The Hodge invariant is $\tH(M_\frakp^\DP)=-n$.
  \end{enumerate}
\end{lem}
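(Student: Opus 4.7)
My plan is to identify the Dabrowski--Panchishkin subspace $M_\mathfrak{p}^{\DP}$ explicitly and then read off both invariants from its description as a Weil--Deligne representation at $p$.

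First I would identify $M_\mathfrak{p}^{\DP}$. The ordinarity hypothesis together with \cref{thm:grosse-galdarst-hida-fam} (applied to the Hida family containing $f$ and specialised at $f$) produces a rank-one unramified $\GQp$-stable direct summand $\Mf_\mathfrak{p}^0 \subseteq \Mf_\mathfrak{p}$ on which Frobenius acts by the unit root $\alpha$ (equal to $a_p$ when $p\mid N$). Set
\[
  M_\mathfrak{p}^{\DP} \da \Mf_\mathfrak{p}^0 \tensor \Motive(\chi^*)_\mathfrak{p} \tensor \Qp(n).
\]
To verify the Dabrowski--Panchishkin condition I would use the Hodge filtration of $\Mf$ from \cref{thm:wnk-hodge-filt}: the only nonzero graded pieces of $\DdR(\Mf)$ lie in degrees $0$ and $k-1$, both one-dimensional. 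Twisting by $\Motive(\chi^*)$ (Hodge--Tate of weight $0$) and by $\Q(n)$ (which shifts the filtration by $-n$) gives graded pieces in degrees $-n$ and $k-1-n$. Since $1\le n\le k-1$ we have $-n<0\le k-1-n$, so $\fil^0\DdR(M_\mathfrak{p})$ is the one-dimensional piece in degree $k-1-n$ while the quotient lives in degree $-n$. Because $\Mf_\mathfrak{p}^0$ is unramified it is crystalline of Hodge--Tate weight $0$, so $\DdR(M_\mathfrak{p}^{\DP})$ is one-dimensional in degree $-n$; the inclusion $M_\mathfrak{p}^{\DP}\subseteq M_\mathfrak{p}$ thus induces an isomorphism $\DdR(M_\mathfrak{p}^{\DP})\cong\DdR(M_\mathfrak{p})/\fil^0\DdR(M_\mathfrak{p})$, which is exactly the Dabrowski--Panchishkin condition.

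Part~(b) is then immediate: $\DdR(M_\mathfrak{p}^{\DP})$ is one-dimensional and concentrated in Hodge degree $-n$, so $\tH(M_\mathfrak{p}^{\DP})=-n$. For part~(a) I would decompose $\chi = \chi_\nr\chi_p$ as in the statement. The restriction of $M_\mathfrak{p}^{\DP}$ to $\GQp$ is then a tensor product of an unramified character (assembled from $\Mf_\mathfrak{p}^0$, the unramified-at-$p$ piece of $\Motive(\chi^*)_\mathfrak{p}$, and the Weil--Deligne incarnation of $\Qp(n)$) with the totally ramified local character of conductor $p^m$ coming from $\chi_p$. Applying the standard Deligne--Tate multiplicativity formula for the $\ep$-factor of an unramified twist of a ramified character, together with the identification of the local $\ep$-factor of a Dirichlet character with its Gauss sum, gives the claimed product: the $\Mf_\mathfrak{p}^0$ factor contributes $\alpha^m$, the unramified piece of $\Motive(\chi^*)_\mathfrak{p}$ contributes $\chi_\nr(p)^m$, the Tate twist contributes $p^{-nm}$, and the ramified factor contributes $\Gausssum(\chi_p)$.

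The real work here is purely convention-tracking: one must align (i) the sense in which $\calT^0$ has Frobenius eigenvalue $\alpha$ (arithmetic vs.\ geometric); (ii) whether $\Motive(\chi^*)_\mathfrak{p}$ realises $\chi$ or $\chi^{-1}$ as a Galois character, following Deligne's convention for Artin motives as adopted in \cref{sec:bases}; (iii) the Weil--Deligne Frobenius eigenvalue of $\Qp(n)$; and (iv) the precise choice of additive character and Haar measure in Fukaya--Kato's definition of $\ep$ in \cite[§3.2--3.3.4]{MR2276851}. The final formula in the statement is rigid enough that it pins down all of these choices uniquely.
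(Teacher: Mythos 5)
Your proposal is correct, and the structure matches the paper's proof, but the two diverge at a couple of points worth flagging. For part~(a) the paper simply identifies the $\GQp$-action on $M_\frakp^\DP$ as the character $\delta\otimes\chi^*\otimes\kappa_\cyc^n$ (with $\delta$ the ordinary unramified character, $\delta(\Frob_p)=\alpha$) and then cites a black-box formula for the $\ep$-factor of a de Rham character from Loeffler--Venjakob--Zerbes; you instead rederive the same product by decomposing the character into its ramified piece $\chi_p$ and the three unramified contributions, and applying the multiplicativity of $\ep$-factors under unramified twist. Both are valid; your route is more self-contained but, as you note, carries the convention-tracking burden (arithmetic vs.\ geometric Frobenius, $\chi$ vs.\ $\chi^*$ on the Galois side, the additive-character normalization in Fukaya--Kato) which the citation outsources.

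For part~(b) the paper argues abstractly: since $\DdR(M_\frakp^\DP)$ is a one-dimensional filtered subobject of $\DdR(M_\frakp)$ mapping isomorphically onto the quotient by $\fil^0$, it must sit in the lowest filtration degree, and reading that degree off \cref{thm:wnk-hodge-filt} gives $-n$. You get to the same place by directly computing the Hodge degree of the tensor factors ($\Mf_\frakp^0$ at $0$, the Artin twist at $0$, $\Q(n)$ shifting by $-n$); this is essentially the same computation run from the other end. You also verify the Dabrowski--Panchishkin condition en route, which the paper leaves implicit because it is built into the setup of the Fukaya--Kato framework; that extra check is harmless and arguably clarifying. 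No gaps.
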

\begin{proof}
  If $\delta$ is the character describing the action of $\GQp$ on the $1$-dimensional
  subrepresentatin of $Mf_\frakp$ coming from the fact that $f$ is ordinary then
  $\delta(\Frob_p)=\alpha$.
  Hence the action of $\GQp$ on $M_\frakp^\DP$ is given by the character
  $\delta\tensor\chi^*\tensor\kappa_\cyc^n$ and the first statement follows from
  \cite[Prop.\ 2.3.3]{MR3323528}.

  We turn to the second statement. Since
  $\dim_L\DdR(M_\frakp^\DP)=\dim_L\fil^0\DdR(M_\frakp)=1$ and
  \begin{equation*}
    \DdR(M_\frakp^\DP)\isom\quotient{\DdR(M_\frakp)}{\fil^0\DdR(M_\frakp)},
  \end{equation*}
  we must have $\DdR(M_\frakp^\DP)\cap\fil^0\DdR(M_\frakp)=0$. Further the functor $\DdR$ is
  exact on de Rham representations and $\DdR(M_\frakp^\DP)$ is a subobject of
  $\DdR(M_\frakp)$ as a filtered vector space. It follows that $\tH(M_\frakp^\DP)$ must be
  the unique $i\in\Z$ such that $\fil^i\DdR(M_\frakp)=\DdR(M_\frakp)$ and
  $\fil^{i+1}\DdR(M_\frakp)\neq\DdR(M_\frakp)$. It is easy to see that this means
  $\tH(M_\frakp^\DP)=-n$.
\end{proof}

\begin{thm}\label{thm:p-adic-period-hida}
  Assume that \cref{cond:big-image-hida-family} and \cref{cond:mmss-free-rank-one} are satisfied.
  Fix $\varphi\in\calX_\calI^\arithm$ of type $(k,\ep,r)$, an integer $n$ with $1\le n\le k$ and a Dirichlet character $\chi$ of conductor $Dp^m$ which we write as a product $\chi=\chi_\nr\chi_p$ with $\chi_\nr$ of conductor $D$ and $\chi_p$ of conductor $p^m$. The choices of $\eta_\varphi^\pm$ and $\delta_{0,\varphi}$ determine bases $\gamma_\varphi$ and $\delta_\varphi$ of $\Motive(F_\varphi^\new)(\chi^*)(n)_\betti^+$ resp.\ $\tangentspace{\Motive(F_\varphi^\new)(\chi^*)(n)}$ as in \cref{sec:bases}. Let $\alpha_{p,\varphi}$ be the unit root of the $p$-th Hecke eigenvalue of $F_\varphi^\new$ and $s=-\chi(-1)(-1)^n$.
  
  Then we have for the $p$-adic period
  \[ \Omega^{\gamma_\varphi,\delta_\varphi,\beta_\varphi(\chi^*,n)}_\frakp(\Motive(F_\varphi^\new)(\chi^*)(n))=-\frac{\alpha_{p,\varphi}^{-m}p^{nm} \errorterm_\frakp(\Xi^s,\eta^s_\varphi)}{U_\varphi^-\chi_\nr(p)^{m}\Gausssum(\chi_p)\Gausssum(\chi^*)}. \]
\end{thm}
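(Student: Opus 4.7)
The plan is to mirror the proof of \cref{thm:complex-period} using Faltings' $p$-adic Eichler--Shimura isomorphism (\cref{thm:faltings-padic-es}) together with its $\calI$-adic interpolation (\cref{thm:lambda-adic-es}). First, substituting $\tH(M_\frakp^\DP)=-n$ and $\ep(M_\frakp^\DP)=\alpha_{p,\varphi}^m\chi_\nr(p)^mp^{-nm}\Gausssum(\chi_p)$ from \cref{lem:calculation-ep-th} into the definition of the $p$-adic period reduces the problem to establishing that, letting $\Phi$ denote the composition of comparison maps appearing in the definition of $\Omega_\frakp$,
\[ \det_{\gamma_\varphi,\delta_\varphi}(\Phi)=-\frac{\tdR^n\,\errorterm_\frakp(\Xi^s,\eta_\varphi^s)}{U_\varphi^-\,\Gausssum(\chi^*)}. \]

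Second, the tensor decomposition $\Motive(F_\varphi^\new)(\chi^*)(n)=\Motive(F_\varphi^\new)\otimes K(n)\otimes\Motive(\chi^*)$, combined with the factored bases $\gamma_\varphi$, $\delta_\varphi$ chosen in \cref{sec:bases}, splits $\det_{\gamma_\varphi,\delta_\varphi}(\Phi)$ as a product of three determinants. By \cref{rem:kann-p-adische-periode-ueber-cp-berechnen} each factor can be computed in $\BHT$, and even in $\Cp$ on pure weight components. The standard $p$-adic periods of $K(n)$ and $\Motive(\chi^*)$ in their canonical bases contribute $\tdR^n$ and $\Gausssum(\chi^*)^{-1}$ respectively, so it remains to prove that the $\Motive(F_\varphi^\new)$-factor equals $-\errorterm_\frakp(\Xi^s,\eta_\varphi^s)/U_\varphi^-$.

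Third, this remaining factor is the determinant, in the bases $\eta_{\varphi,\new}^{-s}$ and $\delta_{0,\varphi}$, of the composition
\[ \Motive(F_\varphi^\new)_\betti^{-s}\hookrightarrow\Motive(F_\varphi^\new)_\frakp^{-s}\longrightarrow\Motive(F_\varphi^\new)_\frakp^+=\Motive(F_\varphi^\new)_\frakp^0\longrightarrow\gr^0\Motive(F_\varphi^\new)_\dR\otimes\Cp, \]
where the equality uses \cref{cond:big-image-hida-family}, the middle arrow is the $\Motive(F_\varphi^\new)$-component of $\beta_\varphi(\chi^*,n)$, and the last arrow is the restriction of the $\gr^0$-projection of Faltings' Hodge--Tate decomposition to the unramified subspace. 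I would carry this out in three moves: (a) \cref{lem:calculation-of-c-beta} controls the middle arrow, producing a ratio of $p$-adic error terms that uniformly converts $\errorterm_\frakp(\Xi^-,\eta_\varphi^-)$ into $\errorterm_\frakp(\Xi^s,\eta_\varphi^s)$ (trivial when $s=-$, equal to $C(\beta_{0,\varphi})$ when $s=+$); (b) the identity $\Psi^-=U^-F$ from \cref{dfn:constant-u}, reduced modulo $P_\varphi$ via \cref{thm:control-thm-ms} and transferred from level $Np^r$ down to the level of $F_\varphi^\new$ via the refinement morphisms of \cref{sec:refinements}, identifies the image of $\eta_{\varphi,\new}^-$ in the $\fil^0$-component of Faltings' decomposition as $U_\varphi^-/\errorterm_\frakp(\Xi^-,\eta_\varphi^-)$ times $F_\varphi^\new$; (c) compatibility of the Poincaré pairing with comparison isomorphisms (\cref{sec:poincare}), combined with the normalizations $\paarung{\eta_{\varphi,\new}^+}{\eta_{\varphi,\new}^-}_\betti^\iota=\MotiveCanBasis{K(1-k)}_\betti$ and $\delta_{0,\varphi}(w_{Np^r}F_\varphi^\new)=1$, converts the $\fil^0$-information from (b) into the desired image of $\eta_{\varphi,\new}^+$ in $\gr^0$, finishing the determinant calculation.

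The main obstacle will be step (c): the interpolated $p$-adic Eichler--Shimura map only provides the $\fil^0=S_k$-component of the specialization, whereas the $p$-adic period requires the $\gr^0=H^1(\omega^{2-k})$-component. Poincaré duality bridges the two, but extracting the precise constants and the sign $-1$ demands careful bookkeeping of the Atkin--Lehner twist built into ${\paarung{\cdot}{\cdot}}_\betti^\iota$ and of the antisymmetry of the pairing (analogous to the minus sign appearing in \cref{dfn:gr-on-mod-forms} and \cref{lem:es-gr-equiv}). A secondary subtlety is the translation from level $Np^r$, where the $\calI$-adic Eichler--Shimura lives, to the level of $F_\varphi^\new$: the refinement morphisms must be tracked so that the final formula appears in terms of $\errorterm_\frakp(\Xi^s,\eta_\varphi^s)$ rather than $\eta_{\varphi,\new}^s$.
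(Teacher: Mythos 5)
Your proposal follows essentially the same approach as the paper's proof: reduce to a determinant calculation via \cref{lem:calculation-ep-th}, factor that determinant across the tensor decomposition so the $K(n)$- and $\Motive(\chi^*)$-factors contribute $\tdR^n$ and $\Gausssum(\chi^*)^{-1}$, and then pin down the remaining $\Motive(F_\varphi^\new)$-factor by specializing $\Psi^\pm = U^\pm F$ via the control isomorphism and transferring the resulting $\S_k$-information over to $\gr^0\Motive(F_\varphi^\new)_\dR$ through the compatibility of Poincar\'e duality with the comparison isomorphisms, invoking \cref{lem:calculation-of-c-beta} to absorb $\beta_{0,\varphi}$ when $s=+$ and handling the non-newform case by refinement. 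The only slip is a notational one in your step (c): the $\S_k$-summand is $\fil^1 = \gr^{k-1}$ of the de Rham realization, not $\fil^0$ (which is all of it); the conceptual point, that the $\calI$-adic Eichler--Shimura map only gives you the $\S_k$-component and you must pass to the complementary $\gr^0$-component via the pairing, is exactly what the paper does with its two big commutative diagrams.
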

\begin{proof}
  We first assume that $\varphi$ is such that $F_\varphi$ is a newform, i.\,e.\ $F_\varphi=F_\varphi^\new$. 
  Consider the commutative diagram
  \begin{equation}\label{eqn:erstes-grosses-diagram-fuer-omega-p}
    \begin{tikzpicture}
      \matrix (m) [matrix of math nodes, row sep=9ex, column sep=2.3em, text height=1.5ex, text depth=0.25ex]
      { \BdR\tensor \wnklevel{Np^r}_\betti^+ & \BdR\tensor\wnklevel{Np^r}_p^+ & \BdR\tensor\wnklevel{Np^r}_p^0 & \;\\
        \BdR\tensor \wnklevel{Np^r}_\betti & \BdR\tensor\wnklevel{Np^r}_p & \BdR\tensor\wnklevel{Np^r}_p & \;\\
      };
      \path[->,font=\scriptsize]
      (m-1-1) edge node [above] {$\cpiso_\et$} node [below] {$\sim$} (m-1-2)
      (m-1-3) edge node [above] {$\sim$} (m-1-4)
      (m-2-1) edge node [above] {$\cpiso_\et$} node [below] {$\sim$} (m-2-2)
      (m-2-3) edge node [above] {$\sim$} (m-2-4);
      \path[left hook->,font=\scriptsize]
      (m-1-1) edge ($(m-2-1)+(0.0,0.4)$)
      (m-1-2) edge ($(m-2-2)+(0.0,0.4)$)
      (m-1-3) edge ($(m-2-3)+(0.0,0.4)$);
      \draw [double equal sign distance]
      (m-1-2) -- (m-1-3)
      (m-2-2) -- (m-2-3);
    \end{tikzpicture}
    \qquad\qquad\qquad\qquad\qquad\qquad\qquad
  \end{equation}
  \begin{equation*}
    \;
    \begin{tikzpicture}
      \matrix (m) [matrix of math nodes, row sep=9ex, column sep=1.7em, text height=1.5ex, text depth=0.25ex]
      { \, & \BdR\tensor\DdR\big(\!\wnklevel{Np^r}_p^0\big) & \BdR\tensor\left(\frac{\DdR\big(\!\wnklevel{Np^r}_p\big)}{\fil^1\DdR\big(\!\wnklevel{Np^r}_p\big)}\right) & \BdR\tensor\gr^0\wnklevel{Np^r}_\dR \\
        \, & \BdR\tensor\DdR\big(\!\wnklevel{Np^r}_p\big) & \BdR\tensor\DdR\big(\!\wnklevel{Np^r}_p\big) & \BdR\tensor\wnklevel{Np^r}_\dR. \\
      };
      \path[->,font=\scriptsize]
      (m-1-1) edge node [above] {$\sim$} (m-1-2)
      (m-1-2) edge node [above] {$\sim$} (m-1-3)
      (m-1-3) edge node [above] {$\cpiso_\dR$} node [below] {$\sim$} (m-1-4)
      (m-2-1) edge node [above] {$\sim$} (m-2-2)
      (m-2-3) edge node [above] {$\cpiso_\dR$} node [below] {$\sim$} (m-2-4);
      \path[left hook->,font=\scriptsize]
      (m-1-2) edge (m-2-2);
      \path[->>,font=\scriptsize]
      (m-2-3) edge ($(m-1-3)-(0.0,0.5)$)
      ($(m-2-4)+(0.0,0.4)$) edge ($(m-1-4)-(0.0,0.2)$);
      \draw [double equal sign distance]
      (m-2-2) -- (m-2-3);
    \end{tikzpicture}
  \end{equation*}
  On the spaces in the lower row, we have the pairings $\paarung{\cdot}{\cdot}_?$ with $?$
  being \enquote{$\betti$}, \enquote{$\dR$} and \enquote{$p$}, and all the maps in the lower
  row respect these
  pairings. 

  We denote the images of elements of $\BdR\tensor_\Q\wnklevel{Np^r}_\betti$ in $\BdR\tensor_\Qp\wnklevel{Np^r}_p$ by the same symbol, by abuse of notation. If we reduce \eqref{eqn:relation-xi-f-u} modulo $\varphi$ we get $U_\varphi^\pm F_\varphi=\Psi_\varphi^\pm$, where $\Psi_\varphi^\pm$ denotes the reduction of $\Psi^\pm$ modulo $\varphi$. Using \cref{thm:lambda-adic-es} and the definition of the $p$-adic error term (\cref{dfn:p-adic-error-term}), this means that the $p$-adic comparison isomorphism (see \cref{thm:faltings-padic-es})
  \begin{multline*}
    \wnklevel{Np^r}_p\tensor_\Qp\C_p=\Hp^1(Y(Np^r)\fibertimes_\Z\Qpbar,\Sym^{k-2}\RD^1f_*\smuline\Z_p)\tensor_{\Z_p}\C_p \isom \\
    \S_k(\Gamma(Np^r),\C_p)(1-k)
    \;\oplus\;
    \HL^1(X(Np^r),\omega_{X(N)}^{2-k})\tensor_\O\C_p\\\surj\S_k(\Gamma(Np^r),\C_p)(1-k)=\gr^{k-1}\wnklevel{Np^r}_\dR\tensor_\Q\C_p(1-k)
  \end{multline*}
  maps $\errorterm_\frakp(\Xi^\pm,\eta^\pm_\varphi)\eta_\varphi^\pm\mapsto F_\varphi\tensor U_\varphi^\pm\tdR^{1-k}$, where we identify $\Cp(1-k)$ with $\Cp\cdot\tdR^{1-k}\subseteq\BdR$.\footnote{Recall that we ignored the Galois actions on the various modules in \cref{thm:lambda-adic-es}. We use $\tdR$ as a basis of $\Cp(1)$ to identify $\Cp(1-k)$ with $\Cp$. This is where the factor $\tdR^{1-k}$ comes from.}

  Let $\rho^\pm\in\BdR\tensor_\Q\wnklevel{Np^r}_\dR$ be the images of $\eta^\pm_\varphi\in\BdR\tensor_K\Motive(F_\varphi^\new)_\betti\subseteq\BdR\tensor_\Q\wnklevel{Np^r}_\betti$ under the map in the lower row in diagram \eqref{eqn:erstes-grosses-diagram-fuer-omega-p}.
  We know by definition of $\eta^\pm_\varphi$ that
  \begin{equation*}
    \paarung{\eta^\pm_\varphi}{w_{Np^r}\errorterm_\frakp(\Xi^\mp,\eta^\mp_\varphi)\tdR^{k-1}\eta_\varphi^\mp}_p=\errorterm_\frakp(\Xi^\mp,\eta^\mp_\varphi)\tdR^{k-1}\cdot\MotiveCanBasis{K(1-k)}
_p.
  \end{equation*}
  Hence since the comparison isomorphism respects the pairings and maps $\errorterm_\frakp(\Xi^\mp,\eta^\mp_\varphi)\tdR^{k-1}\eta_\varphi^\mp\mapsto U_\varphi^\pm F_\varphi$ it follows
  \begin{equation*}
    \paarung{\rho^+}{w_{Np^r}F_\varphi}_\dR=\frac{\errorterm_\frakp(\Xi^-,\eta^-_\varphi)}{U_\varphi^-}.
  \end{equation*}
  This means that the map in the upper row in diagram \eqref{eqn:erstes-grosses-diagram-fuer-omega-p} maps
  \begin{equation*}
    \eta_\varphi^+\mapsto\frac{\errorterm_\frakp(\Xi^-,\eta^-_\varphi)}{U_\varphi^-}\delta_{0,\varphi};
  \end{equation*}
  note that although this latter map is defined over $\BdR$, our  calculations over $\Cp$ suffice for seeing this, as explained in \cref{rem:kann-p-adische-periode-ueber-cp-berechnen}.

  We now look at the following variant of diagram \eqref{eqn:erstes-grosses-diagram-fuer-omega-p}, which differs only by the extra map $\beta_{0,\varphi}^{-1}$:
  \begin{equation}\label{eqn:zweites-grosses-diagram-fuer-omega-p}
    \begin{tikzpicture}
      \matrix (m) [matrix of math nodes, row sep=9ex, column sep=2.3em, text height=1.5ex, text depth=0.25ex]
      { \BdR\tensor \wnklevel{Np^r}_\betti^- & \BdR\tensor\wnklevel{Np^r}_\frakp^- & \BdR\tensor\wnklevel{Np^r}_\frakp^+ & \BdR\tensor\wnklevel{Np^r}_\frakp^0 & \;\\
        \BdR\tensor \wnklevel{Np^r}_\betti & \BdR\tensor\wnklevel{Np^r}_\frakp & \BdR\tensor\wnklevel{Np^r}_\frakp & \BdR\tensor\wnklevel{Np^r}_\frakp & \;\\
      };
      \path[->,font=\scriptsize]
      (m-1-1) edge node [above] {$\cpiso_\et$} node [below] {$\sim$} (m-1-2)
      (m-1-4) edge node [above] {$\sim$} (m-1-5)
      (m-2-1) edge node [above] {$\cpiso_\et$} node [below] {$\sim$} (m-2-2)
      (m-2-4) edge node [above] {$\sim$} (m-2-5)
      (m-1-2) edge node [above] {$\beta_{0,\varphi}^{-1}$} node [below] {$\sim$} (m-1-3)
      (m-2-2) edge node [above] {$\beta_{0,\varphi}^{-1}$} node [below] {$\sim$} (m-2-3);
      \path[left hook->,font=\scriptsize]
      (m-1-1) edge ($(m-2-1)+(0.0,0.4)$)
      (m-1-2) edge ($(m-2-2)+(0.0,0.4)$)
      (m-1-3) edge ($(m-2-3)+(0.0,0.4)$)
      (m-1-4) edge ($(m-2-4)+(0.0,0.4)$);
      \draw [double equal sign distance]
      (m-1-3) -- (m-1-4)
      (m-2-3) -- (m-2-4);
    \end{tikzpicture}
    \qquad\qquad\qquad\qquad
  \end{equation}
  \begin{equation*}
    \;
    \begin{tikzpicture}
      \matrix (m) [matrix of math nodes, row sep=9ex, column sep=1.6em, text height=1.5ex, text depth=0.25ex]
      { \, & \BdR\tensor\DdR\big(\!\wnklevel{Np^r}_p^0\big) & \BdR\tensor\left(\frac{\DdR\big(\!\wnklevel{Np^r}_p\big)}{\fil^1\DdR\big(\!\wnklevel{Np^r}_p\big)}\right) & \BdR\tensor\gr^0\wnklevel{Np^r}_\dR \\
        \, & \BdR\tensor\DdR\big(\!\wnklevel{Np^r}_p\big) & \BdR\tensor\DdR\big(\!\wnklevel{Np^r}_p\big) & \BdR\tensor\wnklevel{Np^r}_\dR. \\
      };
      \path[->,font=\scriptsize]
      (m-1-1) edge node [above] {$\sim$} (m-1-2)
      (m-1-2) edge node [above] {$\sim$} (m-1-3)
      (m-1-3) edge node [above] {$\cpiso_\dR$} node [below] {$\sim$} (m-1-4)
      (m-2-1) edge node [above] {$\sim$} (m-2-2)
      (m-2-3) edge node [above] {$\cpiso_\dR$} node [below] {$\sim$} (m-2-4);
      \path[left hook->,font=\scriptsize]
      (m-1-2) edge (m-2-2);
      \path[->>,font=\scriptsize]
      (m-2-3) edge ($(m-1-3)-(0.0,0.5)$)
      ($(m-2-4)+(0.0,0.4)$) edge ($(m-1-4)-(0.0,0.2)$);
      \draw [double equal sign distance]
      (m-2-2) -- (m-2-3);
    \end{tikzpicture}
  \end{equation*}
  Here the maps in the lower row are no longer compatible with the pairings because we introduced the map $\beta_{0,\varphi}^{-1}$. Anyway, by definition of $C(\beta_{0,\varphi})$ and our previous calculations, we know that the map in the upper row in diagram \eqref{eqn:zweites-grosses-diagram-fuer-omega-p} maps
  \begin{equation*}
    \eta_\varphi^-\mapsto\frac{C(\beta_{0,\varphi})\errorterm_\frakp(\Xi^-,\eta^-_\varphi)}{U_\varphi^-}\delta_{0,\varphi} = \frac{\errorterm_\frakp(\Xi^+,\eta^+_\varphi)}{U_\varphi^-}\delta_{0,\varphi},
  \end{equation*}
  where the equality above comes from \cref{lem:calculation-of-c-beta}.
  
  Now let $s=-\chi(-1)(-1)^n$. We know
  \begin{align*}
    \Motive(F_\varphi)(\chi^*)(n)_\betti^+ &= \Motive(F_\varphi)_\betti^{-s}\tensor\Motive(\chi^*)_\betti\tensor K(n)_\betti, \\
    \tangentspace{\Motive(F_\varphi)(\chi^*)(n)}^+ &= \gr^0\Motive(F_\varphi)_\dR^s\tensor\Motive(\chi^*)_\dR\tensor K(n)_\dR, \\
    \Motive(F_\varphi)(\chi^*)(n)_\frakp^+ &= \Motive(F_\varphi)_\frakp^{-s}\tensor\Motive(\chi^*)_\frakp\tensor K(n)_\frakp, \\
    \Motive(F_\varphi)(\chi^*)(n)_\frakp^\DP &= \Motive(F_\varphi)_\frakp^0\tensor\Motive(\chi^*)_\frakp\tensor K(n)_\frakp.
  \end{align*}
  We have to look at the determinant of the composition
  \begin{align*}
    \BdR\tensor_K\Motive(F_\varphi)(\chi^*)(n)_\betti^+ & \ra[$\cpiso_\et$] \BdR\tensor_L\Motive(F_\varphi)(\chi^*)(n)_\frakp^+ \\
    & \longra[$\beta_\varphi(\chi^*,n)$]{1.4} \BdR\tensor_L \Motive(F_\varphi)(\chi^*)(n)_\frakp^\DP\\
    & \ra[$\alpha^{-1}$] \BdR\tensor_L\DdR(\Motive(F_\varphi)(\chi^*)(n)_\frakp^\DP) \\
    & \ra[$\operatorname{dp}$]\BdR\tensor_L\left(\quotient{\DdR(\Motive(F_\varphi)(\chi^*)(n)_\frakp)}{\fil^0\DdR(\Motive(F_\varphi)(\chi^*)(n)_\frakp)}\right) \\
    & \ra[$\cpiso_\dR$]\BdR\tensor_K\tangentspace{\Motive(F_\varphi)(\chi^*)(n)}.
  \end{align*}
  By definition of $\beta\colon\mathbb T(1)^+\isom\mathbb T^0(1)$ we have that
  \begin{equation*}
    \beta_\varphi(\chi^*,n)\colon \BdR\tensor_L \Motive(F_\varphi)_\frakp^{-s}\tensor\Motive(\chi^*)_\frakp\tensor K(n)_\frakp \ra \BdR\tensor_L \Motive(F_\varphi)_\frakp^0\tensor\Motive(\chi^*)_\frakp\tensor K(n)_\frakp
  \end{equation*}
  is the identity if $s=-1$ and is $\beta_{0,\varphi}^{-1}$ (tensored with the identity) if
  $s=+1$. Hence the above composition is the tensor product of the upper row of the diagram
  \eqref{eqn:erstes-grosses-diagram-fuer-omega-p} or
  \eqref{eqn:zweites-grosses-diagram-fuer-omega-p} with the comparison isomorphisms for the
  motives $\Motive(\chi^*)$ and $\Q(n)$, according to $s=-1$ or $s=+1$. Therefore this
  composition maps
  \begin{equation*}
    \gamma_\varphi=\eta^{\chi(-1)(-1)^n}_\varphi\tensor(\MotiveCanBasis{K(n)}_\betti)^{\tensor n}\tensor\DirichletCanBasis_\betti \mapsto 
    \left(\frac{\errorterm_\frakp(\Xi^s,\eta^s_\varphi)}{U_\varphi^-}\delta_{0,\varphi}\right)\tensor\left(\tdR\MotiveCanBasis{K(n)}_\dR\right)^{\tensor n}\tensor\left(\Gausssum(\chi^*)^{-1}\DirichletCanBasis_\dR\right).
  \end{equation*}
  The claim now follows from \cref{lem:calculation-ep-th}, which completes the proof in the case where $F_\varphi$ is a newform.

  In the case where $F_\varphi$ is not a newform (then automatically of level $Np$), we replace diagram \eqref{eqn:erstes-grosses-diagram-fuer-omega-p} by
  \begin{equation*}
      \begin{tikzpicture}[cross line/.style={preaction={draw=white, -, line width=7pt}}]
        \matrix (m) [matrix of math nodes, row sep=2.3em, column sep=0pt, text height=1.5ex, text depth=0.25ex]
        {
          & \BdR\tensor \wnklevel{N}_\betti^+ & & \BdR\tensor\wnklevel{N}_p^+ & & \BdR\tensor\wnklevel{N}_p^0 & & \;\\
          \BdR\tensor \wnklevel{Np}_\betti^+ & & \BdR\tensor\wnklevel{Np}_p^+ & & \BdR\tensor\wnklevel{Np}_p^0 & & \qquad\quad\\
          & \BdR\tensor \wnklevel{N}_\betti & & \BdR\tensor\wnklevel{N}_p & & \BdR\tensor\wnklevel{N}_p & & \;\\
          \BdR\tensor \wnklevel{Np}_\betti & & \BdR\tensor\wnklevel{Np}_p & & \BdR\tensor\wnklevel{Np}_p & & \;\\
        };
        \path[->,font=\scriptsize]
        (m-1-2) edge node [above] {$\cpiso_\et$} node [below] {$\sim$} (m-1-4)
        (m-1-6) edge node [above] {$\sim$} (m-1-8)
        (m-3-2) edge (m-3-4)
        (m-3-6) edge node [above] {$\sim$} (m-3-8);
        \path[left hook->,font=\scriptsize]
        (m-1-2) edge (m-3-2)
        (m-1-4) edge (m-3-4)
        (m-1-6) edge (m-3-6);
        \draw [double equal sign distance]
        (m-1-4) -- (m-1-6)
        (m-3-4) -- (m-3-6);
        \path[->,font=\scriptsize]
        (m-2-1) edge [cross line] (m-2-3)
        (m-2-5) edge [cross line] (m-2-7)
        (m-4-1) edge node [above] {$\cpiso_\et$} node [below] {$\sim$} (m-4-3)
        (m-4-5) edge node [above] {$\sim$} (m-4-7);
        \path[left hook->,font=\scriptsize]
        (m-2-1) edge [cross line] ($(m-4-1)+(0.0,0.4)$)
        (m-2-3) edge [cross line] ($(m-4-3)+(0.0,0.4)$)
        (m-2-5) edge [cross line] ($(m-4-5)+(0.0,0.4)$);
        \draw [double equal sign distance, cross line]
        (m-2-3) -- (m-2-5);
        \draw [double equal sign distance]
        (m-4-3) -- (m-4-5);
        \path[->,font=\scriptsize]
        (m-1-2) edge (m-2-1)
        (m-1-4) edge (m-2-3)
        (m-1-6) edge (m-2-5)
        (m-3-2) edge (m-4-1)
        (m-3-4) edge (m-4-3)
        (m-3-6) edge (m-4-5);
      \end{tikzpicture}
    \qquad\qquad\qquad\qquad\qquad\qquad\qquad
  \end{equation*}
  \nopagebreak
  \begin{equation*}
    \hspace*{1.3em}
      \begin{tikzpicture}[cross line/.style={preaction={draw=white, -, line width=7pt}}]
        \matrix (m) [matrix of math nodes, row sep=2.3em, column sep=-1.5em, text height=1.5ex, text depth=0.25ex]
        {
          & \, & & \BdR\tensor\DdR(\wnklevel{N}_p^0) & &
          \BdR\tensor\left(\frac{\DdR(\wnklevel{N}_p)}{\fil^1\DdR(\wnklevel{N}_p)}\right) & & \BdR\tensor\gr^0\wnklevel{N}_\dR \\
          \; & & \BdR\tensor\DdR(\wnklevel{Np}_p^0) & & 
          \BdR\tensor\left(\frac{\DdR(\wnklevel{Np}_p)}{\fil^1\DdR(\wnklevel{Np}_p)}\right) & & \BdR\tensor\gr^0\wnklevel{Np}_\dR \\
          & \, & & \BdR\tensor\DdR(\wnklevel{N}_p) & & \BdR\tensor\DdR(\wnklevel{N}_p) & & \BdR\tensor\wnklevel{N}_\dR \\
          \; & & \BdR\tensor\DdR(\wnklevel{Np}_p) & & \BdR\tensor\DdR(\wnklevel{Np}_p) & & \BdR\tensor\wnklevel{Np}_\dR \\
        };
        \path[->,font=\scriptsize]
        (m-1-2) edge node [above] {$\sim$} (m-1-4)
        (m-1-4) edge node [above] {$\sim$} (m-1-6)
        (m-1-6) edge node [above] {$\cpiso_\dR$} node [below] {$\sim$} (m-1-8)
        (m-3-2) edge node [above] {$\sim$} (m-3-4)
        (m-3-6) edge (m-3-8);
        \path[left hook->,font=\scriptsize]
        (m-1-4) edge (m-3-4);
        \path[->>,font=\scriptsize]
        (m-3-6) edge ($(m-1-6)-(0.0,0.4)$)
        (m-3-8) edge (m-1-8);
        \draw [double equal sign distance]
        (m-3-4) -- (m-3-6);
        \path[->,font=\scriptsize]
        (m-2-3) edge [cross line] (m-2-5)
        (m-2-5) edge [cross line] (m-2-7)
        (m-4-5) edge node [above] {$\cpiso_\dR$} node [below] {$\sim$} (m-4-7);
        \path[left hook->,font=\scriptsize]
        (m-2-3) edge [cross line] (m-4-3);
        \path[->>,font=\scriptsize]
        (m-4-5) edge [cross line] ($(m-2-5)-(0.0,0.4)$)
        ($(m-4-7)+(0.0,0.4)$) edge [cross line] (m-2-7);
        \draw [double equal sign distance]
        (m-4-3) -- (m-4-5);
        \path[->,font=\scriptsize]
        (m-1-4) edge ($(m-2-3)+(0.6,0.3)$)
        (m-1-6) edge ($(m-2-5)+(0.9,0.5)$)
        (m-1-8) edge ($(m-2-7)+(0.6,0.3)$)
        (m-3-4) edge ($(m-4-3)+(0.8,0.3)$)
        (m-3-6) edge ($(m-4-5)+(0.8,0.3)$)
        (m-3-8) edge (m-4-7);
      \end{tikzpicture}
  \end{equation*}
  where the maps from the back layer to the front layer are induced from the motivic refinement morphism $\Ref_{\alpha_\varphi}\colon\wnk\tensor_\Q K\ra\wnklevel{Np}\tensor_\Q K$ from \cref{cor:motivic-refinements}, and similarly also for diagram \eqref{eqn:zweites-grosses-diagram-fuer-omega-p} (we omit drawing this).
  Here the front and back faces are just the diagrams from before, which clearly commute, and the top and bottom faces as well as the vertical ones commute since they come from a morphism of motives. By our choices of $\eta^\pm_{\varphi}$ and $\eta^\pm_{\varphi,\new}$ and the commutativity, it does not matter if we take the determinant in the front or back layer, and by a similar reasoning as before we obtain the result also in this case.
\end{proof}

\subsection{$p$-adic $L$-functions}

We begin this section by writing down as explicitely as possible the conjectural
interpolation value of Fukaya and Kato in the case of a Hida family. To do so we need to
calculate the local correction factor that appears in their formula.

\begin{lem}\label{lem:calculation-local-factor}
  Fix a newform $f\in\S_k(\Gamma_1(N),\psi,K)$, an integer $n$ with $1\le n\le k-1$ and a
  Dirichlet character $\chi$ of conductor $Dp^m$ with $p\nmid D$. Assume that $f$ is
  ordinary at $\frakp$, where $\frakp$ is the place of $K$ lying above $p$ which is fixed by
  our embedding $K\subseteq\Qbar\subseteq\Qpbar$. Write $\alpha$ for the unit root of the
  $p$-th Hecke polynomial. Observe that $\alpha=a_p$ if $p\mid N$, where $a_p$ is the $p$-th
  Hecke eigenvalue of $f$.
  \begin{enumerate}
  \item We have
    \[ \EulerFactorP_p(\Mf(\chi^*),T)=1-\alpha^{-1}\widetilde{\chi\psi}(p)p^{k-1}T. \]
  \item The {local correction factor} of the motive $M\da\Mf(\chi^*)(n)$ is
    \[ \operatorname{LF}_p(M) =
      (1-\alpha^{-1}\chi^*(p)p^{n-1})(1-\alpha^{-1}\widetilde{\chi\psi}(p)p^{k-n-1}). \]
  \end{enumerate}
  Here $\widetilde{\chi\psi}$ denotes the primitive character associated to $\chi\psi$.
\end{lem}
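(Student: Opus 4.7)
The plan is to compute the Euler factor directly from $\EulerFactorP_p(V,T)=\det_L(1-\frobcris T, \Dcris(V))$ by analysing the local Galois structure of $\Mf_\frakp$ at $p$.

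For part (a), I would invoke ordinarity of $f$ at $\frakp$ to obtain an exact sequence of $\GQp$-modules
\[ 0\to V^0\to\Mf_\frakp\to V^{00}\to 0 \]
in which $V^0$ is the canonical rank-one unramified subrepresentation on which $\Frob_p$ acts as $\alpha$, and $V^{00}$ is the rank-one quotient with $\GQp$ acting through the character $\psi\kappa_\cyc^{k-1}\chi_2^{-1}$, where $\chi_2$ denotes the unramified character satisfying $\chi_2(\Frob_p)=\alpha$ (determined by $\det\rho_f=\psi\kappa_\cyc^{k-1}$). After twisting by $\chi^*$ and applying $\Dcris$ (using its left-exactness together with the standard criterion that a one-dimensional $\GQp$-representation has non-zero $\Dcris$ iff it is the product of an unramified character with an integer power of $\kappa_\cyc$), I observe that only the quotient $V^{00}(\chi^*)$ contributes, and only when $\widetilde{\chi\psi}$ has no $p$ in its conductor; in that case $\frobcris$ acts by $\alpha^{-1}\widetilde{\chi\psi}(p)p^{k-1}$. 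This yields (a), under the implicit convention that $\widetilde{\chi\psi}(p)=0$ when the primitive character is ramified at $p$.

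For part (b), I substitute into the definition of $\operatorname{LF}_p$ for $M=\Mf(\chi^*)(n)$. The Dabrowski-Panchishkin subspace $M_\frakp^\DP$ is identified with the $n$-th Tate twist of $V^0(\chi^*)$, a one-dimensional subrepresentation. The ratio $\EulerFactorP_p(M_\frakp,T)/\EulerFactorP_p(M_\frakp^\DP,T)$ at $T=1$ collapses to the Euler factor of the quotient $M_\frakp/M_\frakp^\DP$, which by part (a) and the Tate shift gives $1-\alpha^{-1}\widetilde{\chi\psi}(p)p^{k-n-1}$. The remaining factor $\EulerFactorP_p((M_\frakp^\DP)^*(1),1)$ is computed by dualising the Frobenius eigenvalue of $M_\frakp^\DP$ and incorporating the additional $p^{-1}$-shift from the Tate twist by $1$, producing $1-\alpha^{-1}\chi^*(p)p^{n-1}$. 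Multiplying the two surviving factors yields the claimed product.

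The main obstacle is the consistent bookkeeping of sign and Frobenius conventions: whether $\frobcris$ on $\Dcris(\Qp(1))$ acts as $p$ or $p^{-1}$, the behaviour of Frobenius under the dual, and the precise meaning of $\chi^*(p)$ for a Dirichlet character must all be aligned so that the stated powers of $p$ and the values of $\chi^*$ or $\widetilde{\chi\psi}$ emerge in the right positions. A secondary issue is the case analysis at $p$, verifying that both factors collapse to $1$ appropriately---namely, that $\widetilde{\chi\psi}(p)$ and $\chi^*(p)$ are to be interpreted as zero whenever the respective characters are ramified at $p$.
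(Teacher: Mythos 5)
Your starting point coincides with the paper's: both begin from the ordinary local short exact sequence $0\to\delta\to\Mf_\frakp\to\ep\to0$ with $\delta$ unramified, $\delta(\Frob_p)=\alpha$, and $\delta\ep=\psi^*\kappa_\cyc^{1-k}$ (up to a convention discrepancy noted below), twist by $\chi^*$, and reduce the computation to a rank-one character. The gap is the step where you apply $\Dcris$. You invoke only left exactness, but the conclusion you draw --- that ``only the quotient contributes,'' so that $\Dcris(\Mf(\chi^*)_\frakp)$ equals $\Dcris(\ep\chi^*)$ with the stated Frobenius eigenvalue --- requires the induced map $\Dcris(\Mf(\chi^*)_\frakp)\to\Dcris(\ep\chi^*)$ to be \emph{surjective}. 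Left exactness only gives that $\Dcris(\Mf(\chi^*)_\frakp)$ (modulo $\Dcris(\delta\chi^*)$) injects into $\Dcris(\ep\chi^*)$; a priori that injection could be the zero map. Right exactness of $\Dcris$ along a short exact sequence of $\GQp$-representations is false in general (the nonsplit semistable extension of $\Qp$ by $\Qp(1)$ given by the Kummer class of $p$ has one-dimensional $\Dcris$). The same issue is hidden in your part (b), where you assert that the ratio $\EulerFactorP_p(M_\frakp,T)/\EulerFactorP_p(M_\frakp^\DP,T)$ ``collapses to the Euler factor of the quotient'': that multiplicativity is precisely this exactness, and it does real work in the case $m>0$, $\psi_p=\chi_p^*$, where the $\DP$-factor is trivially $1$ and the identity you are implicitly using is exactly $\EulerFactorP_p(M_\frakp,T)=\EulerFactorP_p(M_\frakp/M_\frakp^\DP,T)$.

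The bulk of the paper's proof of (a) is devoted to precisely this missing surjectivity. It untwists by $\chi\ep^*$ to view $\Mf(\ep^*)_\frakp$ as an extension of the trivial character by $\delta\ep^*$ with class $x\in\HL^1(\Qp,\delta\ep^*)$; shows $x\in\Hg^1$ using exactness of $\DdR$ on de Rham representations; invokes the Bloch--Kato dimension formula to get $\Hg^1(\Qp,\delta\ep^*)=\Hf^1(\Qp,\delta\ep^*)$; concludes from $x\in\Hf^1$ that the $\Dcris$-sequence for the twisted representation is exact; and then tensors back. You need this argument (or a substitute) before discarding the subrepresentation and reading off the Frobenius eigenvalue from the quotient. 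Two smaller remarks: your quotient character $\psi\kappa_\cyc^{k-1}\chi_2^{-1}$ is the inverse normalization of the paper's $\ep=\delta^{-1}\psi^*\kappa_\cyc^{1-k}$ --- make explicit which normalization of $\rho_f$ you are using, since the extracted Frobenius eigenvalue depends on it. And the paper proves (b) not by one uniform multiplicativity argument but by a case analysis on $m=0$ vs.\ $m>0$, and for $m=0$ on $p\nmid N$ vs.\ $p\mid N$; your uniform statement is cleaner but leans on the missing exactness input in place of that case analysis.
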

\begin{proof}
  We first compute $\EulerFactorP_p(\Mf(\chi^*),T)$. Choose a basis of $\Mf_\frakp$ with
  respect to which the representation has the form $\binmatrix\delta*{}\ep$ with a character
  $\ep$ of $\GQ$. Looking at the determinant we get $\delta\ep=\psi^*\kappa_\cyc^{1-k}$. We
  have an exact sequence
  \begin{equation*}
    0\ra\delta\chi^*\ra \Mf(\chi^*)_\frakp\ra\ep\chi^*\ra0
  \end{equation*}
  and we claim that the sequence
  \begin{equation}\label{eqn:dcris-mf-twist}\tag{$*$}
    0\ra\Dcris(\delta\chi^*)\ra\Dcris(\Mf(\chi^*)_\frakp)\ra\Dcris(\ep\chi^*)\ra0
  \end{equation}
  is exact. It is clear that this sequence is left exact.

  We tensor the first sequence with $\chi\ep^*$ and obtain
  \begin{equation*}
    0\ra\delta\ep^*\ra \Mf(\ep^*)_\frakp\ra L\ra0,
  \end{equation*}
  where $L$ stands for the trivial representation. This is an extension of $\delta\ep^*$ by
  the trivial representation, thus it defines a class $x\in\HL^1(\Qp,\delta\ep^*)$. Since
  all representations in the sequence are de Rham, the sequence
  \begin{equation*}
    0\ra\DdR(\delta\ep^*)\ra \DdR(\Mf(\ep^*)_\frakp)\ra \DdR(L)\ra0
  \end{equation*}
  is still exact. Therefore we have $x\in\Hg^1(\Qp,\delta\ep^*)$ and by the dimension
  formula in \cite[Cor.\ 3.8.4]{MR1086888} we have
  $\Hg^1(\Qp,\delta\ep^*)=\Hf^1(\Qp,\delta\ep^*)$ in this situation, so the sequence
    \begin{equation*}
    0\ra\Dcris(\delta\ep^*)\ra \Dcris(\Mf(\ep^*)_\frakp)\ra \Dcris(L)\ra0
  \end{equation*}
  is exact. We now tensor this sequence with $\Dcris(\ep\chi^*)$ and obtain a morphism of
  exact sequences
  \begin{equation*}
    \begin{tikzpicture}
      \matrix (m) [matrix of math nodes, row sep=2em, column sep=1.5em, text height=1.5ex, text depth=0.25ex]
      { 0 & \Dcris(\delta\ep^*)\tensor\Dcris(\ep\chi^*) & \Dcris(\Mf(\ep^*)_\frakp)\tensor\Dcris(\ep\chi^*) & \Dcris(\ep\chi^*) & 0  \\
        0 & \Dcris(\delta\chi^*) & \Dcris(\Mf(\chi^*)_\frakp) & \Dcris(\ep\chi^*). \\};
      \draw [double equal sign distance] (m-1-4) -- (m-2-4);
      \path[->,font=\scriptsize]
      (m-1-1) edge (m-1-2)
      (m-1-2) edge (m-2-2)
      (m-1-2) edge (m-1-3)
      (m-1-3) edge (m-1-4)
      (m-1-4) edge (m-1-5)
      (m-2-1) edge (m-2-2)
      (m-2-2) edge (m-2-3)
      (m-1-3) edge (m-2-3)
      (m-2-3) edge (m-2-4);
    \end{tikzpicture}
  \end{equation*}
  From this diagram we see that the lower right
  map is surjective, hence the sequence \eqref{eqn:dcris-mf-twist} is exact.

  Since $\delta$ is unramified and $\chi$ is ramified, we have
  $\Dcris(\delta\chi^*)=0$. From the sequence \eqref{eqn:dcris-mf-twist} we get then
  $\Dcris(\Mf(\chi^*)_\frakp)\cong\Dcris(\ep\chi^*)$ and we need to determine when the
  character $\ep\chi^*$ is crystalline, which depends only on its restriction to the inertia
  group $\inertia_p$. Decompose the characters $\chi$ and $\psi$ into $p$-power and
  prime-to-$p$ conductor parts $\chi=\chi_p\chi_\nr$ and $\psi=\psi_p\psi_\nr$. We have
  $\ep\restrict{\inertia_p}=(\delta\ep)\restrict{\inertia_p}=(\psi^*\kappa_\cyc^{1-k})\restrict{\inertia_p}$
  because $\delta$ is unramified, so
  $(\ep\chi^*)\restrict{\inertia_p}=((\psi\chi)^*\kappa_\cyc^{1-k})\restrict{\inertia_p}$
  and we see that the character is crystalline if and only if $\psi_p=\chi^*_p$. In this
  case, $\frobcris$ acts on
  $\Dcris(\delta\ep\chi^*)=\Dcris((\psi_\nr\chi_\nr)^*\kappa_\cyc^{1-k})$ by
  $\psi_\nr\chi_\nr(p)p^{k-1}$, hence it acts on $\Dcris(\ep\chi^*)$ as
  $\delta(\Frob_p)^{-1}\psi_\nr\chi_\nr(p)p^{k-1}=\alpha^{-1}\psi_\nr\chi_\nr(p)p^{k-1}$ as
  claimed. Otherwise we get $\Dcris(M_\frakp)=0$ and the Euler factor is $1$.

  Now we turn to the second statement.
  The action of $\GQp$ on $M_\frakp^\DP$ is given by the character
  $\delta\tensor\chi^*\tensor\kappa_\cyc^n$.
  We have
  \begin{equation}\label{eqn:hecke-pol-mf-prooflabel}\tag{$*$}
    \EulerFactorP_p(\Mf_\frakp,T) = 1-a_p T+\psi(p)p^{k-1}T^2;
  \end{equation}
  note that the roots of this polynomial are the inverses of the roots of the Hecke polynomial, in particular $\alpha^{-1}$ is a root.
  
  We distinguish two cases.
  \begin{arabiclist}
  \item We first assume that $m=0$, i.\,e.\ $\chi$ is unramified at $p$.
    As a first step we compute the expression
    \begin{equation*}
      \frac{\EulerFactorP_p(M_\frakp,T)}{\EulerFactorP_p(M_\frakp^\DP,T)}
    \end{equation*}
    occurring in the definition of the local factor. For this we distinguish again two cases.
    \begin{arabiclist}
    \item First let $p\nmid N$.  In this case $\Motive(f)_\frakp$ is crystalline, hence also
      $M_\frakp$ is crystalline. We have then
      $\Dcris(M_\frakp)=\Dcris(\Mf_\frakp)\tensor_\Qp\Dcris(\chi^*)\tensor_\Qp\Dcris(\Qp(n))$
      so
      \begin{equation*}
        \EulerFactorP_p(M_\frakp,T) = 1-a_p\chi(p)p^{-n}T+\psi\chi^2(p)p^{k-2n-1}T^2.
      \end{equation*}
      On the other hand, by the above description of $M_\frakp^\DP$ we know that 
      \begin{equation*}
        \EulerFactorP_p(M_\frakp^\DP,T) = 1-\alpha\chi(p)p^{-n}T.
      \end{equation*}
      We claim that the quotient of these polynomials is
      \begin{equation*}
        \frac{\EulerFactorP_p(M_\frakp,T)}{\EulerFactorP_p(M_\frakp^\DP,T)} = 1-\alpha^{-1}\psi\chi(p)p^{k-n-1}T.
      \end{equation*}
      Indeed, multiplying we get
      \begin{multline*}
        (1-\alpha^{-1}\psi\chi(p)p^{k-n-1}T)(1-\alpha\chi(p)p^{-n}T)=\\1-\chi(p)p^{-n}(\alpha+\alpha^{-1}\psi(p)p^{k-1})T+\psi\chi^2(p)p^{k-2n-1}T^2
      \end{multline*}
      and since $\alpha^{-1}$ is a zero of $\EulerFactorP_p(\Mf_\frakp,T)$, it follows that $\alpha+\alpha^{-1}\psi(p)p^{k-1}=a_p$.
    \item Now we let $p\mid N$. Then $\psi(p)=0$ and \eqref{eqn:hecke-pol-mf-prooflabel}
      tells us that $\Mf_\frakp$ is not crystalline, so $M_\frakp$ is not crystalline either
      (since $\chi$ and $\kappa_\cyc^n$ are crystalline). But $\Mf_\frakp^0$ is crystalline,
      hence so is $M_\frakp^\DP$ and we get $\Dcris(M_\frakp)=\Dcris(M^\DP_\frakp)$ for
      dimension reasons. Thus the expression
      \begin{equation*}
        \frac{\EulerFactorP_p(M_\frakp,T)}{\EulerFactorP_p(M_\frakp^\DP,T)}
      \end{equation*}
      appearing in the definition of $\operatorname{LF}_p$ is equal to $1$ in this case.
    \end{arabiclist}
    It remains to compute $\EulerFactorP_p((M_\frakp^\DP)^*(1),T)$, still assuming that
    $\chi$ is unramified at $p$. We have
    $(M_\frakp^\DP)^*(1)=(\delta\tensor\chi^*\tensor\kappa_\cyc^n)^*(1)=\delta^{-1}\tensor\chi\tensor\kappa_\cyc^{1-n}$,
    so
    \begin{equation*}
      \EulerFactorP_p((M_\frakp^\DP)^*(1),T)=1-\alpha^{-1}\chi^*(p)p^{n-1}T.
    \end{equation*}
  \item Now we let $m>0$. Since $\chi$ is then ramified at $p$, the one-dimensional
    representations $M_\frakp^\DP$ and $(M_\frakp^\DP)^*(1)$ are both not crystalline, so
    $\EulerFactorP_p(M_\frakp^\DP,T)=\EulerFactorP_p((M_\frakp^\DP)^*(1),T)=1$. The value of
    the remaining expression $\EulerFactorP_p(M_\frakp,1)$ was computed in the first part.
  \end{arabiclist}
\end{proof}

\begin{rem}\label{rem:p-adic-multiplier}
  In \cite[§14]{MR830037}, Mazur, Tate and Teitelbaum introduce an expression they call the \enquote{$p$-adic multiplier}. They consider the same situation as we do here, and their $p$-adic multiplier is (up to a power of $\alpha$ which we ignore here)
  defined as
  \begin{equation}
    e_p(M)\da(1-\alpha^{-1}\chi^*(p)p^{n-1})(1-\alpha^{-1}\chi\psi(p)p^{k-n-1}),
  \end{equation}
  so by our above calculation it essentially equals the local correction factor, except that it contains $\chi\psi(p)=\chi(p)\psi(p)$ instead of the value at $p$ of the associated primitive character. We will later need to know when the two expressions differ. Recall that we always assumed $\chi$ to be primitive (while $\psi$ may be imprimitive). Write $\chi_p$ and $\psi_p$ for the $p$-parts of $\chi$ and $\psi$, respectively. Then we have $e_p(M)\neq\operatorname{LF}_p(M)$ if and only if $\chi_p\psi_p(p)\neq\widetilde{\chi_p\psi_p}(p)$, and it is elementary to check that this happens precisely when $\psi_p$ and $\chi_p$ are nontrivial and inverse to each other. In these cases, we have
  \begin{align*}
    e_p(M)&=(1-\alpha^{-1}\chi^*(p)p^{n-1}),\\
    \operatorname{LF}_p(M)&=(1-\alpha^{-1}\chi^*(p)p^{n-1})(1-\alpha^{-1}\widetilde{\chi\psi}(p)p^{k-n-1}).
  \end{align*}
\end{rem}

We have now computed all the expressions that occur in the conjectural interpolation formula
by Fukaya and Kato. Inserting all this into the general formula, the conjectural value of
the $p$-adic $L$-function evaluated at $(\varphi,\chi\kappa_\cyc^{-n})$ becomes
\begin{multline}
  \label{eqn:conjectural-palf-value}
-(n-1)!(1-\alpha_\varphi^{-1}\chi^*(p)p^{n-1})(1-\alpha_\varphi^{-1}\widetilde{\chi\ep\psi\omega^{-k}}(p)p^{k-n-1}) \\ \cdot \frac{\alpha_{p,\varphi}^{-m}p^{nm} \errorterm_\frakp(\Xi^s,\eta^s_\varphi)}{(2\pi\i)^{n+1-k}U_\varphi^-\chi_\nr(p)^{m}\Gausssum(\chi_p)\errorterm_\infty(F_\varphi,\eta_\varphi^s)} L(F_\varphi^\new\tensor\chi,n).
\end{multline}
Here $\varphi\in\calX_\calI^\arithm$ is of type $(k,\ep,r)$, $\chi$ is a primitive Dirichlet character of
conductor $Dp^m$ with $m\ge0$ and $p\nmid D$, $1\le n\le k-1$ and $s=-\chi(-1)(-1)^n$.
Further $F_\varphi^\new\tensor\chi$ denotes the unique newform almost all of whose Fourier
coefficients are the same as the ones of $\sum_n\chi(n)a_nq^n$, if $\sum_na_nq^n$ is the
Fourier expansion of $F_\varphi^\new$.

We want to express this using the naively twisted $L$-function $L(F_\varphi^\new,\chi,n)$
instead of $L(F_\varphi^\new\tensor\chi,n)$. The relation between these two twisted
$L$-functions is given by
\[
  L(F^\new_\varphi\tensor\chi,n)=L(F^\new_\varphi,\chi,n)\cdot\!\!\!\!\!\!\prod_{\ell\mid(Np^{r'},Dp^m)}\!\!\!\!\!\!\EulerFactorP_\ell(\Motive(F_\varphi^\new)(\chi^*),\ell^{-n})^{-1}, \]
where $r'$ below the product sign should mean either $r$ or $0$, depending on whether
$F_\varphi=F_\varphi^\new$ or not (so that $Np^{r'}$ is the level of $F_\varphi^\new$) and
$\EulerFactorP_\ell$ denotes the Euler factor at $\ell$ of the corresponding motive.  In the
product, the prime $\ell$ can be our fixed prime $p$ if and only if $r'>0$ and $m>0$, and by
\cref{lem:calculation-local-factor} the corresponding factor then equals
\[
  \EulerFactorP_p(\Motive(F_\varphi^\new)(\chi^*),p^{-n})=(1-\alpha_\varphi^{-1}\widetilde{\chi\ep\psi\omega^{-k}}(p)p^{k-n-1}). \]
This factor is nontrivial if and only if the $p$-parts of $\chi$ and $\ep\psi\omega^{-k}$
are inverse to each other (and nontrivial, since $m>0$ and $\chi$ is
primitive).\footnote{The case that $n=k-1$ and
  $\alpha_\varphi=\widetilde{\chi\ep\psi\omega^{-k}}(p)$ cannot occur because this would
  contradict the generalized Ramanujan conjecture proved by Deligne.} By
\cref{rem:p-adic-multiplier}, the cases in which we have a nontrivial Euler factor at $p$
are therefore precisely the cases in which
\[ (1-\alpha_\varphi^{-1}\widetilde{\chi\ep\psi\omega^{-k}}(p)p^{k-n-1})\neq (1-\alpha_\varphi^{-1}\chi\ep\psi\omega^{-k}(p)p^{k-n-1}), \]
and in these cases the first expression (coming from the local correction factor in the interpolation formula) just cancels the Euler factor at $p$ and the second expression is $1$. This discussion shows that
\begin{multline*}
   (1-\alpha_\varphi^{-1}\widetilde{\chi\ep\psi\omega^{-k}}(p)p^{k-n-1})L(F_\varphi^\new\tensor\chi,n) =\\ (1-\alpha_\varphi^{-1}\chi\ep\psi\omega^{-k}(p)p^{k-n-1})
  L(F_\varphi^\new,\chi,n)\cdot
  \!\!\!\!\!\!\prod_{\substack{\ell\mid(Np^{r'},Dp^m)\\\ell\neq p}}\!\!\!\!\!\!\EulerFactorP_\ell(\Motive(F_\varphi^\new)(\chi^*),\ell^{-n})^{-1}
\end{multline*}
in each case. To simplify the formulas below, we set
\[ \operatorname{EEF}(F_\varphi^\new,\chi,n)\da
  \!\!\!\prod_{\ell\mid(N,D)}\!\!\!\EulerFactorP_\ell(\Motive(F_\varphi^\new)(\chi^*),\ell^{-n})^{-1} \]
(where \enquote{EEF} stands for \enquote{extra Euler factors}).  In conclusion, the value of
the conjectural $p$-adic $L$-function at $(\varphi,\chi\kappa_\cyc^{-n})$ from
\eqref{eqn:conjectural-palf-value} becomes
\begin{multline*}
  -(n-1)!(1-\alpha_\varphi^{-1}\chi^*(p)p^{n-1})(1-\alpha_\varphi^{-1}\chi\ep\psi\omega^{-k}(p)p^{k-n-1}) \operatorname{EEF}(F_\varphi^\new,\chi,n) \\ \cdot \frac{\alpha_{p,\varphi}^{-m}p^{nm} \errorterm_\frakp(\Xi^s,\eta^s_\varphi)}{(2\pi\i)^{n+1-k}U_\varphi^-\chi_\nr(p)^{m}\Gausssum(\chi_p)\errorterm_\infty(F_\varphi,\eta_\varphi^s)} 
  L(F_\varphi^\new,\chi,n).
\end{multline*}

We now take Kitagawa's $p$-adic $L$-function $\mu^\Kit_F\in\psring{\calI}{G}$ for $F$,
whose interpolation formula (after a twist to have the same evaluation point) is
\begin{multline*}
  \varphi\left(\int_{\Z_{p,D}^\times}\chi\kappa^{-n}\d\mu^\Kit_F\right) = (n-1)!(1-\alpha_\varphi^{-1}\chi^*(p)p^{n-1})(1-\alpha_\varphi^{-1}\chi\ep\psi\omega^{-k}(p)p^{k-n-1}) \\ \cdot \frac{D^{n-1}p^{m(n-1)}\Gausssum(\chi^*)\errorterm_\frakp(\Xi^\pm,\eta_\varphi^\pm)}{\alpha_\varphi^{m}(2\pi\i)^{n+1-k}\errorterm_\infty(F_\varphi,\eta_\varphi^s)}L(F_\varphi^\new,\chi,n),
\end{multline*}
with $\varphi$, $\chi$, $n$ and $s$ as above.

Let us calculate the quotient of these two interpolation formulas to see what the difference is. In the calculation below, we use two classical relations for Gauß sums:
\begin{enumerate}
\item\label{item:first-gauss-sum-relation} $\Gausssum(\chi_p)\Gausssum(\chi_p^*)=\chi_p(-1)p^m$, see \cite[Lem.\ 3.1.1 (2)]{MR1021004}, 
\item $\Gausssum(\chi^*)=\chi^*_p(D)\chi^*_\nr(p^m)\Gausssum(\chi^*_p)\Gausssum(\chi^*_\nr)$, see \cite[Lem.\ 3.1.2]{MR1021004}. 
\end{enumerate}
So if we divide Kitagawa's value by Fukaya-Kato's value (at $(\varphi,\chi\kappa_\cyc^{-n})$, respectively), we get
\begin{equation}\label{eqn:quotient-palf-values}
  \begin{aligned}
    \frac{\text{Kitagawa's value}}{\text{Fukaya-Kato's value}}
    &= -\frac{U_\varphi^-D^{n-1}p^{-m}\Gausssum(\chi^*)\Gausssum(\chi_p)\chi_\nr(p)^m}{\operatorname{EEF}(F_\varphi^\new,\chi,n)} \\
    &= -\frac{U_\varphi^-D^{n-1}p^{-m}\chi_p^*(D)\chi_\nr^*(p)^m\Gausssum(\chi_p^*)\Gausssum(\chi_\nr^*)\Gausssum(\chi_p)\chi_\nr(p)^m}{\operatorname{EEF}(F_\varphi^\new,\chi,n)} \\
    &= -\frac{\chi_p(-1)U_\varphi^-D^{n-1}\chi_p^*(D)\Gausssum(\chi_\nr^*)}{\operatorname{EEF}(F_\varphi^\new,\chi,n)}.
  \end{aligned}
\end{equation}

We discuss now how to interpolate (most of) the expressions in this quotient.

Regarding the extra Euler factors, we distinguish the automorphic type (principal series,
special or supercuspidal) of the Hida family at each of the primes $\ell$. By
\cite[§3.2]{HsiehHidaFamiliesTripleProducts} and the references given there these are rigid
in the Hida family and their local Galois representations can be described as follows.
Here the character $[\kappa_\cyc]\colon\GQell\ra\calI^\times$ means the composition
\[ [\kappa_\cyc]\colon\GQell\ra[$\kappa_\cyc$]\Z_p^\times\surj 1+p\Z_p=\Gamma^\wt\ra[$\lbrack\cdot\rbrack$](\Lambda^\wt)^\times\ra\calI^\times. \]
\begin{enumerate}
\item If $F$ is principal series at $\ell$, then
  \[ \rho_F\restrict{\GQell} \cong \alpha\xi_1\kappa_\cyc^{1/2}[\kappa_\cyc]^{-1/2} \oplus \alpha^{-1}\xi_2\kappa_\cyc^{1/2}[\kappa_\cyc]^{-1/2} \]
  with $\alpha\colon\GQell\ra\calI^\times$ unramified and $\xi_1,\xi_2\colon\GQell\ra\O^\times$ of finite order.
\item If $F$ is  special at $\ell$, then
  \[ \rho_F\restrict{\GQell} \cong \binmatrix{\xi\kappa_\cyc[\kappa_\cyc]^{-1/2}}{*}{}{\xi[\kappa_\cyc]^{-1/2}} \]
  with $\alpha\colon\GQell\ra\calI^\times$ unramified and $\xi\colon\GQell\ra\O^\times$ of finite order.
\item If $F$ is  supercuspidal at $\ell$, then
  \[ \rho_F\restrict{\GQell} \cong \rho_0\tensor[\kappa_\cyc]^{-1/2} \]
  with $\rho_0\colon\GQell\ra\GL_2(\O)$ an irreducible Artin representation.
\end{enumerate}

Take an unramified character $\alpha\colon\GQell\ra\calI^\times$, a character
$\xi\colon\GQell\ra\O^\times$ of finite order and further $i,j\in\frac12\Z$. As $\xi$ is of
finite order, it factors through $\Gal{\Qp(\mu_M)}{\Qp}$ for some $M\in\N$ and we can write
it as a product $\xi=\xi_{\ell}\xi'$ as before. Assume that $M$ is chosen minimally (i.\,e.\
$M=\conductor\xi$) and let $\mu\da\ord_\ell M$. If $\nu\ge\mu$ we can and do view $\xi$ as a
character of $\Z_{p,D}^\times$. We then define
\[ \mu(\alpha,\xi,i,j)\da
  \begin{cases}
    \ell^{-i}\alpha\xi'[\kappa_\cyc]^j(\Frob_\ell)\delta_{\xi_\ell}\in\psring\calI{\Z_{p,D}^\times} & \text{if }\nu\ge\mu,\\
    0&\text{otherwise}.
  \end{cases}
\]
Using this notation and distinguishing the above three cases we can define elements
\[ \mu_\ell(F) \da
  \begin{cases}
    (\phi(\ell^\nu)-\mu(\alpha,\xi_1,\frac12,-\frac12))(\phi(\ell^\nu)-\mu(\alpha^{-1},\xi_2,\frac12,-\frac12)) & \text{if $F$ in the principal series at $\ell$}, \\
    (\phi(\ell^\nu)-\mu(1,\xi,1,-\frac12))(\phi(\ell^\nu)-\mu(1,\xi,0,-\frac12)) & \text{if $F$ special at $\ell$}, \\
    \phi(\ell^\nu)^2 & \text{if $F$ supercuspidal at $\ell$}, \\
  \end{cases} \]
It is then a straightforward calculation to check that for each
$\varphi\in\calX_\calI^\arithm$, each finite order character
$\chi\colon\Z_{p,D}^\times\ra\O^\times$ and each $n\in\Z$ we have
\[ \varphi\left(\int_{\Z_{p,D}^\times}\!\!\!\chi\kappa_\cyc^{-n}\,\d\mu_\ell(F)\right) =
  \phi(\ell^\nu)^2\EulerFactorP_\ell(\Motive(F_\varphi^\new)(\chi^*)(n),0). \]

Finally it is easy to see that there exists a unit $\mu_D\in(\psring{\calI}G)^\times$ such that for each $n\in\Z$ and Dirichlet character $\chi$ of conductor $Dp^m$ for some $m\in\N_0$, which we write as a product $\chi=\chi_\nr\chi_p$ of characters of conductors $D$ and $p^m$, we have \[ \int_G\chi\kappa_\cyc^{-n}\d\mu_D = D^{n-1}\chi_p^*(D)\Gausssum(\chi_\nr^*). \]

We now have all ingredients ready to arrive at our main result. To state it more clearly, we introduce the following notations for the technical conditions we need.
\begin{cond}
  We consider the following conditions on $F$ and primes $\ell\mid(N,C)$:
  \begin{align}
    \label{eqn:minor-condition-on-tame-levels}
    \tag{$\text{nd}_\ell$}
    \begin{minipage}{.8\textwidth}
      \begin{center}
        $p\nmid\ell-1$
      \end{center}
    \end{minipage}
    \\
    \label{eqn:in-fact-unit}
    \tag{$\mathrm{triv}_\ell$}
    \left.
    \begin{minipage}{.8\textwidth}
      \begin{center}
        $F$ is in the principal series at $\ell$ and $\ordnung_\ell\conductor\xi_i<\ord_\ell D$ for $i=1,2$, or\\
        $F$ is special at $\ell$ and $\ordnung_\ell\conductor\xi<\ordnung_\ell D$, or\\
        $F$ is supercuspidal at $\ell$
      \end{center}
    \end{minipage}
    \right\}
  \end{align}
\end{cond}

If \eqref{eqn:minor-condition-on-tame-levels} holds for $\ell$, then $\phi(\ell^\nu)$ is a unit in $\O$. If \eqref{eqn:in-fact-unit} holds for $\ell$, then $\mu_\ell(F)=\phi(\ell^\nu)^2$.
This follows directly from the definition of $\mu_\ell(F)$.

\begin{thm}\label{thm:main-thm}
  Fix coefficient rings as in \cref{setting:hida-families}, a Hida family $F$ which is new
  and basis elements as in \cref{situation:p-adic-periods}. Assume that
  \cref{cond:big-image-hida-family} and \cref{cond:mmss-free-rank-one} hold.
  \begin{enumerate}
  \item There exists a $p$-adic $L$-function $\mu^\FK_F\in\bigLambda[\frac1p]=\Qp\tensor_\Zp\psring{\calI}{G}$ such that for each $\varphi\in\calX_\calI^\arithm$ of type $(k,\ep,r)$, each primitive Dirichlet character $\chi$ of conductor $Dp^m$ for some $m\ge0$ and each $1\le n\le k-1$ the evaluation
    \[ \varphi\left(\int_{G} \chi\kappa_\cyc^{-n}\d\mu^\FK_F\right) \]
    is the value predicted by Fukaya and Kato up to a factor $\chi_p(-1)$.
  \item If we assume in addition that for each prime $\ell\mid(N,D)$ at least one of \eqref{eqn:minor-condition-on-tame-levels} and \eqref{eqn:in-fact-unit} holds,
    then $\mu^\FK_F\in\bigLambda$. In this case the ideals generated by $\mu_F^\FK$ and $\mu _F^\Kit$ differ by $\prod_{\ell\mid(N,C)}\mu_\ell(F)$.
  \item Assume that for each prime $\ell\mid(N,D)$ both \eqref{eqn:minor-condition-on-tame-levels} and \eqref{eqn:in-fact-unit} hold.
    Then $\mu^\FK_F$ and $\mu_F^\Kit$ generate the same ideal in $\psring\calI{G}$.
  \end{enumerate}
\end{thm}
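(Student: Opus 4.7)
The plan is to define $\mu_F^\FK$ as an explicit modification of Kitagawa's measure $\mu_F^\Kit$, using the quotient formula \eqref{eqn:quotient-palf-values} as a blueprint. Concretely, I would set
\[ \mu_F^\FK \da -\mu_F^\Kit\cdot\frac{\prod_{\ell\mid(N,D)}\mu_\ell(F)}{U^-\cdot\mu_D\cdot\prod_{\ell\mid(N,D)}\phi(\ell^\nu)^2}. \]
The factors $U^-\in\calI^\times$ (by \cref{thm:u-is-unit}) and $\mu_D\in(\psring\calI{G})^\times$ are units; the only obstruction to $\mu_F^\FK$ lying in $\bigLambda$ comes from the $\phi(\ell^\nu)^2$ in the denominator, which is $p$-integral but not always a $p$-adic unit. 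Hence a priori $\mu_F^\FK\in\bigLambda[\tfrac1p]$, which gives the ambient-ring statement in part (a).

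To verify the interpolation in (a), I would evaluate $\varphi(\int_G\chi\kappa_\cyc^{-n}\,d\mu_F^\FK)$ multiplicatively, substituting the formulas already gathered in the text: the specialization of $U^-$, the evaluation of $\mu_D$, the specialization of each $\mu_\ell(F)$, and Kitagawa's own interpolation formula. The product $\prod_{\ell\mid(N,D)}\mu_\ell(F)/\phi(\ell^\nu)^2$ then specializes to $\operatorname{EEF}(F_\varphi^\new,\chi,n)^{-1}$, and plugging everything into \eqref{eqn:quotient-palf-values} yields exactly $\chi_p(-1)$ times the Fukaya--Kato value; the sign $\chi_p(-1)$ that already appears in \eqref{eqn:quotient-palf-values} is the uninterpolable factor flagged in the introduction.

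For part (b), the assumption \eqref{eqn:minor-condition-on-tame-levels} at each $\ell\mid(N,D)$ makes $\phi(\ell^\nu)=\ell^{\nu-1}(\ell-1)$ a $p$-adic unit (using $\ell\neq p$, which holds since $\ell\mid N$ and $p\nmid N$). The denominators then become units and $\mu_F^\FK\in\bigLambda$. Since every factor multiplying $\mu_F^\Kit$ in the definition of $\mu_F^\FK$ is a unit except possibly $\prod_\ell\mu_\ell(F)$, one reads off
\[ (\mu_F^\FK)=\Bigl(\prod_{\ell\mid(N,D)}\mu_\ell(F)\Bigr)\cdot(\mu_F^\Kit) \]
as principal ideals in $\bigLambda$. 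For part (c), the additional assumption \eqref{eqn:in-fact-unit} reduces $\mu_\ell(F)$ to $\phi(\ell^\nu)^2$, which under \eqref{eqn:minor-condition-on-tame-levels} is itself a unit, so $\prod_\ell\mu_\ell(F)$ is a unit and the two measures generate the same ideal.

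The main obstacle I expect is bookkeeping rather than genuine content: one must track the sign $\chi_p(-1)$, the Gauss sum identities, and the distinction between naive and automorphic twists of the $L$-function consistently through the quotient \eqref{eqn:quotient-palf-values}. All the conceptual input --- the period calculations of \cref{thm:complex-period} and \cref{thm:p-adic-period-hida}, the construction of the measures $\mu_D$ and $\mu_\ell(F)$, the control theorem identifications, and the unit property of $U^-$ --- is already in place, and the final theorem is the clean combination of these ingredients through the definition above.
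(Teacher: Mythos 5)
Your proposal matches the paper's proof essentially verbatim: the paper defines $\mu_F^\FK$ by exactly the formula you write down, namely $\mu_F^\FK\da-\frac{\mu_F^\Kit}{U^-\cdot\mu_D}\prod_{\ell\mid(N,D)}\frac{\mu_\ell(F)}{\phi(\ell^{\nu_\ell})^2}$, and then asserts that the interpolation and integrality claims follow from the quotient calculation \eqref{eqn:quotient-palf-values}, the unit property $U^-\in\calI^\times$ from \cref{thm:u-is-unit}, and the identities established just before the theorem. The only wrinkle is that in part (b) you argue integrality solely from \eqref{eqn:minor-condition-on-tame-levels}, whereas the hypothesis is the disjunction with \eqref{eqn:in-fact-unit}; but in the remaining case the factor $\mu_\ell(F)/\phi(\ell^\nu)^2$ equals $1$ so integrality is immediate, and the paper glosses over this at the same level of detail you do.
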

\begin{proof}
  We just define
  \begin{equation*}
    \mu_F^\FK\da-\frac{\mu_F^\Kit}{U^-\cdot\mu_D}\prod_{\ell\mid(N,C)}\frac{\mu_\ell(F)}{\phi(\ell^{\nu_\ell})^2}.
  \end{equation*}
  This is an element of $\bigLambda$ if \eqref{eqn:minor-condition-on-tame-levels} or \eqref{eqn:in-fact-unit} holds for each $\ell\mid(N,D)$ and of $\Qp\tensor_\Zp\bigLambda$ otherwise.
  The claims follow from our previous calculations
  and the fact that $U_\varphi^-$ comes by definition from an element $U^-\in\calI^\times$ which is a unit by \cref{thm:u-is-unit}. 
\end{proof}

We close with some remarks about our result.

\begin{rem}
  \begin{enumerate}
  \item
    The conditions in \cref{thm:main-thm} that \eqref{eqn:minor-condition-on-tame-levels} or \eqref{eqn:in-fact-unit} or even both of them hold for all $\ell\mid(N,D)$ are the most general ones we could get for these results to hold, but of course they are bit ugly. Each of the following conditions implies them:
    \begin{arabiclist}
    \item $F$ is supercuspidal at all primes dividing $(N,D)$,
    \item $F$ is supercuspidal at all primes dividing $N$,
    \item $(N,D)=1$,
    \item $N=1$,
    \item $D=1$.
    \end{arabiclist}
  \item In \cref{thm:main-thm} we need the Dirichlet character $\chi$ to be primitive,
    i.\,e.\ its \enquote{$D$-part} away from $p$ must be primitive. This is because the
    interpolation formula in Kitagawa's construction is known to hold only in this case. Of
    course we can perform the construction for each $D$, but there does not seem to be an
    easy relation between the measures obtained for different $D$, see
    \cite[\enquote{Warning} on p.\ 19]{MR830037}. Fukaya's and
    Kato's formula predicts a similar interpolation behaviour also for imprimitive
    characters, but we do not know how to prove this. Of course this problem disappears if
    we set $D=1$.
  \end{enumerate}
\end{rem}

\begin{rem}\label{rem:problem}
  In \cref{thm:main-thm} we obtained the desired interpolation property only up to a factor
  of $\chi_p(-1)$. This is in fact a serious problem: It is easy to see that there cannot
  exist an element $\mu\in\bigLambda$ such that for each finite order character $\chi$ of
  $G$ and each $n$ in a fixed subset of $\Z$ containing at least one even and one odd number
  we have \[ \int_{\Gcyc}\chi^*\kappa_\cyc^n\d\mu = \chi(-1) \] (not even in the quotient
  ring of $\bigLambda$). Therefore we assume that Fukaya's and Kato's interpolation formula
  is wrong. If one uses the conjectural interpolation formula proposed by Coates and
  Perrin-Riou \cite{MR1129081} instead, the resulting formula seems to be correct.

  This is particularly intriguing because several other texts in the literature seem to
  contain errors in their interpolation formula as well. For example, Kitagawa's
  interpolation formula which we cited before is different in his original work; however the
  author carefully rewrote Kitagawa's construction in \cite[Appendix B]{Diss} and obtained
  the previously cited version and found the following error in Kitagawa's text: The formula
  in \cite[Prop.\ 4.7]{MR1279604} (or alternatively, \cite[Thm.\
  6.2]{MR1279604}) contains a factor $(-\Delta)^\nu$, where $\Delta$ is what we
  called $D$ (and $\nu$ there is what we called $n$). The same formula contains an
  expression $A(\xi,\chi,\nu+1)$. The meaning of this latter expression is defined in
  \cite[§4.1]{MR1279604}, where the defining formula also contains a factor
  $(-1)^\nu$. Hence if one inserts the definition of $A(\xi,\chi,\nu+1)$ into the formula in
  \cite[Prop.\ 4.7]{MR1279604} (or alternatively, \cite[Thm.\
  6.2]{MR1279604}), the two factors $(-1)^\nu$ should disappear. However, in the
  final formula in \cite[Thm.\ 1.1]{MR1279604}, the factor $(-\Delta)^\nu$ is
  still present, while $(-1)^\nu$ is not. Other texts, e.\,g.\ \cite{MR830037}, seem to
  contain the correct interpolation formula. The following table gives an overview on other
  texts.
  
  \bigskip
  
  \begin{tabular}[c]{p{.45\textwidth} | p{.45\textwidth}}
    \bfseries{same as Mazur/Tate/Teitelbaum} & \bfseries{same as Kitagawa} \\
    \hline
    Vishik \cite{MR0412114} & Amice-Velu \cite{MR0376534} \\
    Pollack-Stevens \cite{MR2760194}, \cite{MR3046279} & Kato \cite{MR2104361} \\
    Bella\"iche \cite{MR2929082} & Hida \cite{MR1216135} \\
    Delbourgo \cite{MR2444858} & \\
  \end{tabular}
  
  \bigskip

  A solution to this problem that was suggested by Y.\ Zaehringer \cite[Ex.\
  9.2.14]{ZaehringerDiss} is to replace the character $\psi$ in Fukaya's and Kato's text by
  its inverse, which is equivalent to changing a convention we made at the very beginning:
  on \cpageref{c-cp-orientation} we required $\C$ and $\Cp$ to be \enquote{oriented
    compatibly}, i.\,e.\ the system $\xi$ of $p$-power roots of unity in $\Cp$ should be
  identified with $(e^{2\pi\i p^{-n}})_n$ by our pair of embeddings. If we reverse this
  (thus using $(e^{-2\pi\i p^{-n}})_n$ instead), this indeed makes the unwanted factor
  $\chi(-1)$ disappear. The same choice is also made by Perrin-Riou, see \cite[top of p.\
  91]{MR1743508}. In \cite[beginning of
  §4]{MR2276851} it is written explicitly that the system
  $(\e^{2\pi\i p^{-n}})_n$ should be used, but this could be wrong. In order to figure out
  whether this can be a satisfactory solution, one should check which consequences this has
  for other motives, such as e.\,g.\ the motives attached to Dirichlet characters. But this
  lies outside the scope of this work.
\end{rem}

\begin{rem}
  At a first glance, the fact that $\mu_F^\Kit$ and $\mu_F^\FK$ do not always generate the
  same ideal looks problematic with regard to the Main Conjecture, but in fact this is
  consistent.  Since this work focuses on the analytic side of Iwasawa Theory, we do not go
  into detail here, but let us sketch the reason.

  Let $\Sigma$ be a finite set of places of $\Q$ not containing $p$.  Then one can consider
  $\Sigma$-primitive $p$-adic $L$-functions, which should satisfy similar interpolation
  properties as above but with the Euler factors of the complex $L$-function for the primes
  in $\Sigma$ omitted. In our case, since we can interpolate the Euler factors away from
  $p$, we know that such $\Sigma$-primitive $p$-adic $L$-functions exist for any
  $\Sigma$. For example, Kitagawa's $p$-adic $L$-function is such a $\Sigma$-primitive one
  for $\Sigma$ being the set of primes dividing $(N,D)$.

  On the algebraic side, the Iwasawa modules one considers are (duals of) {Selmer groups},
  which are subgroups of an $\HL^1$ in continuous Galois cohomology whose elements satisfy
  certain local conditions. As on the analytic side, there is also a notion of
  $\Sigma$-primitivity here: we can form $\Sigma$-primitive Selmer groups by omitting the
  local conditions for the places in $\Sigma$.

  Obviously, the usual Selmer group (the one with $\Sigma=\varnothing$) is then contained in
  the $\Sigma$-primitive one for any nonempty $\Sigma$, and the cokernel can be described in
  terms of the local conditions at the primes in $\Sigma$. In accordance with the analytic
  side, the characteristic ideal of this cokernel is generated by a product of elements
  interpolating the Euler factors at primes in $\Sigma$. See \cite[§2, esp.\ Prop.\ 2.4,
  Cor.\ 2.3]{MR1784796} for these facts and for a treatment of
  $\Sigma$-primitive Selmer groups.

  In their proof of the Iwasawa Main Conjecture for (certain) modular forms and families,
  Skinner and Urban a priori use those $\Sigma$-primitive objects on both the algebraic and
  analytic side for $\Sigma$ containing all primes dividing $N$ (among others); using the
  techniques mentioned above they deduce from this the Main Conjecture for any set
  $\Sigma$. See \cite[§3, esp.\ §3.6 and the proof of Thm.\ 3.29]{MR3148103}.

  In Fukaya's and Kato's work, the Selmer group (or rather Selmer complex) which is related
  to their $p$-adic $L$-function via the Main Conjecture is the usual one (i.\,e.\
  $\Sigma=\varnothing$), as it should be, since their $p$-adic $L$-function has all Euler
  factors away from $p$, i.\,e.\ it is $\varnothing$-primitive. Hence our result and the
  fact that $\mu_F^\Kit$ and $\mu_F^\FK$ may generate different ideals in $\bigLambda$ is
  consistent with the Main Conjecture proved by Skinner and Urban.

  Let us finally remark that Fukaya and Kato also formulate a version of their conjecture
  involving $\Sigma$-primitive objects. They formulate this in terms of an open subset $U$
  of $\Spec\Z$ which plays the role of the complement of $\Sigma$. See \cite[§4.1.2 resp.\
  §4.2.11]{MR2276851} for the definitions of the
  $\Sigma$-primitive resp.\ $\varnothing$-primitive Selmer complexes and \cite[Thm.\ 4.2.22,
  case with \enquote{(resp.\ \dots)}]{MR2276851} for the
  conjectural interpolation formula and the Main Conjecture in the $\Sigma$-primitive
  case. Since we can interpolate all Euler factors away from $p$, we can show with our
  methods that also these $p$-adic $L$-functions exist for any $U$ (of course up to the
  factor $\chi_p(-1)$), and this is still consistent with the Main Conjecture.
\end{rem}

\printbibliography

\end{document}